\documentclass[leqno]{article}

\usepackage[frenchb,english]{babel}
\usepackage[utf8]{inputenc}
\usepackage{amsmath}
\usepackage{amssymb}
\usepackage{amsfonts}
\usepackage{enumerate}
\usepackage{vmargin}
\usepackage[all]{xy}
\usepackage{mathrsfs}
\usepackage{mathtools}
\usepackage{lmodern}
\usepackage{slashed}
\usepackage[colorlinks=true,linkcolor=blue,pagebackref=true]{hyperref}%
\setmarginsrb{3cm}{3cm}{3.5cm}{3cm}{0cm}{0cm}{1.5cm}{3cm}
\usepackage{comment}
\usepackage{tikz}
\usetikzlibrary{patterns}

\newcommand{\Cbb}{\ensuremath{\mathbb{C}}}
\newcommand{\C}{\mathrm{C}}

\newcommand{\E}{\ensuremath{\mathbb{E}}}

\newcommand{\N}{\ensuremath{\mathbb{N}}}
\newcommand{\B}{\mathrm{B}} 
\let\H\relax 
\newcommand{\H}{\mathrm{H}}

\let\L\relax 
\newcommand{\L}{\mathrm{L}}
\newcommand{\J}{\mathcal{J}} 

\newcommand{\SO}{\mathrm{SO}}
\newcommand{\Aut}{\mathrm{Aut}}

\newcommand{\M}{\mathrm{M}}

\newcommand{\RCD}{\mathrm{RCD}}
\newcommand{\dist}{\mathrm{dist}}

\newcommand{\Spin}{\mathrm{Spin}}

\newcommand{\Rie}{\mathrm{R}} 
\newcommand{\g}{\mathfrak{g}}
\newcommand{\Scal}{\mathrm{Scal}} 

\newcommand{\X}{\mathcal{X}} 
\let\div\relax
\newcommand{\div}{\mathrm{div}}
\let\cal\relax
\newcommand{\cal}{\mathcal}
\newcommand{\Z}{\ensuremath{\mathbb{Z}}}
\newcommand{\R}{\ensuremath{\mathbb{R}}}
\newcommand{\T}{\mathrm{T}}

\newcommand{\W}{\mathrm{W}}

\newcommand{\Id}{\mathrm{Id}}
\newcommand{\VN}{\mathrm{VN}}

\newcommand{\la}{\langle}
\newcommand{\ra}{\rangle}

\renewcommand{\leq}{\ensuremath{\leqslant}}
\renewcommand{\geq}{\ensuremath{\geqslant}}
\newcommand{\qed}{\hfill \vrule height6pt  width6pt depth0pt}
\newcommand{\bnorm}[1]{ \big\| #1  \big\|}

\newcommand{\norm}[1]{\left\Vert#1\right\Vert}

\newcommand{\Iso}{\mathrm{Iso}} 
\newcommand{\Stab}{\mathrm{Stab}} %
\newcommand{\xra}{\xrightarrow}
\newcommand{\co}{\colon}

\newcommand{\ot}{\otimes}
\newcommand{\ovl}{\overline}
\newcommand{\otvn}{\ovl\ot}

\newcommand{\cb}{\mathrm{cb}}
\newcommand{\fin}{\mathrm{fin}} 

\let\i\relax 
\newcommand{\i}{\mathrm{i}}
\newcommand{\ov}{\overset}

\newcommand{\vol}{\mathrm{vol}} 

\newcommand{\ad}{\mathrm{ad}}
\newcommand{\Ad}{\mathrm{Ad}}
\newcommand{\Ric}{\mathrm{Ric}} 

\newcommand{\Inv}{\mathrm{Inv}} 
\newcommand{\GL}{\mathrm{GL}} 

\newcommand{\epsi}{\varepsilon}
\newcommand{\Cl}{\mathrm{Cl}} 
\renewcommand{\d}{\mathop{}\mathopen{}\mathrm{d}} 
\newcommand{\e}{\mathrm{e}} 

\DeclareMathOperator{\Span}{span} 
\DeclareMathOperator{\tr}{Tr} 
\let\ker\relax 
\DeclareMathOperator{\ker}{Ker} 
\DeclareMathOperator{\Ran}{Ran} 
\DeclareMathOperator{\gap}{Gap} 
\DeclareMathOperator{\dom}{dom} 
\let\Re\relax 
\DeclareMathOperator{\Re}{Re} 
\DeclareMathOperator{\End}{End}

\DeclareMathOperator{\grad}{grad}
\DeclareMathOperator{\card}{card} 
 
\DeclareMathOperator{\diam}{diam} 

\DeclareMathOperator{\rank}{rank}
\newtheorem{thm}{Theorem}[section]
\newtheorem{defi}[thm]{Definition}
\newtheorem{prop}[thm]{Proposition}
\newtheorem{conj}[thm]{Conjecture}

\newtheorem{cor}[thm]{Corollary}
\newtheorem{lemma}[thm]{Lemma}

\newtheorem{remark}[thm]{Remark}
\newtheorem{example}[thm]{Example}

\newenvironment{proof}[1][]{\noindent {\it Proof #1} : }{\hbox{~}\qed
\smallskip
}

\usepackage{tocloft}
\numberwithin{equation}{section}
\usepackage[nottoc,notlot,notlof]{tocbibind}

\let\OLDthebibliography\thebibliography
\renewcommand\thebibliography[1]{
  \OLDthebibliography{#1}
  \setlength{\parskip}{0pt}
  \setlength{\itemsep}{0pt plus 0.3ex}
}

\newcommand\reallywidehat[1]{\arraycolsep=0pt\relax%
\begin{array}{c}
\stretchto{
  \scaleto{
    \scalerel*[\widthof{\ensuremath{#1}}]{\kern-.5pt\bigwedge\kern-.5pt}
    {\rule[-\textheight/2]{1ex}{\textheight}} 
  }{\textheight} %
}{0.5ex}\\           
#1\\                 
\rule{-1ex}{0ex}
\end{array}
}

\begin{document}
\selectlanguage{english}
\title{\bfseries{A reverse Riesz estimate combined with a spectral gap implies a Poincar\'e inequality}}
\date{}
\author{\bfseries{C\'edric Arhancet}}
\maketitle


\begin{abstract}
Working at the level of an Abel-ergodic sectorial operator $A$ on a Banach space $X$ and an unbounded operator $\partial$ defined on a subspace of $X$ in another Banach space $Y$, we show that a single reverse Riesz estimate $\|A^\alpha x\|_X \lesssim \|\partial x\|_Y$ for some $0 < \alpha < 1$, combined with the condition $0 \in \rho(A_0)$, where $A_0$ is the part of $A$ on the closure of the range of $A$, implies the Poincar\'e inequality $\|x - P(x)\|_X \lesssim \|\partial x\|_Y$, where $P$ is the Abel-ergodic projection onto the kernel of $A$. The condition $0 \in \rho(A_0)$ is the natural abstract substitute for a spectral gap, and is sharp already in the Hilbertian case. We also obtain a companion divergence inequality. The arguments are remarkably short, yet the principle is genuinely unifying: it covers commutative and noncommutative situations on the same footing and can be used with arbitrary Banach spaces. As a consequence, we recover Poincar\'e inequalities associated with hypercontractive semigroups and extend their validity to semigroups satisfying only a reduced spectral gap and a reverse Riesz estimate. We then illustrate the flexibility of the method across a wide spectrum of geometries, ranging from Riemannian manifolds, Lie groups, metric measure spaces, spin manifolds to genuinely noncommutative settings such as group von Neumann algebras, semigroups of Schur multipliers, $q$-Ornstein-Uhlenbeck semigroups and quantum tori, where we sometimes establish new inequalities and otherwise recover classical ones from a single principle.
%
%
%
%
%
%
%
%
%
\end{abstract}

\makeatletter
 \renewcommand{\@makefntext}[1]{#1}
 \makeatother
 \footnotetext{
 2020 {\it Mathematics subject classification:}
 43A15, 47D03, 47B90, 58B34. 
\\
{\it Key words}: $\L^p$-spaces, sectorial operators, semigroups of operators, Poincar\'e inequalities.}

{
  \hypersetup{linkcolor=blue}
 \tableofcontents
}


\section{Introduction}
\label{sec:Introduction}

In his famous memoir \cite[p.~253--259]{Poi90}, Poincar\'e proved the following inequality for a bounded open convex subset $\Omega$ of $\R^d$ of finite measure. There exists a constant $C_\Omega$ such that for any smooth function $f \co \Omega \to \R$ with $\int_\Omega f=0$ we have
\begin{equation}
\label{Poincare-intro}
\norm{f}_{\L^2(\Omega)} 
\leq C_\Omega \norm{\nabla f}_{\L^2(\Omega,\ell^2_d)}. 
\end{equation}
We refer to \cite{All12} and \cite{Maw12} for a historical account of the work of Poincar\'e on his inequality and to \cite[Proposition 2.58 p.~123]{FLW22} for a modern proof, for an open connected subset  $\Omega$ of finite measure with the $\W^{1,2}$-extension property, i.e., there exists a bounded linear operator $E \co \W^{1,2}(\Omega) \to \W^{1,2}(\R^d)$ such that for any $f \in \W^{1,2}(\Omega)$ we have $(Ef)(x) = f(x)$ for almost every $x \in \Omega$. Indeed, this result admits the slightly more general form  
\begin{equation}
\label{eq:intro-classical}
\norm{ f - f_\Omega }_{\L^2(\Omega)} \leq C_\Omega \norm{ \nabla f }_{\L^2(\Omega, \ell^2_d)},
\quad f_\Omega \ov{\mathrm{def}}{=} \frac{1}{|\Omega|} \int_\Omega f, \quad f \in \W^{1,2}(\Omega).
\end{equation}
So, in this classical form, the Poincar\'e inequality asserts that a function is controlled, in $\L^2$-norm, by its gradient, once its average has been subtracted. The subtraction of the mean is not a cosmetic normalization. Constant functions lie in the kernel of the gradient, so no estimate of the form \eqref{eq:intro-classical} can hold unless one first subtracts the mean. Behind \eqref{eq:intro-classical} stands a single spectral fact. As in \cite[pp.~170-171]{FLW22}, we can consider the Neumann Laplacian $-\Delta_\Omega^\mathrm{N}$ which is an unbounded positive selfadjoint operator on the complex Hilbert space $\L^2(\Omega)$ with domain
\begin{align*}
\MoveEqLeft
\dom -\Delta_\Omega^\mathrm{N}
=
\left\{
f \in \W^{1,2}(\Omega) :
 f \text{ has a weak Laplacian } \Delta f \in \L^2(\Omega) 
\right.\\
&\left.
\text{ and for all } g \in \W^{1,2}(\Omega),
\int_\Omega \nabla f \cdot \nabla g \d x 
=-\int_\Omega (\Delta f) g \d x
\right\},
\end{align*}
whose kernel consists of the constants, since $\Omega$ is connected. 
On the orthogonal complement of that kernel the spectrum of the Laplacian $-\Delta_\Omega^\mathrm{N}$ is bounded below. Indeed, according to \cite[p.~536]{Rob91}, the square of the best constant in \eqref{eq:intro-classical} is exactly the reciprocal of the bottom $\lambda_1$ of the spectrum of $-\Delta_\Omega^\mathrm{N}$ away from $0$. 

In the case where the bounded open subset $\Omega$ is convex, it is worth noting that the constant in \eqref{eq:intro-classical} can be taken to be $\frac{\diam(\Omega)}{\pi}$. This bound is sharp, since it is attained for intervals, as shown by Bebendorf in \cite{Beb03}, correcting an error in \cite{PaW60}, more than a century after Poincar\'e's seminal paper. In conclusion, a Poincar\'e inequality is the analytic counterpart of a spectral gap.

The preceding discussion is not specific to the Neumann Laplacian. More generally, one may replace $-\Delta_\Omega^\mathrm{N}$ by any positive selfadjoint unbounded operator $A$ acting on a Hilbert space $\L^2(\Omega)$ for some measure space $\Omega$. Then the relevant question is whether the operator $A$ has a spectral gap, that is, there exists $c >0$ such that $\sigma(A) \subset \{0\} \cup [c,\infty)$ and to compute the constant $c$. This viewpoint is ubiquitous rather than anecdotal. Under reasonable assumptions on $A$, the same spectral gap is what forces the associated semigroup $(\e^{-tA})_{t \geq 0}$ to return to equilibrium exponentially fast. 
It therefore provides a quantitative form of ergodicity, see \cite[Section 4.2.2]{BGL14} and \cite[Section 1.2]{Wan05}. It also sits at the bottom of the hierarchy of functional inequalities. Indeed, it is implied by logarithmic Sobolev inequalities, see \cite[Proposition 5.1.3 p.~238]{BGL14} for a precise statement, and it implies an exponential measure concentration, see \cite[Section 4.4.3]{BGL14}. On a geometric side, Poincar\'e inequalities are connected through the Lichnerowicz and Buser estimates to lower bounds on the Ricci curvature and to isoperimetry. For example, if $M$ is a compact Riemannian manifold without boundary of dimension $d \geq 2$ with Ricci curvature $\Ric(M) \geq k > 0$, then the Lichnerowicz estimate \cite[theorem p.~107]{ScY94} is the inequality $\lambda_1(-\Delta) \geq \frac{d}{d-1}k$ for the Laplacian $\Delta$ on $M$. 
Thus the Poincar\'e inequality organizes analysis, probability, spectral geometry and numerical analysis around one common object: the spectral gap.

Actually, if $1 \leq p < \infty$, it is also possible, according to \cite[Theorem 13.27 p.~432]{Leo17}, to generalize this inequality to $\L^p$-spaces. For an open connected subset $\Omega$ of finite measure with the $\W^{1,p}$-extension property, we have
\begin{equation}
\label{eq:intro-classical-Lp}
\norm{ f - f_\Omega }_{\L^p(\Omega)} 
\lesssim_\Omega \norm{ \nabla f }_{\L^p(\Omega, \ell^p_d)},
\quad f \in \W^{1,p}(\Omega).
\end{equation}
Beyond domains of $\R^d$, $\L^p$-Poincar\'e inequalities have been extensively studied in a wide variety of geometric and analytic frameworks, including Markovian semigroups \cite{BGL14}, Riemannian manifolds \cite[Theorem 2.10 p.~40]{Heb96} \cite{Mil09a}, Lie groups \cite{RuS11}, discrete hypercube $\{-1,1\}^n$ \cite{EfL08} \cite[Theorem 1.4 p.~297]{Tal93}\footnote{\thefootnote. This result is stated for real functions. 
} and, increasingly, on genuinely noncommutative spaces such as quantum tori \cite{XXY18}, mixed $Q$-Gaussian algebras \cite{JLZZ24}, group von Neumann algebras \cite{JLZZ24} \cite{JuZ15a} 
and crossed products \cite{Zen14}.

The framework of this paper is that of a Banach space $X$, a second Banach space $Y$ playing the role of the space in which the gradients take their values and two operators. The first is a sectorial operator $A$ on $X$ that is Abel-ergodic, meaning that $X$ splits topologically as
\begin{equation}
\label{eq:intro-splitting}
X 
= \ker A \oplus \ovl{\Ran A}.
\end{equation}
The associated ergodic projection $P \co X \to X$ onto $\ker A$ is the abstract substitute for the expectation onto fixed points. In the case of an operator $-A$ generating a bounded strongly continuous semigroup of operators on a reflexive space $X$, it is delivered automatically by mean ergodicity through the Cesàro averages of the semigroup. The second is a densely defined unbounded operator $\partial \colon \dom \partial \subset X \to Y$, which incarnates a gradient or some kind of derivation.

The spectral gap is encoded intrinsically. Writing $A_0$ for the part of the operator $A$ on the Banach space $\overline{\Ran A}$, we say that $A$ admits a spectral gap if $0 \in \rho(A_0)$. For a positive unbounded selfadjoint operator $A$ acting on a Hilbert space, this condition is equivalent to $\inf \sigma(A_0) > 0$, that is, to the strict positivity of the classical gap. It is thus the correct Banach-space formulation of ``$A$ admits a spectral gap''.

Our main result is the next theorem which couples this gap with a single reverse Riesz estimate.

\begin{thm}
\label{thm:intro-main}
Let $X$ be a Banach space. Let $A$ be an Abel-ergodic sectorial operator on $X$ with ergodic projection $P\co X \to X$ onto the subspace $\ker A$. Suppose that $A$ admits a spectral gap, i.e., $0 \in \rho(A_0)$. Let $\partial \co \dom \partial \subset X \to Y$ be an unbounded operator with $\dom \partial \subset \dom A^{\frac{1}{2}}$, and assume the reverse Riesz estimate
\begin{equation}
\label{eq:intro-reverse-riesz}
\bnorm{ A^{\frac{1}{2}}(x) }_X 
\leq C \norm{ \partial(x) }_Y,
\quad x \in \dom \partial.
\end{equation}
Then
\begin{equation}
\label{eq:intro-poincare}
\norm{ x - P(x) }_X 
\leq C \bnorm{ A_0^{-\frac{1}{2}} }_{\ovl{\Ran A} \to \ovl{\Ran A}} \norm{ \partial(x) }_Y,
\quad x \in \dom \partial.
\end{equation}
\end{thm}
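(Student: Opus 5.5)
The plan is to write $x - P(x)$ as $A_0^{-\frac{1}{2}}$ applied to $A^{\frac{1}{2}}(x)$, and then invoke the reverse Riesz estimate \eqref{eq:intro-reverse-riesz}.

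First I record the structural facts about $A_0$. Since $A$ is Abel-ergodic, the splitting \eqref{eq:intro-splitting} reduces $A$. The subspace $\ovl{\Ran A}$ is invariant under the resolvents $(\lambda + A)^{-1}$, so the part $A_0$ is sectorial on $\ovl{\Ran A}$ and injective. Its fractional powers are the restrictions of those of $A$. Concretely, for $y \in \dom A^{\frac{1}{2}} \cap \ovl{\Ran A}$ we have $y \in \dom A_0^{\frac{1}{2}}$ and $A_0^{\frac{1}{2}} y = A^{\frac{1}{2}} y$. I would justify this through the Balakrishnan representation
\[
A^{\frac{1}{2}} y = \frac{1}{\pi}\int_0^\infty t^{-\frac{1}{2}} A(t+A)^{-1} y \,\d t ,
\]
together with the fact that the resolvents commute with $P$. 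Since $0 \in \rho(A_0)$, the operator $A_0$ is invertible. Then $0 \in \rho(A_0^{\frac{1}{2}})$ as well, with $(A_0^{\frac{1}{2}})^{-1} = A_0^{-\frac{1}{2}}$ bounded on $\ovl{\Ran A}$. This follows from the standard functional calculus statement that for an invertible sectorial operator the negative fractional powers are bounded and invert the positive ones.

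Next, for $x \in \dom \partial \subset \dom A^{\frac{1}{2}}$, I check that $A^{\frac{1}{2}}$ kills the kernel part. Since $P(x) \in \ker A$, and $A(t+A)^{-1}P(x) = 0$, the representation above gives $P(x) \in \dom A^{\frac{1}{2}}$ with $A^{\frac{1}{2}}P(x) = 0$. Hence $y \ov{\mathrm{def}}{=} x - P(x)$ lies in $\dom A^{\frac{1}{2}} \cap \ovl{\Ran A}$ and satisfies $A^{\frac{1}{2}} y = A^{\frac{1}{2}} x$. By the previous step, $A_0^{\frac{1}{2}} y = A^{\frac{1}{2}} x$, and therefore
\[
y = A_0^{-\frac{1}{2}} A_0^{\frac{1}{2}} y = A_0^{-\frac{1}{2}} A^{\frac{1}{2}} x .
\]
Taking norms and applying \eqref{eq:intro-reverse-riesz} yields
\[
\norm{x - P(x)}_X \leq \bnorm{A_0^{-\frac{1}{2}}}\, \bnorm{A^{\frac{1}{2}} x}_X \leq C \bnorm{A_0^{-\frac{1}{2}}}\, \norm{\partial x}_Y ,
\]
which is \eqref{eq:intro-poincare}.

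The main obstacle is the bookkeeping around the part operator. One has to show that the fractional power of $A_0$ agrees with the restriction of $A^{\frac{1}{2}}$, and that $A^{\frac{1}{2}}$ vanishes on $\ker A$, both with correct domains. I expect the cleanest route is to use $P$ commuting with the resolvents, so that it also commutes with $A^{\frac{1}{2}}$ on its domain, and then to read off both facts from the Balakrishnan formula.
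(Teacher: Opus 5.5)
Your proposal is correct and follows essentially the same route as the paper. Both arguments use $A^{\frac12}P(x)=0$ and the agreement of $A_0^{\frac12}$ with $A^{\frac12}$ on $\dom A^{\frac12}\cap\ovl{\Ran A}$ to write $x-P(x)=A_0^{-\frac12}A^{\frac12}x$, and then conclude from the boundedness of $A_0^{-\frac12}$ and the reverse Riesz estimate.

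The only difference is in how the two bookkeeping facts are justified. The paper cites the kernel identity \eqref{inclusion-range} and Haase's result that the fractional powers of the part $A_0$ are the parts of the fractional powers of $A$. You instead propose to derive both from the Balakrishnan formula and the commutation of $P$ with the resolvents.
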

We shall see below in Theorem \ref{thm-direct-Poincare-via-negative-powers} that the exponent $\frac12$ can be replaced by an arbitrary parameter $\alpha \in (0,1)$. This flexibility may be useful in irregular settings, such as fractals, where the classical exponent $\frac12$ associated with Riesz transforms is no longer necessarily the most natural choice. See, for instance \cite{Fen26}.

The proof is transparent. Indeed, we have $A_0^\frac{1}{2}(x - P x) = A^\frac{1}{2} x$ for any $x \in \dom A^\frac{1}{2}$, and the boundedness of the fractional power $A_0^{-\frac{1}{2}}$ turns the reverse Riesz estimate into \eqref{eq:intro-poincare} in a couple of lines. We regard this brevity as a feature rather than a shortcut: the mathematical content of the paper lies in identifying \emph{which} two assumptions are the right ones, after which the geometry evaporates. In the Hilbertian case, when $A$ is connected to the operator $\partial$ through a formula of the form $A=\partial^\dagger \partial$ for a closed densely defined operator $\partial^\dagger$, the gap assumption is not only sufficient but necessary and the reverse Riesz estimate is in practice always satisfied. So the result is sharp already at its most classical point.

If $A
=\partial^\dagger \partial$, the same circle of ideas yields, under some assumptions, a companion divergence inequality $\norm{y}_{Y}
\lesssim \norm{\partial^\dagger(y)}_{X}$ for any $y \in \partial(\dom A)$ (Theorem~\ref{thm-Poincare-divergence-corrected}), in which the roles of $\partial$ and of an adjoint operator $\partial^\dagger$ are exchanged. It is the dual counterpart of \eqref{eq:intro-poincare} and is proved by the same principle applied to the negative power $A_0^{-\frac{1}{2}}$.

Finally, it is well known \cite{AdW15} \cite[Corollary 4.2 p.~583]{EfL08} 
\cite{HuT21} \cite{JuW26} that Poincar\'e inequalities imply concentration estimates in some sense. In Section \ref{sec-concentration}, we refine this standard principle by tracking the order of magnitude of the constants with respect to $p$, in a setting sufficiently general for our purposes.

\paragraph{Scope of the result}
The strength of Theorem~\ref{thm:intro-main} is its indifference to the underlying geometry. It imposes no restriction on the exponent $p$ in the case of an $\L^p$-Poincar\'e inequality (or an $\L^p$-$\L^q$-Poincar\'e), it accommodates arbitrary Banach spaces, and it treats commutative and noncommutative situations on the same footing, allowing even the use of non-semifinite von Neumann algebras or weights on classical measure spaces. 

Our result recovers the Poincar\'e-type \textit{consequence} of \cite[Corollary B p.~265]{JLZZ24} on noncommutative $\L^p$-spaces associated with noncommutative \textit{probability spaces}, where \textit{hypercontractivity} and \textit{Markovianity} of operators of some semigroup were an important assumption, which are not satisfied in several important cases. We also refer to \cite{JuZ15b} for a result under a curvature-dimension assumption, and to \cite{JuW26} for an approach based on Markov dilations and amalgamated free products. The arguments in all these works rely on substantially more technical machinery, whereas the proof given here is comparatively direct. Their results and ours are complementary.

One should not expect the present abstract approach, whose main strength lies in its generality, to recover the optimal constants. 
Its advantage is instead to identify a common mechanism behind $\L^p$-Poincar\'e inequalities across a broad class of examples. Section~\ref{sec-examples} illustrates the method across a deliberately wide spectrum of settings: Riemannian manifolds (compact, locally symmetric of non-compact type, and Cartan--Hadamard), Riemannian spin manifolds through the Dirac operator, metric measure spaces satisfying the Riemannian curvature-dimension condition $\RCD(K,N)$, compact Lie groups equipped either with subelliptic Hörmander gradients or with bi-invariant metrics, quantum tori, $q$-Ornstein--Uhlenbeck semigroups, group von Neumann algebras, and semigroups of Schur multipliers. In some of these we obtain inequalities that appear to be new even in the commutative range, while in others we recover classical statements. In all of them the inequality follows from the single previous principle. We have made no attempt to be exhaustive.

\paragraph{Structure of the paper}
The paper is organized as follows. In Section~\ref{sec-preliminaries} we collect the operator-theoretic
preliminaries needed throughout the article, with particular emphasis on sectorial operators and Abel-ergodic sectorial operators. Section~\ref{sec-gaps} is devoted to the relation between reduced spectral gaps and uniform exponential stability of semigroups on the reduced space. This provides the spectral input which will later replace the usual Hilbertian gap condition. In Section~\ref{Sec-Poincare}, we prove the main abstract Banach Poincar\'e principle: a reverse Riesz estimate, combined with the invertibility of the reduced part of the generator, yields a Poincar\'e inequality with respect to the Abel-ergodic projection. Section~\ref{sec-divergence} develops the companion divergence inequality, which may be viewed as the dual first-order estimate associated with the same spectral mechanism. In Section~\ref{sec-Riesz-equivalence}, we explain how suitable Riesz estimates imply reverse Riesz estimates. Section \ref{sec-examples} illustrates the scope of the method in a variety of commutative and noncommutative settings. We discuss Riemannian manifolds, spin manifolds, metric measure spaces satisfying an $\RCD(K,N)$ condition, 
Markov semigroups on von Neumann algebras, compact Lie groups, quantum tori, $q$-Ornstein--Uhlenbeck semigroups, group von Neumann algebras and semigroups of Schur multipliers. These examples are meant to show that the argument is not tied to a particular geometry: the same reduced spectral gap and Riesz estimate mechanism recovers classical Poincar\'e inequalities and yields new ones in various situations. Finally, in Section \ref{sec-concentration}, we prove a concentration inequality.

\section{Preliminaries}
\label{sec-preliminaries}
In this paper, the symbols $\approx$ and $\lesssim$ denote an equality or an inequality up to multiplicative constants.

\subsection{Operator theory and sectorial operators}
\paragraph{Unbounded operators} Let $X$ and $Y$ be two Banach spaces. 
Consider an unbounded operator $T \co \dom T \subset X \to Y$ with dense domain. According to \cite[Problem 5.27 p.~168]{Kat76}, we have
\begin{equation}
\label{lien-ker-image}
\ker T^*
=(\Ran T)^\perp.
\end{equation}
For any $x \in \dom T$ and any $y \in \dom T^*$, we have the equality
\begin{equation}
\label{crochet-duality}
\langle T(x),y \rangle_{Y,Y^*}
=\langle x,T^*(y) \rangle_{X,X^*}.
\end{equation}

We will use the following result in some Examples.

\begin{prop}
\label{prop-generator-gradient-factorization}
Let $X$ and $Y$ be reflexive Banach spaces. Let $A \co \dom A \subset X \to X$ be a densely defined closed operator. Let $\partial \co \dom \partial \subset X \to Y$ and $\tilde{\partial} \co \dom \tilde{\partial} \subset X^* \to Y^*$ be densely defined closed operators. Assume that $\dom A \subset \dom \partial$ and $ 
\dom A^* \subset \dom \tilde{\partial}$. Suppose that there exist subspaces $
C \subset \dom A \cap \dom \partial$ and $\tilde{C} \subset \dom A^* \cap \dom \tilde{\partial}$ such that $C$ is a core for both $A$ and $\partial$, $\tilde{C}$ is a core for both $A^*$ and $\tilde{\partial}$, and
\begin{equation}
\label{eq-abstract-gradient-factorization-core}
\la A x,y\ra_{X,X^*}
=
\la \partial x,\tilde{\partial}y\ra_{Y,Y^*},
\quad
x\in C, y \in \tilde{C}.
\end{equation}
Then $
A
=
(\tilde{\partial})^*\partial$ and $A^*
=
\partial^*\tilde{\partial}$.
\end{prop}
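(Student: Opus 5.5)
Set $B = (\tilde{\partial})^*\partial$, with domain $\dom B = \{x \in \dom \partial : \partial x \in \dom (\tilde{\partial})^*\}$. The plan is to show first that $A \subset B$, and then that $B \subset A$, by a duality argument that uses the second identity $A^* = \partial^*\tilde{\partial}$ symmetrically.

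\textbf{Step 1: $A \subset B$.} Take $x \in C$. For every $y \in \tilde{C}$, \eqref{eq-abstract-gradient-factorization-core} gives
$$|\la \partial x,\tilde{\partial}y\ra| = |\la Ax,y\ra| \leq \norm{Ax}\norm{y}.$$
Since $\tilde{C}$ is a core for $\tilde{\partial}$, this bound and the identity $\la \partial x,\tilde\partial y\ra=\la Ax,y\ra$ extend by graph-norm approximation to all $y \in \dom\tilde{\partial}$. Hence $\partial x \in \dom(\tilde{\partial})^*$ and $(\tilde{\partial})^*\partial x = Ax$.

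Now let $x \in \dom A$ be arbitrary. Because $C$ is a core for $A$, there are $x_n \in C$ with $x_n \to x$ and $Ax_n \to Ax$. I would like $\partial x_n$ to converge as well, and I expect this to be the main technical point. It follows from the closed graph theorem: $\dom A \subset \dom \partial$ and both operators are closed, so $\partial$ is bounded from $(\dom A, \text{graph norm})$ to $Y$. Therefore $\partial x_n \to \partial x$, and $(\tilde{\partial})^*\partial x_n = Ax_n \to Ax$. Closedness of $(\tilde{\partial})^*$ then gives $\partial x \in \dom(\tilde\partial)^*$ and $(\tilde{\partial})^*\partial x = Ax$. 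So $A \subset B$.

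The same argument with the roles exchanged, using $\tilde C$, the inclusion $\dom A^*\subset\dom\tilde\partial$, reflexivity to identify $\partial^{**}=\partial$, and the conjugated pairing, gives $A^* \subset \partial^*\tilde{\partial} =: \tilde{B}$.

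\textbf{Step 2: $B \subset A$.} I would check that $B \subset \tilde{B}^*$. Take $x \in \dom B$ and $y \in \dom\tilde{B}$. Then
$$\la Bx,y\ra=\la (\tilde\partial)^*\partial x,y\ra=\la \partial x,\tilde\partial y\ra=\la x,\partial^*\tilde\partial y\ra=\la x,\tilde B y\ra,$$
using $\partial x\in\dom(\tilde\partial)^*$, $y\in\dom\tilde\partial$ for the second equality and $\tilde\partial y\in\dom\partial^*$, $x\in\dom\partial$ for the third. From $A^* \subset \tilde{B}$ we get $\tilde{B}^* \subset A^{**} = A$, since $A$ is closed, densely defined and $X$ is reflexive. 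Hence $B \subset \tilde B^*\subset A$, and with Step 1 this gives $A = B$.

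\textbf{Step 3: the adjoint identity.} By the symmetric argument, $\tilde{B} \subset B^*$. Since $A\subset B$ we have $B^* \subset A^*$, so $\tilde B\subset A^*$. Combined with $A^*\subset\tilde B$ from Step 1, this gives $A^*=\tilde B=\partial^*\tilde\partial$.

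The main obstacle is the approximation in Step 1, namely passing from the core $C$ to all of $\dom A$. It is resolved by the closed-graph boundedness of $\partial$ on $\dom A$. Everything else is bookkeeping with adjoints, with reflexivity used to identify biduals.
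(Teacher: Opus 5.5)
Your proof is correct and follows essentially the paper's route: closed-graph continuity of $\dom A\hookrightarrow\dom\partial$ (and of $\dom A^*\hookrightarrow\dom\tilde\partial$) together with the cores gives $A\subset(\tilde\partial)^*\partial$ and $A^*\subset\partial^*\tilde\partial$, and reflexivity plus an adjoint-of-product inclusion then yields the reverse inclusions. The only cosmetic difference is the last step. You verify $(\tilde\partial)^*\partial\subset(\partial^*\tilde\partial)^*$ by hand, which is legitimate because $\partial^*\tilde\partial\supset A^*$ is densely defined, and conclude via $(\partial^*\tilde\partial)^*\subset A^{**}=A$. The paper instead cites Kato for $\partial^*\tilde\partial\subset\big((\tilde\partial)^*\partial\big)^*$, deduces $B^*=A^*$, and then gets $B\subset B^{**}=A$.
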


\begin{proof}
Since $A$ and $\partial$ are closed and $\dom A \subset \dom \partial$, the inclusion $j \co \dom A \hookrightarrow \dom \partial$ is continuous if the spaces are equipped with the graph norms, by the closed graph theorem. Indeed, suppose that $(x_n)$ is a sequence in $\dom A$ such that $x_n \to x$ in the graph norm of $A$ and $j(x_n) \to y$ in the graph norm of $\partial$. The first convergence implies in particular that $
x_n \to x$ in $X$. The second convergence also implies that $
x_n
=
j(x_n)
\to y$ in $X$.
By uniqueness of the limit in $X$, we obtain $x=y$. Since $x \in \dom A$, we have $j(x)=x=y$. Consequently, the graph of $j$ is closed. Similarly, since $\dom A^* \subset \dom \widetilde{\partial}$, the inclusion $\dom A^* \hookrightarrow \dom \tilde{\partial}$ is continuous. We first prove that $
A
\subset
(\tilde{\partial})^*\partial$. Let $x\in\dom A$. Since $C$ is a core for $A$, there exists a sequence
$(x_n)$ in $C$ such that $x_n\to x$ and $ Ax_n\to Ax$ in $X$. By the continuity of the preceding inclusion, $\partial x_n\to\partial x$ in $Y$. Let $y \in \dom \tilde{\partial}$. Since $\tilde{C}$ is a core for $\tilde{\partial}$, there exists a sequence $(y_m)$ in $\tilde{C}$ such that $y_m\to y$ and $\tilde{\partial}y_m \to \tilde{\partial}y$. Passing successively to the limit in $\la A x_n,y_m\ra_{X,X^*} \ov{\eqref{eq-abstract-gradient-factorization-core}}{=} \la \partial x_n,\tilde{\partial}y_m\ra_{Y,Y^*}$, we obtain
\[
\la Ax,y\ra_{X,X^*}
=
\la\partial x,\tilde{\partial}y\ra_{Y,Y^*},
\quad
y \in \dom \tilde{\partial}.
\]
By the definition of the Banach adjoint, this means that $\partial x \in \dom(\tilde{\partial})^*
$ and $
(\tilde{\partial})^*\partial x=Ax$. This proves $A
\subset
(\tilde{\partial})^*\partial$. Now, we prove that
\begin{equation}
\label{eq-dual-first-inclusion-factorization}
A^*
\subset
\partial^*\tilde{\partial}.
\end{equation}
Let $y \in \dom A^*$. Since $\tilde{C}$ is a core for $A^*$, there exists a sequence $(y_m)$ in $\tilde{C}$ such that $y_m \to y$ and $A^*y_m\to A^*y$. By the continuity of the inclusion $\dom A^*\hookrightarrow \dom \tilde{\partial}$, we also have $\tilde{\partial}y_m \to \tilde{\partial}y$. Let $x \in \dom\partial$. Since $C$ is a core for $\partial$, there exists a sequence $(x_n)$ in $C$ such that $x_n \to x$ and $\partial x_n \to \partial x$. Using
\[
\la x_n,A^*y_m \ra_{X,X^*}
\ov{\eqref{crochet-duality}}{=}
\la Ax_n,y_m\ra_{X,X^*}
\ov{\eqref{eq-abstract-gradient-factorization-core}}{=}
\la\partial x_n,\tilde{\partial}y_m\ra_{Y,Y^*}
\]
and passing successively to the limit, we obtain
\[
\la x,A^*y\ra_{X,X^*}
=
\la\partial x,\tilde{\partial}y\ra_{Y,Y^*},
\quad
x \in \dom \partial.
\]
Hence $\tilde{\partial}y\in\dom\partial^*$ and $
\partial^*\tilde{\partial}y=A^*y$, which proves \eqref{eq-dual-first-inclusion-factorization}. Set $
B
\ov{\mathrm{def}}{=}
(\tilde{\partial})^*\partial$. Since $A
\subset
(\tilde{\partial})^*\partial$, we have $A \subset B$. Since $A$ is densely defined, $B$ is densely defined as well, and therefore $B^*$ is defined. Taking adjoints gives $
B^*
\subset
A^*$. On the other hand, since $\tilde{\partial}$ is closed and $X$ and $Y$ are reflexive, we have $
\tilde{\partial}^{**} = \tilde{\partial}$ by \cite[Theorem 5.29 p.~168]{Kat76}. By the usual inclusion \cite[Problem 5.26 p.~168]{Kat76} for adjoints of products of unbounded operators, we have
\[
\partial^*\tilde{\partial}
=
\partial^*\tilde{\partial}^{**}
\subset
\big((\tilde{\partial})^*\partial\big)^*
=
B^*.
\]
Together with \eqref{eq-dual-first-inclusion-factorization}, this gives $
A^*
\subset
B^*$. Combining this inclusion with $B^*
\subset
A^*$, we conclude that $
B^*=A^*$. Taking adjoints once more and using the reflexivity of $X$, we obtain $
B^{**}
=
A^{**}
=
A$. Since $B\subset B^{**}$, we have $B\subset A$. Together with $A \subset B$, this proves $A=(\tilde{\partial})^*\partial$. Finally, taking adjoints in this identity and using \eqref{eq-dual-first-inclusion-factorization} yields $A^*=\partial^*\tilde{\partial}$. 
\end{proof}

For any angle $\theta \in (0,\pi)$, we introduce the open sector symmetric around the positive real half-axis with opening angle $2\theta$
\begin{equation}
\label{def-sigma-omega}
\Sigma_{\theta} 
\ov{\mathrm{def}}{=} \big\{ z \in \mathbb{C} \backslash \{ 0 \} : \: | \arg z | < \theta \big\}.
\end{equation}
It is useful to put $\Sigma_0 \ov{\mathrm{def}}{=} (0,\infty)$.

Background material on sectorial operators and spectral theory can be found in the books \cite{Haa06} and \cite{HvNVW18}.
Let $A \co \dom A \subset X \to X$ be a closed densely defined linear operator acting on a Banach space $X$. We say that $A$ is a $\theta$-sectorial operator for some angle $\theta \in (0,\pi)$ if its spectrum $\sigma(A)$ is a subset of the closed sector $\ovl{\Sigma_\theta}$ and if the set $\big\{zR(z,A) : z \in \mathbb{C} \backslash \ovl{\Sigma_\theta}\big\}$ is bounded in the algebra $\B(X)$ of bounded operators acting on $X$, where $R(z,A) \ov{\mathrm{def}}{=} (z\,\Id-A)^{-1}$ is the resolvent operator. We caution the reader that this definition may differ across the literature. 
The operator $A$ is said to be sectorial if it is a $\theta$-sectorial operator for some $\theta \in (0,\pi)$. In this situation, we can introduce the angle of sectoriality
$\omega_{\sec}(A) 
\ov{\mathrm{def}}{=} \inf\{ \theta \in (0,\pi) : A \textrm{ is $\theta$-sectorial} \}$. 
According to \cite[Example 10.1.2 p.~362]{HvNVW18}, if $-A$ is the generator of a bounded strongly continuous semigroup $(T_t)_{t \geq 0}$ of bounded operators then the operator $A$ is sectorial with $\omega_{\sec}(A) \leq \frac{\pi}{2}$. Furthermore, by \cite[Example 10.1.3 p.~362]{HvNVW18} and \cite[Proposition 3.4.4 p.~79]{Haa06}, the operator $A$ is sectorial with $\omega_{\sec}(A) < \frac{\pi}{2}$ if and only if $-A$ generates a bounded holomorphic (equivalently, bounded analytic) strongly continuous semigroup $(T_t)_{t \geq 0}$, that is, there exist an angle $\theta \in (0,\frac{\pi}{2})$ and a bounded holomorphic extension $\Sigma_\theta \to \B(X)$, $z \mapsto T_z$.

\begin{example}\normalfont
\label{Example-analytic-positive-selfadjoint}
If $A$ is a positive selfadjoint operator with dense domain on a Hilbert space, then \cite[Example 3.7.5 p.~150]{ABHN11} shows that $(\e^{-tA})_{t \geq 0}$ is a bounded holomorphic strongly continuous semigroup.
\end{example}

If $A$ is a sectorial operator on a \textit{reflexive} Banach space $X$, we have by \cite[Proposition 2.1.1 (h) p.~21]{Haa06} or \cite[Proposition 10.1.9 p.~367]{HvNVW18} a topological decomposition
\begin{equation}
\label{decompo-reflexive}
X
=\ker A \oplus \ovl{\Ran A}.
\end{equation}

\paragraph{Fractional powers} 
References on fractional powers include \cite{ABHN11}, \cite{Haa06}, \cite{Haa18}, \cite{MCSA01} and \cite{HvNVW23}. If $A$ is a sectorial operator on a Banach space $X$ and if $\alpha>0$ satisfies $\alpha\omega_{\sec}(A) < \pi$, then $A^\alpha$ is sectorial and $\omega_{\sec}(A^\alpha) = \alpha\omega_{\sec}(A)$ by \cite[Proposition 3.1.2]{Haa06} and \cite[Theorem 15.2.7 p.~440]{HvNVW23}. 
By \cite[Proposition 3.1.1 (d) p.~61]{Haa06} combined with a duality argument relying on \cite[Problem 5.27 p.~168]{Kat76} and \cite[Proposition 1.10.15 (c) p.~93]{Meg98}, we have, for any complex number $\alpha \in \mathbb{C}$ with $\Re \alpha > 0$, the equalities
\begin{equation}
\label{inclusion-range}
\ker A^\alpha=\ker A
\quad \text{and} \quad
\ovl{\Ran A^\alpha} 
=\ovl{\Ran A}.
\end{equation}
Finally, whenever $A$ is densely defined and $0 < \Re\alpha < 1$, the subspace $\dom A$ is a core for the unbounded operator $A^\alpha$ by \cite[Proposition 3.1.1 (h) p.~61]{Haa06}.  In Section \ref{sec-Markov}, we will use the following observation.

\begin{prop}
\label{core-A-alpha}
Let $A$ be a sectorial operator on a Banach space $X$. Let $\alpha \in \mathbb{C}$ with $0 < \Re \alpha < 1$. Every core of the unbounded operator $A$ is also a core of $A^\alpha$.
\end{prop}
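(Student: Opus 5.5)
We have to show: if $C$ is a core for $A$ (i.e. $C \subset \dom A$ is dense in $\dom A$ for the graph norm), then $C$ is dense in $\dom A^\alpha$ for the graph norm of $A^\alpha$. Since the paper already recalls that $\dom A$ is a core for $A^\alpha$ when $0<\Re\alpha<1$ (by \cite[Proposition 3.1.1 (h) p.~61]{Haa06}), it suffices to show that the graph norm of $A$ dominates the graph norm of $A^\alpha$ on $\dom A$. Concretely, we aim for an estimate
\[
\norm{A^\alpha x}_X \lesssim \norm{x}_X + \norm{Ax}_X, \quad x \in \dom A.
\]
Then, given $x \in \dom A^\alpha$ and $\epsi>0$, we first pick $y \in \dom A$ with $\norm{x-y}_X+\norm{A^\alpha(x-y)}_X<\epsi$. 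Next we pick $z \in C$ with $\norm{y-z}_X+\norm{A(y-z)}_X$ small. By the estimate, $\norm{y-z}_X+\norm{A^\alpha(y-z)}_X$ is then also small, and a triangle inequality concludes.

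To get the estimate, use the moment inequality or the Balakrishnan-type representation. Here is the most self-contained route. The operator $A+\Id$ is sectorial and invertible, and for $x\in\dom A$ we have $A^\alpha x = A^\alpha (\Id+A)^{-1}(\Id+A)x$. So it suffices to know that $A^\alpha(\Id+A)^{-1}$ is a bounded operator on $X$ for $0<\Re\alpha<1$. This is standard in the functional calculus of sectorial operators: the function $z\mapsto z^\alpha(1+z)^{-1}$ belongs to the extended Dunford–Riesz class (it is $O(|z|^{\Re\alpha})$ at $0$ and $O(|z|^{\Re\alpha-1})$ at $\infty$, hence lies in $H_0^\infty$ on a sector). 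By the multiplicativity and compatibility of the functional calculus (\cite[Chapter 2–3]{Haa06}), it defines a bounded operator which coincides with $A^\alpha(\Id+A)^{-1}$, and in particular $\dom A \subset \dom A^\alpha$. Then
\[
\norm{A^\alpha x}_X \leq \norm{A^\alpha(\Id+A)^{-1}}\,\norm{x+Ax}_X.
\]
Alternatively, one can invoke directly the interpolation/moment inequality $\norm{A^\alpha x}\lesssim \norm{x}^{1-\Re\alpha}\norm{Ax}^{\Re\alpha}$ from \cite[Proposition 6.6.4]{Haa06}, at least for real $\alpha$. The bounded-operator route handles complex $\alpha$ uniformly, so I would use it.

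The main point requiring care is precisely the identification of the bounded operator $f(A)$, with $f(z)=z^\alpha/(1+z)$, with the product $A^\alpha(\Id+A)^{-1}$ as unbounded operators. This follows from the product rule $f(A)g(A)\subset (fg)(A)$, with equality of domains when $g(A)$ is bounded, in the sectorial functional calculus of \cite{Haa06}, applied with $g(z)=(1+z)^{-1}$. Once this is granted, closedness of $A^\alpha$ is not even needed beyond the fact that $\dom A$ is a core. The remainder is the triangle-inequality argument above.
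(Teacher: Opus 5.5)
Your proposal is correct and follows essentially the same route as the paper: the factorization $A^\alpha x = A^\alpha(\Id+A)^{-1}(\Id+A)x$, boundedness of $A^\alpha(\Id+A)^{-1}$ because $z\mapsto z^\alpha/(1+z)$ lies in $\H^\infty_0$ on a sector, the resulting graph-norm domination $\norm{A^\alpha x}_X \lesssim \norm{x}_X+\norm{Ax}_X$ on $\dom A$, and then the fact that $\dom A$ is a core for $A^\alpha$. Your explicit justification of the identification $f(A)g(A)=(fg)(A)$ via the product rule with $g(A)=(\Id+A)^{-1}$ bounded is a detail the paper leaves implicit.
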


\begin{proof}
If $0 < \Re\alpha < 1$, then
\[
A^\alpha x
=
A^\alpha(\Id+A)^{-1}(\Id+A)x,
\quad x\in\dom A.
\]
Fix $\theta \in (\omega_{\sec}(A),\pi)$. Note that the function $z \mapsto \frac{z^\alpha}{1+z}$ belongs to the algebra $\H^\infty_0(\Sigma_\theta)$ by \cite[Example 2.2.5 p.~29]{Haa18}. So the operator $A^\alpha(\Id+A)^{-1}$ is bounded by \cite[p.~30]{Haa18}. Hence there exists a constant
$K_\alpha\geq0$ such that
\begin{equation}
\label{majo-graph}
\bnorm{A^\alpha x}_X
\leq
K_\alpha\big(\norm{x}_X+\norm{Ax}_X\big),
\quad x\in\dom A.
\end{equation}
Let $C$ be a core of the operator $A$. If $x \in \dom A$, there exists a sequence $(x_n)$ in $C$ such that $x_n \to x$ and $Ax_n\to Ax$ in $X$. By \eqref{majo-graph}, applied to $x_n-x$, we obtain
\[
\bnorm{A^\alpha(x_n-x)}_X
\leq
K_\alpha
\big(
\norm{x_n-x}_X+\norm{A(x_n-x)}_X
\big)
\longrightarrow 0.
\]
Hence $C$ is dense in $\dom A$ for the graph norm of $A^\alpha$. Since $\dom A$ is itself a core of $A^\alpha$, we conclude that $C$ is a core of $A^\alpha$.
\end{proof}



%
%

\subsection{Abel-ergodic sectorial operators}

We introduce the following definition.

\begin{defi}
\label{def-Abel-ergodic}
Let $A$ be a sectorial operator acting on a Banach space $X$. We say that $A$ is Abel-ergodic if we have a topological decomposition $X=\ker A \oplus \ovl{\Ran A}$.
\end{defi}

By \cite[Corollary 4.3.2 p.~262]{ABHN11} or \cite[Proposition 10.1.7 (3) p.~364]{HvNVW18}, this is equivalent to $P(x) \ov{\mathrm{def}}{=} \lim_{\lambda \to 0, \lambda >0} \lambda R(\lambda,-A)x$ exists for all $x \in X$. In this case, the map $P \co X \to X$ is a bounded projection on $\ker A$ along the subspace $\ovl{\Ran A}$, called ergodic projection of $A$. This means that
\begin{equation}
\label{Ran-ker-P}
\Ran P = \ker A
\quad \text{and} \quad
\ker P
=\ovl{\Ran A}.
\end{equation}

\begin{example}
\label{sectorial-Abel-ergodic}
\normalfont
By \cite[Proposition 10.1.9 p.~367]{HvNVW18}, every sectorial operator on a \textit{reflexive} Banach space $X$ is Abel-ergodic.
\end{example}

If $-A$ generates a bounded strongly continuous semigroup $(T_t)_{t \geq 0}$ of bounded operators on $X$, following \cite[Definition 4.3.3 p.~262]{ABHN11}, we say that $(T_t)_{t \geq 0}$ is Abel-ergodic if the sectorial operator $A$ is Abel-ergodic. The semigroup $(T_t)_{t \geq 0}$ is said to be mean-ergodic (or Ces\`aro ergodic) \cite[Definition 4.3.3 p.~262]{ABHN11} \cite[Definition 1.1.8 p.~9]{Eme07} if the Ces\`aro means admit limits in norm, that is, for any $x \in X$ the limit 
\begin{equation}
\label{def-de-P-lim}
P(x) 
\ov{\mathrm{def}}{=} \lim_{t \to \infty} \frac{1}{t} \int_{0}^{t} T_s x \d s
\end{equation}
exists. By \cite[Proposition 4.3.4 a) p.~263]{ABHN11}, this implies that $(T_t)_{t \geq 0}$ is Abel-ergodic and that $P \co X \to X$ coincides with the ergodic projection of $A$. In this case, we say that $P$ is the mean ergodic projection of $(T_t)_{t \geq 0}$. It is obvious that
\begin{equation}
\label{mean-et-Tt}
T_tP
=PT_t
=P, \quad t \geq 0.
\end{equation}
Conversely, according to \cite[Proposition 4.3.4 b) p.~263]{ABHN11}, if the strongly continuous semigroup $(T_t)_{t \geq 0}$ is \textit{bounded} and Abel-ergodic then $(T_t)_{t \geq 0}$ is mean-ergodic.


\begin{example}
\label{bounded-ergodic}
\normalfont
By \cite[Corollary 4.3.5 p.~263]{ABHN11}, every \textit{bounded} strongly continuous semigroup $(T_t)_{t \geq 0}$ on a \textit{reflexive} Banach space $X$ is mean-ergodic.
\end{example}

\begin{example}
\normalfont
According to \cite[Proposition 3.1.4 p.~120]{Eme07}, any one-parameter Markov semigroup $(T_t)_{t \geq 0}$ of operators acting on an $\L^1$-space possessing a strictly positive invariant density is mean ergodic.
\end{example}

Suppose that $X$ is reflexive. If $(T_t)_{t \geq 0}$ is a strongly continuous semigroup of bounded operators on $X$, the adjoint semigroup $(T_t^*)_{t \geq 0}$ is strongly continuous on the Banach space $X^*$ by \cite[Proposition p.~44]{EnN00}. According to \cite[Theorem 4.9 p.~33]{Gol85}, its infinitesimal generator is $-A^*$, where $-A$ is the infinitesimal generator of $(T_t)_{t \geq 0}$. Moreover, if $(T_t)_{t \geq 0}$ is mean-ergodic, it is easy to check with \eqref{def-de-P-lim} that the adjoint $P^* \co X^* \to X^*$ is the mean ergodic projection of this semigroup and that we have
\begin{equation}
\label{Ran-ker-P*}
\Ran P^*= \ker A^*
\quad \text{and} \quad 
\ker P^*
=\ovl{\Ran A^*} .
\end{equation}


\section{Spectral gaps and uniformly exponentially stable semigroups}
\label{sec-gaps}

We introduce the following definition.

\begin{defi}
\label{defi-spectral-gap}
Let $X$ be a Banach space. Consider an Abel-ergodic sectorial operator $A$ acting on $X$. We say that $A$ admits a spectral gap if $0 \in \rho(A_0)$, where $A_0$ is the part of $A$ on $\ovl{\Ran A}$. 
\end{defi}

We will describe equivalent properties in Proposition \ref{prop-reduced-exp-stability-closed-range} under reasonable assumptions. We start by recalling some background on uniformly exponentially stable semigroups and spectral bounds. Recall that a strongly continuous semigroup $\cal{T}=(T_t)_{t \geq 0}$ of operators acting on a Banach space $X$ with infinitesimal generator $-A$ is uniformly exponentially stable \cite[p.~298]{EnN00} if the exponential growth bound 
\begin{equation}
\label{exponential-growth-bound}
\omega(\cal{T})
\ov{\mathrm{def}}{=} \inf \big\{ \omega \in \R : \text{there exists } M\geq 0 \text{ such that } \norm{T_t}_{X \to X}
\leq M \e^{\omega t} \text{ if } t \geq 0 \big\}
\end{equation}
is strictly negative, i.e., there exists $M \geq 0$ and $\alpha > 0$ such that
\begin{equation}
\label{uniformly-exponentially-stable}
\norm{T_t}_{X \to X}
\leq M \e^{-\alpha t}, \quad t \geq 0.
\end{equation}
According to \cite[Proposition 1.7 p.~299]{EnN00}, this property is equivalent to the condition $\lim_{t \to \infty} \norm{T_t}_{X \to X}=0$ or the existence of some $t_0 > 0$ such that $\norm{T_{t_0}}_{X \to X} < 1$. It is worth noting that we have the following inequality \cite[p.~346]{ABHN11} between the spectral bound 
\begin{equation}
\label{spectral-bound}
s(-A) 
\ov{\mathrm{def}}{=} \sup \{\Re \lambda : \lambda \in \sigma(-A)\}
= -\inf \{\Re \lambda : \lambda \in \sigma(A)\}
\end{equation}
of the generator $-A$ and the exponential growth bound:
\begin{equation*}
s(-A) 
\leq \omega(\cal{T}).
\end{equation*}
We warn the reader that this inequality may be strict for a strongly continuous semigroup acting on a Hilbert space, see \cite[Example 5.1.10 p.~347]{ABHN11}, which describes a semigroup with $\omega(\cal{T})=1$ and $s(-A)=0$. However, if the semigroup is in addition holomorphic then $
\omega(\cal{T}) =s(-A)$ by \cite[Theorem 5.1.12 p.~350]{ABHN11}.


If $X$ is reflexive and if $(T_t)_{t \geq 0}$ is a strongly continuous semigroup of operators on $X$ with infinitesimal generator $-A$, recall that by \cite[Lemma 4.4 p.~338]{EnN00} the subspace $\ovl{\Ran A}$ is invariant under $(T_t)_{t \geq 0}$ and that the semigroup $\cal{T}_0 \ov{\mathrm{def}}{=} (T_t|_{\ovl{\Ran A}})_{t \geq 0}$ is a strongly continuous semigroup on the Banach space $\ovl{\Ran A}$. According to \cite[Corollary p.~61]{EnN00}, its infinitesimal generator is the part of the operator $-A$ in $\ovl{\Ran A}$.

\begin{prop}
\label{prop-reduced-exp-stability-closed-range}
Let $X$ be a reflexive Banach space and let $\cal{T}=(T_t)_{t \geq 0}$ be a bounded holomorphic semigroup on $X$ with infinitesimal generator $-A$. Consider the restricted semigroup $\cal{T}_0=(T_t|_{\ovl{\Ran A}})_{t \geq 0}$ with infinitesimal generator $-A_0$. The following assertions are equivalent.
\begin{enumerate}
\item $A$ admits a spectral gap, i.e., $0 \in \rho(A_0)$.
\item The semigroup $\cal{T}_0$ is uniformly exponentially stable.
\item The spectral bound of $-A_0$ is strictly negative: $s(-A_0) < 0$.
\item The subspace $\Ran A$ is closed in $X$.
\end{enumerate}
The equivalence between assertions~1 and~4  holds for every Abel-ergodic sectorial operator, without any semigroup assumption.
\end{prop}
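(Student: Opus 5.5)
I plan to establish the chain $1 \Leftrightarrow 4$ in full generality for an Abel-ergodic sectorial $A$, and then $1 \Rightarrow 2 \Rightarrow 3 \Rightarrow 1$ using the semigroup hypotheses. Throughout, write $X_0 = \ovl{\Ran A}$, so that $X = \ker A \oplus X_0$ with ergodic projection $P$. The first observation is that $X_0$ is invariant under resolvents $R(\lambda,A)$ for $\lambda \in \rho(A)$, since $A$ commutes with its resolvent. Hence $A_0$ is sectorial on $X_0$ with $\ker A_0 = \ker A \cap X_0 = \{0\}$. Moreover $\Ran A_0 = \Ran A$. Indeed, if $x \in \dom A$, then $Px \in \ker A \subset \dom A$, so $x - Px \in \dom A \cap X_0$ and $A(x-Px) = Ax$ with $Ax \in X_0$. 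This gives $\Ran A \subset \Ran A_0$, and the reverse inclusion is trivial.

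For $1 \Rightarrow 4$: if $0 \in \rho(A_0)$, then $A_0$ is surjective onto $X_0$, so $\Ran A = \Ran A_0 = X_0$ is closed. For $4 \Rightarrow 1$: if $\Ran A$ is closed, then $\Ran A_0 = \Ran A = X_0$. Thus $A_0 \co \dom A_0 \subset X_0 \to X_0$ is a closed, injective and surjective operator. Its inverse is a closed everywhere-defined operator on the Banach space $X_0$, hence bounded by the closed graph theorem, and so $0 \in \rho(A_0)$. This part uses only Abel-ergodicity and sectoriality (for closedness and invariance), with no semigroup assumption, as claimed.

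For the semigroup equivalences, recall that $\cal{T}_0$ is strongly continuous on $X_0$ with generator $-A_0$. I also need that $\cal{T}_0$ is still bounded holomorphic; this follows because the holomorphic extension $z \mapsto T_z$ leaves $X_0$ invariant, being given by resolvent integrals (or by density/limits from $T_z A x = A T_z x$). Then $\omega(\cal{T}_0) = s(-A_0)$ by \cite[Theorem 5.1.12 p.~350]{ABHN11}, which gives $2 \Leftrightarrow 3$ immediately. For $2 \Rightarrow 1$, uniform exponential stability places $0$ in the resolvent set of $-A_0$, since $s(-A_0) \leq \omega(\cal{T}_0) < 0$, and $\rho(-A_0) = -\rho(A_0)$.

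The step I expect to be the main obstacle is $1 \Rightarrow 3$. The task is to show that $0 \in \rho(A_0)$ forces the spectrum of $A_0$ to stay at positive distance from the imaginary axis, not merely to avoid $0$. I would first invoke the sectoriality of $A_0$ with some angle $\theta < \frac{\pi}{2}$, which holds since the semigroup is bounded holomorphic. Then $\sigma(A_0) \subset \ovl{\Sigma_\theta}$. Since $\rho(A_0)$ is open and contains $0$, there is $r > 0$ with the disc $\{|z| < r\} \subset \rho(A_0)$. Consequently $\sigma(A_0) \subset \ovl{\Sigma_\theta} \cap \{|z| \geq r\}$, and every such $z$ satisfies $\Re z \geq r \cos\theta > 0$. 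Hence $s(-A_0) \leq -r\cos\theta < 0$. The delicate point is the claim that $A_0$ inherits sectoriality with the same angle as $A$. I would verify it by noting that $R(z,A_0) = R(z,A)|_{X_0}$, which yields the resolvent bounds directly.
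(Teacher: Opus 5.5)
Your proof is correct and follows essentially the same route as the paper. You prove $1 \Leftrightarrow 4$ via $\Ran A_0=\Ran A$, injectivity of $A_0$ from the ergodic decomposition, and the closed graph theorem; $2 \Leftrightarrow 3$ via $\omega(\cal{T}_0)=s(-A_0)$ for holomorphic semigroups; and $1 \Rightarrow 3$ via a disc around $0$ in $\rho(A_0)$ intersected with a sector of angle $\theta<\frac{\pi}{2}$, giving $\Re\lambda \geq r\cos\theta$. The differences are cosmetic: you establish $\Ran A_0=\Ran A$ once up front, and you get the sectoriality of $A_0$ from $R(z,A_0)=R(z,A)|_{X_0}$ rather than from the bounded analyticity of $\cal{T}_0$, which is equally valid.
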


\begin{proof}
2 $\iff$ 3: Since $\cal{T}$ is analytic, the restricted semigroup $\cal{T}_0$ is also analytic. Hence, by \cite[Theorem 5.1.12 p.~350]{ABHN11}, the spectral bound of its generator coincides with its exponential growth bound: $\omega(\cal{T}_0)=s(-A_0)$. This proves the equivalence between (2) and (3).

3 $\Rightarrow$ 1: Assume $s(-A_0) < 0$. By \eqref{spectral-bound} applied to $A_0$, we infer that $\inf \{\Re \lambda : \lambda \in \sigma(A_0)\} > 0$, and therefore $0 \in \rho(A_0)$. 

1 $\Rightarrow$ 3: Suppose that $0 \in \rho(A_0)$. Since $\rho(A_0)$ is open, there exists $\epsi > 0$ such that
\[
\sigma(A_0) \cap \{z \in \mathbb{C} : |z| < \epsi\}
=\emptyset.
\]
Since $\cal{T}_0$ is bounded analytic, $A_0$ is sectorial of angle strictly smaller than $\frac{\pi}{2}$. In particular, there exists an angle $\theta \in (0,\frac{\pi}{2})$ such that $\sigma(A_0) \subset \ovl{\Sigma_\theta}$. We get
\[
\Re \lambda \geq \epsi \cos \theta,
\quad \lambda \in \sigma(A_0).
\]
By \eqref{spectral-bound} applied to $A_0$, we deduce that $
s(-A_0)
\leq
-\epsi \cos \theta
<0$. 

1 $\Rightarrow$ 4: Assume first that $0 \in \rho(A_0)$. Then $A_0 \co \dom A_0 \to \ovl{\Ran A}$ is surjective. Consequently, we have $\ovl{\Ran A}=\Ran A_0 \subset \Ran A \subset \ovl{\Ran A}$. Hence $\Ran A=\ovl{\Ran A}$. So $\Ran A$ is closed.

4 $\Rightarrow$  1: Conversely, assume that $\Ran A$ is closed. Then $X_0=\ovl{\Ran A}=\Ran A$. Note that $A_0$ is injective. Indeed, by construction $
\ker A_0
=
\ker A \cap \ovl{\Ran A}
=
\{0\}$ by the direct sum decomposition $
X=\ker A \oplus \ovl{\Ran A}$. We next show that $A_0 \co \dom A_0 \to \ovl{\Ran A}$ is surjective. Let $y \in \ovl{\Ran A}$. Since $\ovl{\Ran A}=\Ran A$, there exists $x \in \dom A$ such that $
y=Ax$. Write $
x=x_1+x_0$ with $x_1 \in \ker A$ and $x_0 \in \ovl{\Ran A}$. Since $x_1 \in \ker A \subset \dom A$ and $x \in \dom A$, the element $x_0=x-x_1$ belongs to $\dom A \cap X_0=\dom A_0$. Moreover, we have
\[
A_0x_0
=Ax_0
=A(x-x_1)
=Ax=y.
\]
Thus $\Ran A_0=\ovl{\Ran A}$. We have proved that $A_0 \co \dom A_0 \subset \ovl{\Ran A} \to \ovl{\Ran A}$ is bijective. Since $A_0$ is closed, its inverse is bounded by the closed graph theorem. Therefore $0 \in \rho(A_0)$.
\end{proof}

The following result generalizes \cite[Exercise 4.5.4 p.~57]{Are05} and allows us to obtain a spectral gap by interpolation.

\begin{prop}
\label{prop-stability-interpolation}
Let $(X_0,X_1)$ be a compatible couple of Banach spaces. For $0 < \theta < 1$, set $X_\theta \ov{\mathrm{def}}{=} (X_0,X_1)_\theta$ for the complex interpolation space. Let $(T_0(t) \co X_0 \to X_0)_{t \geq 0}$ and $(T_1(t) \co X_1 \to X_1)_{t \geq 0}$ be two consistent strongly
continuous semigroups on $X_0$ and $X_1$ of bounded linear operators, in the sense that $T_0(t)x=T_1(t)x$ for any $x \in X_0 \cap X_1$ and any $t \geq 0$. Assume that there exist constants $M_0,M_1 \geq 0$ and $\alpha > 0$ such that
\[
\norm{T_0(t)}_{X_0 \to X_0} \leq M_0,
\quad
\text{and}
\quad
\norm{T_1(t)}_{X_1 \to X_1} \leq M_1 \e^{-\alpha t},
\quad t \geq 0.
\]
Then, for every $0\leq \theta \leq 1$, the operators $T_0(t)$ and $T_1(t)$ induce a bounded operator $T_\theta(t) \co X_\theta \to X_\theta$ and
\[
\norm{T_\theta(t)}_{X_\theta \to X_\theta}
\leq
M_0^{1-\theta} M_1^\theta \e^{-\theta \alpha t},
\quad t \geq 0.
\]
In particular, if $0<\theta\leq 1$, then the semigroup $(T_\theta(t))_{t \geq 0}$ is uniformly exponentially stable on $X_\theta$.
\end{prop}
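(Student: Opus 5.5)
The plan is to prove this by the Riesz--Thorin/Stein complex interpolation method applied to an analytic family of operators. The semigroup itself will not be used directly.

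\emph{Bounded operators on the interpolation spaces.} For fixed $t \geq 0$, consistency means that $T_0(t)$ and $T_1(t)$ agree on $X_0 \cap X_1$. Hence they define a single linear map on $X_0 + X_1$ by $T(t)(x_0+x_1) \ov{\mathrm{def}}{=} T_0(t)x_0 + T_1(t)x_1$. This is well defined: if $x_0+x_1 = x_0'+x_1'$, then $x_0-x_0' = x_1'-x_1 \in X_0\cap X_1$, and consistency gives the same value. The map is bounded $X_j \to X_j$ with norm at most $M_0$ for $j=0$ and $M_1\e^{-\alpha t}$ for $j=1$. The complex interpolation theorem \cite{BeL76} (Theorem 4.1.2, exactness of exponent $\theta$) then yields $T_\theta(t) \co X_\theta \to X_\theta$ with
\[
\norm{T_\theta(t)}_{X_\theta\to X_\theta} \leq M_0^{1-\theta}\big(M_1 \e^{-\alpha t}\big)^\theta = M_0^{1-\theta}M_1^\theta \e^{-\theta\alpha t}.
\]
This settles the bound for $0<\theta<1$. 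The endpoints $\theta=0,1$ are just the hypotheses. I would briefly recall the argument: given $x\in X_\theta$ and an admissible $f \in \mathcal{F}(X_0,X_1)$ with $f(\theta)=x$, the function $T(t)\circ f$ is again admissible, and its boundary norms are controlled by the two constants.

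\emph{Semigroup property and strong continuity.} The family $(T_\theta(t))_{t\geq 0}$ satisfies the semigroup law, because it does so on $X_0+X_1$ by consistency; in particular $T_0(t)x_0 \in X_0$ is acted on by $T_0(s)$. It also satisfies $T_\theta(0)=\Id$. Strong continuity on $X_\theta$ is the only step needing care, and it is only required for the phrase ``uniformly exponentially stable semigroup''. I would first verify it on the dense subspace $X_0\cap X_1$ of $X_\theta$ using the interpolation inequality
\[
\norm{y}_{X_\theta}\lesssim \norm{y}_{X_0}^{1-\theta}\norm{y}_{X_1}^\theta, \quad y \in X_0\cap X_1,
\]
applied to $y=T(t)x-x$. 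Both factors tend to $0$ as $t\to 0^+$ by the strong continuity of $T_0$ and $T_1$. The uniform bound $\sup_{0\leq t\leq 1}\norm{T_\theta(t)}<\infty$ from the previous step then extends convergence to all of $X_\theta$ by an $\epsi/3$ argument.

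\emph{Conclusion.} For $0<\theta\leq 1$ the estimate shows $\norm{T_\theta(t)}\leq M\e^{-\theta\alpha t}$ with $\theta\alpha>0$. This is exactly \eqref{uniformly-exponentially-stable}, so the semigroup is uniformly exponentially stable.

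I expect the main obstacle to be bookkeeping rather than substance: checking that the interpolated operators form a strongly continuous semigroup. The norm estimate itself is immediate from exact interpolation of exponent $\theta$.
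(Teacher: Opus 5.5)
Your proposal is correct and follows essentially the same route as the paper: glue $T_0(t)$ and $T_1(t)$ by consistency and apply the exact complex interpolation theorem of exponent $\theta$ to get $\|T_\theta(t)\|_{X_\theta\to X_\theta}\leq M_0^{1-\theta}(M_1\mathrm{e}^{-\alpha t})^\theta$. The paper dismisses strong continuity on $X_\theta$ as ``a standard argument''; your version fills it in correctly, using density of $X_0\cap X_1$ in $X_\theta$ for $0<\theta<1$, the inequality $\|y\|_{X_\theta}\leq\|y\|_{X_0}^{1-\theta}\|y\|_{X_1}^{\theta}$, and the uniform bound on $[0,1]$.
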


\begin{proof}
The cases $\theta=0$ and $\theta=1$ are precisely the two assumptions. Let $0 < \theta < 1$.

Fix $t\geq 0$. By the consistency assumption, $T_0(t)$ and $T_1(t)$ define the same operator on $X_0 \cap X_1$. Hence, by the complex interpolation theorem for linear operators, they induce a bounded operator $T_\theta(t) \co X_\theta \to X_\theta$ satisfying
\[
\norm{T_\theta(t)}_{X_\theta \to X_\theta}
\leq
\norm{T_0(t)}_{X_0 \to X_0}^{1-\theta}
\norm{T_1(t)}_{X_1 \to X_1}^\theta .
\]
Using the assumptions, we obtain
\[
\norm{T_\theta(t)}_{X_\theta \to X_\theta}
\leq
M_0^{1-\theta}
\bigl(M_1 \e^{-\alpha t}\bigr)^\theta
=
M_0^{1-\theta} M_1^\theta \e^{-\theta \alpha t}.
\]
This proves the desired estimate. Since $\theta \alpha >0$ when $0 < \theta \leq 1$, the last estimate gives uniform exponential stability on $X_\theta$. Note that the semigroup $(T_\theta(t))_{t \geq 0}$ is strongly continuous on $X_\theta$ by a standard argument.
\end{proof}

Let $(T_t)_{t \geq 0}$ be a bounded strongly continuous semigroup with generator $-A$ on a Banach space $X$. By \cite[Theorem 4.10 p.~342]{EnN00} $(T_t)_{t \geq 0}$ is uniformly mean ergodic, i.e., the Ces\`aro means of \eqref{def-de-P-lim} converge in norm to $P$, if and only if the subspace $\Ran A$ is closed in $X$. So the conditions of Proposition \ref{prop-reduced-exp-stability-closed-range} are equivalent to the uniformly mean ergodicity of the semigroup.

In the reflexive case, more than \eqref{def-de-P-lim} can be said about the ergodic projection $P$.

\begin{prop}
\label{prop-strong-limit-projection}
Consider a strongly continuous bounded semigroup $(T_t)_{t \geq 0}$ on a reflexive Banach space $X$ generated by $-A$ with mean ergodic projection $P$. Assume that the semigroup $\cal{T}_0 \ov{\mathrm{def}}{=} \big(T_t|_{\ovl{\Ran A}}\big)_{t \geq 0}$ is uniformly exponentially stable. Then for all $f \in X$ we have 
$\lim_{t \to \infty} T_t(f)
=P(f)$. 
\end{prop}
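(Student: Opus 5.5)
We use the decomposition $X=\ker A \oplus \ovl{\Ran A}$ and treat the two components of $f$ separately. By Example \ref{bounded-ergodic}, the semigroup is mean-ergodic on the reflexive space $X$. Its mean ergodic projection $P$ is a bounded projection with $\Ran P=\ker A$ and $\ker P=\ovl{\Ran A}$, by \eqref{Ran-ker-P}. So for $f \in X$ we write
\[
f = P(f) + (f - P(f)), \qquad P(f) \in \ker A, \quad f-P(f) \in \ovl{\Ran A}.
\]

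For the first component, \eqref{mean-et-Tt} gives $T_tP=P$ for every $t \geq 0$. Hence $T_t(P f)=Pf$ for all $t$. If one prefers to avoid \eqref{mean-et-Tt}: an element $x \in \ker A$ satisfies $\frac{\d}{\d t}T_t x=-T_tAx=0$, so $T_tx=x$.

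For the second component, $\ovl{\Ran A}$ is invariant under $(T_t)_{t\geq 0}$ by \cite[Lemma 4.4 p.~338]{EnN00}, and on this space $T_t$ acts as the restricted semigroup $\cal{T}_0$. Uniform exponential stability \eqref{uniformly-exponentially-stable} of $\cal{T}_0$ gives constants $M\geq 0$ and $\alpha>0$ such that
\[
\norm{T_t(f-Pf)}_X \leq M\e^{-\alpha t}\norm{f-Pf}_X \leq M\e^{-\alpha t}\big(1+\norm{P}\big)\norm{f}_X .
\]
Combining the two components yields $\norm{T_tf-Pf}_X \leq M(1+\norm{P})\e^{-\alpha t}\norm{f}_X \to 0$. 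This is in fact convergence in operator norm, $\norm{T_t - P}_{X\to X} \to 0$, which is stronger than the strong convergence claimed.

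There is no real obstacle here. The only points to check are that the restriction of $T_t$ to $\ovl{\Ran A}$ is exactly the semigroup $\cal{T}_0$ to which the hypothesis applies, and that $P$ kills exactly $\ovl{\Ran A}$. Both facts are already recorded in the preliminaries.
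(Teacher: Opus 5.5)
Your proof is correct and is essentially the paper's argument. The paper also writes $T_t f - P f = T_t(\Id-P)f$ using $T_tP=P$, and bounds this by $M\e^{-\omega t}\norm{\Id-P}\norm{f}$ via the uniform exponential stability on $\ovl{\Ran A}$; your observation that this in fact gives $\norm{T_t-P}_{X\to X}\to 0$ is a correct strengthening, though the statement does not need it.
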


\begin{proof}
By definition, there exist constants $M \geq 0$ and $\omega > 0$ such that 
\begin{equation}
\label{uniformly-exponentially-stable-bis}
\norm{T_t|_{\ovl{\Ran A}}}_{\ovl{\Ran A} \to \ovl{\Ran A}}
\ov{\eqref{uniformly-exponentially-stable}}{\leq} M \e^{-\omega t}, \quad t \geq 0
\end{equation}
Note that $\Id-P$ is the projection onto $\ovl{\Ran A}$ along $\ker A$. For any $f \in X$, we obtain
\begin{align*}
\MoveEqLeft
\bnorm{T_t(f)-P(f)}_{X}
\ov{\eqref{mean-et-Tt}}{=}\bnorm{T_t(\Id-P)(f)}_{X}
\leq \norm{T_t|_{\ovl{\Ran A}}}_{\ovl{\Ran A} \to \ovl{\Ran A}} \bnorm{(\Id-P)}_{X \to \ovl{\Ran A}} \norm{f}_{X} \\
&\ov{\eqref{uniformly-exponentially-stable-bis}}{\leq} M\norm{\Id-P}_{X \to X} \e^{-\omega t} \norm{f}_{X}
\xra[t \to \infty]{} 0.      
\end{align*}
We conclude that $T_t(f) \to P(f)$ as $t \to \infty$. 
\end{proof}

Now, we present some examples which satisfy the condition $0 \in \rho(A_0)$.

\paragraph{Operators acting on Hilbert spaces}
Assume that $H$ is a Hilbert space and that $A$ is a positive selfadjoint operator on $H$. Set $
H_0 \ov{\mathrm{def}}{=} \ovl{\Ran A}$. Then $H_0=(\ker A)^\perp$ by \eqref{lien-ker-image} combined with the results \cite[Proposition 2.6.6 p.~225]{Meg98} and \cite[Theorem 2.5.16 p.~216]{Meg98}. Let $A_0$ be the part of $A$ in $H_0$. Since $A$ is selfadjoint, the closed subspace $H_0$ reduces $A$ (in the sense of \cite[Definition 9.8.1 p.~251]{deO09}),
and $A_0$ is the restriction of $A$ to $H_0$. We define the spectral gap by
\begin{equation}
\label{eq-reduced-spectral-gap}
\gap(A)
\ov{\mathrm{def}}{=}
\inf \left\{
\frac{\la Af,f\ra}{\norm{f}^2_H}
:
f \in \dom A \cap (\ker A)^\perp,\ f \neq 0
\right\}.
\end{equation}

\begin{prop}
\label{prop-gap-and-s}
Let $H$ be a Hilbert space. Consider a positive selfadjoint operator $A$ acting on $H$. We have $
\gap(A)=\inf \sigma(A_0)$. Consequently, we have
\[
\gap(A) > 0
\quad \Longleftrightarrow \quad
s(-A_0) < 0
\quad \Longleftrightarrow \quad
0 \in \rho(A_0).
\]
\end{prop}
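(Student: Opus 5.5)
The plan is to use the spectral theorem for the positive selfadjoint operator $A_0$ on the Hilbert space $H_0=\ovl{\Ran A}=(\ker A)^\perp$, which by the preceding discussion is the restriction of $A$ to the reducing subspace $H_0$.

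\emph{Step 1: identify the Rayleigh quotient with the spectral infimum.} Since $H_0$ reduces $A$, we have $\dom A \cap (\ker A)^\perp = \dom A_0$, and $\la Af,f\ra=\la A_0f,f\ra$ for $f$ in this domain. Thus $\gap(A)$ is the infimum of the numerical range of $A_0$. For a selfadjoint operator this equals $\inf \sigma(A_0)$. Write $\lambda_0=\inf\sigma(A_0)$ and let $E$ be the spectral measure of $A_0$.

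\begin{itemize}
\item For the inequality $\geq$, note that $\la A_0f,f\ra=\int_{\sigma(A_0)}\lambda \d\la E(\lambda)f,f\ra \geq \lambda_0\norm{f}^2$.
\item For the inequality $\leq$, given $\epsi>0$ the projection $E([\lambda_0,\lambda_0+\epsi])$ is nonzero because $\lambda_0\in\sigma(A_0)$. A unit vector $f$ in its range lies in $\dom A_0$ and satisfies $\la A_0f,f\ra\leq\lambda_0+\epsi$.
\end{itemize}

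\emph{Caveat.} This step needs $H_0\neq\{0\}$. If $A=0$, then $\sigma(A_0)=\emptyset$, and with the conventions $\inf\emptyset=+\infty$ and $0\in\rho(A_0)$ trivially, everything holds vacuously. I would mention this degenerate case briefly.

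\emph{Step 2: the equivalences.} Since $A_0\geq 0$ is selfadjoint, $\sigma(A_0)\subset[0,\infty)$ is real. Hence $s(-A_0)=-\inf\sigma(A_0)=-\gap(A)$ by \eqref{spectral-bound}, which gives $\gap(A)>0 \iff s(-A_0)<0$.

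Next, $\sigma(A_0)$ is closed and contained in $[0,\infty)$. So $0\in\rho(A_0)$ if and only if $0\notin\sigma(A_0)$, if and only if $\inf\sigma(A_0)>0$. This uses closedness: if the infimum were $0$, then $0$ would belong to the closed set $\sigma(A_0)$.

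Alternatively, the equivalence between $s(-A_0)<0$ and $0\in\rho(A_0)$ follows from Proposition~\ref{prop-reduced-exp-stability-closed-range} together with Example~\ref{Example-analytic-positive-selfadjoint} and reflexivity. The direct argument is shorter, though.

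\emph{Main obstacle.} The only delicate point is Step 1: justifying that $\dom A\cap(\ker A)^\perp=\dom A_0$ and that $A_0$ is selfadjoint on $H_0$. Both follow from $H_0$ reducing $A$, since $A$ commutes with the orthogonal projection onto $\ker A$. The remaining identification of the numerical range infimum with the spectral infimum is standard spectral calculus.
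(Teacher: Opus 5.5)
Your proposal is correct and follows essentially the same route as the paper. The paper also identifies $\dom A_0=\dom A\cap(\ker A)^\perp$, equates $\gap(A)$ with $\inf\sigma(A_0)$ via the Rayleigh-quotient characterization of the bottom of the spectrum, and reads off $s(-A_0)=-\gap(A)$; the only differences are that the paper cites the Rayleigh-quotient fact where you prove it with the spectral measure, and that you spell out the closedness argument for $0\in\rho(A_0)\iff\inf\sigma(A_0)>0$ and the degenerate case $A=0$, which the paper leaves implicit.
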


\begin{proof}
Note that $\dom A_0 = \dom A \cap \ovl{\Ran A} = \dom A \cap (\ker A)^\perp$. Using the Rayleigh quotient \cite[Proposition 15 p.~296]{Bou23} of the selfadjoint operator $A_0$, we obtain
\begin{align*}
\MoveEqLeft
s(-A_0)
\ov{\eqref{spectral-bound}}{=} -\inf \sigma(A_0)
=-\inf \left\{
\frac{\la A_0 f,f\ra}{\norm{f}_H^2}
:
f \in \dom A_0,\ f \neq 0
\right\}
\ov{\eqref{eq-reduced-spectral-gap}}{=} -\gap(A).         
\end{align*}
\end{proof}

\begin{prop}
\label{prop-Gap-equivalente}
Let $A$ be a positive selfadjoint operator acting on a Hilbert space $H$. Then the following assertions are equivalent.
\begin{enumerate}
\item $\gap(A) > 0$.
\item There exists $c > 0$ such that $\sigma(A) \subset \{0\} \cup[c,\infty)$.
\end{enumerate}
\end{prop}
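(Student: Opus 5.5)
The plan is to combine Proposition \ref{prop-gap-and-s} with an elementary spectral decomposition of $A$ along $H=\ker A\oplus H_0$, where $H_0=(\ker A)^\perp=\ovl{\Ran A}$. Since $H_0$ reduces $A$ and $A_0$ is the restriction of $A$ to $H_0$, I will use
\[
\sigma(A)=\sigma(A|_{\ker A})\cup\sigma(A_0),
\qquad
\sigma(A|_{\ker A})\subset\{0\},
\]
with equality $\sigma(A|_{\ker A})=\{0\}$ when $\ker A\neq\{0\}$, and $\sigma(A|_{\ker A})=\emptyset$ when $\ker A=\{0\}$. This holds because, for a reduced operator, $\lambda\in\rho(A)$ if and only if $\lambda$ lies in the resolvent sets of both restrictions; the resolvent of $A$ is then the direct sum of the two resolvents.

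\emph{1 $\Rightarrow$ 2.} By Proposition \ref{prop-gap-and-s}, $\inf\sigma(A_0)=\gap(A)=:c>0$. Since $A_0$ is selfadjoint, $\sigma(A_0)\subset[c,\infty)$. The decomposition above then gives $\sigma(A)\subset\{0\}\cup[c,\infty)$.

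\emph{2 $\Rightarrow$ 1.} Assume $\sigma(A)\subset\{0\}\cup[c,\infty)$. Then $\sigma(A_0)\subset\sigma(A)\subset\{0\}\cup[c,\infty)$. The key point is to rule out $0\in\sigma(A_0)$. Since $\sigma(A)\cap(-c,c)\subset\{0\}$, the point $0$ is either in $\rho(A)$ or an isolated point of the spectrum of the selfadjoint operator $A$, so it is an eigenvalue. The spectral projection $E(\{0\})$ is then the orthogonal projection onto $\ker A$. Consequently $A_0$, living on $\Ran(\Id-E(\{0\}))$, has spectrum $\sigma(A)\setminus\{0\}\subset[c,\infty)$, using the spectral theorem: $\sigma(A_0)$ is the support of the spectral measure restricted to $H_0$. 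Hence $\inf\sigma(A_0)\geq c>0$, and Proposition \ref{prop-gap-and-s} yields $\gap(A)\geq c>0$.

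Alternatively, I could avoid spectral measures. The function $z\mapsto (z-\lambda)^{-1}$ is holomorphic near $0$ for $\lambda\in\rho(A)$. Using the Riesz projection $\frac{1}{2\pi\i}\oint_{|\lambda|=c/2}R(\lambda,A)\,\d\lambda$, which for a selfadjoint operator is the orthogonal projection onto $\ker A$, one sees that $A_0$ on its complement has no spectrum in $\{|\lambda|<c/2\}$.

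The main obstacle is this identification of the isolated point $0$ with a spectral projection onto $\ker A$, in particular checking that the range of the Riesz projection is exactly $\ker A$ rather than a larger generalized eigenspace. For a selfadjoint operator this is standard, since the Riesz projection coincides with $E(\{0\})$ and $\Ran E(\{0\})=\ker A$. I will cite the spectral theorem for this step.
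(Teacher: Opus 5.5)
Your proof is correct and follows the paper's route: decompose $A=0_{\ker A}\oplus A_0$ on $H=\ker A\oplus H_0$, use Proposition \ref{prop-gap-and-s} to get $\gap(A)=\inf\sigma(A_0)$, and for the converse remove the spectral subspace $\Ran E(\{0\})=\ker A$. You justify that last step in more detail than the paper, which simply asserts it. One small fix: in $1\Rightarrow 2$ take a finite $c\in(0,\gap(A))$, as the paper does, rather than $c=\gap(A)$, because $\gap(A)=+\infty$ when $A=0$.
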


\begin{proof}
Set $H_0=\overline{\Ran A}$. As we said, we have $H_0=(\ker A)^\perp$. Moreover \(H_0\) reduces \(A\), and the part $A_0$ of \(A\) in $H_0$ is the restriction of \(A\) to \(H_0\). Hence $
A=0_{\ker A}\oplus A_0$ with respect to the orthogonal decomposition $H=\ker A\oplus H_0$. Therefore $\sigma(A) \subset \{0\}\cup \sigma(A_0)$. By Proposition \ref{prop-gap-and-s}, we have
\[
\gap(A) = \inf\sigma(A_0).
\]
If \(\gap(A)>0\), choose \(c\in(0,\gap(A))\). Then $\sigma(A_0) \subset [c,\infty)$,  and hence
\[
\sigma(A) \subset \{0\}\cup[c,\infty).
\]
Conversely, if there exists \(c>0\) such that $
\sigma(A)\subset\{0\}\cup[c,\infty)$, then the restriction to \(H_0=(\ker A)^\perp\) removes the spectral subspace corresponding to \(0\). Thus $\sigma(A_0)\subset[c,\infty)$ and consequently $\gap(A)=\inf\sigma(A_0)\geq c>0$. 
\end{proof}



\paragraph{Generators with compact resolvent}
Let $(T_t)_{t \geq 0}$ be a bounded strongly continuous semigroup with generator $-A$ on a Banach space $X$ such that the operator $-A$ has compact resolvent. According to \cite[Corollary 4.11 p.~344]{EnN00}, the semigroup $(T_t)_{t \geq 0}$ is uniformly mean ergodic and the mean ergodic projection $P \co X \to X$ has finite rank. We have  
\begin{equation}
\label{compact-resolvent}
0 \in \rho(A_0).
\end{equation}

\begin{example}
\normalfont
Let $\Omega$ be a finite measure space. By \cite[Theorem 6.6.6 p.~626]{Sim15} combined with \cite[Theorem 4.29 p.~119]{EnN00}, the generator $A$ on $\L^2(\Omega)$ of an ultracontractive semigroup, i.e., each $T_t$ induces a bounded operator $T_t \co \L^2(\Omega) \to \L^\infty(\Omega)$, has a compact resolvent. We refer to \cite{Arh24b} for a generalization.
\end{example}

\paragraph{Hypercontractive semigroups}
Let $\cal{M}$ be a von Neumann algebra equipped with a normalized normal finite faithful trace $\tau$. An $\L^p$-contractive semigroup $(T_t)_{t \geq 0}$ is a strongly continuous semigroup of contractions acting on the noncommutative $\L^2$-space $\L^2(\cal{M})$ such that $\norm{T_tf}_{\L^p(\cal{M})} \leq \norm{f}_{\L^p(\cal{M})}$ for any $f \in \cal{M}$ and any $p \in [1,\infty]$. Following \cite[Definition~2.3 p.~10]{JLZZ24}, we introduce the following definition. This definition is more restrictive than the one used in \cite[p.~618]{Sim15} and \cite[p.~371]{DGS92}, where the boundedness of the operator $T_{t_0} \co \L^2(\cal{M}) \to \L^4(\cal{M})$ is required.

\begin{defi}
Let $\cal{M}$ be a von Neumann algebra equipped with a normalized normal finite faithful trace $\tau$.  An $\L^p$-contractive semigroup $(T_t)_{t \geq 0}$ is called hypercontractive if there exists some $t_0 > 0$ such that $T_{t_0}$ induces a contractive operator from $\L^2(\cal{M})$ into the space $\L^4(\cal{M})$.
%
\end{defi}

A weak* continuous contraction $T \co \cal{M} \to \cal{M}$ is said to be selfadjoint \cite[p.~49]{JMX06} if $\tau(T(x)y^*)=\tau(xT(y)^*)$ for any $x, y \in \cal{M}$. By \cite[p.~49]{JMX06}, such an operator induces a contraction $T \co \L^p(\cal{M}) \to \L^p(\cal{M})$ for any $1 \leq p < \infty$, which is  selfadjoint if $p=2$.

A weak* continuous semigroup $(T_t)_{t \geq 0}$ of operators acting on $\cal{M}$ is said to be a Markov semigroup if each $T_t$ is a weak* continuous selfadjoint unital completely positive operator on $\cal{M}$. Such a semigroup induces a strongly continuous semigroup of contractions on the noncommutative $\L^p$-space $\L^p(\cal{M})$ for any $1 \leq p < \infty$, which is bounded holomorphic by \cite[Proposition 5.4 p.~51, Lemma 3.1 p.~26]{JMX06} if $p>1$. Moreover, for any element $x \in \cal{M}$, we have 
\begin{equation}
\label{trace-preserving}
\tau(T_t(x))
=\tau(T_t(x)1)
=\tau(xT_t(1)) 
= \tau(x).
\end{equation}
If $-A$ is the generator on $\L^2(\cal{M})$, then $A$ is a positive selfadjoint operator on the Hilbert space $\L^2(\cal{M})$ and each $T_t$ is selfadjoint on $\L^{2}(\cal{M})$. In particular, we have an $\L^p$-contractive semigroup. 

By \cite[Theorem 2.4]{KuN79}  (see also \cite[Corollary 1.27 p.~48]{Hel09}), such a semigroup is weak* mean ergodic and the corresponding projection onto the weak* closed fixed-point subalgebra $\{ x \in \cal{M} : T_t(x)=x \text{ for any } t \geq 0 \}$ is a conditional expectation $\E \co \cal{M} \to \cal{M}$. By \cite[Theorem 1.2]{KuN79}, it also satisfies the equalities
\begin{equation}
\label{E-Tt}
T_t\E
=\E T_t
=\E, \quad t \geq 0.
\end{equation}
Since $\E$ belongs to the closed convex hull of the set $\{T_t : t \geq 0\}$ in the point weak* topology, a standard argument using complex interpolation shows that the map $\E \co \L^\infty(\cal{M}) \to \L^\infty(\cal{M})$ admits a contractive $\L^p$-extension $\E_p \co \L^p(\cal{M}) \to \L^p(\cal{M})$ for any $1 \leq p \leq \infty$. We will use the classical notation  
$\L_0^p(\cal{M})
\ov{\mathrm{def}}{=} \ker \E_p$. 
If $-A_p$ is the infinitesimal generator of the strongly continuous semigroup $(T_{t})_{t \geq 0}$ on the Banach space $\L^p(\cal{M})$ then we have $\L_0^p(\cal{M})=\ovl{\Ran A_p}$ and the decomposition $\L^p(\cal{M})=\L_0^p(\cal{M}) \oplus \ker A_p$, where $1 \leq p < \infty$. In other words, $\ker A_p$ is contractively complemented by the contractive projection $\E_p \co \L^p(\cal{M}) \to \L^p(\cal{M})$ which is defined by interpolation by the two compatible contractive projections $\E \co \L^\infty(\cal{M}) \to \L^\infty(\cal{M})$ and $\E_1 \co \L^1(\cal{M}) \to \L^1(\cal{M})$. The space $\L_0^p(\cal M)=\ker\E_p$ is complemented by $Q_p\ov{\mathrm{def}}{=}\Id-\E_p$, with
$\norm{Q_p}_{\L^p(\cal M)\to\L^p(\cal M)}\leq2$.


%

Using a standard perturbation argument around the constant function $1$, we can prove the following quantitative spectral gap estimate. The argument refines \cite[Lemmas 5.1--5.2, pp.~22--23]{JLZZ24} by exploiting the fact that the Markov semigroup preserves selfadjointness and the trace. This yields the sharper spectral gap $\frac{\log 3}{2t_0}$ instead of the lower bound $\frac{1}{4t_0}$ stated in \cite[Lemma 5.2 p.~22]{JLZZ24}.

\begin{prop}
\label{prop-hypercontractivity-spectral-gap}
Let $(T_t)_{t \geq 0}$ be a Markov semigroup which is hypercontractive. Let $t_0 > 0$ be such that $
T_{t_0} \co \L^2(\cal M) \to \L^4(\cal M)$ is a contraction. Then $
\ker A_2=\mathbb{C}1$. Moreover, we have
\begin{equation}
\label{spectral-gap-hypercontractive}
\sigma(A_2)
\subset
\{0\}
\cup
\left[
\frac{\log 3}{2t_0},\infty
\right).
\end{equation}
In particular, we have $\gap(A_2)
\geq
\frac{\log 3}{2t_0}$.
\end{prop}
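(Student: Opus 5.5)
The plan is a perturbation argument around the constant function $1$, organised in three steps.

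\emph{Step 1: reduction to a contraction estimate.} Everything can be done on the Hilbert space $\L^2(\cal M)$, where $A_2$ is positive selfadjoint. Its spectrum therefore sits in $[0,\infty)$, and on $H_0=(\ker A_2)^\perp$ we have $\norm{T_t|_{H_0}}=\e^{-t\inf\sigma(A_0)}$. So both claims reduce to one estimate: for every $f\in\L^2(\cal M)$ with $\tau(f)=0$,
\[
\norm{T_{t_0}f}_2\leq 3^{-1/2}\norm{f}_2 .
\]
If this holds, then every $f\in\ker A_2$ satisfies $T_{t_0}f=f$, and so does $f-\tau(f)1$. The estimate then forces $f=\tau(f)1$, which gives $\ker A_2=\mathbb{C}1$. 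In addition, $H_0=\{f:\tau(f)=0\}$ is invariant under the semigroup by \eqref{trace-preserving}, and $\e^{-t_0\inf\sigma(A_0)}\leq 3^{-1/2}$. This yields $\inf\sigma(A_0)\geq \frac{\log 3}{2t_0}$, and \eqref{spectral-gap-hypercontractive} follows from Proposition \ref{prop-Gap-equivalente} together with Proposition \ref{prop-gap-and-s}.

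\emph{Step 2: reduction to selfadjoint elements.} Since each $T_t$ is $*$-preserving and trace preserving, I write $f=a+\i b$ with $a,b$ selfadjoint of zero trace. Then $T_{t_0}a$ and $T_{t_0}b$ are selfadjoint, so $\norm{T_{t_0}f}_2^2=\norm{T_{t_0}a}_2^2+\norm{T_{t_0}b}_2^2$ by the vanishing of cross terms, since $\tau(xy)\in\R$ for $x,y$ selfadjoint. It therefore suffices to treat $f=f^*$ with $\tau(f)=0$. By density and $\L^2$-continuity I may also take $f\in\cal M$.

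\emph{Step 3: the perturbation.} For real $\epsi$, hypercontractivity gives $\norm{1+\epsi T_{t_0}f}_4^4\leq\norm{1+\epsi f}_2^4$. Put $g=T_{t_0}f$, which is selfadjoint with $\tau(g)=0$. Because everything commutes with $1$, I can expand
\[
\tau\big((1+\epsi g)^4\big)=1+6\epsi^2\tau(g^2)+4\epsi^3\tau(g^3)+\epsi^4\tau(g^4),
\]
and
\[
\big(1+\epsi^2\tau(f^2)\big)^2=1+2\epsi^2\tau(f^2)+\epsi^4\tau(f^2)^2,
\]
the linear terms vanishing because the traces are zero. Comparing the $\epsi^2$ coefficients as $\epsi\to0$ gives $6\norm{g}_2^2\leq 2\norm{f}_2^2$, that is, $\norm{T_{t_0}f}_2\leq 3^{-1/2}\norm{f}_2$.

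The main obstacle is justifying this second-order comparison: one has to check that the cubic and quartic terms are $O(\epsi^3)$ uniformly. This holds because $f$ and $g$ are bounded. The key point to verify is the use of selfadjointness, which makes $\norm{1+\epsi g}_4^4=\tau((1+\epsi g)^4)$ exact, with no absolute values. This is precisely where the constant $3$ improves on the cruder bound in \cite{JLZZ24}.
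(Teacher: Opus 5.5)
Your proposal is correct and follows the paper's proof essentially step for step. The shared steps are the perturbation $1+\epsi x$ with the exact expansion of $\tau\big((1+\epsi T_{t_0}x)^4\big)$ for selfadjoint centred $x$, the splitting $x=a+\i b$ together with density, the identification of $\ker A_2$ through $x-\tau(x)1$, and the spectral theorem on the reducing subspace $\L^2_{\tau,0}(\cal M)$.

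The only cosmetic point concerns the final inclusion \eqref{spectral-gap-hypercontractive} with the explicit constant $\frac{\log 3}{2t_0}$. It is most directly read off from the reducing decomposition $\L^2(\cal M)=\mathbb{C}1\oplus\L^2_{\tau,0}(\cal M)$, which gives $\sigma(A_2)\subset\{0\}\cup\sigma(A_{2,0})$. The statement of Proposition \ref{prop-Gap-equivalente} on its own only yields some $c>0$.
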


\begin{proof}
We first prove an estimate for selfadjoint elements of trace zero. Let $x=x^* \in \cal M$ satisfy $\tau(x)=0$. Since the operator $T_{t_0}$ is unital, we have
\[
T_{t_0}(1+\varepsilon x)
=
1+\varepsilon T_{t_0}(x),
\qquad
\epsi \in \R.
\]
Moreover, $T_{t_0}(x)$ is selfadjoint since $T_{t_0}$ is positive. By the trace-preserving property \eqref{trace-preserving}, we also have $\tau\big(T_{t_0}(x)\big)=\tau(x)=0$. Since $T_{t_0} \co \L^2(\cal M) \to \L^4(\cal M)$ is contractive, for any $\varepsilon \in \R$ we obtain
\begin{equation}
\label{hypercontractive-epsilon}
\bnorm{1+\varepsilon T_{t_0}(x)}_{\L^4(\cal M)}^4
\leq
\bnorm{1+\varepsilon x}_{\L^2(\cal M)}^4.
\end{equation}
Since $T_{t_0}(x)$ is selfadjoint, the left-hand side satisfies
\begin{align*}
\MoveEqLeft
\bnorm{1+\varepsilon T_{t_0}(x)}_{\L^4(\cal M)}^4 
=
\tau\left(
\big(1+\varepsilon T_{t_0}(x)\big)^4
\right)\\
&=
1
+4\varepsilon\tau\big(T_{t_0}(x)\big)
+6\varepsilon^2\tau\big(T_{t_0}(x)^2\big)
+4\varepsilon^3\tau\big(T_{t_0}(x)^3\big)
+\varepsilon^4\tau\big(T_{t_0}(x)^4\big)\\
&=
1
+6\varepsilon^2
\bnorm{T_{t_0}(x)}_{\L^2(\cal M)}^2
+4\varepsilon^3\tau\big(T_{t_0}(x)^3\big)
+\varepsilon^4
\bnorm{T_{t_0}(x)}_{\L^4(\cal M)}^4.
\end{align*}
On the other hand, since $x=x^*$ and $\tau(x)=0$, we have
\[
\bnorm{1+\varepsilon x}_{\L^2(\cal M)}^2
=
\tau\left((1+\varepsilon x)^2\right)
=
1+\varepsilon^2\norm{x}_{\L^2(\cal M)}^2.
\]
Consequently, we have
\begin{equation}
\label{expansion-L2-hypercontractive}
\bnorm{1+\varepsilon x}_{\L^2(\cal M)}^4
=
1
+2\varepsilon^2\norm{x}_{\L^2(\cal M)}^2
+\varepsilon^4\norm{x}_{\L^2(\cal M)}^4.
\end{equation}
Combining \eqref{hypercontractive-epsilon} and
\eqref{expansion-L2-hypercontractive}, subtracting $1$, dividing by
$\varepsilon^2$ for $\varepsilon\neq0$, and letting
$\varepsilon\to0$, we obtain
\[
6\bnorm{T_{t_0}(x)}_{\L^2(\cal M)}^2
\leq
2\norm{x}_{\L^2(\cal M)}^2.
\]
Hence
\begin{equation}
\label{L2-contraction-centered-selfadjoint}
\bnorm{T_{t_0}(x)}_{\L^2(\cal M)}
\leq
\frac1{\sqrt{3}}
\norm{x}_{\L^2(\cal M)}
\end{equation}
for any selfadjoint element $x\in\cal M$ satisfying $\tau(x)=0$. Now, we extend \eqref{L2-contraction-centered-selfadjoint} to arbitrary elements of trace zero. Let $x\in\cal M$ satisfy $\tau(x)=0$ and write
\[
x=a+\i b,
\qquad
a\ov{\mathrm{def}}{=}\frac{x+x^*}{2},
\qquad
b\ov{\mathrm{def}}{=}\frac{x-x^*}{2\i}.
\]
Then $a=a^*$, $b=b^*$ and $
\tau(a)=\tau(b)=0$. Since $\tau$ is tracial, we have
\begin{align*}
\MoveEqLeft
\norm{x}_{\L^2(\cal M)}^2
=
\tau(x^*x)
=\tau\big((a-\i b)(a+\i b)\big)
=
\tau(a^2)+\tau(b^2)+\i\tau(ab-ba)
=
\norm{a}_{\L^2(\cal M)}^2
+
\norm{b}_{\L^2(\cal M)}^2.
\end{align*}
Similarly, since $T_{t_0}$ preserves selfadjoint elements, we have
\[
\bnorm{T_{t_0}(x)}_{\L^2(\cal M)}^2
=
\bnorm{T_{t_0}(a)}_{\L^2(\cal M)}^2
+
\bnorm{T_{t_0}(b)}_{\L^2(\cal M)}^2.
\]
Using \eqref{L2-contraction-centered-selfadjoint} for $a$ and $b$, we infer that
\begin{equation}
\label{L2-contraction-centered}
\bnorm{T_{t_0}(x)}_{\L^2(\cal M)}
\leq
\frac1{\sqrt{3}}
\norm{x}_{\L^2(\cal M)}
\end{equation}
for every $x\in\cal M$ satisfying $\tau(x)=0$. Set $
\L^2_{\tau,0}(\cal M)
\ov{\mathrm{def}}{=}
\big\{
x\in\L^2(\cal M):\tau(x)=0
\big\}$. The subspace $
\cal M \cap \L^2_{\tau,0}(\cal M)$ is dense in $\L^2_{\tau,0}(\cal M)$. Indeed, if $x \in \L^2_{\tau,0}(\cal M)$ and $x_n \in \cal M$ satisfies $x_n\to x$ in $\L^2(\cal M)$, then $
x_n-\tau(x_n)1
\to
x$ in $\L^2(\cal M)$, since
\[
|\tau(x_n)|
=
|\tau(x_n-x)|
\leq
\norm{x_n-x}_{\L^2(\cal M)}.
\]
Therefore \eqref{L2-contraction-centered} extends by density to
\begin{equation}
\label{L2-contraction-centered-full}
\bnorm{T_{t_0}(x)}_{\L^2(\cal M)}
\leq
\frac1{\sqrt{3}}
\norm{x}_{\L^2(\cal M)},
\qquad
x\in\L^2_{\tau,0}(\cal M).
\end{equation}
Now, we identify the kernel of $A_2$. Let $x \in \ker A_2$. Then $
T_t(x)=x$ for any $t \geq 0$. Since $T_t(1)=1$, the element $
x_0
\ov{\mathrm{def}}{=}
x-\tau(x)1$ also satisfies
\[
T_t(x_0)=x_0,
\qquad
t\geq0,
\]
and belongs to $\L^2_{\tau,0}(\cal M)$. Applying \eqref{L2-contraction-centered-full} gives
\[
\norm{x_0}_{\L^2(\cal M)}
=
\bnorm{T_{t_0}(x_0)}_{\L^2(\cal M)}
\leq
\frac1{\sqrt{3}}
\norm{x_0}_{\L^2(\cal M)}.
\]
Hence $x_0=0$. Thus $x=\tau(x)1$, and therefore $
\ker A_2=\mathbb{C}1$. Consequently, we have $
\L^2_{\tau,0}(\cal M)
=
(\mathbb{C}1)^\perp
=
(\ker A_2)^\perp$. Since $A_2$ is positive selfadjoint, this closed subspace reduces $A_2$. Let $A_{2,0}$ denote the restriction of $A_2$ to $\L^2_{\tau,0}(\cal M)$. By \eqref{L2-contraction-centered-full}, we have
\[
\bnorm{\e^{-t_0A_{2,0}}}_{\L^2_{\tau,0}(\cal M)
\to\L^2_{\tau,0}(\cal M)}
\leq
\frac1{\sqrt{3}}.
\]
If $\L^2_{\tau,0}(\cal M)=\{0\}$, the spectral assertion is immediate.
Otherwise, by the spectral theorem,
\begin{align*}
\bnorm{\e^{-t_0A_{2,0}}}_{\L^2_{\tau,0}(\cal M)
\to\L^2_{\tau,0}(\cal M)}
&=
\sup_{\lambda\in\sigma(A_{2,0})}
\e^{-t_0\lambda}
=
\e^{-t_0\inf\sigma(A_{2,0})}.
\end{align*}
It follows that $
\e^{-t_0\inf\sigma(A_{2,0})}
\leq
\frac1{\sqrt{3}}$. Taking logarithms yields $
\inf\sigma(A_{2,0})
\geq
\frac{\log3}{2t_0}$. Since $
\L^2(\cal M)
=
\mathbb{C}1
\oplus
\L^2_{\tau,0}(\cal M)$ is a reducing decomposition for $A_2$, we conclude that $
\sigma(A_2)
\subset
\{0\}
\cup
\left[
\frac{\log3}{2t_0},\infty
\right)$. 
\end{proof}

In particular, the hypercontractivity assumption implies a spectral gap. Let $A_{2,0}$ be the part of $A_2$ in $\L^2_0(\cal M)$. Since $\L^2_0(\cal{M})$ reduces $A_2$, the operator $A_{2,0}$ is simply the restriction of $A_2$ to $\L^2_0(\cal{M})$. Combining \cite[Theorem 8.3.13 p.~215]{deO09} with Proposition \ref{prop-hypercontractivity-spectral-gap}, we obtain the inclusion $
\sigma(A_{2,0})
\subset
\left[
\frac{\log3}{2t_0},\infty
\right)$.
Consequently, we have 
\begin{equation}
\label{hyer-gap}
0 \in \rho(A_{2,0}).
\end{equation}

The lower bound $\frac{\log 3}{2t_0}$ in Proposition \ref{prop-hypercontractivity-spectral-gap} is optimal, as shown by the following example.

\begin{example} \normalfont
\label{rem-optimal-hypercontractive-gap}
\normalfont
Indeed, consider the probability space $
\Omega
\ov{\mathrm{def}}{=}
\{-1,1\}$ equipped with the uniform probability measure $
\mu
\ov{\mathrm{def}}{=}
\frac12\delta_{-1}+\frac12\delta_1$, and the finite commutative von Neumann algebra $
\cal{M}
\ov{\mathrm{def}}{=}
\L^\infty(\Omega,\mu)$ equipped with the normalized trace
$
\tau(f)
\ov{\mathrm{def}}{=}
\frac12\big(f(-1)+f(1)\big)$ where $
f\in\cal{M}$. Let $\varepsilon \co \Omega\to\{-1,1\}$ be the function defined by $
\varepsilon(\omega)
\ov{\mathrm{def}}{=}
\omega$ where $\omega \in \Omega$. Every element $f\in\cal{M}$ admits a unique representation $f = a+b\epsi$, with $a,b \in \mathbb C$. Fix $\lambda > 0$. We consider the Markov semigroup $(T_t)_{t \geq 0}$ acting on $\cal{M}$ by
\begin{equation}
\label{eq-two-point-Markov-semigroup}
T_t(a+b\epsi)
\ov{\mathrm{def}}{=}
a+\e^{-\lambda t}b\epsi,
\quad
a,b \in \mathbb C,
\quad
t\geq0.
\end{equation}
Equivalently, if $f \in \cal{M}$, then
\[
T_t(f)
=
\tau(f)1
+
\e^{-\lambda t}\big(f-\tau(f)1\big).
\]
In particular, each $T_t$ is unital, positive, trace preserving and selfadjoint on $\L^2(\Omega,\mu)$. Since $\cal{M}$ is commutative, each $T_t$ is completely positive. Hence $(T_t)_{t\geq0}$ is a Markov semigroup on $\cal{M}$, and by consistency it acts contractively on every space $\L^p(\Omega,\mu)$, where $1\leq p\leq\infty$.

Its infinitesimal generator $-A_p$ on the Banach space $\L^p(\Omega,\mu)$ is given by $
A_p(a+b\epsi)
=
\lambda b\epsi$. Consequently, we have $
\ker A_2=\mathbb C1$, $
\L^2_0(\Omega,\mu)=\mathbb C\epsi$, and $
A_{2,0}
=
\lambda\Id_{\mathbb C\epsi}$. Thus
\begin{equation}
\label{eq-two-point-gap}
\gap(A_2)=\lambda.
\end{equation}
Now, we determine when the operator $T_{t_0}$ is contractive from the space $\L^2(\Omega,\mu)$ into $\L^4(\Omega,\mu)$. Put $
c
\ov{\mathrm{def}}{=}
\e^{-\lambda t_0}$. For any $a,b \in \mathbb C$, we have $\norm{a+b\epsi}_{\L^2(\Omega,\mu)}^2
=
|a|^2+|b|^2$. On the other hand, we have
\begin{align}
\MoveEqLeft
\label{inter-fin}
\bnorm{T_{t_0}(a+b\epsi)}_{\L^4(\Omega,\mu)}^4 
=
\frac12
\left(
|a+cb|^4+|a-cb|^4
\right) \\
&=
\big(|a|^2+c^2|b|^2\big)^2
+
4c^2\big(\Re(\overline{a}b)\big)^2
\leq
|a|^4
+
6c^2|a|^2|b|^2
+
c^4|b|^4. \nonumber
\end{align}
If $
c
=
\frac1{\sqrt3}$, then
\begin{align*}
\bnorm{T_{t_0}(a+b\epsi)}_{\L^4(\Omega,\mu)}^4
\ov{\eqref{inter-fin}}{\leq}
|a|^4
+
2|a|^2|b|^2
+
\frac19|b|^4
\leq
\big(|a|^2+|b|^2\big)^2
=\norm{a+b\epsi}_{\L^2(\Omega,\mu)}^4.
\end{align*}
Hence $
\norm{T_{t_0}}_{\L^2(\Omega,\mu) \to \L^4(\Omega,\mu)}
\leq1$ whenever $
\e^{-\lambda t_0}
=
\frac1{\sqrt3}$. Choosing $
\lambda
=
\frac{\log3}{2t_0}$ we therefore obtain a Markov semigroup for which $T_{t_0}$ is contractive from $\L^2(\Omega,\mu)$ into $\L^4(\Omega,\mu)$ and, by \eqref{eq-two-point-gap},
\[
\gap(A_2)
=
\frac{\log3}{2t_0}.
\]
\end{example}

\section{Banach Poincar\'e inequalities}
\label{Sec-Poincare}

The following result is the main result of this paper. The inequality \eqref{main} can be seen as a generalization of \cite[(1.5) p.~3]{JLZZ24}. 

\begin{thm}
\label{thm-direct-Poincare-via-negative-powers}
Consider some $\alpha \in (0,1)$. Let $X$ be a Banach space. Consider some Abel-ergodic sectorial operator $A$ with ergodic projection $P \co X \to X$ on $\ker A$. Let $A_0$ be the part of $A$ in $\ovl{\Ran A}$. Suppose that $A$ admits a spectral gap, i.e., $0 \in \rho(A_0)$. Then
\begin{equation}
\label{main}
\norm{x-P(x)}_X
\leq
\bnorm{A_0^{-\alpha}}_{\ovl{\Ran A} \to \ovl{\Ran A}}
\norm{A^\alpha x}_X, \quad x \in \dom A^\alpha.
\end{equation}
Consider, in addition, an unbounded operator $\partial \co \dom \partial \subset X \to Y$ such that $\dom\partial \subset \dom A^{\alpha}$ such that the reverse Riesz estimate
\begin{equation}
\label{eq-lower-Riesz-direct-Poincare}
\bnorm{A^{\alpha}(x)}_X
\leq
C\norm{\partial(x)}_Y
\quad x \in \dom\partial
\end{equation}
holds for some constant $C \geq 0$. Then, for any $x \in \dom\partial$, we have
\begin{equation}
\label{eq-direct-Poincare}
\norm{x-P(x)}_X
\leq
C\bnorm{A_0^{-\alpha}}_{\ovl{\Ran A} \to \ovl{\Ran A}}\norm{\partial(x)}_Y.
\end{equation}
\end{thm}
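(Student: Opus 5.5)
The plan is to reduce everything to the identity
\[
A_0^\alpha\big(x-P(x)\big)=A^\alpha x, \qquad x \in \dom A^\alpha,
\]
and then apply the bounded inverse $A_0^{-\alpha}$. Once \eqref{main} is established, \eqref{eq-direct-Poincare} follows immediately from \eqref{eq-lower-Riesz-direct-Poincare}, since $\dom \partial \subset \dom A^\alpha$.

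\emph{Step 1: the reduced operator $A_0$.} Set $X_0 = \ovl{\Ran A}$. By Definition \ref{def-Abel-ergodic} and \eqref{Ran-ker-P}, $X_0=\ker P$ is complemented. I will check that $A_0$ is sectorial on $X_0$ with the same angle, and that its resolvents are the restrictions of those of $A$. This holds because $X_0$ is invariant under $R(z,A)$: $R(z,A)$ commutes with $A$, so it maps $\Ran A$ into $\Ran A$, and by continuity it maps $X_0$ into $X_0$.

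Since $0 \in \rho(A_0)$, the operator $A_0$ is invertible and sectorial. Hence the fractional powers $A_0^{\alpha}$ are invertible with $(A_0^\alpha)^{-1}=A_0^{-\alpha}$ bounded on $X_0$. I will quote the standard functional calculus fact (Haase) that for an invertible sectorial operator the negative powers are bounded and inverse to the positive ones.

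\emph{Step 2: compatibility of fractional powers with the decomposition.} This is the main obstacle. I need to show:
\begin{itemize}
\item For $x \in \dom A^\alpha$ we have $P(x) \in \ker A \subset \dom A^\alpha$ with $A^\alpha P(x)=0$, using \eqref{inclusion-range}.
\item $x-P(x) \in \dom A_0^\alpha$ and $A_0^\alpha(x-P(x))=A^\alpha(x-P(x))$, i.e.\ $A_0^\alpha$ is the part of $A^\alpha$ in $X_0$.
\end{itemize}
I would first try Balakrishnan's representation for $0<\alpha<1$:
\[
A^\alpha y=\frac{\sin \pi\alpha}{\pi}\int_0^\infty t^{\alpha-1}A(t+A)^{-1}y \d t, \qquad y \in \dom A.
\]
For $y\in \dom A\cap X_0=\dom A_0$ this integrand coincides with the one defining $A_0^\alpha y$, because the resolvents agree. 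So $A^\alpha$ and $A_0^\alpha$ agree on $\dom A_0$.

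Closedness and the core property then extend the agreement. The subspace $\dom A$ is a core for $A^\alpha$. Given $y \in \dom A^\alpha \cap X_0$, take $y_n\in \dom A$ converging to $y$ in the graph norm of $A^\alpha$. Then $(\Id-P)y_n \in \dom A_0$, and $A^\alpha (\Id-P) y_n = A^\alpha y_n$ since $A^\alpha$ kills $\ker A$. Moreover $(\Id-P)y_n\to y$. The closedness of $A_0^\alpha$ then gives $y\in\dom A_0^\alpha$ and $A_0^\alpha y=A^\alpha y$.

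\emph{Step 3: conclusion.} Apply Step 2 to $y = x-P(x)$, which lies in $\dom A^\alpha \cap X_0$, and use $A^\alpha P(x)=0$. This gives
\[
x-P(x)=A_0^{-\alpha}A^\alpha x .
\]
Here $A^\alpha x\in X_0$ by \eqref{inclusion-range}. Taking norms yields \eqref{main}. Combining \eqref{main} with \eqref{eq-lower-Riesz-direct-Poincare} gives \eqref{eq-direct-Poincare}.
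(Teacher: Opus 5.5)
Your proposal is correct and follows the same route as the paper: establish $A_0^\alpha(x-P(x))=A^\alpha x$ using $A^\alpha P(x)=0$, then invert with the bounded operator $A_0^{-\alpha}$, and finally apply the reverse Riesz estimate. The only difference is that you prove by hand (via Balakrishnan's formula and a core/closedness argument) that $A^\alpha$ and $A_0^\alpha$ agree on $\dom A^\alpha \cap \ovl{\Ran A}$, whereas the paper simply cites Haase's Proposition 3.1.1 (i) for this fact.
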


\begin{proof}
Let $x \in \dom A^\alpha$. Note that $P(x) \in \ker A$ and that $x-P(x)$ belongs to $\ovl{\Ran A} \ov{\eqref{inclusion-range}}{=} \ovl{\Ran A^\alpha}$. Since $\ker A \ov{\eqref{inclusion-range}}{=} \ker A^\alpha$, the element $P(x)$ belongs to the subspace $\dom A^{\alpha}$ and we have $A^{\alpha} P(x) =0$. We deduce that $x-P(x)$ belongs to the subspace $\dom A^{\alpha} \cap \ovl{\Ran A^\alpha}$
and
\begin{equation}
\label{equa-divers-34}
A_0^{\alpha}(x-P(x))
=A^{\alpha}(x-P(x))
=A^\alpha x-A^{\alpha} P(x)
=A^\alpha x,
\end{equation}
where we use \cite[Proposition 3.1.1 (i) p.~61]{Haa06} in the first equality. Since \(0 \in \rho(A_0)\), the operator $A_0^{-1} \co \ovl{\Ran A} \to \ovl{\Ran A}$ is a well-defined bounded operator. By \cite[Corollary 15.2.10 p.~444]{HvNVW23} or \cite[Proposition 3.2.3 p.~72]{Haa06}, the negative power $A_0^{-\alpha}$ is bounded on the Banach space $\ovl{\Ran A}$. Therefore
\begin{equation}
\label{inter-67}
x-P(x)
=
A_0^{-\alpha}A_0^{\alpha}(x-P(x))
\ov{\eqref{equa-divers-34}}{=}
A_0^{-\alpha}A^{\alpha}x.
\end{equation}
We deduce that
\begin{align*}
\MoveEqLeft
\norm{x-P(x)}_X
\ov{\eqref{inter-67}}{=} \norm{A_0^{-\alpha}A^{\alpha}x}_X 
\leq
\bnorm{A_0^{-\alpha}}_{\ovl{\Ran A} \to \ovl{\Ran A}}
\norm{A^\alpha x}_X,
\end{align*}
that is \eqref{main}. Consequently, using the reverse Riesz estimate \eqref{eq-lower-Riesz-direct-Poincare}, we obtain if $x \in \dom \partial$
$$
\norm{x-P(x)}_X
\ov{\eqref{inter-67}}{=} \norm{A_0^{-\alpha}A^{\alpha}x}_X 
\leq C\bnorm{A_0^{-\alpha}}_{\ovl{\Ran A} \to \ovl{\Ran A}}
\norm{\partial(x)}_Y.
$$
\end{proof}


\section{Banach divergence inequalities}
\label{sec-divergence}

Consider some Banach spaces $X$ and $Y$. Consider a sectorial operator $A$ with dense domain acting on $X$. Following \cite{Arh26b}, an abstract pair $(\partial,\partial^\dagger)$ of gradient and divergence operators compatible with $A$ consists of a closed operator $\partial \co \dom \partial \subset X \to Y$ with dense domain and a closed operator
$\partial^\dagger \co \dom \partial^\dagger \subset Y \to X$ with dense domain such that
\begin{equation}
\label{facto-with-diamond}
\dom A
=\big\{x \in \dom \partial : \partial x \in \dom \partial^\dagger \big\},
\qquad
Ax
=\partial^\dagger \partial x
\quad \text{for all } x \in \dom A.
\end{equation}

The formulation of the following result was inspired by the inequality proved in \cite[Theorem 2.9]{Nee15}. If $\Omega$ is a bounded Lipschitz domain with mixed boundary conditions defined on boundary parts $\Gamma_\tau$ and $\Gamma_\nu$ of the boundary $\Gamma$, \cite[p.~2]{PaV20} presents the similar inequality $\norm{H}_{\L^2(\Omega,\mathbb C^n)} \lesssim \norm{\div H}_{\L^2(\Omega)}$ for any $H \in
\dom \div_{\Gamma_\tau} \cap \ovl{\Ran \grad_{\Gamma_\nu}}$.
 
\begin{thm}
\label{thm-Poincare-divergence-corrected}
Let $X$ and $Y$ be Banach spaces. Consider some Abel-ergodic sectorial operator $A$. Let $A_0$ be the part of $A$ in $\ovl{\Ran A}$. Assume that $0 \in \rho(A_0)$. Let $(\partial,\partial^\dagger)$ be a compatible pair with $A$. Suppose that $0 < \alpha < 1$. Assume that $\dom A^\alpha \subset \dom\partial$ and that there exists a constant $K \geq 0$ such that
\begin{equation}
\label{eq-Riesz-half-upper}
\norm{\partial x}_{Y}
\leq K \bnorm{A^{\alpha}(x)}_{X},
\quad x \in \dom A^{\alpha}.
\end{equation}
Then
\begin{equation}
\label{eq-Poincare-div-core}
\norm{y}_{Y}
\leq K \bnorm{A_0^{-(1-\alpha)}}_{\ovl{\Ran(A)} \to \ovl{\Ran(A)}}
\norm{\partial^\dagger(y)}_{X},
\quad y \in \partial(\dom A).
\end{equation}
Let \(\partial^\dagger|_{\ovl{\Ran \partial}}\) denote the restriction of \(\partial^\dagger\) to the closed subspace \(\ovl{\Ran\partial}\), with domain
\[
\dom \partial^\dagger|_{\ovl{\Ran \partial}}
\ov{\mathrm{def}}{=}
\dom(\partial^\dagger)\cap \ovl{\Ran\partial}.
\]
If, in addition, the subspace $\partial(\dom A)$ is a core of the operator $\partial^\dagger|_{\ovl{\Ran \partial}}$, then
\begin{equation}
\label{eq-Poincare-div-full}
\norm{y}_{Y}
\leq K \bnorm{A_0^{-(1-\alpha)}}_{\ovl{\Ran(A)} \to \ovl{\Ran(A)}}\norm{\partial^\dagger(y)}_{X},
\quad y \in \dom(\partial^\dagger)\cap \ovl{\Ran\partial}.
\end{equation}
\end{thm}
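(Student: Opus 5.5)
Let $y = \partial x$ with $x \in \dom A$. Then $\partial^\dagger y = Ax$ by \eqref{facto-with-diamond}. The aim is to write $y = \partial x'$, where $x' = x - P(x)$, and to bound $\norm{\partial x'}_Y$ by $K\norm{A^\alpha x'}_X$ using \eqref{eq-Riesz-half-upper}. After that, $A^\alpha x'$ is expressed through $A_0^{-(1-\alpha)}$ applied to $Ax$.

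\emph{Replacing $x$ by $x - P(x)$.} First I will check that $\partial P(x) = 0$. Since $P(x) \in \ker A \subset \dom A$, we have $P(x) \in \dom A^\alpha$ and $A^\alpha P(x) = 0$ by \eqref{inclusion-range}. Applying \eqref{eq-Riesz-half-upper} to $P(x)$ then gives $\partial P(x) = 0$. Hence $y = \partial(x - P(x))$, and also $x - P(x) \in \dom A \cap \ovl{\Ran A} = \dom A_0$.

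\emph{The key identity.} Put $x_0 = x - P(x) \in \dom A_0$. By \cite[Proposition 3.1.1 (i) p.~61]{Haa06}, as already used in the proof of Theorem \ref{thm-direct-Poincare-via-negative-powers}, we have $A^\alpha x_0 = A_0^\alpha x_0$. Since $0 \in \rho(A_0)$, the power $A_0^{-(1-\alpha)}$ is bounded on $\ovl{\Ran A}$. The additivity of fractional powers for an invertible sectorial operator gives, for $x_0 \in \dom A_0$,
\[
A_0^\alpha x_0 = A_0^{-(1-\alpha)} A_0 x_0 = A_0^{-(1-\alpha)} A x,
\]
since $A_0 x_0 = A x - A P(x) = Ax$. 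This step is the main point to justify carefully. I expect to use the composition rule $A_0^{-(1-\alpha)}A_0^{1} \supset A_0^{\alpha}$, or equivalently $A_0^{1-\alpha}A_0^\alpha x_0 = A_0 x_0$ for $x_0 \in \dom A_0$, which follows from \cite[Proposition 3.1.1 / 3.2.1]{Haa06}. Combining everything,
\[
\norm{y}_Y = \norm{\partial x_0}_Y \leq K\norm{A^\alpha x_0}_X = K \bnorm{A_0^{-(1-\alpha)} Ax}_X \leq K \bnorm{A_0^{-(1-\alpha)}}\, \norm{\partial^\dagger y}_X .
\]
This is \eqref{eq-Poincare-div-core}.

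\emph{Extension by density.} Let $y \in \dom \partial^\dagger \cap \ovl{\Ran \partial}$. Since $\partial(\dom A) \subset \ovl{\Ran\partial}$ and this subspace is a core for $\partial^\dagger|_{\ovl{\Ran \partial}}$, there are $y_n \in \partial(\dom A)$ with $y_n \to y$ and $\partial^\dagger y_n \to \partial^\dagger y$. Applying \eqref{eq-Poincare-div-core} to $y_n$ and letting $n \to \infty$ yields \eqref{eq-Poincare-div-full}, because both sides are continuous with respect to the graph norm.

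I expect the only delicate step to be the fractional-power identity $A_0^\alpha x_0 = A_0^{-(1-\alpha)}A_0x_0$ on $\dom A_0$, together with the bookkeeping needed to ensure that $P(x)$ is annihilated by $\partial$.
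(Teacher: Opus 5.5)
Your proposal is correct and follows the paper's proof essentially step by step. You reduce to $x_0=x-P(x)\in\dom A_0$ via $\partial P(x)=0$, which comes from $A^\alpha P(x)=0$ and \eqref{eq-Riesz-half-upper}; you then write $A^\alpha x_0=A_0^\alpha x_0=A_0^{-(1-\alpha)}A_0x_0$ with $A_0x_0=Ax=\partial^\dagger\partial x$, and you finish with the same core/density argument, where the product rule for fractional powers of the invertible sectorial operator $A_0$ is exactly the justification the paper relies on.
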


\begin{proof}
We denote by $P \co X \to X$ the ergodic projection on $\ker A$. Note that $\Id-P$ is the projection onto $\ovl{\Ran A}$ along the subspace $\ker A$. By \cite[p.~24]{Haa06}, the operator $A_0$ is sectorial on the subspace $\ovl{\Ran A}$ and, by assumption, $0 \in \rho(A_0)$. Hence $A_0^{-(1-\alpha)}$ is a bounded operator on $\ovl{\Ran A}$ by \cite[Corollary 15.2.10 p.~444]{HvNVW23} or \cite[Proposition 3.2.3 p.~72]{Haa06}. Let $x \in \dom A$ and set $x_0 \ov{\mathrm{def}}{=} (\Id-P)x=x-P(x)$. Note that $P(x)$ belongs to $\ker A$, which is a subspace of $\dom A$. Thus $x_0$ belongs to $\dom A \cap \ovl{\Ran A}$, i.e., $x_0 \in \dom A_0$ and
$$
A(x)
=A(x_0+P(x))
=A(x_0)
=A_0(x_0).
$$
Using \eqref{inclusion-range}, we see that $A^{\alpha}P(x)=0$. We get
\[
\norm{\partial P(x)}_{Y}
\ov{\eqref{eq-Riesz-half-upper}}{\leq}  K \bnorm{A^{\alpha}P(x)}_{X}
=0.
\]
Thus $\partial P(x)=0$. Hence
\begin{equation}
\label{inter-720}
\partial(x)
=\partial((\Id-P+P)x)
=\partial((\Id-P)x)+\partial P(x)
=\partial((\Id-P)x)
=\partial(x_0).
\end{equation}
Taking the norms, we obtain
\[
\norm{\partial(x)}_{Y}
\ov{\eqref{inter-720}}{=}
\norm{\partial(x_0)}_{Y}
\ov{\eqref{eq-Riesz-half-upper}}{\leq}
K \bnorm{A^{\alpha}(x_0)}_{X}.
\]
Since $x_0 \in \dom A_0$ and since the fractional powers of $A$ and $A_0$ agree by \cite[Proposition 3.1.1 (i) p.~61]{Haa06} on the subspace $\ovl{\Ran(A)}$, we have $A^{\alpha}(x_0)=A_0^{\alpha}(x_0)=A_0^{-(1-\alpha)}A_0(x_0)$. Consequently, we obtain
\begin{align*}
\MoveEqLeft
\norm{\partial(x)}_{Y}
\leq
K \bnorm{A_0^{-(1-\alpha)}}_{\ovl{\Ran(A)} \to \ovl{\Ran(A)}}
\norm{A_0(x_0)}_{X} 
=
K \bnorm{A_0^{-(1-\alpha)}}_{\ovl{\Ran(A)} \to \ovl{\Ran(A)}}
\norm{A(x)}_{X}  \\       
&\ov{\eqref{facto-with-diamond}}{=} K \bnorm{A_0^{-(1-\alpha)}}_{\ovl{\Ran(A)} \to \ovl{\Ran(A)}}
\norm{\partial^\dagger\partial(x)}_{X}.
\end{align*}
This proves \eqref{eq-Poincare-div-core} for all $y \in \partial(\dom A)$.

Now, assume that the subspace $\partial(\dom A)$ is a core of the operator $\partial^\dagger|_{\ovl{\Ran \partial}}$. Let $y \in \dom(\partial^\dagger) \cap \ovl{\Ran \partial}$. Then there exists a sequence $(x_n)$ of elements in $\dom A$ such that $\partial(x_n) \to y$ in $Y$ and $\partial^\dagger\partial(x_n) \to \partial^\dagger(y)$ in $X$. Applying \eqref{eq-Poincare-div-core} to $\partial(x_n)$, we get
\[
\norm{\partial(x_n)}_{Y}
\leq
K \bnorm{A_0^{-(1-\alpha)}}_{\ovl{\Ran(A)} \to \ovl{\Ran(A)}}
\norm{\partial^\dagger\partial(x_n)}_{X}.
\]
Passing to the limit yields
\[
\norm{y}_{Y}
\leq
K \bnorm{A_0^{-(1-\alpha)}}_{\ovl{\Ran(A)} \to \ovl{\Ran(A)}}
\norm{\partial^\dagger(y)}_{X}.
\]
This proves \eqref{eq-Poincare-div-full}.
\end{proof}

\section{Riesz estimates and reverse Riesz estimates} 
\label{sec-Riesz-equivalence}

%
%
%
%
Inspired by \cite[Definition 3.3]{Arh26b}, we introduce the following definition. 

\begin{defi}
Consider some Banach spaces $X$ and $Y$. Consider a sectorial operator $A$ with dense domain acting on $X$. Let $\alpha \in (0,1)$. Let $\partial \co \dom \partial \subset X \to Y$ be an unbounded operator.
\begin{enumerate}
\item We say that the operator $A$ admits the $(\alpha,\partial)$-Riesz estimate if $A^{\alpha}$ admits a core $C$ which is a subspace of $\dom \partial$ satisfying
\begin{equation}
\label{estim-Riesz-123}
\bnorm{\partial(x)}_{Y} 
\lesssim \norm{A^{\alpha}(x)}_{X}, \quad  x \in C.
\end{equation} 
\item We say that the operator $A$ admits the reverse $(\alpha,\partial)$-Riesz estimate if $\dom \partial$ is a subspace of $\dom A^{\alpha}$ and if
\begin{equation}
\label{estim-Riesz-bis}
\bnorm{A^{\alpha}(x)}_{X} 
\lesssim \norm{\partial(x)}_{Y}, \quad  x \in \dom \partial.
\end{equation} 
\end{enumerate}
\end{defi}

Now, we prove some elementary consequences of these estimates in the case where the unbounded operator $\partial$ is closable.

\begin{prop}
\label{Prop-derivation-closable-sgrp-bis}
Consider some Banach spaces $X$ and $Y$. Consider a sectorial operator $A$ with dense domain acting on $X$. Let $\alpha \in (0,1)$. Assume that the unbounded operator $\partial \co \dom \partial \subset X \to Y$ is closable. 

\begin{enumerate}
\item If $A$ admits the $(\alpha,\partial)$-Riesz estimate, we have $\dom A^{\alpha} \subset \dom \ovl{\partial} $. Moreover, we have
\begin{equation} 
\label{alpha-Riesz}
\bnorm{\ovl{\partial}(x)}_{Y} 
\lesssim \bnorm{A^{\alpha}(x)}_{X}, \quad x \in \dom A^{\alpha}. 
\end{equation}

\item If $A$ admits the reverse $(\alpha,\partial)$-Riesz estimate, we have $\dom \ovl{\partial} \subset \dom A^{\alpha}$. Moreover, we have
\begin{equation} 
\label{Equivalence-square-root-domaine-Schur-sgrp-bis}
\bnorm{A^{\alpha}(x)}_{X} 
\lesssim \bnorm{\ovl{\partial}(x)}_{Y}, \quad x \in \dom \ovl{\partial}. 
\end{equation}	
\end{enumerate}
\end{prop}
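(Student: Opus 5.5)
Both parts follow from a closedness argument for $A^{\alpha}$ and $\ovl{\partial}$. The first ingredient is that $A^\alpha$ is a closed operator: for $0<\alpha<1$ it is sectorial, hence closed, as recalled in the preliminaries on fractional powers. The second is the definition of the closure: $x\in\dom\ovl{\partial}$ iff there is a sequence $(x_n)$ in $\dom\partial$ with $x_n\to x$ in $X$ and $(\partial x_n)$ Cauchy in $Y$. In that case $\ovl{\partial}x=\lim\partial x_n$, independently of the sequence.

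For part 1, let $x\in\dom A^\alpha$. Since $C$ is a core of $A^\alpha$, I choose $x_n\in C$ with $x_n\to x$ and $A^\alpha x_n\to A^\alpha x$. By linearity, \eqref{estim-Riesz-123} applied to $x_n-x_m\in C$ gives
\[
\norm{\partial x_n-\partial x_m}_Y\lesssim\norm{A^\alpha x_n-A^\alpha x_m}_X .
\]
So $(\partial x_n)$ is Cauchy in $Y$ and converges to some $y$. Since $C\subset\dom\partial$ and $\partial$ is closable, we get $x\in\dom\ovl{\partial}$ with $\ovl{\partial}x=y$. Passing to the limit in $\norm{\partial x_n}_Y\lesssim\norm{A^\alpha x_n}_X$ yields \eqref{alpha-Riesz}.

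For part 2, let $x\in\dom\ovl{\partial}$. I choose $x_n\in\dom\partial$ with $x_n\to x$ and $\partial x_n\to\ovl{\partial}x$. The hypothesis gives $x_n\in\dom A^\alpha$. Applying \eqref{estim-Riesz-bis} to $x_n-x_m\in\dom\partial$ shows that $(A^\alpha x_n)$ is Cauchy, hence converges to some $z\in X$. Since $A^\alpha$ is closed, $x\in\dom A^\alpha$ and $A^\alpha x=z$. Taking limits in $\norm{A^\alpha x_n}_X\lesssim\norm{\partial x_n}_Y$ gives \eqref{Equivalence-square-root-domaine-Schur-sgrp-bis}.

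The only step needing care is the use of closedness of $A^\alpha$ in part 2, which I take from the cited fractional power theory. Everything else is the routine passage to limits described above.
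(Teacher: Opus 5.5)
Your proof is correct and follows the paper's argument. You approximate by a core (or by $\dom\partial$), transfer the Cauchy property through the (reverse) Riesz estimate applied to differences, and conclude by closedness of $\ovl{\partial}$ (resp.\ $A^\alpha$) before passing to the limit in the estimate. The only cosmetic difference is in part 1: you use the graph-closure characterization of $\ovl{\partial}$ directly, whereas the paper phrases the same step as completeness of $\dom\ovl{\partial}$ for the graph norm plus uniqueness of limits.
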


\begin{proof}
1. Let $x \in \dom A^{\alpha}$. The subspace $C$ is a core of the operator $A^{\alpha}$. 
So we can find a sequence $(x_n)$ of elements of $C$ such that $x_n \to x$ and $A^{\alpha}(x_n) \to A^{\alpha}(x)$. For any integers $n,m \geq 1$, we obtain
\begin{align*}
\MoveEqLeft
\norm{x_n-x_m}_{X} + \bnorm{\partial(x_n) - \partial(x_m)}_{Y} 
\ov{\eqref{estim-Riesz-123}}{\lesssim} \norm{x_n-x_m}_{X} + \norm{A^{\alpha}(x_n)-A^{\alpha}(x_m)}_{X}
\end{align*} 
which shows that $(x_n)$ is a Cauchy sequence in $\dom \ovl{\partial}$ equipped with the graph norm. Since $\ovl{\partial}$ is closed, the space $\dom \ovl{\partial}$ is complete for the graph norm. We infer that there exists $x' \in \dom \ovl{\partial}$ such that $x_n \to x'$ and $\ovl{\partial}(x_n) \to \ovl{\partial}(x')$. By uniqueness of the limit, we have $x=x'$. It follows that $x \in \dom \ovl{\partial}$. This proves the inclusion $\dom A^{\alpha} \subset \dom \ovl{\partial}$. Moreover, for any integer $n \geq 1$, we have $
\bnorm{\partial(x_n)}_{Y} 
\ov{\eqref{estim-Riesz-123}}{\lesssim} \norm{A^{\alpha}(x_n)}_{X}$. Since $x_n \to x$ in $\dom \ovl{\partial}$ and in $\dom A^{\alpha}$ both equipped with the graph norm, we conclude that $
\bnorm{\ovl{\partial}(x)}_{Y} 
\lesssim \bnorm{A^{\alpha}(x)}_{X}$. 

2. Let $x \in \dom \ovl{\partial}$. The subspace $\dom \partial$ is a core of the operator $\ovl{\partial}$. 
So we can find a sequence $(x_n)$ of elements of $\dom \partial$ such that $x_n \to x$ and $\partial(x_n) \to \ovl{\partial}(x)$. For any integers $n,m \geq 1$, we obtain
\begin{align*}
\MoveEqLeft
\norm{x_n-x_m}_{X} + \bnorm{A^{\alpha}(x_n) - A^{\alpha}(x_m)}_{X} 
\ov{\eqref{estim-Riesz-bis}}{\lesssim} \norm{x_n-x_m}_{X} + \norm{\partial(x_n)-\partial(x_m)}_{Y}
\end{align*} 
which shows that $(x_n)$ is a Cauchy sequence in $\dom A^{\alpha}$ equipped with the graph norm. Since $A^{\alpha}$ is closed, the space $\dom A^{\alpha}$ is complete for the graph norm. We infer that there exists $x' \in \dom A^{\alpha}$ such that $x_n \to x'$ and $A^{\alpha}(x_n) \to A^{\alpha}(x')$. By uniqueness of the limit, we have $x=x'$. It follows that $x \in \dom A^{\alpha}$. This proves the inclusion $\dom \ovl{\partial} \subset \dom A^{\alpha}$. Moreover, for any integer $n \geq 1$, we have  $
\bnorm{A^{\alpha}(x_n)}_{X} 
\ov{\eqref{estim-Riesz-bis}}{\lesssim} \norm{\partial(x_n)}_{Y}$. 
Since $x_n \to x$ in $\dom\ovl{\partial}$ and in $\dom A^{\alpha}$ both equipped with the graph norm, we conclude that $
\bnorm{A^{\alpha}(x)}_{X} 
\lesssim \bnorm{\ovl{\partial}(x)}_{Y}$. 
\end{proof}

The same argument as in \cite[Proposition 3.7]{Arh26b} shows the following result, which says that a Riesz estimate implies a reverse Riesz estimate. Here, we consider an abstract pair $(\partial,\partial^\dagger)$ of gradient and divergence operators compatible with a sectorial operator $A$, as in Section \ref{sec-divergence}. Note that in this case, we have $\dom A \subset \dom \partial$.

\begin{prop}
\label{prop-duality-Riesz}
Suppose that $X$ is reflexive. Consider an abstract pair $(\partial,\partial^\dagger)$ of gradient and divergence operators compatible with a sectorial operator $A$. Let $\alpha \in (0,1)$. Let $C'$ be a subspace of $\dom (A^*)^\alpha \cap \dom(\partial^\dagger)^*$ which is a core
of $(A^*)^\alpha$. We denote by $P \co X \to X$ the bounded projection onto the subspace $\ker A$ associated to the decomposition $X \ov{\eqref{decompo-reflexive}}{=} \ker A \oplus \ovl{\Ran A}$. Then the estimate
\begin{equation}
\label{Riesz-555}
\norm{(\partial^\dagger)^*(z)}_{Y^*}
\leq K\bnorm{(A^*)^{\alpha}(z)}_{X^*}, \quad z \in C'
\end{equation}
implies the estimate 
$$
\bnorm{A^{1-\alpha}(x)}_{X}
\leq K \norm{\Id-P}_{X \to X} \norm{\partial(x)}_{Y}, \quad x \in \dom A.
$$
\end{prop}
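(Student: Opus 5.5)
We argue by duality, writing $A^{1-\alpha}x$ as an element of $X = (X^*)^*$ (reflexivity) and testing it against $z \in X^*$. Fix $x \in \dom A$. Then $A^{1-\alpha}x$ is defined, and by \eqref{inclusion-range} it lies in $\ovl{\Ran A}$, so $A^{1-\alpha}x = (\Id-P)A^{1-\alpha}x$. By Hahn--Banach,
\[
\bnorm{A^{1-\alpha}x}_X = \sup_{\norm{z}_{X^*}\leq 1} \big|\la A^{1-\alpha}x,z\ra_{X,X^*}\big| = \sup_{\norm{z}_{X^*}\leq 1} \big|\la A^{1-\alpha}x,(\Id-P)^*z\ra_{X,X^*}\big|.
\]
This reduces the claim to test functionals of the form $w=(\Id-P)^*z$, which satisfy $\norm{w}_{X^*}\leq \norm{\Id-P}_{X\to X}$. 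By \eqref{Ran-ker-P*}, applied to the adjoint sectorial operator $A^*$ on the reflexive space $X^*$, these $w$ lie in $\ovl{\Ran A^*} = \ovl{\Ran (A^*)^\alpha}$.

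The key identity is
\[
\la A^{1-\alpha}x, w\ra = \la \partial x, (\partial^\dagger)^*(A^*)^{-\alpha} w\ra
\]
for suitable $w$. Formally this comes from
\[
A^{1-\alpha} = A\,A^{-\alpha} = \partial^\dagger\partial A^{-\alpha},
\]
but it is cleaner to work with $w = (A^*)^\alpha u$. I would therefore first take $u \in C'$, set $w=(A^*)^\alpha u$, and verify
\[
\la A^{1-\alpha}x,(A^*)^\alpha u\ra = \la A x, u\ra = \la \partial^\dagger \partial x, u\ra = \la \partial x, (\partial^\dagger)^* u\ra .
\]
The first equality uses $(A^\alpha)^* = (A^*)^\alpha$ (fractional powers commute with adjoints on reflexive spaces) and $A^\alpha A^{1-\alpha}x = Ax$ for $x\in\dom A$. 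The second is \eqref{facto-with-diamond}, and the third is \eqref{crochet-duality} for $\partial^\dagger$, using $u\in\dom(\partial^\dagger)^*$. Combining this with \eqref{Riesz-555} gives
\[
|\la A^{1-\alpha}x,w\ra| \leq K\norm{\partial x}_Y\,\bnorm{(A^*)^\alpha u}_{X^*} = K\norm{\partial x}_Y\norm{w}_{X^*}.
\]

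It remains to pass from the elements $w=(A^*)^\alpha u$, $u\in C'$, to all $w=(\Id-P)^*z$. This density step is the main obstacle. Since $C'$ is a core of $(A^*)^\alpha$, the set $\{(A^*)^\alpha u : u \in C'\}$ is dense in $\Ran (A^*)^\alpha$ for the norm of $X^*$. Moreover $\Ran (A^*)^\alpha$ is dense in $\ovl{\Ran A^*}$ by \eqref{inclusion-range}. Hence for $w=(\Id-P)^*z$ we can choose $u_n \in C'$ with $(A^*)^\alpha u_n \to w$ in $X^*$. The bound above, with $\norm{(A^*)^\alpha u_n}_{X^*}$ on the right, then passes to the limit, giving
\[
|\la A^{1-\alpha}x,(\Id-P)^*z\ra| \leq K\norm{\partial x}_Y\,\norm{\Id-P}_{X\to X}\,\norm{z}_{X^*}.
\]
Taking the supremum over $\norm{z}_{X^*}\leq 1$ concludes the proof.

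The delicate points to check carefully are the following. First, the identification $(A^\alpha)^*=(A^*)^\alpha$ together with the use of \eqref{Ran-ker-P*}; both require $A^*$ sectorial, which holds on reflexive $X$. Second, the requirement that the core approximation controls $(A^*)^\alpha u_n$ in norm, which is exactly the graph-norm convergence supplied by the core property.
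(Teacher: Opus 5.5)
Your proof is correct and follows essentially the same duality argument the paper relies on (it cites \cite[Proposition 3.7]{Arh26b}, and the same pattern appears explicitly in the proof of Proposition \ref{prop-Markov-Riesz-equivalence}). You test $A^{1-\alpha}x$ against $(A^*)^\alpha u$ with $u \in C'$, use $A^\alpha A^{1-\alpha}x = Ax = \partial^\dagger\partial x$, and extend by density of $(A^*)^\alpha(C')$ in $\ovl{\Ran A^*} = \ker P^*$. One small point: \eqref{Ran-ker-P*} is stated in the paper only for mean-ergodic semigroups, but here $\ker P^* = (\ker A)^\perp = \ovl{\Ran A^*}$ follows directly from reflexivity, so your argument is unaffected.
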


\section{Illustrations and discussions in various contexts}
\label{sec-examples}





\subsection{Poincar\'e inequalities on Riemannian manifolds}
\label{sec-Riemann-manifolds}

%
%
%
%
%
%
%
%
%
%


Let $(M,g)$ be a smooth Riemannian manifold. If $\Rie$ is the Riemann tensor, for each point $x \in M$, we define, as in \cite[Definition p.~60]{Cha06}, the bilinear form $\Ric_x \co \T_x M \times \T_x M \to \R$ by
$$
\Ric_x(Y,Z) 
\ov{\mathrm{def}}{=} \tr\big(X \mapsto \Rie_x(Y,X)Z\big), \quad Y,Z \in \T_x M.
$$
The Ricci endomorphism $\widehat{\Ric}_x \co \T_x M \to \T_x M$ is the unique linear map defined by
\begin{equation}
\label{Ricci-endomorphism}
g_x(\widehat{\Ric}_xY,Z)
\ov{\mathrm{def}}{=} \Ric_x(Y,Z),
\quad Y,Z \in \mathrm{T}_x M.
\end{equation}
Since $\Ric_x$ is symmetric according to \cite[p.~60]{Cha06}, the operator $\widehat{\Ric}_x$ is selfadjoint with respect to $g_x$.

Let $M$ be a complete connected Riemannian manifold of dimension $d$, whose Ricci curvature is bounded from below, i.e., there exists a constant $K \in \R$ such that $\Ric_x(X,X) \geq K g_x(X,X)$ for any $X \in \T_x M$. Assume in addition that there exists a constant $a > 0$ such that
\begin{equation}
\label{spectral-gap-Kunst}
\sigma(-\Delta) 
\subset \{0\} \cup [a,\infty).
\end{equation}
Suppose that $1 < p <\infty$. Then by \cite[Corollary 1.2 p.~39]{JKW10}, we have a Riesz equivalence
\begin{equation}
\label{Riesz-Kunst-bis}
\norm{ (-\Delta)^{\frac{1}{2}}f}_{\L^p(M)} 
\approx_{p,M}  \norm{\nabla f}_{\L^p(M,\T M)}, \quad f \in \C_0^{\infty}(M).
\end{equation}
This result is a variant of a classical result \cite{Bak87} of Bakry. By \cite[Proposition 3.4]{Arh26b}, we deduce that the unbounded operator $\nabla \co \C_0^{\infty}(M) \subset \L^p(M) \to \L^p(M,\T M)$ is closable with closure denoted $\nabla_p$, that $\dom \nabla_p=\dom (-\Delta_p)^{\frac{1}{2}} $ and that
\begin{equation}
\label{Riesz-Kunst}
\norm{ (-\Delta_p)^{\frac{1}{2}}f}_{\L^p(M)} 
\approx_{p,M}  \norm{\nabla_p f}_{\L^p(M,\T M)}, \quad f \in \dom \nabla_p.
\end{equation}
First, we record the following standard fact.

\begin{prop}
\label{prop-ergodic}
Let $M$ be a complete smooth Riemannian manifold. Suppose that $1 < p < \infty$. The heat semigroup $(\e^{t \Delta_p})_{t \geq 0}$ is mean ergodic and bounded holomorphic on the Banach space $\L^p(M)$.
\end{prop}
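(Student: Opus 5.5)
The argument splits into two independent parts: bounded holomorphy on $\L^p(M)$, and then mean ergodicity, which will follow from reflexivity.

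For the first part, I use that the heat semigroup $(\e^{t\Delta})_{t \geq 0}$ on a complete Riemannian manifold is a symmetric sub-Markovian semigroup. Completeness gives essential selfadjointness of $\Delta$ on $\C_0^\infty(M)$ (Gaffney, Strichartz). The operators $\e^{t\Delta}$ are selfadjoint on $\L^2(M)$, positivity preserving, and contractive on $\L^\infty(M)$. By duality and symmetry they are then contractive on $\L^1(M)$, and by Riesz--Thorin interpolation contractive on every $\L^p(M)$ with $1 \leq p \leq \infty$. The family is consistent across $p$, and strong continuity on $\L^p$ for $1 \leq p < \infty$ is standard. Holomorphy for $1<p<\infty$ follows from Stein's theorem for symmetric diffusion semigroups: it extends analytically to a sector of angle $\frac{\pi}{2}(1-|\frac{2}{p}-1|)$, and is bounded there. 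Alternatively, one can invoke the general result already used in the paper for Markov semigroups, \cite[Proposition 5.4 p.~51, Lemma 3.1 p.~26]{JMX06}, applied to the commutative von Neumann algebra $\L^\infty(M)$. There is a subtlety: that statement is phrased for finite traces, whereas here the volume of $M$ may be infinite. I therefore plan to quote Stein's result, which only needs a $\sigma$-finite measure and submarkovianity. This yields a bounded holomorphic strongly continuous semigroup of contractions with generator $\Delta_p$.

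For mean ergodicity, the space $\L^p(M)$ is reflexive for $1<p<\infty$, and the semigroup is bounded, in fact contractive. Example \ref{bounded-ergodic}, i.e. \cite[Corollary 4.3.5 p.~263]{ABHN11}, then immediately gives mean ergodicity. Equivalently, Example \ref{sectorial-Abel-ergodic} gives Abel-ergodicity of the sectorial operator $-\Delta_p$, and \cite[Proposition 4.3.4 b)]{ABHN11} converts this into mean ergodicity.

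The only genuinely delicate step is justifying submarkovianity and the $\L^p$-contractivity of the heat semigroup on a possibly noncompact complete manifold. I intend to handle it by citing the standard literature (for instance Strichartz, or Grigor'yan's book on heat kernels) rather than reproving it. Once this is in place, both conclusions are formal consequences of results already quoted in the paper.
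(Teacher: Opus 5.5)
Your proposal is correct and follows essentially the same route as the paper. You get $\L^p$-contractivity of the heat semigroup (the paper cites \cite[Exercise 7.36 p.~208]{Gri09}), then reflexivity gives mean ergodicity via Example \ref{bounded-ergodic}, and bounded holomorphy comes from the diffusion-semigroup results \cite[Proposition 5.4 p.~51, Lemma 3.1 p.~26]{JMX06} or \cite[Theorem 1 p.~67]{Ste70}. The paper cites both for this last step, so your choice of Stein is fine. Your finite-trace worry about \cite{JMX06} is unnecessary: the paper itself applies that result with the infinite trace on $\B(\ell^2_I)$ for Schur multipliers.
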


\begin{proof}
Recall that by \cite[Exercise 7.36 p.~208]{Gri09} the heat semigroup $(\e^{t \Delta_p})_{t \geq 0}$ is contractive on the Banach space $\L^p(M)$. Since this space is reflexive, we deduce by Example \ref{bounded-ergodic} that the semigroup $(\e^{t \Delta_p})_{t \geq 0}$ is mean ergodic. Moreover, it is known (essentially proved in \cite{Gri09}) that $(\e^{t \Delta_p})_{t \geq 0}$ is a diffusion semigroup in the sense of \cite[p.~49]{JMX06}. We conclude that the semigroup is bounded holomorphic on the Banach space $\L^p(M)$ by \cite[Proposition 5.4 p.~51, Lemma 3.1 p.~26]{JMX06} or \cite[Theorem 1 p.~67]{Ste70}. 
\end{proof}



\paragraph{Locally symmetric spaces}

Let $M$ be a Riemannian manifold. A geodesic symmetry at $x \in M$ is a map $s_x \ov{\mathrm{def}}{=} \exp_x \circ (-\Id_{\T_xM}) \circ \exp_x^{-1} \co \cal{U}_x \to \cal{U}_x$, where the exponential map $\exp_x \co U_x \to \cal{U}_x$ is a diffeomorphism from a neighbourhood $U_x$ of the origin of $\mathrm{T}_xM$, symmetric with respect to 0, onto a normal neighbourhood $\cal{U}_x$ of $x$. The manifold $M$ is called locally symmetric if $s_x$ is an isometry for all $x \in M$. It is known \cite[Theorem 8.1.1 p.~232]{Wol11} that a Riemannian manifold is a locally symmetric space if and only if its curvature tensor $R$ is parallel, that is,
\begin{equation}
\label{nabla-R-=-zero}
\nabla R
=0.
\end{equation} 
The manifold is said to be a symmetric space if for every $x \in M$, the geodesic symmetry at $x$ extends to a globally defined isometry of $M$. In particular, every symmetric space is locally symmetric. Moreover, if $M$ is complete and connected, then by \cite[Corollary 12.7 p.~351]{Lee18}, $M$ is locally symmetric if and only if it is isometric to a quotient $\Gamma \backslash \tilde{M}$, where $\tilde{M}$ is a symmetric space and where $\Gamma$ is a discrete group that acts freely, properly, and isometrically on $\tilde{M}$. In this case, $\tilde{M}$ can be taken to be the Riemannian universal covering of $M$. We will say that a symmetric space is irreducible \cite[p.~242]{Wol11} if the universal cover $\tilde{M}$ of $M$ is not isometric to a non-trivial product of symmetric spaces.

It is worth noting that each connected symmetric space $M$ is isometric to a quotient $G/K$, where $G \ov{\mathrm{def}}{=} \Iso_0(M)$ is the connected component of the identity of the group of isometries of $M$ and $K \ov{\mathrm{def}}{=} \Stab_G(x)$ is the compact subgroup of $G$ which leaves a fixed point $x \in M$. Moreover, the symmetry $s_x$ gives rise to an involutive automorphism $\sigma_x \co G \to G$, $g \mapsto s_x gs_x$. If $G^\sigma = \{g \in G : \sigma(g) = g\}$ is the fixed point set of $\sigma$, then $(G^\sigma)_0 \leq K \leq G^\sigma$. We refer to \cite[Theorem 3.3 p.~208]{Hel01} for a precise statement and complements. Conversely, $(G,K)$ is called a Riemannian symmetric pair if $G$ is a connected
Lie group, $K$ is a closed subgroup of $G$ such that $\Ad_G(K)$ is a compact subgroup of $\GL(\g)$, and  there exists an 
involutive automorphism $\sigma \co G \to G$ such that $(G^\sigma)_0 \subset K \subset G^\sigma$. Then, by \cite[Proposition 3.4 p.~209]{Hel01}, $G/K$, endowed with any $G$-invariant Riemannian metric, is a Riemannian symmetric space.

We will use the following elementary lemma.

\begin{lemma}
\label{lemma-Ricci-bounded-below}
Let $M$ be a connected locally symmetric space. The Ricci curvature is bounded from below.
\end{lemma}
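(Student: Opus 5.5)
The plan is to exploit the fact that the curvature tensor of a locally symmetric space is parallel, by \eqref{nabla-R-=-zero}. Parallel transport along curves is an isometry $\T_x M \to \T_y M$ that commutes with the parallel tensor $\Rie$. Hence the Ricci tensor, being a contraction of $\Rie$, is also parallel. Since $M$ is connected, any two points $x,y$ are joined by a piecewise smooth curve $\gamma$. Let $\tau_\gamma \co \T_xM \to \T_yM$ denote parallel transport along $\gamma$. I would then show
\[
\Ric_y(\tau_\gamma Y,\tau_\gamma Y)=\Ric_x(Y,Y), \quad Y \in \T_x M.
\]
To prove this, fix $Y$ and let $Y(t)$ be the parallel field along $\gamma$ with $Y(0)=Y$. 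Then
\[
\frac{\d}{\d t}\Ric(Y(t),Y(t))=(\nabla_{\dot\gamma}\Ric)(Y(t),Y(t))+2\Ric(\nabla_{\dot\gamma}Y,Y)=0.
\]
The same computation shows that $g(Y(t),Y(t))$ is constant.

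The lower bound then follows directly. Fix a base point $x_0 \in M$. The symmetric bilinear form $\Ric_{x_0}$ on the finite-dimensional Euclidean space $(\T_{x_0}M,g_{x_0})$ has a smallest eigenvalue $K$, namely the smallest eigenvalue of the selfadjoint endomorphism $\widehat{\Ric}_{x_0}$ of \eqref{Ricci-endomorphism}. So $\Ric_{x_0}(Y,Y)\geq K g_{x_0}(Y,Y)$ for all $Y \in \T_{x_0}M$.

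Now take any $y \in M$ and $Z \in \T_yM$. Choose a curve $\gamma$ from $x_0$ to $y$ and set $Y=\tau_\gamma^{-1}Z$. The invariance above gives
\[
\Ric_y(Z,Z)=\Ric_{x_0}(Y,Y)\geq K g_{x_0}(Y,Y)=K g_y(Z,Z).
\]
Hence $\Ric \geq K$ everywhere. In fact the eigenvalues of $\widehat{\Ric}$ are the same at every point.

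The only step needing care is the claim that $\nabla \Ric=0$ follows from $\nabla \Rie=0$. This holds because covariant differentiation commutes with contractions, and $\Ric$ is the contraction $\tr(X\mapsto \Rie(Y,X)Z)$. Everything else is linear algebra and connectedness. No completeness is needed, and the statement only requires connectedness.
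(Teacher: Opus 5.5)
Your proposal is correct and follows essentially the same route as the paper: you deduce $\nabla \Ric = 0$ from $\nabla \Rie = 0$ because covariant differentiation commutes with contractions, then use parallel transport along a curve to see that $\Ric$ and $g$ are preserved, so the smallest eigenvalue of $\widehat{\Ric}$ gives a global lower bound. The only cosmetic difference is that you transport the inequality directly from a fixed base point, whereas the paper writes $\widehat{\Ric}_x=(P_{0,1}^\gamma)^*\widehat{\Ric}_y P_{0,1}^\gamma$ and concludes that the spectra agree at all points.
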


\begin{proof}
Note that by \eqref{nabla-R-=-zero}, we have $\nabla R=0$. Taking the trace, we obtain, since covariant differentiation commutes with contractions \cite[Proposition 4.15 p.~95]{Lee18}, that the Ricci tensor of a locally symmetric space is parallel, that is, $\nabla \Ric=0$. Let $x,y \in M$. Consider a smooth curve $\gamma \co [0,1] \to M$ from $x$ to $y$. If we consider the parallel transport map $P_{0,1}^\gamma \co \mathrm{T}_xM \to \mathrm{T}_yM$, we deduce that for any $X,Y \in \mathrm{T}_xM$, we have $\Ric_x(X,Y)=\Ric_y(P_{0,1}^\gamma X,P_{0,1}^\gamma Y)$. Using \eqref{Ricci-endomorphism}, we infer that 
$$
g_x\big(\widehat{\Ric}_xX,Y\big)
=g_y\big(\widehat{\Ric}_y P_{0,1}^\gamma X,P_{0,1}^\gamma Y\big)
=g_x\big((P_{0,1}^\gamma)^*\widehat{\Ric}_y P_{0,1}^\gamma X, Y\big).
$$ 
So by uniqueness of the Ricci endomorphism, we obtain that $\widehat{\Ric}_x=(P_{0,1}^\gamma)^*\widehat{\Ric}_y P_{0,1}^\gamma$.
Hence the eigenvalues of the Ricci endomorphism $\widehat{\Ric}_x \co \mathrm{T}_xM \to \mathrm{T}_xM$ are independent of the point $x \in M$. Let $\lambda$ be the smallest one. We get
\[
\Ric_x(X,X)
\ov{\eqref{Ricci-endomorphism}}{=} g_x(\widehat{\Ric}_xX,X)
\geq \lambda g_x(X,X),
\quad x \in M, X \in \mathrm{T}_x M.
\]
Thus $\Ric \geq \lambda g$ on $M$.
\end{proof} 

A connected Riemannian symmetric space $M$ is said to be of non-compact type if it is isometric to a homogeneous space $G/K$, where $G$ is a connected semisimple Lie group with finite center and no compact factors, and $K$ is a maximal compact subgroup of $G$. It is worth noting that by \cite[Corollary 8.3.13 p.~245]{Wol11} a Riemannian symmetric space of non-compact type is simply connected.

Let $\Gamma$ be a discrete group that acts freely, properly, and isometrically on $G/K$. Recall that, for any $x,y \in G/K$ the orbit counting function is defined by $N(R, x, y) \ov{\mathrm{def}}{=} \card \{\gamma \in \Gamma  : \dist(x, \gamma y) \leq R\}$. We can introduce the critical exponent
$$
\delta(\Gamma) 
= \limsup_{R \to \infty} \frac{\log N(R, x, y)}{R},
$$
which is a measure for the exponential growth rate of $\Gamma$ orbits in $G/K$. The definition of the critical exponent $\delta(\Gamma)$ does not depend on the choice of the points $x,y \in G/K$. 
According to \cite[Lemma 3.1 p.~76]{Web08}, we have $\delta(\Gamma)\leq 2\norm{\rho}$, where $\rho$ is half the sum of the positive roots. 



By \cite[Theorem 4.5 p.~82]{Web08} \cite{Web07} (see also \cite{Leu04}) the following holds:
\begin{enumerate}
	\item if $0 \leq \delta(\Gamma) \leq \rho_{\min}$ then $\inf \sigma(-\Delta_2) = \norm{\rho}^2$,

	\item if $\rho_{\min} \leq \delta(\Gamma)  \leq \norm{\rho}$ then $\norm{\rho}^2 - (\delta(\Gamma)  - \rho_{\min})^2 \leq \inf \sigma(-\Delta_2)  \leq \norm{\rho}^2$,

	\item if $\norm{\rho} \leq \delta(\Gamma)  \leq 2\norm{\rho}$ then 
	$$
	\max\{0,\norm{\rho}^2  - ( \delta(\Gamma) -\rho_{\min})^2\} \leq \inf \sigma(-\Delta_2) \leq \norm{\rho}^2 - (\delta(\Gamma) - \norm{\rho})^2,
	$$
\end{enumerate}
where $\inf \sigma(-\Delta_2) $ is the bottom of the spectrum of the Laplacian $-\Delta_2$ on the Hilbert space $\L^2(\Gamma \backslash G/K)$ and where we refer to \cite{Web08} for the definition of $\rho_{\min}$. It is clear that we can have $\inf \sigma(-\Delta) >0$. Now, we obtain in this case the following $\L^p$-Poincar\'e inequality.

\begin{thm}
Let $M=\Gamma \backslash G/K$ be a complete locally symmetric space such that $G$ is a connected semisimple Lie group with finite center and no compact factors and where $\Gamma$ is a discrete group that acts freely, properly, and isometrically on $G/K$. Assume that $\inf \sigma(-\Delta_2) >0$. Suppose that $1 < p < \infty$. Then
\begin{equation*}
\norm{f}_{\L^p(M)}
\lesssim_{p,M} \norm{\nabla_p f}_{\L^p(M,\mathrm{T}M)}, \quad f \in \dom \nabla_p.
\end{equation*}
\end{thm}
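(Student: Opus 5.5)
The plan is to apply Theorem~\ref{thm-direct-Poincare-via-negative-powers} on $X=\L^p(M)$ and $Y=\L^p(M,\T M)$, with $A=-\Delta_p$, $\partial=\nabla_p$ and $\alpha=\frac12$, and to show that in this setting the ergodic projection $P$ vanishes. That is why the conclusion reads $\norm{f}_{\L^p(M)}$ rather than $\norm{f-P(f)}_{\L^p(M)}$.

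\emph{Reverse Riesz estimate.} By Lemma~\ref{lemma-Ricci-bounded-below}, $M$ has Ricci curvature bounded from below. $M$ is complete and connected by assumption. Set $a\ov{\mathrm{def}}{=}\inf\sigma(-\Delta_2)>0$. Then $\sigma(-\Delta_2)\subset[a,\infty)$, so \eqref{spectral-gap-Kunst} holds. Hence \cite[Corollary 1.2 p.~39]{JKW10} applies and gives \eqref{Riesz-Kunst-bis}. Via \cite[Proposition 3.4]{Arh26b} it also gives \eqref{Riesz-Kunst}: $\dom\nabla_p=\dom(-\Delta_p)^{\frac12}$ and $\norm{(-\Delta_p)^{\frac12}f}_{\L^p(M)}\lesssim_{p,M}\norm{\nabla_p f}_{\L^p(M,\T M)}$. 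This is exactly the reverse Riesz estimate \eqref{eq-lower-Riesz-direct-Poincare} with $\alpha=\frac12$. By Proposition~\ref{prop-ergodic}, $-\Delta_p$ is an Abel-ergodic sectorial operator generating a bounded holomorphic semigroup.

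\emph{Spectral gap on $\L^p$.} Since $-\Delta_2$ is positive selfadjoint with $\inf\sigma(-\Delta_2)=a>0$, the spectral theorem gives $\norm{\e^{t\Delta_2}}_{\L^2\to\L^2}\leq\e^{-at}$ on all of $\L^2(M)$. Given $p\neq2$ in $(1,\infty)$, I choose $q\in(1,\infty)$ on the other side of $p$ from $2$, so that $\L^p=(\L^q,\L^2)_\theta$ with $0<\theta<1$. The heat semigroups on $\L^q$ and $\L^2$ are contractive and consistent, both coming from the same heat kernel. Proposition~\ref{prop-stability-interpolation} then yields $\norm{\e^{t\Delta_p}}_{\L^p\to\L^p}\leq\e^{-\theta at}$, so the semigroup is uniformly exponentially stable on the whole of $\L^p(M)$.

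\emph{Consequences of stability.} First, any $f\in\ker\Delta_p$ satisfies $f=\e^{t\Delta_p}f\to0$, so $\ker(-\Delta_p)=\{0\}$ and $P=0$. Second, by reflexivity and \eqref{decompo-reflexive}, $\ovl{\Ran(-\Delta_p)}=\L^p(M)$, so $A_0=-\Delta_p$. Third, uniform exponential stability forces $s(\Delta_p)<0$, hence $0\in\rho(A_0)$; alternatively, Proposition~\ref{prop-reduced-exp-stability-closed-range} gives this directly.

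Theorem~\ref{thm-direct-Poincare-via-negative-powers} then yields $\norm{f}_{\L^p(M)}\leq C\norm{(-\Delta_p)^{-\frac12}}\norm{\nabla_pf}_{\L^p(M,\T M)}$ for $f\in\dom\nabla_p$. The main obstacle is not the abstract part but checking the inputs. One must verify that the hypotheses of \cite{JKW10} are met, in particular that their gap condition is stated on $\L^2$ and is implied by $\inf\sigma(-\Delta_2)>0$. One must also justify the consistency of the heat semigroups across the $\L^p$ scale, which is needed to interpolate and to identify the $\L^2$ spectral bound with decay on $\L^p$.
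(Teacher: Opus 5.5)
Your proposal is correct and follows essentially the same route as the paper. You get exponential decay on $\L^2(M)$ from the bottom of the spectrum, transfer it to $\L^p(M)$ by Proposition~\ref{prop-stability-interpolation} so that the ergodic projection vanishes, obtain the reverse Riesz estimate \eqref{Riesz-Kunst} via Lemma~\ref{lemma-Ricci-bounded-below} and \cite{JKW10}, and conclude with Theorem~\ref{thm-direct-Poincare-via-negative-powers} for $\alpha=\frac12$ and $P=0$. The only cosmetic difference is that you read off $\ker(-\Delta_p)=\{0\}$ and $0\in\rho(-\Delta_p)$ directly from the decay estimate, where the paper invokes Propositions~\ref{prop-reduced-exp-stability-closed-range} and~\ref{prop-strong-limit-projection}.
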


\begin{proof}
By Proposition \ref{prop-ergodic}, the semigroup $(\e^{t \Delta_p})_{t \geq 0}$ is mean ergodic and bounded holomorphic. We have $0 \in \rho(-\Delta_2)$. According to Proposition \ref{prop-reduced-exp-stability-closed-range}, $(\e^{t \Delta_2})_{t \geq 0}$ is uniformly exponentially stable on $\L^2(M)$. By interpolation with Proposition \ref{prop-stability-interpolation}, we deduce that $(\e^{t \Delta_p})_{t \geq 0}$ is uniformly exponentially stable on $\L^p(M)$. By \eqref{uniformly-exponentially-stable}, we infer that there exist constants $C,\omega_p > 0$ such that
\[
\norm{\e^{t\Delta_p}}_{\L^p(M) \to \L^p(M)}
\leq C\e^{-\omega_p t},
\quad t\geq 0.
\]
Taking the limit when $t \to \infty$ and using Proposition \ref{prop-strong-limit-projection}, we obtain  that the mean projection $P \co \L^p(M) \to \L^p(M)$ is equal to 0. The Ricci curvature is bounded from below by Lemma \ref{lemma-Ricci-bounded-below}. From \eqref{Riesz-Kunst}, we see that the reverse Riesz estimate
\[
\bnorm{(-\Delta_p)^{\frac{1}{2}}f}_{\L^p(M)}
\lesssim_{p,M}
\norm{\nabla_p f}_{\L^p(M,\mathrm{T}M)}, \quad f \in \dom \nabla_p
\]
holds (see also \cite{MaM09} for a previous partial result). Now, we conclude using Theorem \ref{thm-direct-Poincare-via-negative-powers} with $\alpha=\frac{1}{2}$, $\partial=\nabla_p$ and $P=0$.
\end{proof}

\begin{remark} \normalfont
Let $M$ be a symmetric space of non-compact type. According to \cite[Proposition 2.2 p.~780]{Tay89}, for any $p \in [1,\infty)$, the spectrum of the closure $-\Delta_{p}$ of the Laplacian operator on $\L^p(M)$ is given by 
$$
\sigma(-\Delta_{p}) 
= \Big\{ \norm{\rho}^2 + z^2 : z \in \mathbb{C}, |\mathrm{Im} \, z| \leq \big|\tfrac{2}{p}-1\big|\norm{\rho} \Big\}.
$$
Furthermore, if $p > 2$ any point in the interior of this parabolic region is an eigenvalue for $-\Delta_{p}$ and eigenfunctions corresponding to these eigenvalues are given by spherical functions.
\end{remark}

\begin{remark} \normalfont
A proof of \eqref{Riesz-Kunst} is sketched in \cite[Theorem 6.1 p.~75]{Str83} for rank-one symmetric space $G/K$ of non-compact type, where $G$ is a non-compact connected semi-simple Lie group of real rank one, and $K$ a maximal compact subgroup. 
\end{remark}


\paragraph{Cartan-Hadamard manifolds}
Let $M$ be a Cartan--Hadamard manifold, i.e., a complete simply connected Riemannian manifold with non-positive sectional curvature. Assume in addition that the sectional curvature is bounded above by a \textit{strictly negative} constant $-k^2$ and that the dimension of $M$ is $d \geq 2$. By \cite[p.~360]{McK70} (see also \cite{Pin81}), the spectrum of its Laplacian $-\Delta_2$ on $\L^2(M)$ is included in the interval $\big[\frac{(d-1)^2k^2}{4},\infty\big)$. This implies that $\ker \Delta_2=\{0\}$, $0 \in \rho(-\Delta_2)$ and it admits by Proposition \ref{prop-gap-and-s} an $\L^2$-Poincar\'e inequality, namely 
\begin{equation}
\label{Poincare-Cartan}
\norm{f}_{\L^2(M)}
\lesssim_M \norm{\nabla_2 f}_{\L^2(M,\mathrm{T}M)}, \quad f \in \C^\infty_0(M).
\end{equation}
Indeed, Strichartz essentially proved in \cite[Theorem 5.4 p.~68]{Str83} the following $\L^p$-Poincar\'e inequality, which generalizes \eqref{Poincare-Cartan}.

\begin{thm}[Strichartz]
\label{thm-Strichartz}
Suppose that $1 < p < \infty$. Let $M$ be a Cartan--Hadamard manifold with dimension $d$, whose sectional curvature is bounded above by a strictly negative constant $-k^2$. Then
\begin{equation*}
\norm{f}_{\L^p(M)}
\lesssim \frac{p}{(d-1)k} \norm{\nabla_p f}_{\L^p(M,\mathrm{T}M)}, \quad f \in \C^\infty_0(M).
\end{equation*}
\end{thm}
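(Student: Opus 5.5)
We deduce Theorem \ref{thm-Strichartz} from Theorem \ref{thm-direct-Poincare-via-negative-powers} with $\alpha=\frac12$, $\partial=\nabla_p$ and $A=-\Delta_p$. The inputs are the mean ergodicity of the heat semigroup, a spectral gap on $\L^p(M)$ with $P=0$, and a reverse Riesz estimate. We only aim at the qualitative inequality; the explicit constant $\frac{p}{(d-1)k}$ requires a quantitative gap on $\L^p$, which we address last.

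\emph{Semigroup and projection.} By Proposition \ref{prop-ergodic}, $(\e^{t\Delta_p})_{t\geq0}$ is mean ergodic and bounded holomorphic on $\L^p(M)$. By McKean's estimate, $\sigma(-\Delta_2)\subset[\frac{(d-1)^2k^2}{4},\infty)$, so $\ker\Delta_2=\{0\}$. Hence $\norm{\e^{t\Delta_2}}\leq \e^{-(d-1)^2k^2t/4}$. The semigroup is contractive on $\L^1$ and on $\L^\infty$. Proposition \ref{prop-stability-interpolation} with $X_0=\L^1$ or $\L^\infty$ and $X_1=\L^2$ then gives, for $1<p<\infty$,
\[
\norm{\e^{t\Delta_p}}_{\L^p\to\L^p}\leq \e^{-\theta_p\frac{(d-1)^2k^2}{4}t},
\qquad \theta_p=2\min\big(\tfrac1p,1-\tfrac1p\big).
\]
By Proposition \ref{prop-strong-limit-projection} (or directly), the mean ergodic projection is $P=0$. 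Therefore $\ovl{\Ran(-\Delta_p)}=\L^p(M)$ and $A_0=-\Delta_p$. Proposition \ref{prop-reduced-exp-stability-closed-range} then yields $0\in\rho(-\Delta_p)$.

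\emph{Reverse Riesz.} Negative sectional curvature does not give a Ricci lower bound, so \eqref{Riesz-Kunst} from \cite{JKW10} does not apply directly. If the curvature is additionally bounded below, Ricci is bounded below and \eqref{Riesz-Kunst} holds on $\dom\nabla_p$. In general we would use Proposition \ref{prop-duality-Riesz}: the reverse estimate on $\L^p$ follows from the direct Riesz estimate $\norm{\nabla f}_{p^*}\lesssim\norm{(-\Delta)^{1/2}f}_{p^*}$ on the dual side. For $p^*\leq2$, the direct estimate holds on any complete manifold (Coulhon--Duong, and trivially at $p=2$). For $p^*>2$, we would appeal to Strichartz's argument in \cite{Str83}, or impose bounded geometry. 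We work on $f\in\C_0^\infty(M)$ and use $\ker(-\Delta_p)=\{0\}$ to handle $\Id-P=\Id$. With both inputs, Theorem \ref{thm-direct-Poincare-via-negative-powers} gives $\norm{f}_p\leq C_p\norm{(-\Delta_p)^{-1/2}}\norm{\nabla f}_p$.

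\emph{Constant.} This is the main obstacle. We bound $(-\Delta_p)^{-1/2}=\Gamma(\frac12)^{-1}\int_0^\infty t^{-1/2}\e^{t\Delta_p}\d t$ using the exponential decay above. This gives
\[
\norm{(-\Delta_p)^{-1/2}}\leq\Big(\theta_p\frac{(d-1)^2k^2}{4}\Big)^{-1/2}\lesssim \frac{\sqrt{p}}{(d-1)k}\quad\text{for large } p,
\]
and symmetrically for $p$ near $1$. The resulting $p$-dependence is acceptable, but the reverse Riesz constant $C_p$ must be $O(\sqrt p)$ to reach $\frac{p}{(d-1)k}$. The abstract method is not expected to give sharp constants. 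To recover the exact constant, we would instead follow Strichartz's direct argument: integrate $|f|^{p-2}\bar f\,\Delta f$, use the gap of the $\L^p$ generator, and apply Hölder.
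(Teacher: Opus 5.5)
Your first two steps (mean ergodicity, McKean's gap transported to $\L^p$ by interpolation so that $P=0$ and $0\in\rho(-\Delta_p)$, then Theorem~\ref{thm-direct-Poincare-via-negative-powers} with $\alpha=\frac12$) are exactly the paper's argument. The paper, however, uses it only for the \emph{qualitative} inequality \eqref{My-Stri}, under Lohou\'e's extra hypothesis that the curvature tensor and its first two covariant derivatives are bounded. The theorem as stated is quoted from \cite{Str83}, not derived from the abstract principle. As a proof of the statement, your proposal has two genuine gaps.

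First, the reverse Riesz estimate is not available under the stated hypotheses. Coulhon--Duong's $\L^q$-bound for the Riesz transform, $1<q\leq 2$, assumes volume doubling and Gaussian heat kernel upper bounds. Doubling fails here because the volume growth is exponential, so their result does not give the direct estimate on $\L^{p^*}$ ``on any complete manifold''. For $p^*>2$, \cite{Str83} only sketches the Riesz estimate for rank-one symmetric spaces, and the paper explicitly leaves the needed estimate \eqref{Lohoue-improved} as a conjecture. Adding a lower curvature bound (so that \eqref{Riesz-Kunst} applies) or bounded geometry changes the theorem.

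Second, even granting reverse Riesz, the abstract route yields the constant $C_p\norm{(-\Delta_p)^{-1/2}}$ with $C_p$ uncontrolled. Your bound on $\norm{(-\Delta_p)^{-1/2}}$ blows up like $(p-1)^{-1/2}$ as $p\to 1$, while $\frac{p}{(d-1)k}$ stays bounded. So this route cannot produce the stated $p$-dependence.

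Your fallback works only for $p\geq 2$. There, the bound $\norm{\e^{t\Delta_p}}\leq\e^{-\lambda_pt}$ with $\lambda_p=\frac{(d-1)^2k^2}{2p}$ gives, by dissipativity, $\lambda_p\norm{f}_p^p\leq\Re\int_M(-\Delta f)|f|^{p-2}\ovl{f}\leq(p-1)\int_M|f|^{p-2}|\nabla f|^2\leq(p-1)\norm{f}_p^{p-2}\norm{\nabla f}_p^2$. The last step is H\"older with exponents $\frac{p}{p-2}$ and $\frac p2$. This yields $\norm{f}_p\leq\frac{\sqrt{2p(p-1)}}{(d-1)k}\norm{\nabla f}_p$, with no Riesz transform at all. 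For $1<p<2$ the H\"older step fails because of the negative power of $|f|$, and nothing in your spectral set-up replaces it.

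The missing idea is geometric. With $r=\dist(o,\cdot)$ one has $|\nabla r|=1$, and by the Laplacian comparison theorem $\Delta r\geq(d-1)k\coth(kr)\geq(d-1)k$ on $M\setminus\{o\}$. The singularity at $o$ produces no boundary term when $d\geq 2$. Integrating against $|f|^p$, $f\in\C^\infty_0(M)$, gives for every $1\leq p<\infty$
\[
(d-1)k\norm{f}_{\L^p(M)}^p\leq\int_M|f|^p\Delta r=-\int_M\la\nabla|f|^p,\nabla r\ra\leq p\int_M|f|^{p-1}|\nabla f|\leq p\norm{f}_{\L^p(M)}^{p-1}\norm{\nabla f}_{\L^p(M,\T M)}.
\]
This is the theorem with the exact constant $\frac{p}{(d-1)k}$, and it uses neither a Riesz estimate nor the spectral machinery.
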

Suppose that $1 < p < \infty$. Let $M$ be a Cartan--Hadamard manifold whose sectional curvature is bounded above by a strictly negative constant \textit{and whose curvature tensor, together with its first two covariant derivatives, is bounded.} According to a result of Lohou\'e \cite[Corollaire 14 p.~191]{Loh85}, we have the Riesz equivalence
\begin{equation}
\label{Lohoue}
\norm{\nabla f}_{\L^p(M,\mathrm{T}M)} 
\approx_{p,M} \bnorm{(-\Delta)^{\frac{1}{2}} f}_{\L^p(M)} ,\quad f \in \C^\infty_0(M).
\end{equation}
For instance, the assumption on the curvature tensor is satisfied by the real hyperbolic space
$\mathrm{Hyp}^d$ of dimension $d \geq 2$. Indeed, if the sectional curvature is normalized to be $-1$, then by \cite[Proposition II.3.1 p.~65]{Cha06} we have
\[
\Rie(X,Y)Z
=-\bigl(g(X,Z)Y-g(Z,Y)X\bigr).
\]
Since the Levi-Civita connection is compatible with $g$ according \cite[Theorem 5.10 p.~122]{Lee18}, we have $\nabla g=0$ by \cite[Proposition 5.5 p.~118]{Lee18}. A simple computation \cite{Lee09} left to the reader shows that 
$\nabla \Rie=0$. This implies that $\nabla^2 \Rie=0$. Thus the curvature tensor and its first two covariant derivatives are bounded.

So, using Theorem \ref{thm-direct-Poincare-via-negative-powers}, we can give a proof of a qualitative version of Theorem \ref{thm-Strichartz}:
\begin{equation}
\label{My-Stri}
\norm{f}_{\L^p(M)}
\lesssim_{p,M} \norm{\nabla_p f}_{\L^p(M,\mathrm{T}M)}, \quad f \in \dom \nabla_p.
\end{equation}

\begin{proof}[of \eqref{My-Stri}]
By Proposition \ref{prop-ergodic}, the semigroup $(\e^{t \Delta_p})_{t \geq 0}$ is mean ergodic and bounded holomorphic. We have seen that $0 \in \rho(-\Delta_2)$. By Proposition \ref{prop-reduced-exp-stability-closed-range}, $(\e^{t \Delta_2})_{t \geq 0}$ is uniformly exponentially stable on $\L^2(M)$. By interpolation with Proposition \ref{prop-stability-interpolation}, we conclude that $(\e^{t \Delta_p})_{t \geq 0}$ is uniformly exponentially stable on $\L^p(M)$. So by \eqref{uniformly-exponentially-stable}, there exist constants $C,\omega_p > 0$ such that
\[
\norm{\e^{t\Delta_p}}_{\L^p(M) \to \L^p(M)}
\leq C\e^{-\omega_p t},
\quad t \geq 0.
\]
Taking the limit when $t \to \infty$, we deduce from Proposition \ref{prop-strong-limit-projection} that the mean projection $P \co \L^p(M) \to \L^p(M)$ is equal to 0. Using \cite[Proposition 3.4]{Arh26b}, we can extend the equivalence \eqref{Lohoue} to elements of the space $\dom \nabla_p$. So, we see that the reverse Riesz estimate
\[
\bnorm{(-\Delta_p)^{\frac{1}{2}}f}_{\L^p(M)}
\lesssim_{p,M}
\norm{\nabla_p f}_{\L^p(M,\mathrm{T}M)}, \quad f \in \dom \nabla_p
\]
holds. Now, it suffices to use Theorem \ref{thm-direct-Poincare-via-negative-powers} with $\alpha=\frac{1}{2}$, $\partial=\nabla_p$ and $P=0$.
\end{proof}

In the particular case $M=\mathbb{H}^d$, the constant $\frac{p}{d-1}$ is sharp by \cite[Theorem 1.1]{NgN19} and is never achieved in $\W^{1,p}(\mathbb{H}^d)$. It is worth noting that the spectrum of the Laplacian $-\Delta_p$ on the Banach space $\L^p(\mathbb{H}^d)$ is explicitly known, see \cite[Theorem 5.7.1 p.~178]{Dav89}. It is natural to believe that Lohou\'e's curvature assumptions should not be needed. This leads us to conclude with the following conjecture.

\begin{conj}
Suppose that $1 < p < \infty$. Let $M$ be a Cartan-Hadamard manifold whose sectional curvature is bounded above by a strictly negative constant. Then
\begin{equation}
\label{Lohoue-improved}
\norm{\nabla f}_{\L^p(M,\mathrm{T}M)} 
\lesssim_{p,M} \bnorm{(-\Delta)^{\frac{1}{2}} f}_{\L^p(M)} ,\quad f \in \C^\infty_0(M).
\end{equation} 
\end{conj}
By Proposition \ref{prop-duality-Riesz}, this conjecture implies a reverse Riesz estimate.


\paragraph{Compact manifolds}
We continue with the standard case of compact Riemannian manifolds, which satisfies \eqref{spectral-gap-Kunst} by \cite[Theorem 4.3.1 p.~77]{Lab15} (see also \cite[Exercise 10.10 p.~282]{Gri09}). Riesz estimates are available with \cite[Proposition 4.1]{Arh26c}. So we recover the following result \cite[Theorem 2.10 p.~40]{Heb96}, \cite[Lemma 3.8 p.~24]{Heb00} stated here with the useless assumption $1<p<n$. See also \cite[Corollary A.1.2 p.~660]{Jos17} for the case $p=2$.

\begin{thm}
Let $M$ be a smooth connected compact Riemannian manifold. Suppose that $1 < p < \infty$. Then
\begin{equation*}
\norm{f- \frac{1}{\vol(M)}\int_M f}_{\L^p(M)}
\lesssim_{p,M} \norm{\nabla_p f}_{\L^p(M,\mathrm{T}M)}, \quad f \in \dom \nabla_p.
\end{equation*}
\end{thm}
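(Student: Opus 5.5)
The proof will be a direct application of Theorem \ref{thm-direct-Poincare-via-negative-powers} with $\alpha=\frac12$, $X=\L^p(M)$, $Y=\L^p(M,\mathrm{T}M)$, $A=-\Delta_p$ and $\partial=\nabla_p$. It then remains to check three things: Abel-ergodicity, the spectral gap $0\in\rho(A_0)$, and the reverse Riesz estimate. We also have to identify the ergodic projection $P$ with the averaging map $f\mapsto \frac{1}{\vol(M)}\int_M f$.

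\emph{Ergodicity and the projection.} By Proposition \ref{prop-ergodic}, the heat semigroup $(\e^{t\Delta_p})_{t\geq0}$ is mean ergodic and bounded holomorphic on the reflexive space $\L^p(M)$. Hence $A$ is Abel-ergodic, with ergodic projection $P$ given by the Ces\`aro limit \eqref{def-de-P-lim}. To identify $P$, I would use that $M$ is compact, so $1\in\L^p(M)$ and $\e^{t\Delta}1=1$, and that the heat semigroup preserves the integral (it is symmetric and conservative on a compact manifold). The map $f\mapsto \frac{1}{\vol(M)}\int_M f$ is therefore a projection commuting with the semigroup and fixing constants. On smooth functions, $\Delta f=0$ forces $f$ constant by connectedness, after integrating $f\Delta f$. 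Smoothness of heat kernels lets me upgrade this to $\ker\Delta_p=\Cbb 1$ for every $p$. Since $P$ is the projection onto $\ker A$ along $\ovl{\Ran A}$, and every element of $\Ran A$ has integral zero, $P$ is exactly the averaging map.

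\emph{Spectral gap.} On $\L^2(M)$, the operator $-\Delta_2$ has compact resolvent on a compact manifold, so \eqref{spectral-gap-Kunst} holds, as recalled just before the statement. Proposition \ref{prop-Gap-equivalente} and Proposition \ref{prop-gap-and-s} give $0\in\rho(A_{2,0})$. By Proposition \ref{prop-reduced-exp-stability-closed-range}, the restricted semigroup is then uniformly exponentially stable on $\L^2_0(M)$. To pass to $\L^p$, I would apply Proposition \ref{prop-stability-interpolation} to the semigroup $\e^{t\Delta}(\Id-P)$. This family is consistent across $p$, bounded on $\L^q$ for $q$ near $1$ or near $\infty$ (with norm at most $2$), and exponentially decaying on $\L^2$. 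Choosing $X_0=\L^q$ and $X_1=\L^2$ with $q$ on the far side of $p$ yields exponential decay on $\L^p$, i.e.\ on $\ovl{\Ran A_p}$. Proposition \ref{prop-reduced-exp-stability-closed-range} then converts this back into $0\in\rho(A_0)$ on $\L^p$.

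Alternatively, $-\Delta_p$ has compact resolvent on $\L^p(M)$ because the heat kernel is smooth on a compact manifold, and \eqref{compact-resolvent} applies directly. This route avoids the interpolation step, and I would probably use it.

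\emph{Reverse Riesz estimate.} The paper cites \cite[Proposition 4.1]{Arh26c} for Riesz estimates. Since $M$ is compact, the Ricci curvature is bounded below, and \eqref{spectral-gap-Kunst} holds, so \eqref{Riesz-Kunst} already gives $\|(-\Delta_p)^{1/2}f\|_{\L^p}\lesssim\|\nabla_p f\|_{\L^p}$ for $f\in\dom\nabla_p$, with $\dom\nabla_p=\dom(-\Delta_p)^{1/2}$. Theorem \ref{thm-direct-Poincare-via-negative-powers} then gives the inequality.

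The only delicate point is the identification of $P$ and of $\ker\Delta_p$ for $p\neq 2$. I expect this to be straightforward via elliptic regularity or smoothing of the heat semigroup, and I do not see a real obstacle.
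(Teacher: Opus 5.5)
Your proposal is correct and follows the paper's route: the paper only notes that compact manifolds satisfy the gap \eqref{spectral-gap-Kunst}, that Riesz estimates are available (it cites \cite[Proposition 4.1]{Arh26c}, whereas you use \eqref{Riesz-Kunst}, which is equally valid since the Ricci curvature of a compact manifold is bounded below), and then applies Theorem~\ref{thm-direct-Poincare-via-negative-powers} with $\alpha=\frac12$. The extra details you supply are sound and fill in steps the paper leaves implicit: the identification of $P$ with the average via $\ker\Delta_p=\Cbb 1$, and the transfer of the gap to $\L^p$ via compact resolvent or interpolation.
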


\paragraph{Manifolds with cusps of rank one}
Manifolds with cusps of rank one are non-compact manifolds with finite volume satisfying also the condition $\sigma(-\Delta) = \{0,\lambda_1,\ldots,\lambda_r\} \cup [b,\infty)$ for some $b > 0$, see \cite{Mul87} and \cite{Web07}. So we obtain an $\L^p$-Poincar\'e inequality in this case.

\vspace{0.2cm}

Finally, we refer to \cite[p.~274]{Li08} and \cite{Li09} for other results on $\L^p$-Poincar\'e inequalities in the context of complete Riemannian manifolds.

\subsection{Poincar\'e inequalities on Riemannian spin manifolds}


We briefly recall the spin-geometric notation used below. 
Let $d \geq 2$. Consider the canonical two-fold covering $\lambda \co \Spin(d) \to \SO(d)$ of the special orthogonal group $\SO(d)$, which is the universal cover of $\SO(d)$ if $d \geq 3$. A spin structure on an oriented Riemannian manifold $M$ of dimension $d$ is a $\lambda$-reduction 
$(P_{\Spin(d)},\Theta)$ of the principal bundle $P_{\SO(d)}(M) \to M$ of oriented orthonormal frames of the tangent bundle $\mathrm{T}M$. This means that $P_{\Spin(d)}(M) \to M$ is a principal $\Spin(d)$-bundle and that $\Theta \co P_{\Spin(d)}(M) \to P_{\SO(d)}(M)$ is a bundle map over the identity of $M$ that is compatible  with the corresponding group actions, i.e., $\Theta(u \cdot s)=\Theta(u) \cdot \lambda(s)$ for any $s \in \Spin(d)$ and any $u \in P_{\Spin(d)}(M)$. Recall that a Riemannian spin manifold \cite[Definition 1.1.2 p.~2]{Gin09} is an orientable smooth Riemannian manifold admitting a spin structure. It is known that an oriented Riemannian manifold admits a spin structure if and only if $w_2(\T M)=0$ in the \v{C}ech cohomology group $\mathrm{H}^2(M,\mathbb{Z}_2)$ with coefficients in $\mathbb{Z}_2$. Moreover, when spin structures exist, their equivalence classes are parametrized by $\mathrm{H}^1(M,\mathbb{Z}_2)$, see \cite[Proposition 1.1.3, p.~2]{Gin09} and \cite[Proposition 3.34, p.~115]{BGV04}. In the sequel, a Riemannian spin manifold will always be equipped with a fixed orientation and a fixed spin structure.

%


Let $\Cl_d(\mathbb{C})
\ov{\mathrm{def}}{=}
\Cl_{d,0}(\R) \ot_{\R} \mathbb{C}$ be the complex Clifford algebra associated with the Euclidean space $\R^d$, with the convention $vw+wv=-2\la v,w \ra_{\R^d}\Id$, where $v,w \in \R^d$. Fix an irreducible complex representation $\rho_d \co \Cl_d(\mathbb{C}) \to \End_{\mathbb{C}}(\Delta_d)$, 
and denote by $
\kappa_d
\ov{\mathrm{def}}{=}
\rho_d|_{\Spin(d)}
\co
\Spin(d) \to \mathrm{GL}(\Delta_d)$ the associated complex spin representation, defined in \cite[Definition 5.14 p.~36]{LaM89}
, which is a faithful representation of the group $\Spin(d)$ by \cite[Proposition p.~20]{Fri00}. 

According to \cite[Proposition 3.26 p.~112]{BGV04} and \cite[Proposition p.~24]{Fri00}, there exists a Hermitian inner product $\la \cdot,\cdot\ra_{\Delta_d}$ on $\Delta_d$, unique up to multiplication by a positive constant, for which Clifford multiplication by real vectors is skew-Hermitian:
\[
\la \rho_d(v)\xi,\eta\ra_{\Delta_d}
=
-\la \xi,\rho_d(v)\eta\ra_{\Delta_d},
\quad
v\in\R^d,\quad \xi,\eta\in\Delta_d.
\]
We fix such an inner product. In particular, the representation $\kappa_d$ is unitary with respect to this inner product by \cite[Proposition 3.11 p.~107]{BGV04}. 

The spinor bundle of $M$ associated with the chosen spin structure is the complex vector bundle $
\Sigma M
\ov{\mathrm{def}}{=}
P_{\Spin(d)}(M)\times_{\kappa_d}\Delta_d$, see \cite[Definition 1.2.2, pp.~5--6]{Gin09}. The space of smooth sections of $\Sigma M$ is denoted by $\C^\infty(M,\Sigma M)$ and its elements are called spinor fields. The Hermitian inner product on $\Delta_d$ induces a canonical Hermitian metric on the vector bundle $\Sigma M$.

By \cite[Definition 2.4.9 p.~100]{Bar11}, 
the representation $\rho_d \co \Cl_d(\mathbb{C}) \to \End_{\mathbb{C}}(\Delta_d)$ induces a bundle map $
c \co \T M \to \mathrm{End}_{\Cbb}(\Sigma M)$. 
We write $X\cdot\varphi$ for $c(X)\varphi$ for any $X \in \T_xM$ and $\varphi \in \Sigma_xM$. By \cite[pp.~100-101]{Bar11} we have the Clifford relations $
X \cdot(Y\cdot\varphi) + Y\cdot(X\cdot\varphi)
=
-2g(X,Y)\varphi$ and Clifford multiplication by tangent vectors is skew-Hermitian, i.e., $
\la X \cdot\varphi,\psi \ra
=-\la\varphi,X \cdot \psi \ra$, where $X,Y \in \T_xM$ and $\varphi,\psi \in \Sigma_xM$.

By \cite[pp.~100-101]{Bar11},
the Levi-Civita connection on $P_{\SO(d)}(M)$ lifts through the spin structure and induces a canonical metric connection $
\nabla^\Sigma
\co
\C^\infty(M,\Sigma M)
\to
\C^\infty(M,\T^*M\ot\Sigma M)$, 
called the spin connection. By \cite[Lemma 2.4.12 p.~103]{Bar11} (see also \cite[Definition 1.2.2 (iii) p.~6]{Gin09}), it is compatible with Clifford multiplication in the sense that
\[
\nabla_X^\Sigma(Y\cdot\sigma)
=
(\nabla_XY)\cdot\sigma
+
Y\cdot\nabla_X^\Sigma\sigma,
\quad
X,Y\in\C^\infty(M,\T M),
\sigma\in\C^\infty(M,\Sigma M),
\]
where $\nabla$ denotes the Levi-Civita connection on $\T M$. The Dirac operator is then defined \cite[Definition 5.5.12 p.~406]{RuS17} as the composition of this connection with the Clifford multiplication
\[
D
\ov{\mathrm{def}}{=}
c \circ \nabla^{\Sigma}
\co \C^\infty_c(M,\Sigma M)
\xrightarrow{\nabla^{\Sigma}}
\C^\infty_c(M,\T^*M \ot \Sigma M)
\cong \C^\infty_c(M,\T M \ot \Sigma M)
\xrightarrow{c}
\C^\infty_c(M,\Sigma M),
\]
where we use the Riemannian metric to identify the cotangent bundle $\T^*M$ with the tangent bundle $\T M$. By \cite[Proposition 1.3.4 p.~12]{Gin09}, the Dirac operator is an elliptic and formally selfadjoint differential operator of first order.
From now on, we assume that $M$ is complete. Then the operator $
D\co
\C^\infty_c(M,\Sigma M)
\subset
\L^2(M,\Sigma M)
\to
\L^2(M,\Sigma M)$ is essentially selfadjoint by \cite[Theorem 5.1, p.~622]{Wol73} (see also \cite[Proposition 1.3.5, p.~13]{Gin09}).

Suppose that $1 < p < \infty$. It is essentially proved in \cite{AmG16} and \cite{ChG23} that the Dirac operator $D \co \C^\infty_c(M,\Sigma M) \subset \L^p(M,\Sigma M) \to \L^p(M,\Sigma M)$ is closable with closure denoted $D_p$ and that the domain of the closed operator $D_p$ is the completion $\W^{1,p}_D(M,\Sigma M)$ of the space $\C_c^\infty(M,\Sigma M)$ equipped with the norm
\begin{equation}
\norm{\sigma}_{\W^{1,p}_D(M,\Sigma M)}
\ov{\mathrm{def}}{=}
\norm{\sigma}_{\L^p(M,\Sigma M)} +\norm{D\sigma}_{\L^p(M,\Sigma M)}.
\end{equation}
Moreover, by \cite[Lemma B.2]{AmG16}, we have $D_p^*=D_{p^*}$ where $p^* \ov{\mathrm{def}}{=} \frac{p}{p-1}$
is the H\"older conjugate exponent. 

Following \cite[p.~60]{Cha06}, the scalar curvature $\Scal \co M \to \R$ of $M$ is defined by $\Scal(x) \ov{\mathrm{def}}{=} \tr(\widehat{\Ric}_x)$, where $x \in M$. Recall that the Ricci endomorphism $\widehat{\Ric}_x \co \T_x M \to \T_x M$ is defined in \eqref{Ricci-endomorphism}. It is worth noting that, by \cite[Theorem 1.3.8 p.~16]{Gin09}, the Dirac operator $D$ satisfies the Schr\"odinger-Lichnerowicz formula $D^2 = (\nabla^\Sigma)^* \nabla^\Sigma + \frac{\Scal}{4}\Id$. By \cite[Theorem 7.3.1 p.~106]{Gin09} (see also \cite[Theorem 3.1 p.~70]{Bar09} for a more general statement), the bottom of the spectrum $\sigma(D_2^2)$ of the operator $D_2^2$ satisfies
\begin{equation}
\label{min-sigma}
\inf \sigma(D_2^2) 
\geq \frac{d}{4(d-1)} \inf_M \Scal.
\end{equation} 
By \cite[Theorem 3.5]{BBC20}, if the scalar curvature $\Scal$ is positive and if $1 < p < \infty$, we have the Riesz estimate
\begin{equation}
\label{Banuelos}
\norm{D \sigma}_{\L^p(M,\Sigma M)} 
\lesssim_{p,M} \bnorm{(D^2)^{\frac{1}{2}} \sigma}_{\L^p(M,\Sigma M)}, \quad \sigma \in \C^\infty_c(M,\Sigma M).
\end{equation}
Although \cite[Theorem 3.5]{BBC20} is stated for even-dimensional spin manifolds, the same proof applies in odd dimension. 
Suppose that $1 <p < \infty$. If $M$ is complete with Ricci curvature and scalar curvature bounded from below, the semigroup $\e^{-t D_2^2}$ induces a strongly continuous semigroup of operators on the Banach space $\L^p(M,\Sigma M)$ with infinitesimal generator $-D_p^2$, which is the square of the closure of the operator $D \co \C^\infty_c(M,\Sigma M) \subset \L^p(M,\Sigma M) \to \L^p(M,\Sigma M)$ according to \cite[Corollary A.2]{ChG23}.

We need the following result.

\begin{prop}
\label{prop-bounded-holomorphic}
Let $E$ be a Hermitian complex vector bundle of finite rank over a smooth Riemannian manifold $M$. Suppose that $(T_t)_{t \geq 0}$ is a consistent semigroup of contractions on $\L^p(M,E)$ for any $1 \leq p \leq \infty$, which is strongly continuous and selfadjoint on the Hilbert space $\L^2(M,E)$. Let $-A_p$ denote the generator of $(T_t)_{t\geq 0}$ on $\L^p(M,E)$. If $1 < p < \infty$, the semigroup $(T_t)_{t \geq 0}$ is bounded holomorphic on the Banach space $\L^p(M,E)$ and $A_p$ is a sectorial operator with $\omega_{\sec}(A_p) \leq \pi \big|\frac{1}{p} -\frac{1}{2}\big|$.
\end{prop}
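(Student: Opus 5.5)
The plan is to derive the statement from Stein's complex interpolation argument for symmetric diffusion semigroups, in its vector-bundle form. The first step is to place $T_t$ in the setting of \cite[Proposition 5.4 p.~51, Lemma 3.1 p.~26]{JMX06} or \cite[Theorem 1 p.~67]{Ste70}. The space $\L^p(M,E)$ of sections of a Hermitian bundle of finite rank is a Bochner-type space. It is a closed subspace of an $\L^p$-space of functions with values in a finite-dimensional Hilbert space, and it interpolates correctly: $(\L^{p_0}(M,E),\L^{p_1}(M,E))_\theta=\L^{p_\theta}(M,E)$. This can be checked locally by trivializing $E$ with unitary frames, or globally via the isometric description of sections as measurable functions with values in the fibers together with the standard interpolation of vector-valued $\L^p$. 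Strong continuity on $\L^p$ for $1<p<\infty$ then follows from strong continuity on $\L^2$ and uniform boundedness on $\L^1$ and $\L^\infty$. Riesz--Thorin interpolation gives convergence on the dense subspace $\L^1\cap\L^\infty$.

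The core step is the holomorphic extension. On $\L^2(M,E)$ the operator $A_2$ is positive selfadjoint, so by Example \ref{Example-analytic-positive-selfadjoint} the map $z\mapsto T_z=\e^{-zA_2}$ is holomorphic and contractive on the right half-plane $\Re z>0$. On $\L^1$ and $\L^\infty$ we only know $\norm{T_t}\leq 1$ for real $t$. I will follow Stein's trick. For $z=t\e^{\i\phi}$, consider the analytic family $S(w)=\e^{-t\e^{\i\phi w}A}$ in a strip, with the angle depending linearly on the interpolation parameter $w$. On one boundary line $w$ gives a real time or a time with angle close to $\pi/2$ on $\L^2$, where the bound is $1$. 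On the other boundary line the operator acts on $\L^1$ or $\L^\infty$ with real time. Stein's analytic interpolation theorem, valid for vector-valued $\L^p$ since the bounds are uniform, then gives $\norm{T_z}_{\L^p\to\L^p}\leq 1$ on the sector $|\arg z|<\frac{\pi}{2}\bigl(1-|\frac{2}{p}-1|\bigr)$. Equivalently, $T_z$ is contractive on $\Sigma_{\pi/2-\pi|1/p-1/2|}$. The admissibility (growth) conditions are checked on simple functions of $\L^1\cap\L^\infty$ sections, exactly as in the scalar case.

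Finally, a bounded holomorphic semigroup on the sector $\Sigma_\delta$ has a generator $A_p$ that is sectorial with $\omega_{\sec}(A_p)\leq\frac{\pi}{2}-\delta$, by \cite[Proposition 3.4.4 p.~79]{Haa06}. With $\delta=\frac{\pi}{2}-\pi|\frac1p-\frac12|$ this gives the announced bound $\omega_{\sec}(A_p)\leq\pi|\frac1p-\frac12|$. The main obstacle is the Stein interpolation step: one must set up the analytic family so that the endpoint bounds use only real-time contractivity on $\L^1$ and $\L^\infty$. If this becomes delicate, a fallback is to use the duality $T_t^*=T_t$ to reduce to $1<p\leq2$. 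One then interpolates directly between the $\L^2$ sector $\Sigma_{\pi/2}$ and the $\L^1$ real-axis contractivity, treating the fibre inner product pointwise, which causes no trouble since the rank is finite.
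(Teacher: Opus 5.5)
Your proposal is correct and follows essentially the same route as the paper: the paper makes your trivialization global through a Borel orthonormal frame (giving an isometric identification $\L^p(M,E)\cong\L^p(M,\mathbb{C}^N)$), proves strong continuity by interpolation on $\L^1\cap\L^2\cap\L^\infty$, reduces to $1<p<2$ by duality as in your fallback, and applies the sectorial form of Stein's interpolation theorem \cite[Lemma 5.3 p.~50]{JMX06} between $\L^1$ and $\L^2$, which is exactly your analytic family $w\mapsto \e^{-t\e^{\i\phi w}A_2}$ packaged as a lemma. The only cosmetic difference is that the paper passes from contractivity on $|\arg z|\leq \frac{\pi}{p^*}$ to sectoriality via \cite[Lemma 3.1 p.~26]{JMX06}, where you use \cite[Proposition 3.4.4 p.~79]{Haa06}.
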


\begin{proof}
Let $N \ov{\mathrm{def}}{=} \rank(E)$. By \cite[Remark I.17 p.~13]{Gun17}, there exists a global orthonormal Borel frame $\sigma_1,\ldots,\sigma_N \co M \to E$. Consequently, for every $p \in [1,\infty]$, the map $\Phi_p \co \L^p(M,E) \to \L^p(M,\mathbb{C}^N)$, $\omega \mapsto (\omega_1,\ldots,\omega_N)$, 
where $
\omega(x)
=\sum_{j=1}^N \omega_j(x)\sigma_j(x)$, is an isometric isomorphism. In particular, the space $\L^p(M,E)$ may be identified isometrically with the Bochner space $\L^p(M,\mathbb{C}^N)$. For any $t \geq 0$, we define the operator $
\widetilde{T}_t
\ov{\mathrm{def}}{=}
\Phi_p T_t \Phi_p^{-1}$ 
acting on the space $\L^p(M,\mathbb{C}^N)$. For any $p \in [1,\infty]$ we deduce that
\begin{equation}
\label{eq:Lp-uniform-bound-St}
\bnorm{\widetilde T_t}_{\L^p(M,\mathbb{C}^N) \to \L^p(M,\mathbb{C}^N)}
\leq 1,
\quad t \geq 0.
\end{equation}
Since $\L^p(M,\mathbb{C}^N)\cap \L^2(M,\mathbb{C}^N)$ is dense in the space $\L^p(M,\mathbb C^N)$ for $1 \leq p < \infty$, the operators $\widetilde{T}_t$, initially defined on $\L^2(M,\mathbb{C}^N)$, extend uniquely to bounded operators on $\L^p(M,\mathbb{C}^N)$. The extensions are consistent: if $f$ belongs to two spaces $\L^p$ and $\L^q$, the two definitions agree by density, because they agree on the subspace $\L^p(M,\mathbb C^N)\cap \L^q(M,\mathbb{C}^N) \cap \L^2(M,\mathbb{C}^N)$. Moreover, the semigroup law $\widetilde T_t\widetilde{T}_s=\widetilde{T}_{t+s}$ holds first on $\L^2(M,\mathbb{C}^N)$ and then, by density and boundedness, on every $\L^p(M,\mathbb{C}^N)$. 

It remains to check strong continuity on $\L^p(M,\mathbb{C}^N)$ for $1 < p < \infty$. Let $
\cal{D}
\ov{\mathrm{def}}{=}
\L^1(M,\mathbb{C}^N) \cap
\L^2(M,\mathbb{C}^N) \cap
\L^\infty(M,\mathbb{C}^N)$. Since the Riemannian measure is \(\sigma\)-finite, this space is dense in
\(\L^p(M,\mathbb{C}^N)\) for every \(1 < p < \infty\). Let \(f \in \cal{D}\). We know that $\widetilde{T}_t f \to f$ in $\L^2(M,\mathbb{C}^N)$ as \(t \to 0^+\), because \((\widetilde{T}_t)_{t \geq 0}\) is strongly continuous on the Hilbert space $\L^2(M,\mathbb{C}^N)$.

Assume first that $1 < p <2$. If $\theta \in (0,1)$ satisfies $\frac1p=\theta+\frac{1-\theta}{2}$, interpolation between \(\L^1\) and \(\L^2\), we have
\[
\bnorm{\widetilde{T}_t f-f}_{\L^p(M,\mathbb{C}^N)}
\leq
\bnorm{\widetilde{T}_t f-f}_{\L^1(M,\mathbb{C}^N)}^{\theta}
\bnorm{\widetilde{T}_t f-f}_{\L^2(M,\mathbb{C}^N)}^{1-\theta}.
\]
Moreover, we have
\[
\bnorm{\widetilde{T}_t f-f}_{\L^1(M,\mathbb{C}^N)}
\leq
\bnorm{\widetilde T_t f}_{\L^1(M,\mathbb{C}^N)}+\norm{f}_{\L^1(M,\mathbb{C}^N)}
\leq
2\norm{f}_{\L^1}.
\]
Since $\bnorm{\widetilde{T}_t f-f}_{\L^2(M,\mathbb{C}^N)} \to 0$, we obtain $\widetilde{T}_t f \to f$ in the Banach space $\L^p(M,\mathbb{C}^N)$.

If \(2 < p < \infty\), interpolation between $\L^2$ and $\L^\infty$ gives
\[
\bnorm{\widetilde{T}_t f-f}_{\L^p(M,\mathbb{C}^N)}
\leq
\bnorm{\widetilde{T}_t f-f}_{\L^2(M,\mathbb{C}^N)}^{\frac{2}{p}}
\bnorm{\widetilde{T}_t f-f}_{\L^\infty(M,\mathbb{C}^N)}^{1-\frac{2}{p}}.
\]
Since $\bnorm{\widetilde{T}_t f-f}_{\L^\infty(M,\mathbb{C}^N)}
\leq
2\norm{f}_{\L^\infty(M,\mathbb{C}^N)}$, we again conclude that $\widetilde{T}_t f \to f$ in $\L^p(M,\mathbb{C}^N)$. Thus strong continuity holds on the dense subspace \(\cal D\). By the uniform $\L^p$-boundedness of $(\widetilde T_t)_{t\geq0}$, it follows from \cite[Proposition 5.3 p.~38]{EnN00} that \((\widetilde{T}_t)_{t \geq 0}\) is strongly continuous on \(\L^p(M,\mathbb{C}^N)\) for every $1 < p < \infty$.


Now, we prove that $(\widetilde{T}_t)_{t \geq 0}$ is bounded holomorphic by adapting the proof \cite[Proposition 5.4 p.~51]{JMX06}. 
By duality, we can suppose that $1 < p < 2$. Since $(\tilde{T}_t)_{t \geq 0}$ is a semigroup of selfadjoint operators on the Hilbert space $\L^2(M,\mathbb{C}^N)$, we see that $\tilde{A}_2$ is a positive selfadjoint operator by \cite[Proposition 6.14 p.~132]{Sch12}, where $-\tilde{A}_p$ is the generator of $(\tilde{T}_t)_{t \geq 0}$. Consequently, by \cite[Corollary 7.1.6 p.~170]{Haa06}, the operator $\tilde{A}_2$ is sectorial of type $0$ and by spectral theory,
$$
\tilde{T}_z 
= \e^{-z\tilde{A}_2} \co \L^2(M,\mathbb{C}^N) \to \L^2(M,\mathbb{C}^N)
$$
is a well-defined contraction for any complex number $z$ such that $\Re(z) \geq 0$. Let us apply the sectorial form \cite[Lemma 5.3 p.~50]{JMX06} of Stein's interpolation theorem with the spaces $E_0 =\L^1(M,\mathbb{C}^N)$, $E_1= \L^2(M,\mathbb{C}^N)$, $ \theta \in (0, \frac{\pi}{2}]$, and $U(z)x = \tilde{T}_z x$. According to \cite[Theorem 2.2.6 p.~91]{HvNVW16}, we have the isometric isomorphism $\L^p(M,\mathbb{C}^N)=(\L^2(M,\mathbb{C}^N),\L^1(M,\mathbb{C}^N))_{1-\frac{2}{p^*}}$. We deduce that
\begin{equation}
\label{9cont}
\norm{\tilde{T}_z}_{\L^p(M,\mathbb{C}^N) \to \L^p(M,\mathbb{C}^N)}
\leq 1
\end{equation}
for any $z \in \mathbb{C}^*$ such that $0 \leq \arg(z) \leq \frac{\pi}{p^*}$. Likewise, \eqref{9cont} holds true if $-\frac{\pi}{p^*} \leq \arg(z) \leq 0$. Using the identification through the Borel frame, the same statement holds for $(T_t)_{t \geq 0}$ on the Banach space $\L^p(M,E)$. Then \cite[Lemma 3.1 p.~26]{JMX06} (or \cite[Theorem G.5.2 p.~537]{HvNVW18}) implies that the operator $A_p$ is sectorial of type $\frac{\pi}{2} -\frac{\pi}{p^*} = \pi \big|\frac{1}{p} -\frac{1}{2}\big|$.
\end{proof}

Now, we obtain an $\L^p$-Poincar\'e inequality.

\begin{prop}
Suppose that $1 < p < \infty$. Let $M$ be a complete Riemannian spin manifold of dimension $d \geq 2$ whose scalar curvature is bounded below by a strictly positive constant and Ricci curvature bounded from below. Then
\begin{equation*}
\norm{\sigma}_{\L^p(M,\Sigma M)}
\lesssim_{p,M} \norm{D_p \sigma}_{\L^p(M,\Sigma M)}, \quad \sigma \in \W^{1,p}_D(M,\Sigma M).
\end{equation*}
\end{prop}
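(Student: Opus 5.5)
The plan is to apply Theorem \ref{thm-direct-Poincare-via-negative-powers} with $X=Y=\L^p(M,\Sigma M)$, $A=D_p^2$, $\alpha=\frac12$ and $\partial=D_p$, after showing that the ergodic projection $P$ vanishes. I would proceed in four steps.

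\emph{Step 1: the semigroup on $\L^p$.} The semigroup $(\e^{-tD_2^2})_{t\geq0}$ is selfadjoint on $\L^2(M,\Sigma M)$. By the Schr\"odinger--Lichnerowicz formula and Kato's inequality, it is dominated by a multiple of the scalar heat semigroup $\e^{t\Delta}\e^{-t\inf \Scal/4}$. With $\Scal>0$ this domination gives consistent contractions on every $\L^p$, $1\leq p\leq\infty$. I expect this domination step to be the main obstacle, since the cited results only directly give a strongly continuous semigroup on $\L^p$ with generator $-D_p^2$. The first thing I would try is the standard semigroup domination $|\e^{-tD^2}\sigma|\leq \e^{t\Delta}|\sigma|$ for $\Scal\geq0$ (Hess--Schrader--Uhlenbrock). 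If needed, I would instead establish bounded holomorphy by interpolation directly. Granting contractivity, Proposition \ref{prop-bounded-holomorphic} shows that $(\e^{-tD_p^2})_{t\geq0}$ is bounded holomorphic on $\L^p(M,\Sigma M)$ and that $A_p=D_p^2$ is sectorial. Since $\L^p$ is reflexive, $A_p$ is Abel-ergodic by Example \ref{sectorial-Abel-ergodic}.

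\emph{Step 2: spectral gap and $P=0$.} By \eqref{min-sigma}, $\inf\sigma(D_2^2)\geq \frac{d}{4(d-1)}\inf_M\Scal>0$. Hence $0\in\rho(D_2^2)$, so $\ker D_2^2=\{0\}$. By Proposition \ref{prop-gap-and-s} together with Proposition \ref{prop-reduced-exp-stability-closed-range}, the semigroup is uniformly exponentially stable on $\L^2$. Interpolating with the contractive bound on $\L^1$ or $\L^\infty$ via Proposition \ref{prop-stability-interpolation} gives $\norm{\e^{-tD_p^2}}\leq C\e^{-\omega_p t}$ on $\L^p$. Proposition \ref{prop-strong-limit-projection}, or simply the mean ergodic formula \eqref{def-de-P-lim}, then yields $P=0$. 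Consequently $\ovl{\Ran A_p}$ is the whole space, $A_0=A_p$, and $0\in\rho(A_p)$ by Proposition \ref{prop-reduced-exp-stability-closed-range}, since the whole-space semigroup is exponentially stable.

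\emph{Step 3: the reverse Riesz estimate.} I would obtain it by duality from the Riesz estimate \eqref{Banuelos} applied at the conjugate exponent $p^*$. The pair $(D_p,D_p)$ is compatible with $D_p^2$ in the sense of \eqref{facto-with-diamond}. Using $D_p^*=D_{p^*}$ and $(D_p^2)^*=D_{p^*}^2$, the estimate \eqref{Banuelos} at $p^*$ on the core $\C_c^\infty$ (a core for $(D_{p^*}^2)^{1/2}$ by Proposition \ref{core-A-alpha}) is exactly \eqref{Riesz-555} with $\alpha=\frac12$. Proposition \ref{prop-duality-Riesz} then gives
\[
\bnorm{(D_p^2)^{\frac12}\sigma}_{\L^p}\lesssim\norm{D_p\sigma}_{\L^p},\quad \sigma\in\dom D_p^2,
\]
with $\norm{\Id-P}=1$.

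\emph{Step 4: conclusion.} I would extend the estimate of Step 3 to $\sigma\in\W^{1,p}_D=\dom D_p$. Since $\C_c^\infty\subset\dom D_p^2$ is a core for $D_p$, the closedness argument of Proposition \ref{Prop-derivation-closable-sgrp-bis}(2) applies, with $\partial=D_p$ restricted to $\dom D_p^2$, whose closure is $D_p$. This also shows $\dom D_p\subset\dom (D_p^2)^{1/2}$. Finally, Theorem \ref{thm-direct-Poincare-via-negative-powers} with $P=0$ yields $\norm{\sigma}_{\L^p}\lesssim\norm{D_p\sigma}_{\L^p}$ for all $\sigma\in\W^{1,p}_D(M,\Sigma M)$.
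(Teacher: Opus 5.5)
Your proposal is correct and follows the same route as the paper. You use contractivity and bounded holomorphy of $(\e^{-tD_p^2})_{t\geq0}$ on $\L^p$, then transfer the $\L^2$ gap from \eqref{min-sigma} to $\L^p$ to get $P=0$, then obtain the reverse Riesz estimate by duality from \eqref{Banuelos} through Proposition \ref{prop-duality-Riesz} and $D_p^*=D_{p^*}$, and finish with Theorem \ref{thm-direct-Poincare-via-negative-powers} at $\alpha=\frac12$.

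The one difference is that you derive $\L^p$-contractivity from Kato/Hess--Schrader--Uhlenbrock domination, whereas the paper cites \cite[Lemma 3.3]{ChG23}. Your domination bound with the factor $\e^{-t\inf\Scal/4}$ already gives exponential stability on every $\L^p$ directly, so you could skip the interpolation step.
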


\begin{proof}
By \cite[Lemma 3.3]{ChG23}, the semigroup $(\e^{-t D_p^2})_{t \geq 0}$ is contractive, hence bounded on the Banach space $\L^p(M,\Sigma M)$ if the scalar curvature $\Scal$ is positive. Since the Banach space $\L^p(M,\Sigma M)$ is reflexive by \cite[Remark I.17 p.~13]{Gun17}, we deduce by Example \ref{bounded-ergodic} that the semigroup $(\e^{-t D_p^2})_{t \geq 0}$ is mean ergodic. 

By Proposition \ref{prop-bounded-holomorphic}, the semigroup $(\e^{-t D_p^2})_{t \geq 0}$ is bounded holomorphic on the Banach space $\L^p(M,\Sigma M)$. Note that $\sigma(D_2^2)=\sigma(D_2)^2$ by the spectral theorem. By \eqref{min-sigma}, we deduce that $\inf \sigma(D_2^2) > 0$. Thus the operator $D_2^2$ admits a spectral gap. By Proposition \ref{prop-reduced-exp-stability-closed-range}, $(\e^{-t D_2^2})_{t \geq 0}$ is uniformly exponentially stable on the space $\L^2(M,\Sigma M)$. By interpolation with Proposition \ref{prop-stability-interpolation}, we conclude that the semigroup $(\e^{-t D_p^2})_{t \geq 0}$ is uniformly exponentially stable on the Banach space $\L^p(M,\Sigma M)$. So, by \eqref{uniformly-exponentially-stable}, there exist constant $C, \omega_p > 0$ such that
\[
\bnorm{\e^{-t D_p^2}}_{\L^p(M,\Sigma M) \to \L^p(M,\Sigma M)}
\leq C\e^{-\omega_p t},
\quad t \geq 0.
\]
Taking the limit when $t \to \infty$, we deduce from Proposition \ref{prop-strong-limit-projection} that the mean ergodic projection $P \co \L^p(M,\Sigma M) \to \L^p(M,\Sigma M)$ is equal to 0. With the estimates \eqref{Banuelos}, Proposition \ref{Prop-derivation-closable-sgrp-bis}, the duality result Proposition \ref{prop-duality-Riesz} and the equality $D_p^*=D_{p^*}$, we see that the reverse Riesz estimate
\[
\bnorm{(D_p^2)^{\frac{1}{2}}\sigma}_{\L^p(M,\Sigma M)}
\lesssim_{p,M}
\norm{D_p \sigma}_{\L^p(M,\Sigma M)}, \quad \sigma \in \W^{1,p}_D(M,\Sigma M)
\]
holds. Now, it suffices to use Theorem \ref{thm-direct-Poincare-via-negative-powers} with $\alpha=\frac{1}{2}$, $\partial=D_p$ and $P=0$.
\end{proof}

\subsection{Poincar\'e inequalities on metric measure spaces satisfying the Riemannian curvature dimension condition $\RCD(K,N)$}
\label{sec-metric-measure-spaces}

In this section, a triple $(\X,\dist,\mu)$ is said to be a metric measure space if $(\X,\dist)$ is a complete separable metric space and if $\mu$ is a Borel probability measure on $X$ with full support. In \cite[p.~4]{EKS15}, the authors introduced the class of metric measure spaces satisfying the Riemannian curvature-dimension condition $\RCD^*(K,N)$, where $K \in \R$ and $N \in [1,\infty)$. This condition may be regarded as a synthetic version, in the non-smooth setting, of the lower Ricci curvature bound $\mathrm{Ric} \geq K$ together with the upper dimension bound $\dim \leq N$. It strengthens the curvature-dimension condition $\mathrm{CD}(K,N)$ introduced by Lott and Villani \cite{LoV09} and Sturm \cite{Stu06a,Stu06b}, which is a generalization of the condition $\mathrm{CD}(K,N)$ of Bakry and Émery. One of its main Riemannian features is that it admits a heat semigroup $(T_t)_{t \geq 0}$, which is a symmetric sub-Markovian semigroup acting on $\L^p(\X) \ov{\mathrm{def}}{=} \L^p(\X,\mu)$ generated by an unbounded  operator $\Delta_p$, called Laplacian. We refer to \cite[Section 2.3]{Tew18} for a nice summary of its properties. Moreover, a differential $\d$ is equally present in this context. It is defined for reasonable functions on $\X$ and takes values in a space $\L^p(\mathrm{T}^*\X)$. For more details, we refer to \cite{AHT18,AHPT21,AMS19,CTT23,EKS15,JLZ16,KuK19,KuL21} and to the references therein.

Let $(\X,\dist,\mu)$ be a metric measure space satisfying the Riemannian curvature dimension condition $\RCD^*(K,N)$ with $K > 0$ and $N > 1$. Then by \cite[Theorem 4.22 p.~73]{EKS15} the spectrum of the Laplacian $-\Delta_2$ on the Hilbert space $\L^2(\X,\mu)$ is discrete and the first non-zero eigenvalue $\lambda_1$ satisfies the following bound:
\begin{align}
\label{eq:lichnerowitz}
\lambda_1
\geq \frac{N}{N-1}K. 
\end{align}
Note that this estimate of the spectral gap is sharp. Cavalletti and Milman proposed in \cite[p.~123]{CM21} a variant $\RCD(K,N)$ of the $\RCD^*(K,N)$-condition and established in \cite[Corollary 13.7 p.~123]{CM21} that the $\RCD(K,N)$-condition is equivalent to the $\RCD^*(K,N)$-condition. The condition $\RCD(K,\infty)=\RCD^*(K,\infty)$ was previously introduced in \cite[p.~1408]{AGS14a}.

Riesz estimates are also available in this setting. Suppose that $1 < p < \infty$. Let $(\X,\dist,\mu)$ be a metric measure space satisfying the Riemannian curvature dimension condition $\RCD(K,\infty)$, with $K \in \mathbb{R}$. Let $K_- \ov{\mathrm{def}}{=} \max\{0, -K\}$. Then, by \cite[Theorem 1.1]{CTT23}, we have\footnote{\thefootnote. The preprint \cite{CTT23} contains some inconsistencies concerning the domains involved. In particular, the authors use $\W^{1,2}(\X)$, whereas the appropriate space is $\W^{1,p}(\X)$. We contacted the authors regarding this issue but, as of the time of writing, have not received any clarification.}
\[ 
\norm{ \d f }_{\L^p(\mathrm{T}^*\X)}
\leq c_p \norm{ \sqrt{ K_- - \Delta_p } f }_{\L^p(\X)}, \quad f \in \W^{1,p}(\X)
\]
and
\begin{equation}
\label{reverse-metric-measure-spaces}
\norm{ \sqrt{ K_{-} - \Delta_p } f }_{\L^p(\X)} 
\leq \sqrt{K_-} \norm{f}_{\L^p(\X)} + c_p \norm{\d f}_{\L^p(\mathrm{T}^*\X)}, \quad f \in \W^{1,p}(\X),
\end{equation}
where $c_p \ov{\mathrm{def}}{=} 16 \max\{p, \frac{p}{p-1} \}$. Now, we obtain an $\L^p$-Poincar\'e inequality.

\begin{thm}
Let $(\X,\dist,\mu)$ be a metric measure space satisfying the Riemannian curvature dimension condition $\RCD(K,N)$ with $K > 0$ and $N \in (1, \infty)$. Suppose that $1 < p < \infty$. Then
\begin{equation}
\label{Poincare-manifold-bundle}
\norm{f-P_p(f)}_{\L^p(\X)}
\lesssim_{p,K,N} \norm{\d f}_{\L^p(\mathrm{T}^*\X)} ,
\quad f \in \W^{1,p}(\X),
\end{equation}
where $P_p \co \L^p(\X) \to \L^p(\X)$ is the ergodic projection on the subspace $\ker(-\Delta_p)$.
\end{thm}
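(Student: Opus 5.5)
The plan is to apply Theorem \ref{thm-direct-Poincare-via-negative-powers} with $X=\L^p(\X)$, $Y=\L^p(\mathrm{T}^*\X)$, $A=-\Delta_p$, $\alpha=\frac12$ and $\partial=\d$ with domain $\W^{1,p}(\X)$. Three inputs are needed: Abel-ergodicity and sectoriality of $-\Delta_p$, a spectral gap $0\in\rho(A_0)$ on $\ovl{\Ran(-\Delta_p)}$, and a reverse Riesz estimate $\norm{(-\Delta_p)^{1/2}f}_{\L^p}\lesssim\norm{\d f}_{\L^p}$ on $\W^{1,p}(\X)$.

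\textbf{Ergodicity and sectoriality.} The heat semigroup is a symmetric sub-Markovian semigroup, so it is contractive on every $\L^p(\X)$. Since $\L^p(\X)$ is reflexive, Example \ref{bounded-ergodic} gives mean ergodicity, hence Abel-ergodicity with ergodic projection $P_p$. Bounded holomorphy for $1<p<\infty$ follows as in Proposition \ref{prop-ergodic}, via \cite[Proposition 5.4 p.~51, Lemma 3.1 p.~26]{JMX06}; the scalar version of Proposition \ref{prop-bounded-holomorphic} also gives it.

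\textbf{Spectral gap.} By \eqref{eq:lichnerowitz}, $\sigma(-\Delta_2)\subset\{0\}\cup[\frac{N}{N-1}K,\infty)$, so $0\in\rho(A_{2,0})$ by Propositions \ref{prop-gap-and-s} and \ref{prop-Gap-equivalente}. Proposition \ref{prop-reduced-exp-stability-closed-range} then makes the restriction of the semigroup to $\L^2_0$ uniformly exponentially stable. To pass to $\L^p$, I would interpolate the semigroup $T_t(\Id-P)$ between $\L^2$, where it decays exponentially, and $\L^q$ with $q$ on the far side of $p$ (for instance $q=1$ or $q=\infty$ in the limit), where it is bounded by $2$ because the mean projection $P$ (integration against $\mu$, as $\mu$ is a probability measure) is contractive on every $\L^q$. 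Proposition \ref{prop-stability-interpolation} then yields exponential decay of $T_t(\Id-P_p)$ on $\L^p$. Here one must check that $P_p$ is consistent with $P_2$; both are limits of Cesàro means of consistent semigroups, so this holds. This gives uniform exponential stability on $\ovl{\Ran(-\Delta_p)}=\ker P_p$, and Proposition \ref{prop-reduced-exp-stability-closed-range} (implication $2\Rightarrow1$) gives $0\in\rho(A_{p,0})$.

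\textbf{Reverse Riesz estimate.} This is the main obstacle, because \eqref{reverse-metric-measure-spaces} involves $\sqrt{K_--\Delta_p}$ rather than $(-\Delta_p)^{1/2}$. Since $K>0$, the condition $\RCD(K,N)$ implies $\RCD(0,\infty)$, so we may take the curvature parameter $0$ in \cite[Theorem 1.1]{CTT23}. Then $K_-=0$, and \eqref{reverse-metric-measure-spaces} becomes exactly
\[
\bnorm{(-\Delta_p)^{\frac12}f}_{\L^p(\X)}\leq c_p\norm{\d f}_{\L^p(\mathrm{T}^*\X)}, \quad f\in\W^{1,p}(\X),
\]
including the inclusion $\W^{1,p}(\X)\subset\dom(-\Delta_p)^{1/2}$.

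If one prefers not to use this monotonicity, a fallback is to write $\sqrt{K_--\Delta_p}$ via functional calculus and absorb the $\sqrt{K_-}\norm{f}$ term. Combined with \eqref{main}, this term is controlled by $\norm{f-P_p f}$ up to constants, because $\d$ kills $\ker(-\Delta_p)$.

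\textbf{Conclusion.} Theorem \ref{thm-direct-Poincare-via-negative-powers} then gives \eqref{Poincare-manifold-bundle} with constant $c_p\norm{A_{p,0}^{-1/2}}$, which depends only on $p,K,N$ through the constants above.
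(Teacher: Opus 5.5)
Your proposal is correct and follows the paper's proof. Both arguments use mean ergodicity from reflexivity, the Lichnerowicz bound \eqref{eq:lichnerowitz} to get $0\in\rho(A_{2,0})$, Proposition \ref{prop-stability-interpolation} to transfer exponential stability to $\ovl{\Ran \Delta_p}$, the estimate \eqref{reverse-metric-measure-spaces} with $K_-=0$, and Theorem \ref{thm-direct-Poincare-via-negative-powers} with $\alpha=\frac12$. Your interpolation of $T_t(\Id-P)$ on the full $\L^q$-scale (using $\norm{\Id-P_q}\leq 2$) replaces the paper's interpolation of the reduced spaces $\ovl{\Ran \Delta_p}$ plus duality, and is a harmless, slightly cleaner variant.

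Two side remarks. Since $K>0$ already gives $K_-=\max\{0,-K\}=0$, the detour through $\RCD(0,\infty)$ is unnecessary. Your fallback would also not work as stated: absorbing the resulting term $\sqrt{K_-}\norm{f-P_p f}$ into the left-hand side requires a smallness condition such as $c\sqrt{K_-}\,\bnorm{A_{p,0}^{-1/2}}<1$.
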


\begin{proof}
We have $K_- = \max\{0, -K\}=0$. Since the Banach space $\L^p(\X)$ is reflexive, we deduce by Example \ref{bounded-ergodic} that the \textit{contractive} semigroup $(T_t)_{t \geq 0}$ is mean ergodic on $\L^p(\X)$. Note that on $\L^2(\X)$ we have $
\sigma(K_{-} - \Delta_2)
\ov{\eqref{eq:lichnerowitz}}{\subset} \{0\} \cup \big[\tfrac{N}{N-1}K,\infty\big)$ since $K > 0$ and $N > 1$. Combining Proposition \ref{prop-Gap-equivalente} and Proposition \ref{prop-gap-and-s}, we conclude that $0 \in \rho(-\Delta_{2,0})$. Recall that $(\e^{t \Delta_2})_{t \geq 0}$ is bounded holomorphic by \cite[Theorem 1 p.~67]{Ste70}. By Proposition \ref{prop-reduced-exp-stability-closed-range}, $(\e^{t \Delta_2})_{t \geq 0}$ is uniformly exponentially stable on $\ovl{\Ran \Delta_2}$. If $1 < r < p <2$, note the complex interpolation formula $\ovl{\Ran \Delta_p}\approx (\ovl{\Ran \Delta_2},\ovl{\Ran \Delta_r})_{\theta}$ obtained from $\L^p(\X)=(\L^2(\X),\L^r(\X))_{\theta}$ for some $\theta \in (0,1)$ and the compatible projections $\Id-P_2$ and $\Id-P_r$ onto the subspaces $\ovl{\Ran \Delta_2}$ and $\ovl{\Ran \Delta_r}$. By interpolation for $1 < p < 2$ and duality if $p >2 $, we conclude with Proposition \ref{prop-stability-interpolation} that $(\e^{t \Delta_p})_{t \geq 0}$ is uniformly exponentially stable on $\ovl{\Ran \Delta_p}$. Since \(\RCD(K,N)\) with \(N<\infty\) implies \(\RCD(K,\infty)\), the Riesz estimates \eqref{reverse-metric-measure-spaces} apply. So, the estimate \eqref{eq-lower-Riesz-direct-Poincare} is satisfied. We conclude with Theorem \ref{thm-direct-Poincare-via-negative-powers}.
\end{proof}

\subsection{Poincar\'e inequalities with Markov semigroups on von Neumann algebras}
\label{sec-Markov}

Here, we use the notations of Section \ref{sec-gaps}. We obtain an improved form of \cite[Corollary B p.~3]{JLZZ24}. We refer also to \cite{JuZ15b} and to \cite{JuZ15a} for previous results.


%

\begin{thm}
\label{Th-Poincare-von-Neumann}
Suppose that $1 < p < \infty$. Let $\cal{M}$ be a von Neumann algebra equipped with a normalized normal finite faithful trace. Consider a Markov semigroup $(T_t)_{t \geq 0}$ of operators acting on $\cal{M}$ with infinitesimal generator $-A_p$ on $\L^p(\cal{M})$. Assume that $0 \in \rho(A_{2,0})$. Let $\alpha \in (0,1)$. Suppose that $\partial \co \dom \partial \to \L^p(\tilde{\cal{M}})$ is an unbounded operator, where $\tilde{\cal{M}}$ is a von Neumann algebra equipped with a normal finite faithful trace. Assume that $\dom\partial \subset \dom A_p^{\alpha}$ and that the operator $A_p$ satisfies the reverse Riesz estimate
\begin{equation}
\label{eq-lower-Riesz-direct-Poincare-Markov}
\bnorm{A_p^{\alpha}(f)}_{\L^p(\cal{M})}
\leq C_p \norm{\partial(f)}_{\L^p(\tilde{\cal{M}})},
\quad f \in \dom\partial,
\end{equation}
for some constant $C_{p} \geq 0$. Then $0\in\rho(A_{p,0})$ and
\begin{equation}
\label{Poincare-clean}
\norm{f-\E_p(f)}_{\L^p(\cal{M})}
\leq C_p\bnorm{A_{p,0}^{-\alpha}}_{\L_0^p(\cal{M})\to\L_0^p(\cal{M})} \norm{\partial f}_{\L^p(\tilde{\cal{M}})},
\quad f \in \dom \partial.
\end{equation}
\end{thm}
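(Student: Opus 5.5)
The plan is to reduce everything to Theorem \ref{thm-direct-Poincare-via-negative-powers}, applied on $X=\L^p(\cal{M})$, $Y=\L^p(\tilde{\cal{M}})$, $A=A_p$. The only genuinely new input is the transfer of the spectral gap from $p=2$ to $p$, i.e. $0 \in \rho(A_{p,0})$. Once this is established, the estimate \eqref{Poincare-clean} is exactly \eqref{eq-direct-Poincare}, since the ergodic projection of $A_p$ is $\E_p$.

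\textbf{Abel-ergodicity and identification of $P$.} Since $1<p<\infty$, the space $\L^p(\cal{M})$ is reflexive. The Markov semigroup induces a strongly continuous contraction semigroup on it, which is bounded holomorphic by \cite{JMX06}, so $A_p$ is sectorial. It is then Abel-ergodic by Example \ref{sectorial-Abel-ergodic}. From Section \ref{sec-gaps} we know that $\L^p(\cal{M})=\ker A_p \oplus \L^p_0(\cal{M})$ with $\L^p_0(\cal{M})=\ovl{\Ran A_p}=\ker \E_p$. Hence the ergodic projection is $P=\E_p$, and $\Id-\E_p$ is the projection onto $\L^p_0(\cal{M})$.

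\textbf{Transfer of the gap.} We have $0\in\rho(A_{2,0})$, and $(T_t)$ on $\L^2$ is bounded holomorphic (Example \ref{Example-analytic-positive-selfadjoint}). By Proposition \ref{prop-reduced-exp-stability-closed-range}, $T_t|_{\L^2_0}$ is therefore uniformly exponentially stable. We then interpolate exactly as in the $\RCD$ example.
\begin{itemize}
\item \emph{Case $1<p<2$.} Pick $1<r<p$ and write $\L^p=(\L^2,\L^r)_\theta$. Since $\E_2,\E_r$ are consistent restrictions of the same interpolated family $\E_q$, the complemented-subspace interpolation theorem gives $\L^p_0\simeq(\L^2_0,\L^r_0)_\theta$ with equivalent norms. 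The semigroups are consistent, contractive on $\L^r_0$ (the bound is at most $2$ after composing with $\Id-\E_r$, and $T_t$ commutes with $\E$ by \eqref{E-Tt}), and satisfy $\norm{T_t}\leq M\e^{-\omega t}$ on $\L^2_0$. Proposition \ref{prop-stability-interpolation} then gives uniform exponential stability on $\L^p_0$.
\item \emph{Case $p>2$.} Argue by duality, using that each $T_t$ is selfadjoint, so $(T_t|_{\L^p_0})^*$ identifies with $T_t|_{\L^{p^*}_0}$; alternatively, interpolate directly between $\L^2$ and some $\L^r$ with $r>p$.
\end{itemize}
Since the restricted semigroup on $\L^p_0(\cal{M})$ is bounded holomorphic, Proposition \ref{prop-reduced-exp-stability-closed-range} (2 $\Rightarrow$ 1) yields $0\in\rho(A_{p,0})$. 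The part of $A_p$ in $\ovl{\Ran A_p}=\L^p_0$ is precisely the generator of the restricted semigroup, so this is the required statement.

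\textbf{Conclusion.} Apply Theorem \ref{thm-direct-Poincare-via-negative-powers} with $\partial$, $\alpha$ and the reverse Riesz estimate \eqref{eq-lower-Riesz-direct-Poincare-Markov}. This gives
\[
\norm{f-\E_p f}\leq C_p\bnorm{A_{p,0}^{-\alpha}}\norm{\partial f}.
\]

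The main obstacle is the interpolation of the reduced spaces $\L^p_0$, which requires checking that the projections $\Id-\E_q$ are compatible. This follows because $\E_q$ is defined by interpolation from a single map $\E$.
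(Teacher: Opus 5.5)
Your proposal is correct and follows the paper's proof essentially step by step. You identify the ergodic projection with $\E_p$, transfer uniform exponential stability from $\L^2_0(\cal{M})$ to $\L^p_0(\cal{M})$ via Proposition \ref{prop-stability-interpolation} on the interpolated reduced spaces (for $1<p<2$, with duality for $p>2$), deduce $0\in\rho(A_{p,0})$ from Proposition \ref{prop-reduced-exp-stability-closed-range}, and conclude with Theorem \ref{thm-direct-Poincare-via-negative-powers}; your alternative for $p>2$ of interpolating directly between $\L^2$ and $\L^r$ with $r\in(p,\infty)$ also works and avoids the duality step.
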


\begin{proof}
By the discussion in Section~\ref{sec-gaps}, we have $
\L_0^p(\cal{M})
=
\ker\E_p
=
\ovl{\Ran A_p}$. Moreover, the semigroup $(T_t)_{t \geq 0}$ is bounded holomorphic on $\L^p(\cal{M})$ by
\cite[Proposition 5.4 p.~51, Lemma 3.1 p.~26]{JMX06}, hence on $\ovl{\Ran A_p}$. We first assume that $1<p<2$. Choose $r\in(1,p)$. By interpolation of the compatible projections $\Id-\E_2$ and $\Id-\E_r$, we have the complex interpolation formula $
\L_0^p(\cal{M})
\approx (\L_0^2(\cal{M}),\L_0^r(\cal{M}))_\theta$ obtained from $\L^p(\cal{M})=(\L^2(\cal{M}),\L^r(\cal{M}))_{\theta}$, for a suitable $\theta \in (0,1)$. Since $0 \in \rho(A_{2,0})$, Proposition \ref{prop-reduced-exp-stability-closed-range} shows that the restricted semigroup is uniformly exponentially stable on $\L_0^2(\cal{M})$. By interpolation with Proposition \ref{prop-stability-interpolation}, this implies uniform exponential stability on $\L_0^p(\cal{M})$. Hence $0 \in \rho(A_{p,0})$ by Proposition \ref{prop-reduced-exp-stability-closed-range}. The case $p=2$ follows directly from the assumption $0 \in \rho(A_{2,0})$. For $p > 2$, the conclusion follows by duality from the case $p^*<2$, since $\E_p^*=\E_{p^*}$ and $A_{p}^*=A_{p^*}$. Finally, Theorem~\ref{thm-direct-Poincare-via-negative-powers}, applied with $P=\E_p$, gives \eqref{Poincare-clean}.
\end{proof}


Let $\cal{M}$ be a von Neumann algebra equipped with a normalized normal finite faithful trace. Consider a Markov semigroup $(T_t)_{t \geq 0}$ of operators acting on $\cal{M}$ with infinitesimal generator $-A_p$ on $\L^p(\cal{M})$. By the construction of Cipriani and Sauvageot \cite{CiS03} \cite{Cip97} \cite{Cip08} \cite {Cip16}, there exist a symmetric Hilbert $\cal{M}$-bimodule $(\cal{H},\Phi,\Psi,\J)$ and a $\mathrm{W}^*$-derivation $\partial \co \dom \partial \subset \cal{M} \to \cal{H}$ such that $\partial \co \dom \partial \subset \L^2(\cal{M}) \to \cal{H}$ is closable with closure denoted by $\partial_2$ and satisfying $A_2=\partial_2^*\partial_2$. In other words, this means that the opposite of the infinitesimal generator $-A_2$ of a symmetric Markov semigroup of operators can always be represented as the composition of a <<divergence>> $\partial_2^*$ with a <<gradient>> $\partial_2$.

In the situations considered below, we assume in addition that the first-order differential calculus admits a realization in a finite von Neumann algebra $\tilde{\cal{M}}$: $\dom \partial$ is a weak* dense $*$-subalgebra of $\cal{M}$ such that $\partial(\dom \partial) \subset\tilde{\cal{M}}$ and the Hilbertian norm $\norm{\cdot}_{\L^2(\tilde{\cal{M}})}$ induced by this realization agrees with the norm of the tangent bimodule on $\partial(\dom \partial )$.

\begin{prop}
\label{prop-Markov-Riesz-equivalence}
Suppose that $1 < p < \infty$. Let $(T_t)_{t\geq0}$ be a Markov semigroup on a finite von Neumann algebra $\cal{M}$ and denote by $-A_r$ its generator on $\L^r(\cal{M})$, where $1<r<\infty$. Assume that there exist a finite von Neumann algebra $\tilde{\cal{M}}$, a weak* dense $*$-subalgebra $\dom \partial$ of $\cal{M}$ and a linear map $\partial \co \dom \partial \to \tilde{\cal{M}}$ such that
\begin{enumerate}
\item the subspace $\dom \partial$ is a core for the unbounded operators $A_p$ and $A_{p^*}$,
\item for any $x,y \in \dom \partial$, we have
\begin{equation}
\label{eq-Markov-energy-identity}
\la A_p x,y\ra_{\L^p(\cal{M}),\L^{p^*}(\cal{M})}
=
\la\partial x,\partial y\ra_{\L^p(\tilde{\cal{M}}),\L^{p^*}(\tilde{\cal{M}})}.
\end{equation}
\end{enumerate}
Assume in addition that
\begin{equation}
\label{eq-Markov-direct-Riesz}
\norm{\partial x}_{\L^p(\tilde{\cal{M}})}
\leq
C_p
\bnorm{A_p^{\frac12}x}_{\L^p(\cal{M})},
\qquad
\norm{\partial x}_{\L^{p^*}(\tilde{\cal{M}})}
\leq
C_{p^*}
\bnorm{A_{p^*}^{\frac12}x}_{\L^{p^*}(\cal{M})},
\quad
x \in \dom \partial.
\end{equation}
Then, for $r\in\{p,p^*\}$, the unbounded operator $
\partial
\co
\dom\partial
\subset
\L^r(\cal{M})
\to
\L^r(\tilde{\cal{M}})$ is closable. We denote its closure by $\partial_r$. We have
\begin{equation}
\label{eq-Markov-domain-Riesz}
\dom A_r^{\frac12}
=
\dom\partial_r,
\qquad
r\in\{p,p^*\},
\end{equation}
and
\begin{equation}
\label{eq-Markov-direct-Riesz-closed}
\norm{\partial_r x}_{\L^r(\tilde{\cal{M}})}
\leq
C_r
\bnorm{A_r^{\frac12}x}_{\L^r(\cal{M})},
\qquad
x\in\dom A_r^{\frac12}.
\end{equation}
Moreover, we have
\begin{equation}
\label{eq-Markov-reverse-explicit-r}
\bnorm{A_r^{\frac12}x}_{\L^r(\cal{M})}
\leq
C_{r^*}
\norm{\Id-\E_r}_{\L^r(\cal{M})\to\L^r(\cal{M})}
\norm{\partial_r x}_{\L^r(\tilde{\cal{M}})},
\qquad
x\in\dom\partial_r,
\end{equation}
where $r^*$ denotes the H\"older conjugate exponent of $r$. Finally, we have $
A_p
=
(\partial_{p^*})^*\partial_p$. In particular, we have
\begin{equation}
\label{eq-Markov-Riesz-equivalence}
\bnorm{A_p^{\frac12}x}_{\L^p(\cal{M})}
\approx_p
\norm{\partial_p x}_{\L^p(\tilde{\cal{M}})},
\qquad
x\in\dom\partial_p.
\end{equation}
\end{prop}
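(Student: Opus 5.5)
The proof has four parts: closability and the domain identity, the reverse Riesz estimate by duality, the factorization $A_p=(\partial_{p^*})^*\partial_p$, and the final equivalence. For each $r\in\{p,p^*\}$ I will deduce everything from Proposition \ref{Prop-derivation-closable-sgrp-bis}, Proposition \ref{prop-duality-Riesz} and Proposition \ref{prop-generator-gradient-factorization}. Throughout I use that $\L^r(\cal M)$ and $\L^r(\tilde{\cal M})$ are reflexive, that $A_r$ is sectorial and Abel-ergodic with ergodic projection $\E_r$, and that $A_r^*=A_{r^*}$ under the duality bracket.

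\textbf{Closability and the domain identity.} Let $r\in\{p,p^*\}$, and let $x_n\in\dom\partial$ with $x_n\to0$ in $\L^r(\cal M)$ and $\partial x_n\to z$ in $\L^r(\tilde{\cal M})$. For $y\in\dom\partial$, the identity \eqref{eq-Markov-energy-identity} (in the symmetric form with $p$ and $p^*$ exchanged, which follows from selfadjointness of the Markov semigroup) gives
\[
\la z,\partial y\ra=\lim_n\la\partial x_n,\partial y\ra=\lim_n\la x_n,A_{r^*}y\ra=0.
\]
So $z$ annihilates $\partial(\dom\partial)$. To conclude $z=0$, I need $\partial(\dom\partial)$ to be weak* dense, or at least enough of it to separate $z$. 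This is the main obstacle. My first attempt is to show that $z$ lies in the closure of $\partial(\dom\partial)$ and is annihilated by it in the pairing $\L^r$--$\L^{r^*}$; that would force $z=0$ by a Hahn--Banach/uniform convexity argument on the closure of the range. Failing this, I will use the Hilbertian closability from the Cipriani--Sauvageot construction (the $\L^2$ closure $\partial_2$ exists) combined with consistency of the $\L^r$ and $\L^2$ norms on the finite algebras.

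Granting closability, $\dom\partial$ is a core of $A_r$ by hypothesis 1, hence a core of $A_r^{1/2}$ by Proposition \ref{core-A-alpha}. The direct estimate \eqref{eq-Markov-direct-Riesz} is then the $(\frac12,\partial)$-Riesz estimate. Part 1 of Proposition \ref{Prop-derivation-closable-sgrp-bis} gives $\dom A_r^{1/2}\subset\dom\partial_r$ together with \eqref{eq-Markov-direct-Riesz-closed}.

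\textbf{Reverse estimate by duality.} I apply Proposition \ref{prop-duality-Riesz} with $X=\L^r(\cal M)$, $\alpha=\frac12$, $\partial^\dagger=(\partial_{r^*})^*$ and $C'=\dom\partial$. First I need the pair $(\partial_r,(\partial_{r^*})^*)$ to be compatible with $A_r$, which is exactly the factorization proved in the next step. Its dual estimate \eqref{Riesz-555} then reads $\norm{\partial_{r^*}z}\le C_{r^*}\norm{A_{r^*}^{1/2}z}$ on $\dom\partial$, which is the second inequality of \eqref{eq-Markov-direct-Riesz}. The proposition gives \eqref{eq-Markov-reverse-explicit-r} on $\dom A_r$. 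I extend it to $\dom\partial_r$ by Part 2 of Proposition \ref{Prop-derivation-closable-sgrp-bis}, using that $\dom A_r$ (which contains the core $\dom\partial$) is a core for $\partial_r$. That same part also yields $\dom\partial_r\subset\dom A_r^{1/2}$, which completes \eqref{eq-Markov-domain-Riesz}.

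\textbf{Factorization.} I apply Proposition \ref{prop-generator-gradient-factorization} with $\tilde\partial=\partial_{p^*}$ and $C=\tilde C=\dom\partial$. The hypotheses hold as follows:
\begin{enumerate}
\item $\dom A_p\subset\dom A_p^{1/2}=\dom\partial_p$, and similarly for $A_{p^*}$.
\item $\dom\partial$ is a core for $A_p$ and $A_{p^*}$ by assumption, and for $\partial_p$ and $\partial_{p^*}$ by definition of the closures.
\item The pairing identity is \eqref{eq-Markov-energy-identity}.
\end{enumerate}
This gives $A_p=(\partial_{p^*})^*\partial_p$.

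\textbf{Conclusion.} The two-sided estimates from the first two steps give \eqref{eq-Markov-Riesz-equivalence}.
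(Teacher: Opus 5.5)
\textbf{Gap: closability of $\partial$ is never established.} You flag closability as the main obstacle and leave it unresolved, yet everything after it depends on it. Part~1 of Proposition~\ref{Prop-derivation-closable-sgrp-bis} needs it. So does the factorization via Proposition~\ref{prop-generator-gradient-factorization}, which requires both $\partial_p$ and $\partial_{p^*}$ to be closed. Hence so does your use of Proposition~\ref{prop-duality-Riesz}.

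Your computation correctly shows that the limit $z$ annihilates $\partial(\dom\partial)\subset\L^{r^*}(\tilde{\cal{M}})$, but neither proposed way of concluding $z=0$ works.
\begin{itemize}
\item Density of $\partial(\dom\partial)$ fails in the basic examples, since the range of a gradient is not dense.
\item The principle ``$z\in\ovl{V}^{\L^r}$ and $z$ annihilates $V$ imply $z=0$'' is false for $r\neq 2$. For $1<r<2$, pick $z\in\L^r(\mathbb{T})\setminus\L^2(\mathbb{T})$, so $z\notin\L^{r^*}(\mathbb{T})$. Then $u\mapsto\int_{\mathbb{T}} zu$ is unbounded for the $\L^r$-norm on trigonometric polynomials, so its kernel $V$ is $\L^r$-dense. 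Thus $z\in\ovl{V}^{\L^r}$ annihilates $V$ while $z\neq 0$.
\item The Hilbertian route only covers $r\geq 2$, where $\L^r$-convergence implies $\L^2$-convergence on a finite algebra. Since $\{p,p^*\}$ always contains an exponent $\leq 2$, one case stays open.
\end{itemize}

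\textbf{The missing idea: prove the reverse estimate on the core first.} Reverse the order and obtain the reverse Riesz estimate on $\dom\partial$ before any closure exists; this needs no closability.
\begin{enumerate}
\item For $x,y\in\dom\partial$, the energy identity, $A_r^{\frac12}A_r^{\frac12}x=A_rx$ and $(A_r^{\frac12})^*=A_{r^*}^{\frac12}$ give $\la A_r^{\frac12}x,A_{r^*}^{\frac12}y\ra=\la\partial x,\partial y\ra$.
\item H\"older and the direct estimate at $r^*$ then give $|\la A_r^{\frac12}x,A_{r^*}^{\frac12}y\ra|\leq C_{r^*}\norm{\partial x}_{\L^r(\tilde{\cal{M}})}\norm{A_{r^*}^{\frac12}y}_{\L^{r^*}(\cal{M})}$.
\item Since $\dom\partial$ is a core for $A_{r^*}^{\frac12}$ (Proposition~\ref{core-A-alpha}), $A_{r^*}^{\frac12}(\dom\partial)$ is dense in $\L^{r^*}_0(\cal{M})$.
\item Split a test element $z$ as $\E_{r^*}z+(\Id-\E_{r^*})z$. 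The first part pairs to zero with $A_r^{\frac12}x\in\ker\E_r$, and this yields \eqref{eq-Markov-reverse-explicit-r} on $\dom\partial$.
\item Closability is then immediate. If $x_n\to 0$ and $\partial x_n\to y$, the sequence $(A_r^{\frac12}x_n)$ is Cauchy. Its limit is $0$ because $A_r^{\frac12}$ is closed, and the direct estimate forces $y=0$.
\end{enumerate}

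With this in hand, your arguments via Parts~1 and~2 of Proposition~\ref{Prop-derivation-closable-sgrp-bis} and the factorization go through as written. Your ordering ``compatibility before factorization'' is not circular, because the factorization only uses $\dom A_r\subset\dom\partial_r$. The detour through Proposition~\ref{prop-duality-Riesz} then becomes unnecessary, since the reverse estimate is already obtained directly on the core.
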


\begin{proof}
Since the Markov semigroup is symmetric and its $\L^r$-realizations are consistent, we have $
A_p^*=A_{p^*}$. Consequently, the energy identity \eqref{eq-Markov-energy-identity} also holds with $p$ and $p^*$ interchanged. Indeed, for any $x,y \in \dom \partial$, we have
\begin{align*}
\MoveEqLeft
\la A_{p^*}x,y\ra_{\L^{p^*}(\cal{M}),\L^p(\cal{M})}
=
\la x,A_py\ra_{\L^{p^*}(\cal{M}),\L^p(\cal{M})}
=
\la A_py,x\ra_{\L^p(\cal{M}),\L^{p^*}(\cal{M})}\\
&\ov{\eqref{eq-Markov-energy-identity}}{=}
\la\partial y,\partial x\ra_{\L^p(\tilde{\cal{M}}),\L^{p^*}(\tilde{\cal{M}})}
=
\la\partial x,\partial y\ra_{\L^{p^*}(\tilde{\cal{M}}),\L^p(\tilde{\cal{M}})}.
\end{align*}
Fix $r\in\{p,p^*\}$ and denote by $r^*$ its H\"older conjugate exponent. Since $A_r^*=A_{r^*}$, the compatibility of the functional calculus with adjoints \cite[Corollary 5.2.4 p.~116]{MCSA01} gives $
\big(A_r^{\frac12}\big)^*
=
A_{r^*}^{\frac12}$. Moreover, by the product rule for fractional powers \cite[Proposition 3.1.1 (c) p.~61]{Haa06}, we have $
A_r^{\frac12}A_r^{\frac12}x
=
A_rx$ for any $x \in \dom A_r$. Hence, for any $x,y \in \dom \partial$, we obtain
\begin{align}
\MoveEqLeft
\label{eq-Markov-half-energy}
\big\la A_r^{\frac12}x,A_{r^*}^{\frac12}y
\big\ra_{\L^r(\cal{M}),\L^{r^*}(\cal{M})}
=
\big\la A_r^{\frac12}A_r^{\frac12}x,y
\big\ra_{\L^r(\cal{M}),\L^{r^*}(\cal{M})}
=
\la A_rx,y
\ra_{\L^r(\cal{M}),\L^{r^*}(\cal{M})}
\\
&=
\la\partial x,\partial y
\ra_{\L^r(\tilde{\cal{M}}),\L^{r^*}(\tilde{\cal{M}})}.
\nonumber
\end{align}
Using H\"older's inequality and \eqref{eq-Markov-direct-Riesz} at the exponent $r^*$, we infer that
\begin{align}
\MoveEqLeft
\label{eq-Markov-duality-core}
\Big|
\la A_r^{\frac12}x,A_{r^*}^{\frac12}y
\ra_{\L^r(\cal{M}),\L^{r^*}(\cal{M})}
\Big|
\ov{\eqref{eq-Markov-half-energy}}{\leq}
\norm{\partial x}_{\L^r(\tilde{\cal{M}})}
\norm{\partial y}_{\L^{r^*}(\tilde{\cal{M}})}
\\
&\ov{\eqref{eq-Markov-direct-Riesz}}{\leq}
C_{r^*}
\norm{\partial x}_{\L^r(\tilde{\cal{M}})}
\bnorm{A_{r^*}^{\frac12}y}_{\L^{r^*}(\cal{M})},
\quad
x,y\in\dom\partial.
\nonumber
\end{align}
Since $\dom\partial$ is a core for $A_{r^*}$, Proposition \ref{core-A-alpha} shows that it is also a core for
$A_{r^*}^{\frac12}$. Consequently, we have
\[
\ovl{
A_{r^*}^{\frac12}(\dom\partial)
}
=
\ovl{\Ran A_{r^*}^{\frac12}}.
\]
By \eqref{inclusion-range} and the description of the reduced space in Section
\ref{sec-gaps}, we have $
\ovl{\Ran A_{r^*}^{\frac12}}
=
\ovl{\Ran A_{r^*}}
=
\L^{r^*}_0(\cal{M})$. Therefore
\begin{equation}
\label{density-half-range-Markov}
\ovl{
A_{r^*}^{\frac12}(\dom\partial)
}
=
\L^{r^*}_0(\cal{M}).
\end{equation}
Let $z \in \L^{r^*}_0(\cal{M})$. By
\eqref{density-half-range-Markov}, there exists a sequence $(y_n)$ in
$\dom\partial$ such that $
A_{r^*}^{\frac12}y_n
\to z$ in $\L^{r^*}(\cal{M})$. Applying
\eqref{eq-Markov-duality-core} and passing to the limit, we obtain
\begin{equation}
\label{eq-Markov-duality-reduced}
\Big|
\big\la A_r^{\frac12}x,z \big\ra_{\L^r(\cal{M}),\L^{r^*}(\cal{M})} \Big|
\leq
C_{r^*}
\norm{\partial x}_{\L^r(\tilde{\cal{M}})}
\norm{z}_{\L^{r^*}(\cal{M})},
\qquad
x\in\dom\partial.
\end{equation}
Now, let $z\in\L^{r^*}(\cal{M})$. By \eqref{inclusion-range}, we have
\[
A_r^{\frac12}x
\in
\ovl{\Ran A_r^{\frac12}}
=
\ovl{\Ran A_r}
=
\L^r_0(\cal{M})
=
\ker\E_r.
\]
Moreover, the extensions of the conditional expectation satisfy $
\E_r^*
=
\E_{r^*}$. Consequently, we have
\begin{align*}
\la A_r^{\frac12}x,\E_{r^*}z
\ra_{\L^r(\cal{M}),\L^{r^*}(\cal{M})}
&=
\la \E_r A_r^{\frac12}x,z
\ra_{\L^r(\cal{M}),\L^{r^*}(\cal{M})}
=0.
\end{align*}
Thus
\[
\la A_r^{\frac12}x,z
\ra_{\L^r(\cal{M}),\L^{r^*}(\cal{M})}
=
\la A_r^{\frac12}x,(\Id-\E_{r^*})z
\ra_{\L^r(\cal{M}),\L^{r^*}(\cal{M})}.
\]
Since $(\Id-\E_{r^*})z$ belongs to $\L^{r^*}_0(\cal{M})$, we can use
\eqref{eq-Markov-duality-reduced} and obtain
\begin{align*}
\MoveEqLeft
\left|
\la A_r^{\frac12}x,z
\ra_{\L^r(\cal{M}),\L^{r^*}(\cal{M})}
\right|
\leq
C_{r^*}
\norm{\partial x}_{\L^r(\tilde{\cal{M}})}
\norm{(\Id-\E_{r^*})z}_{\L^{r^*}(\cal{M})}\\
&\leq
C_{r^*}
\norm{\Id-\E_{r^*}}_{\L^{r^*}(\cal{M})\to\L^{r^*}(\cal{M})}
\norm{\partial x}_{\L^r(\tilde{\cal{M}})}
\norm{z}_{\L^{r^*}(\cal{M})}.
\end{align*}
Since $
(\Id-\E_r)^*
=
\Id-\E_{r^*}$, we have
\[
\norm{\Id-\E_{r^*}}_{\L^{r^*}(\cal{M})\to\L^{r^*}(\cal{M})}
=
\norm{\Id-\E_r}_{\L^r(\cal{M})\to\L^r(\cal{M})}.
\]
Taking the supremum over all $z\in\L^{r^*}(\cal{M})$ satisfying $\norm{z}_{\L^{r^*}(\cal{M})}\leq1$, we conclude that
\begin{equation}
\label{eq-Markov-reverse-Riesz-core}
\bnorm{A_r^{\frac12}x}_{\L^r(\cal{M})}
\leq
C_{r^*}
\norm{\Id-\E_r}_{\L^r(\cal{M}) \to \L^r(\cal{M})}
\norm{\partial x}_{\L^r(\tilde{\cal{M}})},
\qquad
x\in\dom\partial.
\end{equation}
Now, we prove that $\partial$ is closable on $\L^r(\cal{M})$. Let $(x_n)$ be a sequence in $\dom\partial$ such that $x_n \to 0$ in $\L^r(\cal{M})$ and $\partial x_n \to y$ in $\L^r(\tilde{\cal{M}})$ for some $y\in\L^r(\tilde{\cal{M}})$. In particular, the sequence $(\partial x_n)$ is Cauchy. By \eqref{eq-Markov-reverse-Riesz-core}, for any integers $n,m\geq1$,
\begin{align*}
\MoveEqLeft
\bnorm{
A_r^{\frac12}(x_n-x_m)
}_{\L^r(\cal{M})}
\leq
C_{r^*}
\norm{\Id-\E_r}_{\L^r(\cal{M})\to\L^r(\cal{M})}
\norm{\partial x_n-\partial x_m}_{\L^r(\tilde{\cal{M}})}.
\end{align*}
Hence $\big(A_r^{\frac12}x_n\big)$ is a Cauchy sequence in
$\L^r(\cal{M})$. Let $z\in\L^r(\cal{M})$ denote its limit. Since $
x_n\to 0$ and $
A_r^{\frac12}x_n\to z$ in $\L^r(\cal{M})$ and the operator $A_r^{\frac12}$ is closed, we obtain $
z=A_r^{\frac12}(0)=0$. Therefore $
A_r^{\frac12}x_n
\to 0$
in $\L^r(\cal{M})$. Using the direct Riesz estimate
\eqref{eq-Markov-direct-Riesz}, we infer that
\[
\norm{\partial x_n}_{\L^r(\tilde{\cal{M}})}
\leq
C_r
\bnorm{A_r^{\frac12}x_n}_{\L^r(\cal{M})}
\to 0.
\]
Since $\partial x_n \to y$, we conclude that $y=0$. This proves that $
\partial
\co
\dom\partial
\subset
\L^r(\cal{M})
\to
\L^r(\tilde{\cal{M}})$ is closable. We denote its closure by $\partial_r$. Now, we can extend both Riesz estimates from the common core. Since $\dom\partial$ is a core for $A_r$, Proposition \ref{core-A-alpha} shows that it is a core for $A_r^{\frac12}$. Hence part~1 of Proposition \ref{Prop-derivation-closable-sgrp-bis}, together with
\eqref{eq-Markov-direct-Riesz}, yields $
\dom A_r^{\frac12}
\subset
\dom\partial_r$ and
\begin{equation}
\label{eq-Markov-direct-closed-intermediate}
\norm{\partial_r x}_{\L^r(\tilde{\cal{M}})}
\leq
C_r
\bnorm{A_r^{\frac12}x}_{\L^r(\cal{M})},
\qquad
x\in\dom A_r^{\frac12}.
\end{equation}
On the other hand, part~2 of Proposition
\ref{Prop-derivation-closable-sgrp-bis}, applied to
\eqref{eq-Markov-reverse-Riesz-core}, gives $
\dom\partial_r
\subset
\dom A_r^{\frac12}$ and
\begin{equation}
\label{eq-Markov-reverse-closed-intermediate}
\bnorm{A_r^{\frac12}x}_{\L^r(\cal{M})}
\leq
C_{r^*}
\norm{\Id-\E_r}_{\L^r(\cal{M})\to\L^r(\cal{M})}
\norm{\partial_r x}_{\L^r(\tilde{\cal{M}})},
\qquad
x\in\dom\partial_r.
\end{equation}
Consequently, we have
\[
\dom A_r^{\frac12}
=
\dom\partial_r,
\qquad
r\in\{p,p^*\},
\]
and \eqref{eq-Markov-direct-Riesz-closed} and
\eqref{eq-Markov-reverse-explicit-r} follow.

It remains to prove the factorization. Since $
\dom A_p
\subset
\dom A_p^{\frac12}
=
\dom\partial_p$ and
\[
\dom A_p^*
=
\dom A_{p^*}
\subset
\dom A_{p^*}^{\frac12}
=
\dom\partial_{p^*},
\]
the domain inclusions required in Proposition
\ref{prop-generator-gradient-factorization} are satisfied. Moreover,
$\dom\partial$ is a core for $A_p$ and $A_{p^*}=A_p^*$ by assumption,
and it is a core for $\partial_p$ and $\partial_{p^*}$ by the definition
of these operators as the closures of $\partial$. Finally,
\eqref{eq-Markov-energy-identity} is precisely the required identity on
the common core. Therefore Proposition
\ref{prop-generator-gradient-factorization}, applied with $
X=\L^p(\cal{M})$, $Y=\L^p(\tilde{\cal{M}})$, $C=\widetilde C=\dom\partial$, 
gives $
A_p
=
(\partial_{p^*})^*\partial_p$. Taking $r=p$ in
\eqref{eq-Markov-direct-closed-intermediate} and
\eqref{eq-Markov-reverse-closed-intermediate} proves
\eqref{eq-Markov-Riesz-equivalence}.
\end{proof}

\begin{cor}
\label{cor-Markov-Poincare-from-Riesz}
Under the assumptions of Proposition~\ref{prop-Markov-Riesz-equivalence}, assume in addition that $
0\in\rho(A_{2,0})$. Then
\[
\norm{x-\E_p(x)}_{\L^p(\cal{M})}
\leq
2C_{p^*}
\bnorm{A_{p,0}^{-\frac12}}_{\L_0^p(\cal{M})\to\L_0^p(\cal{M})}
\norm{\partial_p x}_{\L^p(\tilde{\cal{M}})},
\quad
x\in\dom\partial_p.
\]
\end{cor}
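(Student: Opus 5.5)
The plan is to combine three earlier results: Theorem \ref{Th-Poincare-von-Neumann}, which turns a reduced spectral gap plus a reverse Riesz estimate into a Poincar\'e inequality; the explicit reverse Riesz estimate \eqref{eq-Markov-reverse-explicit-r} from Proposition \ref{prop-Markov-Riesz-equivalence}; and the bound $\norm{\Id-\E_p}\leq 2$ recorded in Section \ref{sec-gaps}.

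First, I will apply Proposition \ref{prop-Markov-Riesz-equivalence} with $r=p$. It gives that $\partial$ is closable on $\L^p(\cal{M})$ with closure $\partial_p$, that $\dom\partial_p=\dom A_p^{\frac12}$, and the reverse Riesz estimate
\[
\bnorm{A_p^{\frac12}x}_{\L^p(\cal{M})}
\leq
C_{p^*}\norm{\Id-\E_p}_{\L^p(\cal{M})\to\L^p(\cal{M})}\norm{\partial_p x}_{\L^p(\tilde{\cal{M}})},
\quad x\in\dom\partial_p.
\]
Since $\E_p$ is a contraction, $\norm{\Id-\E_p}\leq 2$. The reverse Riesz estimate \eqref{eq-lower-Riesz-direct-Poincare-Markov} therefore holds for the operator $\partial_p$ with $\alpha=\frac12$ and constant $2C_{p^*}$. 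The domain condition $\dom\partial_p\subset\dom A_p^{\frac12}$ is supplied by \eqref{eq-Markov-domain-Riesz}.

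Next, I will invoke Theorem \ref{Th-Poincare-von-Neumann} with $\partial$ replaced by $\partial_p$, with $\alpha=\frac12$, and with constant $2C_{p^*}$. It uses the hypothesis $0\in\rho(A_{2,0})$ to get $0\in\rho(A_{p,0})$, so that $A_{p,0}^{-\frac12}$ is bounded on $\L_0^p(\cal{M})$. It then yields
\[
\norm{x-\E_p(x)}_{\L^p(\cal{M})}\leq 2C_{p^*}\bnorm{A_{p,0}^{-\frac12}}\norm{\partial_p x}_{\L^p(\tilde{\cal{M}})},
\quad x\in\dom\partial_p,
\]
which is the claimed inequality.

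The only point that needs checking is bookkeeping. Theorem \ref{Th-Poincare-von-Neumann} is stated for a finite $\tilde{\cal{M}}$ with a normal finite faithful trace, and that is exactly the setting of Proposition \ref{prop-Markov-Riesz-equivalence}. The constant coming out of \eqref{eq-Markov-reverse-explicit-r} is $C_{r^*}$ with $r=p$, that is $C_{p^*}$, which matches the statement. No further analytic argument should be needed.
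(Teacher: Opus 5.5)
Your proposal is correct and is essentially the paper's proof. The paper's argument is the one-line combination of Proposition~\ref{prop-Markov-Riesz-equivalence} (the estimate \eqref{eq-Markov-reverse-explicit-r} with $r=p$ and the domain identity \eqref{eq-Markov-domain-Riesz}) with Theorem~\ref{Th-Poincare-von-Neumann}, applied with $\alpha=\frac12$ and $\partial_p$ in place of $\partial$, using $\norm{\Id-\E_p}\leq 2$ to get the constant $2C_{p^*}$.
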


\begin{proof}
Combine Proposition~\ref{prop-Markov-Riesz-equivalence} with Theorem~\ref{Th-Poincare-von-Neumann}.
\end{proof}

Thanks to the discussion in Section \ref{sec-gaps}, this result can be used with semigroups with compact resolvent or hypercontractive semigroups. Our result encompasses all the examples treated in the paper \cite{JLZZ24}, including Poisson semigroups on free groups, Ornstein--Uhlenbeck semigroups associated with mixed $Q$-Gaussian von Neumann algebras, and various hypercontractive semigroups of Fourier multipliers on group von Neumann algebras.

\begin{prop}
\label{prop-OU-negative-square-root-logp}
Let $\cal{M}$ be a von Neumann algebra equipped with a normalized normal finite faithful trace. Consider a Markovian semigroup $(T_t)_{t \geq 0}$ of operators acting on $\cal{M}$ with generator $-A$, which is contractive from $\L^2(\cal{M})$ into $\L^p(\cal{M})$ for any  $2 \leq p < \infty$ and any $t \geq \frac{\log(p-1)}{2}$. Let $A_{p,0}$ be the part of the operator $A_p$ in $\L^p_0(\cal{M})$. For any $2 \leq p <\infty$, we have
\begin{equation}
\label{eq-OU-negative-square-root-upper}
\bnorm{A_{p,0}^{-\frac12}}_{\L^p_0(\cal{M}) \to \L^p_0(\cal{M})}
\lesssim
\sqrt{\log p}.
\end{equation}
\end{prop}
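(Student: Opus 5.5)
Our plan is to represent $A_{p,0}^{-\frac12}$ by the subordination formula
\[
A_{p,0}^{-\frac12} f
=
\frac{1}{\Gamma(\frac12)} \int_0^\infty t^{-\frac12} T_t f \,\d t,
\quad f \in \L^p_0(\cal{M}),
\]
which is valid once we know that the restricted semigroup decays exponentially on $\L^p_0(\cal{M})$. We then split the integral at a time $t_p$ of order $\log p$. The small-time part is bounded by contractivity alone. The large-time part uses a quantitative exponential decay rate that must not deteriorate with $p$. This is what produces $\sqrt{\log p}$.

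First we obtain the decay rate. The hypothesis with $p=4$ gives hypercontractivity at $t_0=\frac{\log 3}{2}$. Proposition \ref{prop-hypercontractivity-spectral-gap} then yields $\gap(A_2) \geq 1$, so $\norm{T_t}_{\L^2_0 \to \L^2_0} \leq \e^{-t}$. For $f \in \L^p_0(\cal{M})$ with $p \geq 2$ and $t \geq s_p \ov{\mathrm{def}}{=} \frac{\log(p-1)}{2}$, we write $T_t = T_{s_p} T_{t-s_p}$. Since $\E$ commutes with the semigroup by \eqref{E-Tt}, $T_{t-s_p} f$ remains in $\L^2_0$. Using the $\L^2 \to \L^p$ contractivity and $\norm{\cdot}_{\L^2} \leq \norm{\cdot}_{\L^p}$ on a probability space, we obtain
\[
\norm{T_t f}_{\L^p} \leq \norm{T_{t-s_p} f}_{\L^2} \leq \e^{-(t-s_p)} \norm{f}_{\L^2} \leq \e^{-(t-s_p)} \norm{f}_{\L^p}.
\]
So on $\L^p_0$ we have $\norm{T_t} \leq \min\{2, \e^{-(t-s_p)}\}$, where the factor $2$ accounts for the restriction to $\L^p_0$ through $\Id-\E_p$. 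Alternatively, since $T_t$ maps $\L^p_0$ into itself, a bound of $1$ holds there as well.

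Next we justify the formula. By this decay, $0 \in \rho(A_{p,0})$, via Proposition \ref{prop-reduced-exp-stability-closed-range}, or more directly through the Laplace-transform representation of $A_{p,0}^{-1}$. The standard formula $A_0^{-\alpha} = \Gamma(\alpha)^{-1} \int_0^\infty t^{\alpha-1} T_t \,\d t$ then holds for exponentially stable semigroups, as in \cite[Corollary 15.2.10]{HvNVW23} or \cite{Haa06}. We split the integral at $s_p$:
\[
\bnorm{A_{p,0}^{-\frac12}}
\leq
\frac{1}{\sqrt{\pi}} \left( \int_0^{s_p} t^{-\frac12} \,\d t + \int_{s_p}^\infty t^{-\frac12} \e^{-(t-s_p)} \,\d t \right).
\]
The first integral equals $2\sqrt{s_p} \lesssim \sqrt{\log p}$. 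The second is at most $\int_0^\infty (u+s_p)^{-\frac12} \e^{-u} \,\d u$, which is bounded by a constant (by $\sqrt{\pi}$ if we drop $s_p$). Summing the two pieces gives \eqref{eq-OU-negative-square-root-upper}. For $p$ close to $2$, where $s_p$ is near $0$, the bound is uniformly $O(1)$, which is consistent with $\sqrt{\log p}$ for $p \geq 2$ up to constants.

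We expect the main obstacle to be the uniformity of the exponential rate in $p$. Proposition \ref{prop-stability-interpolation} applied between $\L^2$ and $\L^\infty$ would give the rate $\frac{2}{p}$, and that would only yield $\sqrt{p}$. The point is therefore to use hypercontractivity to transfer the $\L^2$ gap after the time delay $s_p$, instead of interpolating. A secondary point is to check that $\L^2_0$-decay applies to $T_{t-s_p} f$, which requires that $f \in \L^p_0 \subset \L^2_0$ with $\tau(f)=0$, and that $\ker A_2 = \mathbb{C}1$, so that $\E$ is the trace. Both follow from Proposition \ref{prop-hypercontractivity-spectral-gap}.
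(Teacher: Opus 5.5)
Your proposal is correct and follows the paper's proof essentially step for step: the subordination formula for $A_{p,0}^{-\frac12}$, the split at $s_p=\frac12\log(p-1)$, contractivity on $[0,s_p]$, and on $[s_p,\infty)$ the factorization $T_t=T_{s_p}T_{t-s_p}$ combined with the gap $\gap(A_2)\geq 1$ from Proposition \ref{prop-hypercontractivity-spectral-gap} with $t_0=\frac{\log 3}{2}$, which gives $2\sqrt{s_p}$ plus a term bounded by $\sqrt{\pi}$. Your extra remarks make explicit two points the paper leaves implicit: $T_{t-s_p}f$ stays in $\L^2_0(\cal{M})$ because $\E$ commutes with the semigroup, and the resulting exponential decay on $\L^p_0(\cal{M})$ justifies the subordination formula; the factor $2$ you mention is unnecessary, since $T_t$ is already contractive on $\L^p_0(\cal{M})$.
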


\begin{proof}
On the reduced space $\L^p_0(\cal{M})$, we have the formula by \cite[Corollary 15.2.15 p.~449]{HvNVW23}
\begin{equation}
\label{eq-subordination-N-minus-half}
A_{p,0}^{-\frac12}f
= \frac1{\sqrt\pi} \int_0^\infty t^{-\frac12}T_tf \d t,
\quad f \in \L^p_0(\cal{M}),
\end{equation}
where the integral is understood as a Bochner integral. Let $2 \leq p < \infty$, and set $s_p \ov{\mathrm{def}}{=} \frac12\log(p-1)$. If $0 < t \leq s_p$, then the Markovian contractivity of $T_t$ gives $
\norm{T_t(f)}_{\L^p(\cal{M})}
\leq
\norm{f}_{\L^p(\cal{M})}$. If $t>s_p$, we write
\[
T_t
=T_{s_p}T_{t-s_p}.
\]
By the assumption, $T_{s_p}$ is a contraction from \(\L^2(\cal{M})\) into the space $\L^p(\cal{M})$. 
Taking $p=4$ in the hypercontractivity assumption and applying Proposition~\ref{prop-hypercontractivity-spectral-gap} with $t_0=\frac{\log3}{2}$, we obtain $\gap(A_2) \geq 1$. Since $f \in \L^p_0(\cal{M})$, this spectral gap on $\L^2_0(\cal{M})$ gives by functional calculus
\[
\bnorm{T_{t-s_p}(f)}_{\L^2(\cal{M})}
\leq
\e^{-(t-s_p)}
\norm{f}_{\L^2(\cal{M})}.
\]
Since $\tau$ is finite and normalized, for any $p \geq 2$, we have $
\norm{f}_{\L^2(\cal{M})} 
\leq \norm{f}_{\L^p(\cal{M})}$. Consequently, for any $t > s_p$, we obtain
\[
\bnorm{T_t(f)}_{\L^p(\cal{M})}
\leq
\e^{-(t-s_p)}
\norm{f}_{\L^p(\cal{M})}.
\]
Using \eqref{eq-subordination-N-minus-half}, we obtain
\begin{equation}
\label{fin-29-19}
\bnorm{A_{p,0}^{-\frac12}f}_{\L^p(\cal{M})}
\leq
\frac1{\sqrt\pi}
\bigg( \int_0^{s_p}t^{-\frac12} \d t
+
\int_{s_p}^\infty t^{-\frac12}\e^{-(t-s_p)} \d t \bigg)
\norm{f}_{\L^p(\cal{M})}.
\end{equation}
The first integral is equal to $2\sqrt{s_p}$. For the second one, after the change of variable $u=t-s_p$, we have
\begin{equation}
\label{inter-23456ui}
\int_{s_p}^\infty
t^{-\frac12}\e^{-\omega(t-s_p)}\d t
=\int_0^\infty (u+s_p)^{-\frac12} \e^{-u} \d u
\leq \int_0^\infty u^{-\frac12} \e^{-u} \d u
=\sqrt{\pi}.
\end{equation}
Thus, we have
\[
\bnorm{A_{p,0}^{-\frac12}f}_{\L^p(\cal{M})}
\ov{\eqref{fin-29-19} \eqref{inter-23456ui}}{\lesssim} \bigg(\sqrt{\frac{2}{\pi}\log(p-1)} +1\bigg)\norm{f}_{\L^p(\cal{M})},
\]
which proves \eqref{eq-OU-negative-square-root-upper}.
\end{proof}

\subsection{Poincar\'e inequalities with subelliptic gradients on compact Lie groups}
\label{Sec-Lie-groups}

Let $G$ be a connected Lie group equipped with a left Haar measure $\mu_G$. We consider a finite sequence $X \ov{\mathrm{def}}{=}(X_1,\ldots,X_m)$ of left invariant smooth vector fields which generate the Lie algebra $\g$ of the group $G$ such that the vectors $X_1(e),\ldots, X_m(e)$ are linearly independent. We say that it is a family of left-invariant H\"ormander vector fields. For any $r > 0$ and any $x \in G$, we denote by $B(x,r)$ the open ball with respect to the Carnot-Carath\'eodory metric centered at $x$ and of radius $r$. Its volume $V(r) \ov{\mathrm{def}}{=} \mu_G(B(x,r))$ does not depend on $x$. It is well-known, e.g.~\cite[p.~124]{VSCC92}, that there exists an integer $d \geq 1$ and some constants $c,C > 0$ such that
\begin{equation}
\label{local-dim}
c r^d 
\leq V(r) 
\leq C r^d, \quad r \in ]0, 1[.
\end{equation}
The integer $d$ is called the local dimension of $(G,X)$. When $r \geq 1$, only two situations may occur, independently of the choice of $X$ (see e.g.~\cite[p.~26]{DtER03}): either $G$ has polynomial volume growth, which means that there exist an integer $D \in \N$ and some constants $c',C' > 0$ such that
\begin{equation}
\label{poly-growth}
c' r^D 
\leq V(r) 
\leq 
C' r^D, \quad r \geq 1,
\end{equation}
or $G$ has exponential volume growth, which means that there exist $c_1,C_1, c_2,C_2 > 0$ such that
$$
c_1 \e^{c_2 r}
\leq  V(r)
\leq C_1 \e^{C_2 r}, \quad r \geq 1.
$$
When $G$ has polynomial volume growth, the integer $D$ in \eqref{poly-growth} is called the dimension at infinity of $G$. Note that, contrary to $d$, it only depends on $G$ and not on $X$, see \cite[Chapter 4]{VSCC92}. The case $D=0$ corresponds to uniformly bounded volume, hence to compactness, see \cite[pp.~256-257]{Rob91} and \cite[p.~26]{DtER03}. Note that by \cite[II.4.5 p.~26]{DtER03} or \cite[p.~381]{Rob91}, each connected Lie group of polynomial growth is unimodular.

 
If the connected Lie group $G$ is unimodular, we define the subelliptic Laplacian $\Delta$ on $G$ by $ 
\Delta 
\ov{\mathrm{def}}{=} -\sum_{k=1}^{m} X_k^2$. We can consider the smallest closed extension $\Delta_p \co \dom \Delta_p \subset \L^p(G) \to \L^p(G)$ of the closable unbounded operator $\Delta|_{\C^\infty_c(G)}$ on the Banach space $\L^p(G)$, where $\C^\infty_c(G)$ is the space of smooth functions on $G$ with compact support. Finally, we denote by $(T_t)_{t \geq 0}$ the strongly continuous semigroup of positive contractive convolution operators on the Banach space $\L^p(G)$ if $1 \leq p <\infty$, see \cite[pp.~20-21]{VSCC92} and \cite[p.~301]{Rob91}. This semigroup is bounded holomorphic on $\L^p(G)$ by \cite[Proposition 5.4 p.~51, Lemma 3.1 p.~26]{JMX06} if $1 < p < \infty$. 

Suppose that $1 < p < \infty$ and that the connected Lie group $G$ has polynomial volume growth. By \cite[Theorem 2 p.~692]{Ale92} and \cite[p.~339]{CRT01}, we have the Riesz equivalence
\begin{equation*}
\bnorm{\Delta^{\frac12}(f)}_{\L^p(G)}
\approx_{p,G,X} \sum_{k=1}^{m} \bnorm{X_k(f)}_{\L^p(G)}, \quad f \in \C^\infty_c(G).
\end{equation*}
If $\nabla \co \C^\infty_c(G) \to \L^p(G,\ell^p_m)$, $f \mapsto (X_1 f,\ldots,X_m f)$
denotes the subelliptic gradient with closure $\nabla_p$, then we can reformulate and extend very slightly by \cite[Proposition 3.4 p.~461]{Arh24a} the previous result in the following way. We have $\dom \nabla_p= \dom \Delta_p^{\frac12}$ and
\begin{equation}
\label{Riesz1}
\bnorm{\Delta_p^{\frac12}(f)}_{\L^p(G)}
\approx_{p,G,X}
\norm{\nabla_p f}_{\L^p(G,\ell^p_m)},
\quad f \in \dom \nabla_p.
\end{equation}

If the connected Lie group $G$ is compact, we assume that the measure $\mu_G$ is normalized, i.e., $\mu_G(G)=1$. 
In this case, we have $\ker \Delta_p = \mathbb{C} 1$ by \cite[Lemma 3.5 (3) p.~462]{Arh24a} and the mean ergodic projection $P \co \L^p(G) \to \L^p(G)$ is given by $
P(f)
=\int_G f \d\mu_G$. Finally, recall that $\ovl{\Ran \Delta_p}=\{ f \in \L^p(G) : \int_G f \d\mu_G =0\}$. 

  
Now, we obtain the following $\L^p$-Poincar\'e inequality. The case $p=2$ seems to be a particular case of \cite[Theorem 1.1 p.~107]{RuS11}.

\begin{cor}
\label{cor-intro-Poincare}
Let $G$ be a connected compact Lie group equipped with its normalized Haar measure $\mu_G$. Let $(X_1,\ldots,X_m)$ be a family of left-invariant H\"ormander vector fields. Suppose that $1 < p < \infty$. We have
\begin{equation}
\label{Poincare-intro-bis}
\norm{f-\int_G f}_{\L^p(G)}
\lesssim_{p,G,X} \norm{\nabla_p(f)}_{\L^p(G,\ell^p_m)}, \quad f \in \dom \nabla_p.
\end{equation}
\end{cor}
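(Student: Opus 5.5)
We plan to apply Theorem \ref{thm-direct-Poincare-via-negative-powers} with $X=\L^p(G)$, $Y=\L^p(G,\ell^p_m)$, $A=\Delta_p$, $\alpha=\frac12$ and $\partial=\nabla_p$. The argument has four steps.

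The first step checks the Abel-ergodicity of $\Delta_p$. The semigroup $(T_t)_{t \geq 0}$ is a contractive, hence bounded, strongly continuous semigroup on the reflexive space $\L^p(G)$ for $1<p<\infty$. Example \ref{bounded-ergodic} therefore shows that it is mean ergodic. Its generator $-\Delta_p$ is then Abel-ergodic, and the ergodic projection is $P(f)=\int_G f \d\mu_G$, by the facts recalled just before the statement ($\ker \Delta_p=\mathbb{C}1$). The semigroup is also bounded holomorphic on $\L^p(G)$, as recalled above.

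The second step, which I expect to be the main point, is the spectral gap $0 \in \rho(\Delta_{p,0})$. We start at $p=2$. On a compact Lie group, the H\"ormander condition gives a subelliptic estimate, so $\Delta_2$ has compact resolvent. Alternatively, by Peter--Weyl, $\Delta_2$ acts on each isotypic component as a finite matrix, and its spectrum is discrete. In either case, \eqref{compact-resolvent} gives $0\in\rho(\Delta_{2,0})$. By Proposition \ref{prop-reduced-exp-stability-closed-range}, the semigroup restricted to $\L^2_0(G)=\ovl{\Ran \Delta_2}$ is uniformly exponentially stable.

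To reach $1<p<2$, choose $1<r<p$. We interpolate the reduced spaces $\ovl{\Ran\Delta_p}\approx(\ovl{\Ran \Delta_2},\ovl{\Ran \Delta_r})_\theta$ using the compatible projections $\Id-P$. We then apply Proposition \ref{prop-stability-interpolation} with exponential decay on the $\L^2_0$ side and boundedness on the $\L^r_0$ side. This is exactly as in the proof of Theorem \ref{Th-Poincare-von-Neumann}. The case $p>2$ follows by duality, since the semigroup is symmetric on the unimodular group $G$. Proposition \ref{prop-reduced-exp-stability-closed-range}, applied on $\L^p$, then gives $0\in\rho(\Delta_{p,0})$.

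An alternative route also works: a compact group has polynomial growth with $D=0$. Alternatively, the compact-resolvent argument of \cite[Corollary 4.11 p.~344]{EnN00} can be applied directly on $\L^p$, provided one checks compactness of the resolvent there. That compactness follows by interpolating the compact $\L^2$ resolvent with the bounded $\L^r$ resolvent.

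The third step is the reverse Riesz estimate. A compact group has polynomial volume growth, so \eqref{Riesz1} applies. It gives $\dom\nabla_p=\dom\Delta_p^{\frac12}$ and $\norm{\Delta_p^{\frac12}f}_{\L^p(G)}\lesssim_{p,G,X}\norm{\nabla_p f}_{\L^p(G,\ell^p_m)}$ for $f\in\dom\nabla_p$.

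The fourth step concludes. Theorem \ref{thm-direct-Poincare-via-negative-powers} yields $\norm{f-\int_G f}_{\L^p(G)}\leq C\norm{\Delta_{p,0}^{-1/2}}\norm{\nabla_p f}_{\L^p(G,\ell^p_m)}$, which is \eqref{Poincare-intro-bis}.
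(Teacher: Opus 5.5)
Your proof is correct and has the same skeleton as the paper's: mean ergodicity with $P(f)=\int_G f\,\d\mu_G$, a reduced gap $0\in\rho(\Delta_{p,0})$, the reverse Riesz estimate from \eqref{Riesz1}, and Theorem \ref{thm-direct-Poincare-via-negative-powers} with $\alpha=\frac12$.

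\textbf{Transfer of the gap.} The one real difference is how you move the gap from $\L^2$ to $\L^p$. The paper uses what you list as your fallback. It takes the compact resolvent of $\Delta_2$ from \cite[Lemma 4.4 p.~467]{Arh24a} and extrapolates compactness of the resolvent to $\Delta_p$ by interpolation \cite[p.~78]{Are04}. Then \eqref{compact-resolvent} gives $0\in\rho(\Delta_{p,0})$ directly on $\L^p$. This needs no identification of reduced interpolation spaces and no duality step for $p>2$. Your main route interpolates exponential decay on $\L^2_0$ against boundedness on $\L^r_0$, via Proposition \ref{prop-stability-interpolation} and the compatible projections $\Id-P$, and then uses duality through symmetry. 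That is the mechanism the paper reserves for Theorem \ref{Th-Poincare-von-Neumann}. Here it is longer, but it only needs an $\L^2$ gap rather than compactness. It also yields an explicit exponential decay rate on $\L^p_0$, a fixed fraction of the $\L^2$ gap depending on $p$ and $r$.

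\textbf{Two asides to fix.}
\begin{enumerate}
\item The Peter--Weyl justification is insufficient as stated. Finite-dimensional isotypic blocks do not prevent the block eigenvalues from accumulating at $0$. For example, $-(\partial_1+\sqrt{2}\,\partial_2)^2$ on $\mathbb{T}^2$ has only one-dimensional blocks, yet its spectrum is $[0,\infty)$. What forces the block eigenvalues to infinity is the H\"ormander condition, through the subelliptic estimate, so rely on your first justification.
\item The sentence that a compact group has polynomial growth with $D=0$ is not an argument for the gap. Polynomial growth only enters your third step, where it lets you invoke \eqref{Riesz1}.
\end{enumerate}
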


\begin{proof}
By \cite[Lemma 4.4 p.~467]{Arh24a}, the operator $\Delta_2 \co \dom \Delta_2 \subset \L^2(G) \to \L^2(G)$ has compact resolvent. By interpolation \cite[p.~78]{Are04}, the unbounded operator $\Delta_p$ also has compact resolvent. By \eqref{compact-resolvent}, this gives $0 \in \rho(\Delta_{p,0})$. The estimate \eqref{eq-lower-Riesz-direct-Poincare} is satisfied by \eqref{Riesz1} with $\alpha=\frac{1}{2}$. So it suffices to use Theorem \ref{thm-direct-Poincare-via-negative-powers} (or Theorem \ref{Th-Poincare-von-Neumann}) with $\partial = \nabla_p$.
\end{proof}

Finally, we estimate the constant of \eqref{eq-direct-Poincare} and \eqref{eq-Poincare-div-full} with respect to $p$ in this context.

\begin{prop}
\label{prop-negative-square-root-Lie group}
Let $G$ be a connected compact Lie group equipped with its normalized Haar measure $\mu_G$. Let $X=(X_1,\ldots,X_m)$ be a family of left-invariant H\"ormander vector fields. Suppose that $1 < p < \infty$. Let $\Delta_{p,0}$ be the part of the subelliptic Laplacian $\Delta_p$ in $\L^p_0(G)$. Then there exists a constant $C_{G,X}$ which depends only on $(G,X)$ such that 
\begin{equation}
\label{eq-negative-square-root-torus-uniform}
\bnorm{\Delta_{p,0}^{-\frac12}}_{\L^p_0(G) \to \L^p_0(G)}
\leq C_{G,X}
\end{equation}
for any $1 < p < \infty$.
\end{prop}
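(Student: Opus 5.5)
We use the same subordination formula as in the proof of Proposition \ref{prop-OU-negative-square-root-logp}:
\[
\Delta_{p,0}^{-\frac12}f=\frac{1}{\sqrt\pi}\int_0^\infty t^{-\frac12}T_tf\d t,\quad f\in\L^p_0(G),
\]
where $(T_t)_{t\geq0}$ is the subelliptic heat semigroup. The task then reduces to proving an exponential decay estimate
\[
\norm{T_t|_{\L^p_0(G)}}_{\L^p_0(G)\to\L^p_0(G)}\leq M\e^{-\omega t},\quad t\geq0,
\]
with $M,\omega>0$ depending only on $(G,X)$ and \emph{not} on $p$. Once this holds, splitting the integral at $t=1$ gives
\[
\bnorm{\Delta_{p,0}^{-\frac12}}\leq \frac{1}{\sqrt\pi}\Big(2\sup_{t\leq1}\norm{T_t|_{\L^p_0}}+M\int_1^\infty t^{-\frac12}\e^{-\omega t}\d t\Big).
\]
For the first term, $\norm{T_t|_{\L^p_0}}\leq\norm{T_t}\,\norm{\Id-P}\leq 2$, since $T_t$ is contractive and $P$ (integration against $\mu_G$) is contractive on every $\L^p$. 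The bound is therefore uniform in $p$.

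For the uniform decay, we exploit ultracontractivity, which is available because $G$ is compact. By \eqref{local-dim} and the Varopoulos--Saloff-Coste--Coulhon heat kernel bounds \cite{VSCC92}, the heat kernel $h_t$ of $(T_t)$ is bounded at $t=1$. So $T_1\co\L^1(G)\to\L^\infty(G)$ is bounded with norm $K=\norm{h_1}_\infty$, a constant depending only on $(G,X)$. The spectral gap on $\L^2_0(G)$ comes from compactness of the resolvent (\cite[Lemma 4.4 p.~467]{Arh24a}), as in the proof of Corollary \ref{cor-intro-Poincare}; call it $\lambda_1>0$. Then $\norm{T_t|_{\L^2_0}}\leq\e^{-\lambda_1 t}$.

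For $t\geq2$ and $f\in\L^p_0(G)$, we write $T_t f=T_{1}T_{t-2}T_1 f$, using that $T_s$ commutes with $P$ by \eqref{mean-et-Tt}, so each factor preserves the mean-zero subspace. We estimate the three factors in turn. Because $\mu_G$ is a probability measure, $\norm{T_1 f}_{\L^2}\leq\norm{T_1f}_{\L^\infty}\leq K\norm{f}_{\L^1}\leq K\norm{f}_{\L^p}$. Next, $\norm{T_{t-2}g}_{\L^2}\leq\e^{-\lambda_1(t-2)}\norm{g}_{\L^2}$ for $g\in\L^2_0$. Finally, $\norm{T_1h}_{\L^p}\leq\norm{T_1h}_{\L^\infty}\leq K\norm{h}_{\L^1}\leq K\norm{h}_{\L^2}$. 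Combining these gives
\[
\norm{T_tf}_{\L^p}\leq K^2\e^{2\lambda_1}\e^{-\lambda_1 t}\norm{f}_{\L^p},
\]
a bound independent of $p\in(1,\infty)$. For $t<2$ we use the trivial bound $2$. Thus we may take $M=\max(2,K^2\e^{2\lambda_1})$ and $\omega=\lambda_1$.

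The main obstacle is making the ultracontractivity step precise. We need the $\L^1\to\L^\infty$ bound of $T_1$; the Gaussian upper bound $h_t(e)\lesssim V(\sqrt t)^{-1}$ from \cite{VSCC92} provides it, and compactness handles large times. We must also justify the subordination formula on each $\L^p_0(G)$. This follows from \cite[Corollary 15.2.15 p.~449]{HvNVW23}, once the decay estimate above makes the Bochner integral converge absolutely.
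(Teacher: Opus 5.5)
Your proof is correct. Its skeleton is the paper's: the subordination formula, a split at $t=1$, contractivity for small $t$, and a $p$-independent exponential decay of $T_t$ on $\L^p_0(G)$ for large $t$.

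The difference lies in how that uniform decay is obtained. The paper quotes from \cite[p.~38]{DtER03} the bound $\norm{K_t^0}_{\L^\infty(G)} \lesssim \e^{-\omega t}$ for the kernel of $T_t(\Id-P)$. It then applies Young's inequality, $\norm{T_tf}_{\L^p}\leq \norm{K_t^0}_{\L^1}\norm{f}_{\L^p}\leq \norm{K_t^0}_{\L^\infty}\norm{f}_{\L^p}$.

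You derive the decay yourself. You factor $T_t=T_1T_{t-2}T_1$ through $\L^p\subset\L^1\to\L^\infty\subset\L^2\to\L^2\subset\L^1\to\L^\infty\subset\L^p$, using only $K=\norm{h_1}_\infty<\infty$, the $\L^2$ gap $\lambda_1$, and the fact that $\mu_G$ is a probability measure. This is essentially the standard proof of the kernel estimate the paper cites: it gives $\norm{T_t(\Id-P)}_{\L^1\to\L^\infty}\leq 2K^2\e^{-\lambda_1(t-2)}$.

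What your route buys:
\begin{itemize}
\item it is self-contained and makes the constants explicit;
\item it needs neither Young's inequality nor the convolution structure beyond $\norm{T_1}_{\L^1\to\L^\infty}<\infty$;
\item it therefore works verbatim for any symmetric Markov semigroup on a probability space that is ultracontractive at one time and has an $\L^2$ spectral gap.
\end{itemize}
The paper's version is simply shorter, given the reference.

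One cosmetic fix concerns your constant $M$. With $M=\max(2,K^2\e^{2\lambda_1})$, the inequality $2\leq M\e^{-\lambda_1 t}$ is not guaranteed on $[0,2)$, in particular on the interval $[1,2)$ that enters your tail integral. The fix is easy. On $\L^p_0(G)$ you have the trivial bound $\norm{T_tf}_{\L^p}\leq\norm{f}_{\L^p}$, and $K\geq\int_G h_1\d\mu_G=1$. Hence $M=K^2\e^{2\lambda_1}$ covers all $t\geq0$. This only affects the constant.
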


\begin{proof}
On the space $\ovl{\Ran \Delta_p}=\Ran \Delta_p$ (note Proposition \ref{prop-reduced-exp-stability-closed-range}), the formula \cite[Corollary 15.2.15 p.~449]{HvNVW23} gives
\begin{equation}
\label{eq-subordination-torus}
\Delta_{p,0}^{-\frac12}(f)
=
\frac1{\sqrt\pi}
\int_0^\infty t^{-\frac12} T_t(f) \d t,
\quad f \in \Ran \Delta_p.
\end{equation}
We cut the integral in two parts. Using the contractivity of the operator $T_t \co \L^p(G) \to \L^p(G)$, we obtain for any function $f \in \L^p(G)$
\begin{align}
\label{estim-Lie}
\MoveEqLeft
\norm{\int_0^1 t^{-\frac12} T_t(f) \d t}_{\L^p(G)} 
\leq  \int_0^1 t^{-\frac12} \bnorm{T_t(f)}_{\L^p(G)} \d t        
\leq \bigg(\int_0^1 t^{-\frac12}  \d t \bigg) \norm{f}_{\L^p(G)}
=2\norm{f}_{\L^p(G)}.
\end{align}
It remains to estimate the large-time part. According to \cite[p.~38]{DtER03}, $T_t(\Id-P)$ is a convolution operator with kernel $K_t^0$ and there exists a constant $\omega=\omega_{X,G} > 0$ such that
\begin{equation}
\label{estim-kernel}
\bnorm{K_t^0}_{\L^\infty(G)}
\lesssim_{G,X} C_{G,X} \e^{-\omega t}.
\end{equation}
Hence, by Young's inequality \cite[p.~519]{Rob91}, we see that for any $t \geq 1$ and any $f \in \Ran \Delta_p$
\begin{align}
\MoveEqLeft
\label{Young}
\bnorm{T_t(f)}_{\L^p(G)}
=\bnorm{T_t(\Id-P)(f)}_{\L^p(G)}
\leq
\bnorm{K_t^0}_{\L^1(G)} \norm{f}_{\L^p(G)} \\
&\leq \bnorm{K_t^0}_{\L^\infty(G)} \norm{f}_{\L^p(G)}
\ov{\eqref{estim-kernel}}{\lesssim_{G,X}} \e^{-\omega t} \norm{f}_{\L^p(G)}.  \nonumber       
\end{align}
Consequently, we obtain for $t \geq 1$
\begin{align*}
\MoveEqLeft
\norm{\int_1^\infty t^{-\frac12} T_t f \d t }_{\L^p(G)}
\leq \int_1^\infty t^{-\frac12} \bnorm{T_t f}_{\L^p(G)}\d t\\
&\ov{\eqref{Young}}{\lesssim_{G,X}} 
\bigg(\int_1^\infty t^{-\frac12}\e^{-\omega t}\d t \bigg)
\norm{f}_{\L^p(G)}
\lesssim_{G,X}  \norm{f}_{\L^p(G)}.
\end{align*}
Combining this with \eqref{estim-Lie}, we conclude that the formula \eqref{eq-subordination-torus} provides the estimate
\[
\bnorm{\Delta_{p,0}^{-\frac12}f}_{\L^p(G)}
\lesssim_{G,X} \norm{f}_{\L^p(G)}.
\]
\end{proof}
\subsection{Poincar\'e inequalities on quantum tori}
\label{sec-NC-tori}

Consider an integer $d \geq 2$. To each $d \times d$ real skew-symmetric matrix $\theta$, one can associate a 2-cocycle $\sigma_\theta \co \Z^d \times \Z^d \to \mathbb{T}$ of the discrete group $\Z^d$ defined by $\sigma_\theta (m,n) \ov{\mathrm{def}}{=} \e^{2\pi \i \la m, \theta n\ra}$, where $m,n \in \Z^d$. 
We introduce the $d$-dimensional noncommutative torus $\L^\infty(\mathbb{T}_{\theta}^d)$ as the twisted group von Neumann algebra $\VN(\Z^d,\sigma_\theta)$. We refer to the book \cite{GVF01} and to the papers \cite{CXY13} and \cite{XXY18} for background on the noncommutative tori.  One can provide a concrete realization in the following manner. If $(\epsi_n)_{n \in \Z^d}$ is the canonical basis of the complex Hilbert space $\ell^2_{\Z^d}$ and if $m \in \Z^d$, we can consider the bounded operator $U_m \co \ell^2_{\Z^d} \to \ell^2_{\Z^d}$ defined by 
\begin{equation}
\label{def-lambdas}
U_m(\epsi_n)
\ov{\mathrm{def}}{=} \sigma_\theta(m,n) \epsi_{m+n}, \quad n \in \Z^d.	
\end{equation}
The $d$-dimensional noncommutative torus $\L^\infty(\mathbb{T}_{\theta}^d)$ is the von Neumann subalgebra of $\B(\ell^2_{\Z^d})$ generated by the $*$-algebra $
\cal{P}_{\theta}
\ov{\mathrm{def}}{=} \mathrm{span} \big\{ U_m \ : \ m \in \Z^d \big\}$. Recall that for any $m,n \in \Z^d$ we have the equalities
\begin{equation}
\label{product-adjoint-twisted}
U_m U_n 
= \sigma_\theta(m,n) U_{m+n}
\quad \text{and} \quad 
(U_m)^* 
= U_{-m}.	
\end{equation}
The von Neumann algebra $\L^\infty(\mathbb{T}_{\theta}^d)$ is finite with normalized trace given by $\tau(x) \ov{\mathrm{def}}{=} \la\epsi_{0},x(\epsi_{0}) \ra_{\ell^2_{\Z^d}}$, where $x \in \L^\infty(\mathbb{T}_{\theta}^d)$. In particular, we have $\tau(U_m) = \delta_{m=0}$ for any $m \in \Z^d$.

Following \cite[p.~765]{CXY13}, for any integer $j \in \{1,\ldots,d\}$, the partial derivations $\partial_j \co \cal{P}_{\theta} \to \L^p(\mathbb{T}_{\theta}^d)$ are defined by
\begin{equation}
\label{partial-quantum-tori}
\partial_j(U_m)
\ov{\mathrm{def}}{=} 2\pi \i\, m_j U_m, \quad m \in \Z^d.
\end{equation}
Consider the von Neumann algebra $
\tilde{\cal{M}}_d
\ov{\mathrm{def}}{=}
\bigoplus_{j=1}^d \L^\infty(\mathbb{T}_\theta^d)$ equipped with the normal finite faithful trace $
\widetilde{\tau}(x_1,\ldots,x_d)
\ov{\mathrm{def}}{=}
\sum_{j=1}^d\tau(x_j)$. We can introduce the gradient operator $\nabla \co \cal{P}_{\theta} \to \tilde{\cal{M}}_d$ by 
\begin{equation}
\label{gradient-quantum-tori}
\nabla(x)
\ov{\mathrm{def}}{=}
(\partial_1x,\ldots,\partial_dx), \quad x \in \cal{P}_{\theta}.
\end{equation}
If $1 \leq p < \infty$, we have the canonical isometric identification $
\L^p(\widetilde{\cal{M}}_d)
=
\ell_d^p(\L^p(\mathbb{T}_\theta^d))$. If $p>1$, the operator $
\nabla\co\cal{P}_\theta
\subset
\L^p(\mathbb{T}_\theta^d)
\to
\L^p(\widetilde{\cal{M}}_d)$ is closable by \cite[Theorem 5.28 p.~168]{Kat76}. Indeed, on the dense subspace
$\cal{P}_\theta^d$ of $\L^{p^*}(\tilde{\cal{M}}_d)$, its formal adjoint is given by $
\nabla^\dagger(y_1,\ldots,y_d)
=
-\sum_{j=1}^d\partial_jy_j$. We denote by $\nabla_p$ the closure of $\nabla$.

If $1 < p < \infty$ then the Laplacian operator $-A \ov{\mathrm{def}}{=} \partial_1^2+\cdots +\partial_d^2$ is closable on the noncommutative $\L^p$-space $\L^p(\mathbb{T}_{\theta}^d)$ and its closure $-A_p$ is the infinitesimal generator of a strongly continuous semigroup $(T_t)_{t \geq 0}$ of completely positive contractions, called the noncommutative heat semigroup. This semigroup is bounded holomorphic by \cite[Proposition 5.4 p.~51, Lemma 3.1 p.~26]{JMX06}. We have 
\begin{equation}
\label{Laplacian-quantum-tori}
A_p(U_m) 
= 4\pi^2 |m|^2U_m, \quad m \in \Z^d,
\end{equation}
where $|m| \ov{\mathrm{def}}{=} \sqrt{m_1^2+\cdots+m_d^2}$. Moreover, $\cal{P}_\theta$ is a core of $A_p$. The mean ergodic projection $P \co \L^p(\mathbb{T}_{\theta}^d) \to \L^p(\mathbb{T}_{\theta}^d)$ of the semigroup $(T_t)_{t \geq 0}$ is given by $
P(x)
=\tau(x)1$, where $x \in \L^p(\mathbb{T}_{\theta}^d)$. In the next result, the noncommutative $\L^p$-spaces are equipped with their canonical structures of operator spaces \cite{Pis98}.

\begin{prop}
\label{prop-coordinate-Riesz-transform-quantum-torus}
Suppose that $1 < p < \infty$. For any $j \in \{1,\ldots,d\}$, define the $j$-th Riesz transform $R_j \co \cal{P}_{\theta} \to \L^p(\mathbb{T}_{\theta}^d)$ by
\begin{equation}
\label{def-Riesz-quantum-tori}
R_{j}(U_m)
=\begin{cases}
\i\frac{m_j}{|m|}U_m &\text{ if } m \in \Z^d \setminus \{0\} \\
0& \text{ if } m=0
\end{cases}.
\end{equation}
Then each $R_{j}$ extends to a completely bounded Fourier multiplier on $\L^p(\mathbb{T}_\theta^d)$. Finally, we have $
A_p
=
(\nabla_{p^*})^*\nabla_p$, the equality $\dom\nabla_p = \dom A_p^{\frac12}$ and
\begin{equation}
\label{equiv-Riesz-tori-123}
\bnorm{A_p^{\frac12}(x)}_{\L^p(\mathbb{T}_{\theta}^d)}
\approx_{p,d}
\norm{\nabla_p x}_{\ell^p_d(\L^p(\mathbb{T}_{\theta}^d))},
\quad x \in \dom \nabla_p.
\end{equation}
\end{prop}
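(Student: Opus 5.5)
The plan is to deduce everything from Proposition \ref{prop-Markov-Riesz-equivalence}, applied with $\cal{M}=\L^\infty(\mathbb{T}_\theta^d)$, $\tilde{\cal{M}}=\tilde{\cal{M}}_d$, $\dom\partial=\cal{P}_\theta$ and $\partial=\nabla$. The only real input will be the complete boundedness of the Riesz transforms $R_j$.

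\textbf{Step 1: complete boundedness of $R_j$.} I would use the transference principle for quantum tori. The map $U_m\mapsto U_m\otimes \e_m$, where $\e_m(z)=z^m$ on the commutative torus $\mathbb{T}^d$, extends to a trace preserving normal injective $*$-homomorphism
\[
\pi\co \L^\infty(\mathbb{T}_\theta^d)\to \L^\infty(\mathbb{T}^d)\otvn \L^\infty(\mathbb{T}_\theta^d).
\]
It therefore induces complete isometries on $\L^p$, and it intertwines a Fourier multiplier $\phi$ on $\mathbb{T}_\theta^d$ with $M_\phi\otimes \Id$ on $\L^p(\mathbb{T}^d;\L^p(\mathbb{T}_\theta^d))$. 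The function $m\mapsto \i\frac{m_j}{|m|}$ (with $0$ at $m=0$) is a completely bounded $\L^p$-Fourier multiplier on $\mathbb{T}^d$. This follows from the classical Riesz transform on $\R^d$, via de Leeuw restriction, which preserves complete boundedness. Alternatively, one can use that it is a Calder\'on--Zygmund multiplier with operator-valued extension to Schatten-class valued $\L^p$, since $S^p$ is UMD. Transferring gives that $R_j$ is completely bounded on $\L^p(\mathbb{T}_\theta^d)$. I expect this to be the main obstacle, mostly in citing correctly that the commutative Riesz multiplier is completely bounded on $\L^p(\mathbb{T}^d)$. I would first try the de Leeuw and $S^p$-UMD route.

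\textbf{Step 2: direct Riesz estimates on $\cal{P}_\theta$.} By \eqref{partial-quantum-tori} and \eqref{Laplacian-quantum-tori}, for every $x\in\cal{P}_\theta$ we have $\partial_j x=2\pi R_j A_p^{\frac12}x$, since both sides vanish on $U_0$. Hence
\[
\norm{\nabla x}_{\ell^p_d(\L^p)}\leq 2\pi\Big(\sum_j\norm{R_j}^p\Big)^{1/p}\bnorm{A_p^{\frac12}x}_{\L^p},
\]
and the same holds at the exponent $p^*$. This gives \eqref{eq-Markov-direct-Riesz}.

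\textbf{Step 3: hypotheses of Proposition \ref{prop-Markov-Riesz-equivalence}.} The heat semigroup is a Markov semigroup, being unital, completely positive, trace preserving and selfadjoint. The subspace $\cal{P}_\theta$ is a core for $A_p$ and for $A_{p^*}$, as recalled above the statement. For the energy identity, using the duality bracket $\la x,y\ra=\tau(xy)$, I check on basis elements:
\[
\la A U_m,U_n\ra=4\pi^2|m|^2\tau(U_mU_n),
\]
which is nonzero only when $n=-m$. On the other side,
\[
\sum_j\tau(\partial_jU_m\,\partial_jU_n)=\sum_j(2\pi\i)^2m_jn_j\,\tau(U_mU_n),
\]
which for $n=-m$ equals $4\pi^2|m|^2\tau(U_mU_{-m})$. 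The pairing on $\tilde{\cal{M}}_d$ is $\tilde\tau$, so the two sides agree, and \eqref{eq-Markov-energy-identity} holds by sesquilinearity.

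\textbf{Step 4: conclusion.} Proposition \ref{prop-Markov-Riesz-equivalence} then yields, at once, that $\nabla$ is closable, that $\dom\nabla_p=\dom A_p^{\frac12}$, the factorization $A_p=(\nabla_{p^*})^*\nabla_p$, and the equivalence \eqref{equiv-Riesz-tori-123}. The constants depend only on $p$ and $d$, through $\norm{R_j}_{\cb}$ and $\norm{\Id-P}\leq 2$.
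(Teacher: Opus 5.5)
Your proposal is correct and follows the paper's proof essentially step for step. Transference to the commutative torus gives complete boundedness of $R_j$. The identity relating $\partial_j$ to $R_jA_p^{\frac12}$ on $\cal{P}_\theta$ gives the direct Riesz estimates at $p$ and $p^*$, and integration by parts gives the energy identity. Proposition \ref{prop-Markov-Riesz-equivalence} then delivers closability, the domain equality, the factorization and the equivalence.

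There are two harmless slips. First, since $A_p^{\frac12}U_m=2\pi|m|U_m$, one has exactly $\partial_j x=R_jA_p^{\frac12}x$, with no extra factor $2\pi$. Second, if you use de Leeuw's restriction theorem, you need its version for regulated symbols, because $\xi_j/|\xi|$ is discontinuous at the origin. Its ball averages there vanish, which matches $R_j(U_0)=0$.
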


\begin{proof}
By the transference theorem \cite[Theorem 7.3 p.~785]{CXY13} for completely bounded Fourier multipliers on quantum tori, the multiplier $R_j$ is completely bounded on $\L^p(\mathbb{T}_\theta^d)$ if and only if the corresponding one is completely bounded on $\L^p(\mathbb{T}^d)$.  So each $R_{j}$ is completely bounded on $\L^p(\mathbb{T}_\theta^d)$. In particular, for any $x \in \cal{P}_\theta$, we have
\begin{align}
\MoveEqLeft
\label{eq-direct-Riesz-quantum-torus}
\norm{\nabla x}_{\ell_d^p(\L^p(\mathbb{T}_\theta^d))}
\ov{\eqref{gradient-quantum-tori}}{=} \norm{(\partial_1x,\ldots,\partial_dx)}_{\ell_d^p(\L^p(\mathbb{T}_\theta^d))}
=\bigg(\sum_{j=1}^d \norm{\partial_j x}_{\L^p(\mathbb{T}_\theta^d)}^p \bigg)^{\frac1p} \\
&=\bigg(\sum_{j=1}^d \norm{R_jA_p^{\frac12}x}_{\L^p(\mathbb{T}_\theta^d)}^p \bigg)^{\frac1p}
\leq
C_{p,d}
\bnorm{A_p^{\frac12}x}_{\L^p(\mathbb{T}_\theta^d)}, \nonumber
\end{align}
with $
C_{p,d}
\ov{\mathrm{def}}{=}
d^{\frac1p}
\sup_{1\leq j\leq d}
\norm{R_j}_{\L^p(\mathbb{T}_\theta^d) \to \L^p(\mathbb{T}_\theta^d)}$ and similarly for $p^*$ instead of $p$. For any $x,y\in\cal{P}_\theta$, integration by parts gives
\begin{align*}
\la A_px,y\ra_{\L^p(\mathbb{T}_\theta^d),
\L^{p^*}(\mathbb{T}_\theta^d)}
=
-\sum_{j=1}^d
\la\partial_j^2x,y\ra
=
\sum_{j=1}^d
\la\partial_jx,\partial_jy\ra
=
\la\nabla x,\nabla y\ra_
{\L^p(\tilde{\cal{M}}_d),
\L^{p^*}(\tilde{\cal{M}}_d)}.
\end{align*}
Thus all the assumptions of Proposition
\ref{prop-Markov-Riesz-equivalence} are satisfied with $
\cal{M}
=
\L^\infty(\mathbb{T}_\theta^d)$, $\tilde{\cal{M}}
=
\tilde{\cal{M}}_d$, $\dom\partial
=
\cal{P}_\theta$ and $\partial=\nabla_p$. The conclusion follows from Proposition \ref{prop-Markov-Riesz-equivalence}.
\end{proof}

\begin{remark}
\normalfont
Recall that the classical Riesz transforms on $\L^p(\mathbb{T}^d)$ are completely bounded by a standard argument which is well known to experts, relying on \cite[Remark 1.8 p.~549]{JMP18}. Under the canonical operator space structure induced by $
\ell_d^p(\L^p(\mathbb{T}_\theta^d))
=
\L^p(\tilde{\cal{M}}_d)$, the vector Riesz transform $\cal{R} \co \L^p(\mathbb{T}_\theta^d) \to \L^p(\tilde{\cal{M}}_d)$ defined by $
\cal{R}x
\ov{\mathrm{def}}{=}
(R_1x,\ldots,R_dx)$ is completely bounded. Indeed, we have
\[
\norm{\cal{R}}_{\cb,\L^p(\mathbb{T}_\theta^d) \to \L^p(\tilde{\cal{M}}_d)}
\leq
\bigg(
\sum_{j=1}^d\norm{R_j}_{\cb,\L^p(\mathbb{T}_{\theta}^d) \to \L^p(\mathbb{T}_{\theta}^d)}^p \bigg)^{\frac1p}.
\]
\end{remark}

Now, we recover the result \cite[p.~4 and Theorem 2.12 p.~25]{XXY18}.

\begin{cor}
Suppose that $1 < p < \infty$. We have
\begin{equation*}
\norm{x-\tau(x) 1}_{\L^p(\mathbb{T}_{\theta}^d)}
\lesssim_{p,d} \norm{\nabla_p x}_{\ell^p_d(\L^p(\mathbb{T}_{\theta}^d))}, \quad x \in \dom \nabla_p.
\end{equation*}
\end{cor}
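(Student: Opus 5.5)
The plan is to apply Theorem \ref{thm-direct-Poincare-via-negative-powers} with $X=\L^p(\mathbb{T}_\theta^d)$, $Y=\ell^p_d(\L^p(\mathbb{T}_\theta^d))$, $A=A_p$, $\alpha=\frac12$ and $\partial=\nabla_p$. Equivalently, one can invoke Theorem \ref{Th-Poincare-von-Neumann} or Corollary \ref{cor-Markov-Poincare-from-Riesz} with $\cal{M}=\L^\infty(\mathbb{T}_\theta^d)$, whose conditional expectation is $\E_p(x)=\tau(x)1=P(x)$. Three hypotheses have to be checked.

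\textbf{Abel-ergodicity and the projection.} The heat semigroup is a contractive (indeed Markov) semigroup on the reflexive space $\L^p(\mathbb{T}_\theta^d)$, so it is mean ergodic by Example \ref{bounded-ergodic}. As recalled before the statement, its ergodic projection is $P(x)=\tau(x)1$. Hence $A_p$ is Abel-ergodic, and $x-P(x)=x-\tau(x)1$.

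\textbf{Reverse Riesz estimate.} Proposition \ref{prop-coordinate-Riesz-transform-quantum-torus} gives $\dom\nabla_p=\dom A_p^{\frac12}$ together with the equivalence \eqref{equiv-Riesz-tori-123}. In particular we have $\norm{A_p^{\frac12}x}\lesssim_{p,d}\norm{\nabla_p x}$ for $x\in\dom\nabla_p$, which is exactly \eqref{eq-lower-Riesz-direct-Poincare}.

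\textbf{Spectral gap.} This step needs the most care, though it remains elementary. On $\L^2(\mathbb{T}_\theta^d)$ the family $(U_m)_{m\in\Z^d}$ is an orthonormal basis, and by \eqref{Laplacian-quantum-tori} the operator $A_2$ is diagonal with eigenvalues $4\pi^2|m|^2$. Thus $\ker A_2=\mathbb{C}1$ and $\sigma(A_2)\subset\{0\}\cup[4\pi^2,\infty)$, so $\gap(A_2)\geq 4\pi^2>0$. Propositions \ref{prop-Gap-equivalente} and \ref{prop-gap-and-s} then give $0\in\rho(A_{2,0})$. To pass to $\L^p$, I would use Theorem \ref{Th-Poincare-von-Neumann}: the heat semigroup is a Markov semigroup on a finite von Neumann algebra, and that theorem transfers the gap to $0\in\rho(A_{p,0})$ by interpolating the reduced spaces $\L^p_0$ with Proposition \ref{prop-stability-interpolation}, using duality when $p>2$. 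An alternative route is to note that $A_2$ has compact resolvent, since its eigenvalues tend to infinity with finite multiplicities. By interpolation \cite[p.~78]{Are04}, $A_p$ then also has compact resolvent, and \eqref{compact-resolvent} yields $0\in\rho(A_{p,0})$ directly.

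With the three hypotheses in place, \eqref{eq-direct-Poincare} gives $\norm{x-\tau(x)1}_{\L^p}\leq C\norm{A_{p,0}^{-\frac12}}\norm{\nabla_p x}$ for every $x\in\dom\nabla_p$, which is the claimed inequality. The constant depends only on $p$ and $d$: the constant in \eqref{equiv-Riesz-tori-123} involves only the norms of the Riesz transforms, which are independent of $\theta$ by transference. Whether $\norm{A_{p,0}^{-\frac12}}$ is also independent of $\theta$ is not needed for the statement. It could be checked by writing $A_{p,0}^{-\frac12}$ through the subordination formula as in Proposition \ref{prop-negative-square-root-Lie group}, with the exponential decay $\e^{-4\pi^2 t}$ on $\L^2_0$ transferred to $\L^p_0$ by interpolation.
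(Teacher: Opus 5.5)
Your proposal is correct and follows essentially the same route as the paper. The paper also identifies the ergodic projection as $x\mapsto\tau(x)1$, obtains $0\in\rho(A_{2,0})$ either from the compactness of the resolvent or directly from $\gap(A_2)=4\pi^2$ via the orthonormal basis $(U_m)_{m\in\Z^d}$, and concludes with Corollary~\ref{cor-Markov-Poincare-from-Riesz} together with Proposition~\ref{prop-coordinate-Riesz-transform-quantum-torus}; that corollary rests on Theorem~\ref{Th-Poincare-von-Neumann}, which carries out the $\L^2\to\L^p$ transfer of the gap. The differences are minor: you read the compact resolvent directly off the eigenvalues $4\pi^2|m|^2$ instead of going through the finite Coulhon--Varopoulos dimension of \cite{Arh24b}, and your remark on the $\theta$-independence of $\norm{A_{p,0}^{-\frac12}}$ goes beyond what the paper checks (if $\lesssim_{p,d}$ is meant to be uniform in $\theta$, that point is in fact needed, and your subordination-plus-interpolation sketch supplies it).
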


\begin{proof}
According to \cite[Theorem 7.1]{Arh24b}, the completely bounded local dimension Coulhon--Varopoulos dimension of the noncommutative heat semigroup $(T_t)_{t \geq 0}$ on the noncommutative torus is finite. This property implies, by \cite[Lemma 4.4]{Arh24b}, that the unbounded operator $A_2$ has compact resolvent. 
By \eqref{compact-resolvent}, we deduce that $0 \in \rho(A_{2,0})$. Alternatively, we can observe by \eqref{Laplacian-quantum-tori} that $\gap(A_2)=4\pi^2$ since the family $(U_m)_{m \in \Z^d}$ is an orthonormal basis of the Hilbert space $\L^2(\mathbb{T}_\theta^d)$. Moreover, the mean ergodic projection is given by $\E_p(x)=\tau(x)1$, where $x \in \L^p(\mathbb{T}_{\theta}^d)$. Now, the result follows immediately from Corollary~\ref{cor-Markov-Poincare-from-Riesz} and Proposition~\ref{prop-coordinate-Riesz-transform-quantum-torus}.
\end{proof}

\subsection{Poincar\'e inequalities on compact Lie groups equipped with bi-invariant metrics}
\label{sec-bi-invariant}

Recall that a Riemannian metric $g$ on a Lie group $G$ is left-invariant if the left translation $L_s \co G \to G$ is an isometry for all $s \in G$. Similarly, right-invariant metrics are those for which the right translations are isometries. A metric is said to be bi-invariant if it is both left invariant and right invariant. If $G$ is connected, for a left-invariant Riemannian metric this is equivalent by \cite[Corollary 8.1.13 p.~237]{LeD25} to the condition that the scalar product $\la \cdot,\cdot \ra$ on the Lie algebra $\g$ of $G$ defined by the metric $g$ satisfies
\begin{equation}
\label{ad-skew-adjoint}
\la \ad_z x,  y \ra
= -\la x, \ad_z y \ra, \quad x,y,z \in \g.
\end{equation}
This means that the operator $\ad_z \co \g \to \g$, $x \mapsto [z,x]$ is skew-adjoint for any $z \in \g$. For a left-invariant Riemannian metric on a possibly disconnected Lie group, according to \cite[Theorem 2.2.2 p.~107]{Ham17}, we have to require the stronger condition
\[
\la \Ad_s x,\Ad_s y \ra
=\la x,y\ra,
\quad s \in G, x,y \in \g,
\]
where $\Ad \co G \to \Aut(\g)$ is the adjoint representation of $G$. Indeed, there is a canonical bijection between bi-invariant Riemannian metrics on $G$ and scalar products on the Lie algebra $\mathfrak{g}$ with this property. According to \cite[Corollary 2.2.4 p.~108]{Ham17} or \cite[Theorem 8.1.15 p.~237]{LeD25}, every compact Lie group admits a bi-invariant Riemannian metric. 


Let $G$ be a compact Lie group endowed with a bi-invariant Riemannian metric and let $\g$ be its Lie algebra of left invariant vector fields. Let $X=(X_1,\ldots,X_d)$ be an orthonormal basis of $\mathfrak{g}$. We introduce the operator $A \ov{\mathrm{def}}{=} -X_1^2-\cdots -X_d^2$, which is independent of the orthonormal basis according to \cite[Proposition 1.3.1 p.~18]{App14}, and the associated heat semigroup $(T_t)_{t \geq 0}$ defined by $T_t \ov{\mathrm{def}}{=} \e^{-A t}$. We refer to \cite[Theorem 1 p.~38]{Ste70} for some properties of this semigroup. If $1 < p < \infty$, we denote by $-A_p$ the generator of the consistent extension of the semigroup $(T_t)_{t \geq 0}$ on the Banach space $\L^p(G)$. The subspace $\C^\infty(G)$ is a core of the operator $A_p$. 

Since the left-invariant vector fields $X_1,\ldots,X_d$ form a global orthonormal frame of the tangent bundle $\mathrm{T}G$, they induce an isometric identification $
\L^p(G,\mathrm{T}_{\mathbb{C}}G)
\cong
\L^p(G,\ell_d^2)$. We introduce the gradient operator
\begin{equation}
\label{gradient-32}
\nabla \co \C^\infty(G) \subset \L^p(G) \to \L^p(G,\ell^2_d),f \mapsto (X_1 f,\ldots,X_d f).
\end{equation}
Note that $\nabla$ admits the densely defined formal adjoint $
\nabla^\dagger
\co
\C^\infty(G,\ell_d^2)
\subset
\L^{p^*}(G,\ell_d^2)
\to
\L^{p^*}(G)$, $g=(g_1,\ldots,g_d) \mapsto-\sum_{j=1}^d X_jg_j$. By \cite[Theorem 5.28 p.~168]{Kat76}, the unbounded operator $\nabla \co \C^\infty(G) \subset \L^p(G) \to \L^p(G,\ell^2_d)$ is closable. We denote its closure by $\nabla_p$. 

Riesz estimates in this context are available. Suppose that $1 < p < \infty$. Arcozzi proved in \cite[Theorem 1 p.~203]{Arc98} that the vector Riesz transform $\nabla A_p^{-\frac12}$ on the subspace $\ovl{\Ran A_p}$ has norm at most 
\begin{equation}
\label{Cp-prime}
C_p
\ov{\mathrm{def}}{=} 2\big(\max\{p ,\tfrac{p}{p-1}\}-1\big).
\end{equation}
Applying this estimate to $A_p^{\frac12}f$ gives the estimate
\begin{equation}
\label{Arccozi}
\norm{\nabla(f)}_{\L^p(G,\ell^2_d)} 
\leq C_p \bnorm{A_p^{\frac12}f}_{\L^p(G)}, \quad f \in \C^\infty(G).
\end{equation}  

Now, we obtain an $\L^p$-Poincar\'e inequality.

\begin{cor}
\label{cor-Poincare-Lie-2}
Let $G$ be a compact Lie group equipped with a bi-invariant metric $g$ and its normalized Haar measure. Suppose that $1 < p < \infty$. Let $P \co \L^p(G) \to \L^p(G)$ denote the mean ergodic projection onto $\ker A_p$. We have
\begin{equation}
\label{Poincare-intro-tri}
\norm{f-P(f)}_{\L^p(G)}
\lesssim_{p,G,g} \norm{\nabla_p(f)}_{\L^p(G,\ell^2_d)}, \quad f \in \dom \nabla_p.
\end{equation}
\end{cor}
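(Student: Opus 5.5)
The plan is to apply Theorem \ref{thm-direct-Poincare-via-negative-powers} with $X=\L^p(G)$, $Y=\L^p(G,\ell^2_d)$, $\alpha=\frac12$ and $\partial=\nabla_p$. This requires three inputs: Abel-ergodicity with a spectral gap, and a reverse Riesz estimate.

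\textbf{Abel-ergodicity.} The heat semigroup is contractive and bounded holomorphic on the reflexive space $\L^p(G)$, by the Markov/diffusion argument used earlier via \cite{JMX06}. Hence $A_p$ is sectorial and Abel-ergodic by Example \ref{sectorial-Abel-ergodic}, with ergodic projection $P$.

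\textbf{Spectral gap.} The operator $A$ is the Laplace--Beltrami operator of the compact Riemannian manifold $(G,g)$; equivalently, it is the sub-Laplacian for a Hörmander family spanning $\g$. Therefore $A_2$ has compact resolvent, either by \cite[Lemma 4.4 p.~467]{Arh24a} or by compactness of $G$. By interpolation \cite[p.~78]{Are04}, $A_p$ also has compact resolvent, and \eqref{compact-resolvent} gives $0\in\rho(A_{p,0})$. Alternatively, one could argue as in Theorem \ref{Th-Poincare-von-Neumann}: get the $\L^2$ gap from compact resolvent, then transfer it to $\L^p$ via Proposition \ref{prop-stability-interpolation} and Proposition \ref{prop-reduced-exp-stability-closed-range}.

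\textbf{Reverse Riesz estimate.} This is the main step, since Arcozzi's bound \eqref{Arccozi} is only the direct estimate. I will deduce the reverse one by duality, following the proof of Proposition \ref{prop-Markov-Riesz-equivalence} (or Proposition \ref{prop-duality-Riesz}).

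First, for $f,h\in\C^\infty(G)$, integration by parts with respect to the bi-invariant Haar measure gives
\[
\la A_pf,h\ra=\sum_{j}\la X_jf,X_jh\ra=\la\nabla f,\nabla h\ra_{\L^p(G,\ell^2_d),\L^{p^*}(G,\ell^2_d)}.
\]
This uses that the left-invariant vector fields $X_j$ are divergence-free, which holds because $G$ is unimodular.

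Next, since the semigroup is symmetric, $A_p^*=A_{p^*}$. Combining this with \eqref{Arccozi} at exponent $p^*$ and Hölder's inequality gives
\[
|\la A_p^{1/2}f,A_{p^*}^{1/2}h\ra|\leq C_{p^*}\norm{\nabla f}_{\L^p(G,\ell^2_d)}\bnorm{A_{p^*}^{1/2}h}_{\L^{p^*}(G)}.
\]
The subspace $\C^\infty(G)$ is a core for $A_{p^*}$, hence for $A_{p^*}^{1/2}$ by Proposition \ref{core-A-alpha}. So $A_{p^*}^{1/2}(\C^\infty(G))$ is dense in $\ovl{\Ran A_{p^*}}=\ker P^*$. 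Splitting a general test element $z$ as $z=P^*z+(\Id-P^*)z$, and using that $A_p^{1/2}f\in\ovl{\Ran A_p}=\ker P$ annihilates $\Ran P^*$, yields
\[
\bnorm{A_p^{1/2}f}_{\L^p(G)}\leq C_{p^*}\norm{\Id-P}\,\norm{\nabla f}_{\L^p(G,\ell^2_d)},\qquad f\in\C^\infty(G).
\]

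Since $\nabla_p$ is the closure of $\nabla$ on $\C^\infty(G)$, part 2 of Proposition \ref{Prop-derivation-closable-sgrp-bis} extends this estimate to $\dom\nabla_p\subset\dom A_p^{1/2}$.

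\textbf{Conclusion.} Theorem \ref{thm-direct-Poincare-via-negative-powers} now gives \eqref{Poincare-intro-tri}, with constant $C_{p^*}\norm{\Id-P}\,\bnorm{A_{p,0}^{-1/2}}$.

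If $G$ is connected, $P(f)=\int_G f$. In general $\ker A_p$ consists of the functions that are constant on each connected component, which is why the statement is phrased in terms of $P$.
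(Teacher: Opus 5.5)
Your proof is correct and follows the paper's route: compact resolvent gives the gap $0\in\rho(A_{p,0})$, Arcozzi's direct estimate plus duality gives the reverse Riesz estimate, and Theorem~\ref{thm-direct-Poincare-via-negative-powers} with $\alpha=\frac12$ concludes. The only difference is that you run the duality argument of Proposition~\ref{prop-Markov-Riesz-equivalence} directly on the core and extend to $\dom\nabla_p$ with part~2 of Proposition~\ref{Prop-derivation-closable-sgrp-bis}, whereas the paper first establishes $A_p=(\nabla_{p^*})^*\nabla_p$ via Proposition~\ref{prop-generator-gradient-factorization}, then invokes Proposition~\ref{prop-duality-Riesz} to get the full Riesz equivalence and the domain equality, which is slightly more than needed.
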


\begin{proof}
According to \cite[Theorem 1 viii) p.~38]{Ste70}, for any $t > 0$ the operator $T_t$ is a convolution operator by a function $K_t \co G \to \mathbb{C}$ of class $\C^\infty$. In particular, each $K_t$ is square-integrable on the compact group $G$. By \cite[Theorem 3.1 and Proposition 3.1 p.~222]{App09}, the operator $T_t$ is trace-class for any $t > 0$. By \cite[Theorem 4.29 p.~119]{EnN00}, we deduce that the unbounded operator $A_2 \co \dom A_2 \subset \L^2(G) \to \L^2(G)$ has compact resolvent. By interpolation \cite[p.~78]{Are04}, the unbounded operator $A_p$ also has compact resolvent. Part 1 of Proposition \ref{Prop-derivation-closable-sgrp-bis} and \eqref{Arccozi} imply that $
\dom A_p^{\frac12}\subset \dom\nabla_p$ and
\begin{equation}
\label{eq-Riesz-Lie-direct}
\norm{\nabla_p(f)}_{\L^p(G,\ell_d^2)}
\leq
C_p\bnorm{A_p^{\frac12}f}_{\L^p(G)},
\quad
f \in \dom A_p^{\frac12}.
\end{equation}
For any functions $f,h \in \C^\infty(G)$, the skew-adjointness of the vector fields $X_1,\ldots,X_d$ gives
\begin{align*}
\MoveEqLeft
\la A_pf,h \ra_{\L^p(G),\L^{p^*}(G)}
=-\sum_{j=1}^d \la X_j^2f,h \ra_{\L^p(G),\L^{p^*}(G)}
=\sum_{j=1}^d \la X_jf,X_jh \ra_{\L^p(G),\L^{p^*}(G)} \\
&\ov{\eqref{gradient-32}}{=} \la \nabla f,\nabla h \ra_{\L^p(G,\ell_d^2),\L^{p^*}(G,\ell_d^2)}.
\end{align*}
Recall that the subspace $\dom A_p$ is a subspace (and even a core) for the unbounded operator $A_p^{\frac{1}{2}}$ by \cite[Proposition 3.1.1 (h) p.~61]{Haa06}. Since the heat semigroup is symmetric and its $\L^p$-realizations are
consistent, we have $A_p^*=A_{p^*}$. Applying \eqref{eq-Riesz-Lie-direct} with $p$ replaced by $p^*$ also gives $
\dom A_{p^*}^{\frac12}\subset\dom\nabla_{p^*}$. Since the subspace $\C^\infty(G)$ is a core for the operators $A_p$ and $A_{p^*}$ and, by definition, a core for $\nabla_p$ and $\nabla_{p^*}$, Proposition \ref{prop-generator-gradient-factorization} yields $A_p=(\nabla_{p^*})^*\nabla_p$. By duality with Proposition \ref{prop-duality-Riesz}, we obtain
\begin{equation*}
\bnorm{A_p^{\frac12}f}_{\L^p(G)} 
\approx_p \norm{\nabla_p(f)}_{\L^p(G,\ell^2_d)}, \quad f \in \C^\infty(G).
\end{equation*}
With \cite[Proposition 3.4]{Arh26b}, we obtain that $\dom A_p^{\frac12} = \dom \nabla_p$ and that
\begin{equation}
\label{final}
\bnorm{A_p^{\frac12}f}_{\L^p(G)} 
\approx_p \norm{\nabla_p(f)}_{\L^p(G,\ell^2_d)}, \quad f \in \dom \nabla_p.
\end{equation}
Hence the estimate \eqref{eq-lower-Riesz-direct-Poincare}, with $\alpha=\frac{1}{2}$, is satisfied as a particular case of \eqref{final}. By \cite[Corollary 4.11 p.~344]{EnN00} combined with Proposition \ref{prop-reduced-exp-stability-closed-range}, we have $0 \in \rho(A_{p,0})$. So it suffices to use Theorem \ref{thm-direct-Poincare-via-negative-powers} (or Theorem \ref{Th-Poincare-von-Neumann}) with $\partial=\nabla_p$.
\end{proof}

In the connected case, we can describe the mean ergodic projection.

\begin{prop}
\label{prop-connected}
Let $G$ be a connected compact Lie group equipped with a bi-invariant metric and its normalized Haar measure. Suppose that $1 < p < \infty$. Then the mean ergodic projection onto the subspace $\ker A_p$ is given by the contractive map $P \co \L^p(G) \to \L^p(G)$, $f \mapsto \left(\int_G f \d\mu_G\right)1$. 
\end{prop}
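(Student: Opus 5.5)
The proof has three steps: show that $\ker A_p$ consists exactly of the constants, write the ergodic projection $P$ from Example \ref{bounded-ergodic} explicitly as averaging against Haar measure, and check that this map is contractive.

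\emph{Step 1: $\ker A_p$ contains the constants.} The heat semigroup $(T_t)_{t \geq 0}$ acts by convolution with probability densities $K_t$; these are positive by \cite[Theorem 1 p.~38]{Ste70} and have mass one. Hence $T_t 1 = 1$ for every $t \geq 0$. Equivalently, $X_j 1 = 0$, so $1 \in \ker A_p$.

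\emph{Step 2: $\ker A_p \subset \mathbb{C}1$.} This is the main obstacle. I would reduce to $p=2$ and argue as follows.
\begin{itemize}
\item Since $A_p$ has compact resolvent (shown in the proof of Corollary \ref{cor-Poincare-Lie-2}), $\ker A_p$ is finite-dimensional.
\item By hypoellipticity of the elliptic operator $A$, every $f \in \ker A_p$ is smooth. In particular $f \in \L^2(G)$ and $f \in \ker A_2$.
\item For smooth $f$, the energy identity established in the proof of Corollary \ref{cor-Poincare-Lie-2} gives
\[
0=\la A_2 f,f\ra=\sum_{j=1}^d \norm{X_j f}_{\L^2(G)}^2 .
\]
So $X_j f = 0$ for every $j$.
\item The vector fields $X_1,\ldots,X_d$ span each tangent space, so $\d f = 0$.
\item Since $G$ is connected, $f$ is constant.
\end{itemize}
If I want to avoid hypoellipticity, I can instead use that $\ker A_p$ is the fixed-point space of $(T_t)_{t \geq 0}$. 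Because $K_t$ is bounded and smooth for $t>0$, we get $T_t(\L^p) \subset \C^\infty(G)$, so every fixed point $f = T_t f$ is smooth.

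\emph{Step 3: identifying $P$.} Set $Q(f) = \big(\int_G f\,\d\mu_G\big)1$. By \eqref{Ran-ker-P}, it suffices to show that $Q$ is a projection with range $\ker A_p$ and kernel $\ovl{\Ran A_p}$.
\begin{itemize}
\item The range of $Q$ is $\mathbb{C}1 = \ker A_p$ by Steps 1 and 2, and $Q$ is clearly idempotent.
\item For $f \in \C^\infty(G)$, each $X_j$ is a left-invariant vector field, hence divergence free for Haar measure. Therefore $\int_G A f \,\d\mu_G = 0$.
\item $\C^\infty(G)$ is a core for $A_p$, and the functional $f \mapsto \int_G f\,\d\mu_G$ is continuous on $\L^p(G)$. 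So $\Ran A_p \subset \ker Q$, and then $\ovl{\Ran A_p} \subset \ker Q$ by continuity.
\item Since $X = \ker A_p \oplus \ovl{\Ran A_p}$ and $\ker Q \cap \ker A_p = \{0\}$, we get equality $\ker Q = \ovl{\Ran A_p}$.
\end{itemize}
Hence $Q = P$.

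\emph{Step 4: contractivity.} Because $\mu_G$ is a probability measure, Hölder's inequality gives $\big|\int_G f\,\d\mu_G\big| \leq \norm{f}_{\L^p(G)}$. Since $\norm{1}_{\L^p(G)} = 1$, this shows $\norm{P}_{\L^p(G) \to \L^p(G)} \leq 1$.
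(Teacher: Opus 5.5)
Your argument is correct. Steps 1, 2 and 4 coincide with the paper's proof: elliptic regularity, the energy identity $\sum_j \norm{X_jf}_{\L^2(G)}^2=0$, then $\d f=0$ and connectedness, and finally H\"older. The finite-dimensionality of $\ker A_p$ that you draw from the compact resolvent is never used and can be dropped.

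The real difference is in Step 3. The paper identifies $P$ dynamically. It uses $0\in\rho(A_{p,0})$ and Proposition \ref{prop-strong-limit-projection} to get $T_tf\to P(f)=a_f1$ in $\L^p(G)$. Since each $T_t$ preserves $\mu_G$ (from $T_t(1)=1$ and symmetry), it passes $\int_G T_tf\,\d\mu_G=\int_G f\,\d\mu_G$ to the limit, which gives $a_f$.

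You instead characterize $P$ as the unique projection with range $\ker A_p$ and kernel $\ovl{\Ran A_p}$. You verify $\ovl{\Ran A_p}\subset\ker Q$ by checking $\int_G Af\,\d\mu_G=0$ on the core $\C^\infty(G)$ and taking closures. The splitting $\L^p(G)=\ker A_p\oplus\ovl{\Ran A_p}$ together with $Q(1)=1$ then gives $\ker Q=\ovl{\Ran A_p}$.

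Your route is more elementary: it needs only reflexivity and the core property, not the spectral gap or Proposition \ref{prop-strong-limit-projection}. The paper's route could also avoid the gap by using Ces\`aro means instead of $T_tf\to P(f)$.

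One point to make explicit: the claim that left-invariant vector fields are divergence free relies on invariance of $\mu_G$ under right translations, i.e.\ on unimodularity, which holds because $G$ is compact. Alternatively, $\int_G A_pf\,\d\mu_G=\la f,A_{p^*}1\ra=0$ follows directly from $A_p^*=A_{p^*}$.
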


\begin{proof}
Let $f \in \ker A_p$. We have $A_pf=0$. Then $Af=0$ in the distributional sense. Recall that, according to \cite[Theorem (b) p.~35]{Ste70}, $A$ is elliptic (indeed it coincides with the Laplace-Beltrami operator by \cite[p.~36]{Ste70}). By elliptic regularity \cite[(23.22.8)]{Dieu88}, the function $f$ is smooth, i.e., $f \in \C^\infty(G)$. Hence, using the skew-adjointness \cite[Proposition 3.1 p.~17]{Rob91} of each $X_j$, we can write 
\[
0
=
\la Af,f\ra_{\L^2(G)}
=\bigg\la -\sum_{j=1}^d X_j^2f,f \bigg\ra_{\L^2(G)}
=\sum_{j=1}^d \la  X_jf,X_jf \ra_{\L^2(G)}
=\sum_{j=1}^d \norm{X_jf}_{\L^2(G)}^{2},
\]
where $d$ is the dimension of $G$. Thus $X_jf=0$ in $\L^2(G)$ for any integer $j \in \{1,\ldots,d\}$. Since $f$ is smooth, the equality $X_j f=0$ in $\L^2(G)$ implies $X_j f=0$ pointwise on $G$ for every $j \in \{1,\ldots,d\}$. The vector fields $X_1,\ldots,X_d$ form a global frame of the tangent bundle $\mathrm{T}G$. Let $s \in G$ and consider a tangent vector $v \in \mathrm{T}_sG$. There exist real numbers $a_1,\ldots,a_d$ such that $v=\sum_{j=1}^d a_j X_j(s)$. Consequently, using \cite[(17.14.1.1)]{Dieu72} in the last equality, we obtain
\[
\d_s f(v)
=\d_s f\bigg(\sum_{j=1}^d a_j X_j(s)\bigg)
=\sum_{j=1}^d a_j \d_s f(X_j(s))
=\sum_{j=1}^d a_j X_j f(s)
=0.
\]
Since $s \in G$ and $v \in \mathrm{T}_sG$ are arbitrary, we obtain $\d f=0$. Therefore the function $f$ is locally constant. Since $G$ is connected, we conclude that $f$ is constant. Conversely, by \cite[Theorem (d) p.~35]{Ste70}, $A$ maps constant functions to zero. Consequently, we have $\ker A_p=\mathbb{C}1$. 

The mean ergodic projection $P$ associated with the heat semigroup $(T_t)_{t \geq 0}$ is the projection onto $\ker A_p$. Let $f \in \L^p(G)$. There exists $a_f \in \mathbb{C}$ such that $P(f)=a_f1$. Recall that we established  that $0 \in \rho(A_{p,0})$ in the proof of Corollary \ref{cor-Poincare-Lie-2}. So, we can apply Proposition \ref{prop-strong-limit-projection} which says that $T_t(f) \to P(f)$ in $\L^p(G)$ when $t \to \infty$, thus in the space $\L^1(G)$. Since each operator $T_t$ of the semigroup satisfies $T_t(1)=1$ by \cite[Theorem 1 (vi) p.~38]{Ste70}, the classical argument \eqref{trace-preserving} shows that each operator $T_t$ preserves the normalized Haar measure $\mu_G$. Consequently, we obtain
$$
\int_G f \d \mu_G
=\int_G T_t(f) \d \mu_G
\xra[t \to \infty]{} \int_G P(f)\d \mu_G
=\int_G a_f1 \d \mu_G
=a_f.
$$
We conclude that $P(f)=\left(\int_G f \d\mu_G\right)1$. Finally, we have $
\norm{P(f)}_{\L^p(G)}
=
\left|\int_G f \d\mu_G\right|\norm{1}_{\L^p(G)}
\leq
\norm{f}_{\L^p(G)}$. Thus $\norm{P}_{\L^p(G)\to\L^p(G)} \leq 1$. 
\end{proof}

The optimal constant in \eqref{Arccozi} is unknown. In fact, even the exact norm of the vector Riesz transform on $\mathbb{R}^d$ is unknown when $d \geq 2$, as already pointed out in \cite[p.~202]{Arc98}. See also the recent preprint \cite{Ban26}. We introduce the following stronger conjecture.

\begin{conj}
Suppose that $1 < p < \infty$. In \eqref{Arccozi}, we can take
$$
C_p=\cot\bigg(\frac{\pi}{2\max\{p ,\tfrac{p}{p-1}\}}\bigg).
$$
\end{conj}
This conjecture would provide $C_p \sim \frac{2p}{\pi}$ when $p \to \infty$. See also \cite[p.~57]{Ste70} and \cite[Proposition 1.2]{BBC20}. 

\subsection{Poincar\'e inequalities with $q$-Ornstein-Uhlenbeck semigroups}
\label{sec-q-Ornstein}

We consider in this section a classical noncommutative deformation of the Ornstein--Uhlenbeck semigroup called $q$-Ornstein--Uhlenbeck semigroup. Here $q \in [-1,1)$ is a parameter. We refer to 
\cite{BS91}, \cite{BS94}, \cite{BKS97} and \cite{Lus99}  
for more information on this setting. 

We start by recalling several facts about $q$-Gaussian algebras. For any integer $n \geq 1$, we denote by $\mathrm{S}_n$ the symmetric group. If $\sigma$ is a permutation in $\mathrm{S}_n$ we denote by $\Inv(\sigma) \ov{\mathrm{def}}{=} \card \big\{ (i,j) : 1 \leq i < j \leq n, \sigma(i) > \sigma(j) \big\}$ the number of inversions of $\sigma$. Let $H$ be a real Hilbert space with complexification $H_{\mathbb{C}} \ov{\mathrm{def}}{=} H+\i H$. We consider the algebraic Fock space
 $\cal{F}_{\mathrm{alg}}(H_{\mathbb{C}}) 
\ov{\mathrm{def}}{=} \bigoplus_{n=0}^{\infty} H_{\mathbb{C}}^{\ot n}$, 
where $H_{\mathbb{C}}^{\otimes 0} \ov{\mathrm{def}}{=} \mathbb{C} \Omega$ for a unit vector $\Omega$. The $q$-Fock space 
$\cal{F}_{q}(H_{\mathbb{C}})$ is the completion of this space for the scalar product 
$$
\la h_1 \ot \cdots \ot h_n ,k_1 \ot \cdots \ot k_m \ra_{q}
\ov{\mathrm{def}}{=} \delta_{m,n} \sum_{\sigma \in \mathrm{S}_n} q^{\Inv(\sigma)} \la h_1,k_{\sigma(1)}\ra_{H_{\mathbb{C}}} \cdots \la h_n,k_{\sigma(n)}\ra_{H_{\mathbb{C}}},
$$
where $\delta_{m,n}$ is the Kronecker symbol. If $q=-1$, we must first divide out by the null space, and we obtain the usual antisymmetric Fock space. For any vector $e \in H$, we can introduce as in \cite[Definition 1.1 p.~133]{BKS97} the creation operator $\ell(e) \co \cal{F}_{q}(H_{\mathbb{C}}) \to \cal{F}_{q}(H_{\mathbb{C}})$, $h_1 \ot \cdots \ot h_n \mapsto e \ot h_1 \ot  \cdots \ot h_n$. 
These operators satisfy the $q$-relation $\ell(f)^*\ell(e)-q\ell(e)\ell(f)^* 
= \langle f,e\rangle_{H} \Id_{\cal{F}_{q}(H_{\mathbb{C}})}$ of \cite[p.~133]{BKS97}. These relations interpolate between the bosonic and fermionic commutation relations. Following \cite[Definition 2.1 p.~136]{BKS97}, we consider for any vector $e \in H$ the selfadjoint operator
\begin{equation}
\label{def-sq}
s_q(e) 
\ov{\mathrm{def}}{=} \ell(e)+\ell(e)^* \co \cal{F}_{q}(H_{\mathbb{C}}) \to \cal{F}_{q}(H_{\mathbb{C}}).
\end{equation}
The $q$-Gaussian von Neumann algebra $\Gamma_q(H)$ is the von Neumann algebra over $\cal{F}_q(H_{\mathbb{C}})$ generated by the operators $s_q(e)$ where $e \in H$. By \cite[Proposition 2.3 p.~136]{BKS97}, the von Neumann algebra $\Gamma_q(H)$ is finite and admits the normal finite faithful trace $\tau$ defined by $\tau(x) \ov{\mathrm{def}}{=} \la \Omega,x(\Omega) \ra_{\mathcal{F}_{q}(H_{\mathbb{C}})}$ where $x \in \Gamma_q(H)$. We also define the $*$-algebra $\cal{A}_q(H) \ov{\mathrm{def}}{=} \ast\mathrm{-alg} \{  s_q(e) : e \in H  \}$.


Let $H$ and $K$ be real Hilbert spaces and $T \co H \to K$ be a contraction with complexification $T_{\mathbb{C}} \co H_{\mathbb{C}} \to K_{\mathbb{C}}$. We can consider the second quantization map $
 \cal{F}_q(T) \co \cal{F}_{q}(H_{\mathbb{C}}) \to \cal{F}_{q}(K_{\mathbb{C}})$, $h_1 \ot \cdots \ot h_n  \mapsto T_{\mathbb{C} }(h_1) \ot \cdots \ot T_{\mathbb{C} }(h_n)$. According to \cite[Theorem 2.11 p.~140]{BKS97}, there exists a unique map $\Gamma_q(T) \co \Gamma_q(H) \to \Gamma_q(K)$ such that $
(\Gamma_q(T)(x))\Omega
=\mathcal{F}_q(T)(x\Omega)$ for any $x \in \Gamma_q(H)$ and this map is weak* continuous, unital, completely positive and trace preserving. 

Let $(a_t)_{t \geq 0}$ be a strongly continuous semigroup of selfadjoint contractions on $H$. For any $t \geq 0$, set $T_t \ov{\mathrm{def}}{=} \Gamma_q(a_t)$. Then by \cite[Lemma 9.3 p.~97]{JMX06} $(T_t)_{t \geq 0}$ is a weak* continuous semigroup of normal unital completely positive maps, which induces a strongly continuous semigroup of contractions on the noncommutative $\L^p$-space $\L^p(\Gamma_q(H))$ for any $1 \leq p < \infty$. In the special case where $a_t=\mathrm{e}^{-t}\Id_H$, the semigroup $(T_t)_{t \geq 0}$ is called the $q$-Ornstein--Uhlenbeck semigroup, with generator $-A_p$. 

According to \cite[Proposition 2.3 p.~136]{BKS97}, $\Omega$ is a cyclic and separating trace vector for $\Gamma_q(H)$. In particular, the map $\Gamma_q(H) \to \cal{F}_q(H_{\mathbb C})$, $x \mapsto x\Omega$, is injective. Moreover, by \cite[Remark 2.6 and Proposition 2.7 pp.~136--137]{BKS97}, for any $\xi_1,\ldots,\xi_n \in H_{\mathbb C}$ there exists a unique element $\W_q(\xi_1\ot\cdots\ot\xi_n) \in \cal{A}_q(H)$ such that $
\W_q(\xi_1\ot\cdots\ot\xi_n)\Omega
=
\xi_1\ot\cdots\ot\xi_n$. These Wick words linearly span the $*$-algebra $\cal{A}_q(H)$, i.e., 
$$
\cal{A}_q(H)
=
\Span\left\{
\W_q(\xi_1\ot\cdots\ot\xi_n):
\xi_1,\ldots,\xi_n\in H_{\mathbb C},\ n\geq 0
\right\}.
$$ 
Finally, we have
\begin{equation}
\label{Gamma-q-Wick}
\Gamma_q(T)(\W_q(\xi_1 \ot \cdots \ot \xi_n)) 
= \W_q(T_{\mathbb{C}} \xi_1\ot \dots \ot T_{\mathbb{C}} \xi_n), \quad \xi_1, \ldots, \xi_n \in H_{\mathbb{C}}.
\end{equation}

Consider the linear maps $j_0 \co H_{\mathbb{C}} \to H_{\mathbb{C}} \oplus H_{\mathbb{C}}$, $h \mapsto h \oplus 0$ and $j_1 \co H_{\mathbb{C}} \to H_{\mathbb{C}} \oplus H_{\mathbb{C}}$, $h \mapsto 0 \oplus h$. In this setting, following \cite[Lemma 5.1]{BGJ23}, we can introduce the map $\partial \co \cal{A}_q(H) \to \cal{A}_q(H \oplus H)$  defined by
\begin{equation}
\label{partial-Gao}
\partial
\bigl(\W_q(h_1\ot \cdots \ot h_k)\bigr)
\ov{\mathrm{def}}{=}
\sum_{i=1}^k
\W_q(j_0(h_1) \ot \cdots \ot j_1(h_i) \ot \cdots \ot j_0(h_k)),
\end{equation}
where $h_1,\ldots,h_k \in H_{\mathbb C}$. In the next result, we identify this operator with the gradient introduced by Lust--Piquard in \cite[p.~544]{Lus99}.

\begin{prop}
\label{prop-Lust-Gao-derivations-coincide}
Let $H$ be a real Hilbert space and let $-1 \leq q < 1$. Consider the second quantization $h_H \ov{\mathrm{def}}{=} \Gamma_q(j_0) \co \Gamma_q(H) \to \Gamma_q(H \oplus H)$ of the first-copy embedding. Then the derivation $\partial$ coincides on $\cal{A}_q(H)$ with the gradient $\nabla'$ of \cite[p.~544]{Lus99}, i.e., $\partial=\nabla'$. If $H$ is finite-dimensional and $D'$ denotes the transferred second differential quantization appearing in \cite[p.~544]{Lus99}, then $\partial=\i D' h_H$ on $\cal{A}_q(H)$.
\end{prop}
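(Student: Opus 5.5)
We have to show that $\partial$ and Lust-Piquard's gradient $\nabla'$ agree on $\cal{A}_q(H)$, and that $\partial=\i D' h_H$ there when $H$ is finite-dimensional. Both sides are linear maps defined on $\cal{A}_q(H)$ with values in $\Gamma_q(H\oplus H)$. Since the Wick words $\W_q(h_1\ot\cdots\ot h_k)$ span $\cal{A}_q(H)$, it is enough to check the identities on Wick words. We may also take $h_1,\ldots,h_k\in H_{\mathbb C}$ arbitrary, because both sides are multilinear in $(h_1,\ldots,h_k)$.

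Because $\Omega$ is separating for $\Gamma_q(H\oplus H)$, an element $x$ is determined by the vector $x\Omega$. So I will compare the two maps after applying them to $\Omega$. For our derivation, \eqref{partial-Gao} and the defining property of Wick words give
\[
\partial\bigl(\W_q(h_1\ot\cdots\ot h_k)\bigr)\Omega
=\sum_{i=1}^k j_0(h_1)\ot\cdots\ot j_1(h_i)\ot\cdots\ot j_0(h_k).
\]
This is the image of $h_1\ot\cdots\ot h_k$ under the differential second quantization of the pair $(j_0,j_1)$: one factor is moved into the second copy and the others stay in the first copy.

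Next I recall Lust-Piquard's construction of $\nabla'$ on p.~544 of \cite{Lus99}. It is defined on the generators $s_q(e)$ and the Wick products through the Fock representation, as $\mathcal{F}_q$ of the derivation that acts on $H_{\mathbb C}^{\ot k}$ by $\sum_i j_0\ot\cdots\ot j_1\ot\cdots\ot j_0$. Equivalently, it is the derivative at $\theta=0$ of the rotation $\Gamma_q(\cos\theta\, j_0+\sin\theta\, j_1)$. Using \eqref{Gamma-q-Wick}, I would compute
\[
\frac{\d}{\d\theta}\Big|_{\theta=0}\W_q\bigl((\cos\theta\, j_0+\sin\theta\, j_1)h_1\ot\cdots\bigr),
\]
and the product rule yields exactly the sum above. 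Hence the two maps agree on Wick words, and therefore on $\cal{A}_q(H)$.

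The main obstacle is matching conventions. In finite dimensions Lust-Piquard writes $\nabla'=\i D' h_H$, where $D'$ is the transferred second differential quantization acting on $\Gamma_q(H\oplus H)$. I therefore need to verify the following on each $j_0$-embedded Wick word:
\[
\i D'\,\W_q(j_0h_1\ot\cdots\ot j_0h_k)=\sum_i\W_q(j_0h_1\ot\cdots\ot j_1h_i\ot\cdots\ot j_0h_k).
\]
Suppose $D'$ is $\mathcal{F}_q$ of $-\i$ times the skew operator $\begin{pmatrix}0&-1\\1&0\end{pmatrix}$ on $H\oplus H$, differentiated as a derivation. Then this identity is just the $k$-fold Leibniz expansion, and the factor $\i$ cancels the $-\i$. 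I expect the sign and $\i$ normalization to be the only delicate point. It is also the step that needs $\dim H<\infty$, since Lust-Piquard's $D'$ is defined through a basis. Composing with $h_H=\Gamma_q(j_0)$, by \eqref{Gamma-q-Wick}, then gives $\partial=\i D'h_H$.
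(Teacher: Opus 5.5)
Your middle paragraph is where the argument breaks. You ``recall'' that Lust-Piquard defines $\nabla'$ as the Fock-space derivation $\sum_i j_0\ot\cdots\ot j_1\ot\cdots\ot j_0$, equivalently as the $\theta$-derivative of $\Gamma_q(\cos\theta\, j_0+\sin\theta\, j_1)$. That is exactly the assertion $\partial=\nabla'$ you are asked to prove. In \cite{Lus99}, $\nabla'$ is a transferred gradient, and its link with the Fock picture is Proposition~3.2 there, $\nabla'=\i D'h_{n,2n}^{-1}$, stated for $H=\R^n$. Your rotation formula is a correct description of $\partial$, by \eqref{Gamma-q-Wick} and the product rule, but it says nothing about $\nabla'$. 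So ``hence the two maps agree'' is circular, and the case $\dim H=\infty$ is left unproved. You have to go through $D'$, which you yourself note requires $\dim H<\infty$, and then add the reduction step you omit. Given $h_1,\ldots,h_k\in H_{\mathbb C}$, set $H_0=\Span_{\R}\{\Re h_1,\mathrm{Im}\,h_1,\ldots,\Re h_k,\mathrm{Im}\,h_k\}$. Then $\W_q(h_1\ot\cdots\ot h_k)\in\cal{A}_q(H_0)$, so you apply the finite-dimensional identity on $H_0$. You then transport it through the canonical inclusions $\Gamma_q(H_0)\subset\Gamma_q(H)$ and $\Gamma_q(H_0\oplus H_0)\subset\Gamma_q(H\oplus H)$ induced by second quantization. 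This is how the paper handles general $H$.

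For $H=\R^n$, your last paragraph is the paper's argument, except that its key input is left as a supposition. The convention you guess is the correct one: $-\i\begin{pmatrix}0&-1\\1&0\end{pmatrix}=\begin{pmatrix}0&\i\\-\i&0\end{pmatrix}$ is precisely Lust-Piquard's Hermitian matrix $P$. With $j_0(h)=h\ot e$ and $j_1(h)=h\ot f$, one gets $Pe=-\i f$, hence $\i(\Id_{H_{\mathbb C}}\ot P)j_0(h)=j_1(h)$. This should be checked against the source rather than assumed. In \cite{Lus99}, $D$ is the second differential quantization of $\Id_{H_{\mathbb C}}\ot P$, acting on $\pi_k(u_1\ot\cdots\ot u_k)$ by the Leibniz sum (Lemma~1.1 c)(i) there). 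Moreover $D'=\phi_{2n}^{-1}D\phi_{2n}$ with $\phi_{2n}(\W_q(u))=\pi_k(u)$. Together these give $\i D'\W_q(j_0h_1\ot\cdots\ot j_0h_k)=\sum_i\W_q(j_0h_1\ot\cdots\ot j_1h_i\ot\cdots\ot j_0h_k)$. Since $h_H\W_q(h_1\ot\cdots\ot h_k)=\W_q(j_0h_1\ot\cdots\ot j_0h_k)$, you obtain $\partial=\i D'h_H$. Then $\partial=\nabla'$ in finite dimension follows from Proposition~3.2 of \cite{Lus99}, once you note $h_H=h_{n,2n}^{-1}$. Your detour through the separating vector $\Omega$ is harmless but unnecessary, since everything is an identity between Wick words.
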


\begin{proof}
First assume that $H$ is finite-dimensional, say $H=\R^n$. 
By \cite[p.~520]{Lus99}, the algebra $\Gamma_A(\R^n)$ associated with the matrix $A=[q]_{1 \leq i,j \leq n}$ is identified with $\Gamma_q(H)$. If we consider the matrix $
A'
\ov{\mathrm{def}}{=}
A \ot
\begin{pmatrix}
1 & 1 \\
1 & 1
\end{pmatrix}$, the algebra $\Gamma_{A'}(\R^{2n})$ is identified with $\Gamma_q(H \oplus H)$. Following \cite[p.~524]{Lus99}, we introduce the vectors $
e
\ov{\mathrm{def}}{=}
\begin{bmatrix}
1 \\
0
\end{bmatrix}$ and $
f
\ov{\mathrm{def}}{=}
\begin{bmatrix}
0 \\
1
\end{bmatrix}$ of the space $\R^2$. Under the identification $H_{\mathbb C}\oplus H_{\mathbb C}
\simeq
H_{\mathbb C}\ot\mathbb C^2$, we have
\begin{equation}
\label{j0-j1}
j_0(h)=h \ot e
\quad \text{and} \quad
j_1(h)=h \ot f, \quad h \in H_{\mathbb C}.
\end{equation}
As in \cite[p.~522]{Lus99}, we consider the Hermitian matrix $
P
\ov{\mathrm{def}}{=}
\begin{bmatrix}
0 & \i \\
-\i & 0
\end{bmatrix}$. Then
\begin{equation}
\label{inter-35}
Pe
=-\i f
\quad \text{and} \quad
Pf=\i e.
\end{equation}
Let $D$ denote the second differential quantization of the map $\Id_{H_{\mathbb C}} \ot P \co H_{\mathbb C} \ot \mathbb{C}^2 \to H_{\mathbb C} \ot \mathbb{C}^2$ introduced in \cite[p.~530]{Lus99}. Then, by \cite[Lemma 1.1 c) (i) p.~527]{Lus99}, for any $h_1,\ldots,h_k\in H_{\mathbb C}$, we have
\begin{align*}
\MoveEqLeft
\i\, D\pi_k\big(j_0(h_1) \ot \cdots \ot j_0(h_k)\big) \\
&=\i\,\pi_k\bigg(\sum_{i=1}^k \bigg(\Id_{} \ot \cdots \ot (\Id_{H_{\mathbb C}} \ot P) \ot \cdots \ot \Id_{}\bigg)(j_0(h_1) \ot \cdots \ot j_0(h_k))\bigg)\\
&=\i\,\sum_{i=1}^k\pi_k\big(j_0(h_1) \ot \cdots \ot (\Id_{H_{\mathbb C}} \ot P)(j_0(h_i)) \ot \cdots \ot j_0(h_k)\big)\\
&\ov{\eqref{j0-j1}}{=} \i\,\sum_{i=1}^k\pi_k\big(j_0(h_1) \ot \cdots \ot (\Id_{H_{\mathbb C}} \ot P)(h_i \ot e) \ot \cdots \ot j_0(h_k)\big)\\
&=\i\, \sum_{i=1}^k \pi_k\big(j_0(h_1) \ot \cdots \ot (h_i \ot P(e)) \ot \cdots \ot j_0(h_k)\big) \\
&\ov{\eqref{inter-35}}{=} \sum_{i=1}^k\pi_k\big(j_0(h_1) \ot \cdots \ot (h_i \ot f) \ot \cdots \ot j_0(h_k)\big) \\
&\ov{\eqref{j0-j1}}{=} \sum_{i=1}^k \pi_k\big(j_0(h_1) \ot \cdots \ot j_1(h_i) \ot \cdots \ot j_0(h_k) \big).         
\end{align*}
With the notation of \cite{Lus99}, if $
u\in
(H_{\mathbb C}\oplus H_{\mathbb C})^{\ot k}$ then $\phi_{2n}(\W_q(u))=\pi_k(u)$.
Since $D'=\phi_{2n}^{-1}D\phi_{2n}$ (see \cite[p.~544]{Lus99}), we obtain
$$
\i\, D'\W_q\big(j_0(h_1) \ot \cdots \ot j_0(h_k)\big)
=\sum_{i=1}^k \W_q\big(j_0(h_1) \ot \cdots \ot j_1(h_i) \ot \cdots \ot j_0(h_k) \big).
$$
Since $
h_H\bigl(\W_q(h_1 \ot \cdots \ot h_k)\bigr)
=\Gamma_q(j_0)\bigl(\W_q(h_1 \ot \cdots \ot h_k)\bigr)
\ov{\eqref{Gamma-q-Wick}}{=}
\W_q(j_0(h_1) \ot \cdots \ot j_0(h_k))$, we finally deduce with \eqref{partial-Gao} that
\[
\i D'h_H\bigl(\W_q(h_1 \ot \cdots \ot h_k)\bigr)
=\partial(\W_q(h_1\ot \cdots \ot h_k)).
\]
Hence $
\partial
=\i D'h_H$ on all Wick words. Since Wick words linearly span $\cal A_q(H)$, we obtain $
\partial=\i D'h_H$ on $\cal A_q(H)$.
By \cite[Proposition 3.2 p.~544]{Lus99}, Lust--Piquard's transferred gradient is
$
\nabla'
=\i D' h_H$ since $h_H=h_{n,2n}^{-1}$ (with the notation $h_{mn}$ of \cite{Lus99}, see \cite[p.~535]{Lus99}).

Now, let $H$ be an arbitrary real Hilbert space. Given $h_1,\ldots,h_k \in H_{\mathbb C}$, set 
$$
H_0
\ov{\mathrm{def}}{=}
\Span_{\mathbb R}
\{\Re h_1,\mathrm{Im} h_1,\ldots,\Re h_k,\mathrm{Im} h_k\}.
$$ 
Then $H_0$ is finite-dimensional, and the Wick word
$\W_q(h_1\ot\cdots\ot h_k)$ belongs to $\cal A_q(H_0)$. Applying the finite-dimensional result to $H_0$ and using the canonical inclusions induced by second quantization yields $
\partial
\bigl(\W_q(h_1\ot\cdots\ot h_k)\bigr)
=
\nabla'
\bigl(\W_q(h_1\ot\cdots\ot h_k)\bigr)$. Since Wick words linearly span $\cal A_q(H)$, the conclusion follows.
%
%
%
%
%
\end{proof}

In this framework, if $1 < p < \infty$, the Riesz equivalence was obtained by Lust-Piquard in \cite[Theorem 3.3 p.~546]{Lus99} (see also \cite{Lus98}). Indeed, we have
\begin{equation}
\label{equiv-Lust}
\bnorm{A_p^{\frac{1}{2}}(f)}_{\L^p(\Gamma_q(H))}
\approx_p \norm{\partial(f)}_{\L^p(\Gamma_q(H \oplus H))}, \quad f \in \cal{A}_q(H),
\end{equation}
where $-A_p$ is the generator of the $q$-Ornstein-Uhlenbeck semigroup $(T_t)_{t \geq 0}$ on the Banach space $\L^p(\Gamma_q(H))$. By \cite[Proposition 3.4]{Arh26b}, we deduce that the unbounded operator $\partial \co \cal{A}_q(H) \subset \L^p(\Gamma_q(H)) \to \L^p(\Gamma_q(H \oplus H))$ is closable with closure denoted $\partial_p$, that $\dom A_p^{\frac{1}{2}}=\dom \partial_p$ and that the equivalence \eqref{equiv-Lust} extends to elements of the space $\dom \partial_p$.

The semigroup $(T_t)_{t \geq 0}$ is hypercontractive by \cite[Theorem 3 p.~461]{Bia97}. As explained in Section \ref{sec-gaps}, this implies a spectral gap. Combined with the reverse Riesz equivalence provided by \eqref{equiv-Lust}, this allows us to apply Theorem \ref{Th-Poincare-von-Neumann}. So we obtain the following Poincar\'e inequality, essentially stated in \cite[p.~39]{JLZZ24}.

\begin{thm}
Suppose that $1 < p < \infty$ and $-1 \leq q < 1$. Then
\begin{equation*}
\norm{f-\tau(f)1}_{\L^p(\Gamma_q(H))}
\lesssim_p \norm{\partial_p(f)}_{\L^p(\Gamma_q(H \oplus H))}, \quad f \in \dom \partial_p.
\end{equation*}
\end{thm}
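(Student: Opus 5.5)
The plan is to apply Theorem~\ref{Th-Poincare-von-Neumann} with $\cal{M}=\Gamma_q(H)$, $\tilde{\cal{M}}=\Gamma_q(H\oplus H)$, $\alpha=\frac12$ and $\partial=\partial_p$. Its hypotheses are: a Markov semigroup on a finite von Neumann algebra with normalized trace, the gap $0\in\rho(A_{2,0})$, and a reverse Riesz estimate on $\dom\partial_p$. After that, one only needs to identify $\E_p$ with $f\mapsto\tau(f)1$.

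\textbf{Markov structure.} Each $T_t=\Gamma_q(\e^{-t}\Id_H)$ is normal, unital, completely positive and trace preserving by \cite[Theorem 2.11]{BKS97}, and $(T_t)_{t\geq0}$ is weak* continuous by \cite[Lemma 9.3]{JMX06}. Selfadjointness $\tau(T_t(x)y^*)=\tau(xT_t(y)^*)$ follows from \eqref{Gamma-q-Wick}. Indeed, $T_t$ acts on the $n$-th chaos $\{\W_q(u):u\in H_{\mathbb C}^{\ot n}\}$ as multiplication by $\e^{-nt}$, and these chaoses are mutually orthogonal in $\L^2(\Gamma_q(H))\cong\cal{F}_q(H_{\mathbb C})$ through $x\mapsto x\Omega$. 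The trace $\tau$ is normalized since $\tau(1)=\la\Omega,\Omega\ra=1$.

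\textbf{Spectral gap.} By \cite[Theorem 3]{Bia97}, the semigroup is hypercontractive: there is $t_0>0$ such that $T_{t_0}\co\L^2\to\L^4$ is contractive. For the $q$-OU semigroup one can take $t_0=\frac{\log 3}{2}$. Proposition~\ref{prop-hypercontractivity-spectral-gap} then gives $\ker A_2=\mathbb{C}1$ and \eqref{hyer-gap}, i.e. $0\in\rho(A_{2,0})$. One can also see the gap directly: the chaos decomposition gives $\sigma(A_2)\subset\{0,1,2,\ldots\}$ with $\ker A_2=\mathbb{C}\Omega$, so $\gap(A_2)\geq1$.

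\textbf{Reverse Riesz estimate.} By \cite[Proposition 3.4]{Arh26b} applied to \eqref{equiv-Lust}, $\partial$ is closable and $\dom\partial_p=\dom A_p^{1/2}$. Moreover, $\|A_p^{1/2}f\|_{\L^p}\leq C_p\|\partial_pf\|_{\L^p}$ for every $f\in\dom\partial_p$, which is \eqref{eq-lower-Riesz-direct-Poincare-Markov}. Theorem~\ref{Th-Poincare-von-Neumann} therefore yields $\|f-\E_p f\|_{\L^p}\lesssim_p\|\partial_pf\|_{\L^p}$.

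\textbf{Identifying $\E_p$.} This is the one point needing care. Since $\ker A_2=\mathbb{C}1$ and $\tau$ is a normal faithful state, the fixed-point algebra of $(T_t)$ in $\cal{M}$ is contained in $\ker A_2\cap\cal{M}=\mathbb{C}1$. Hence $\E(x)=\tau(x)1$, because $\E$ is a trace-preserving conditional expectation onto $\mathbb{C}1$. By density and continuity, $\E_p(f)=\tau(f)1$ on $\L^p$.

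The main obstacle is consistency of the kernels across exponents, namely that $\ker A_p=\mathbb{C}1$ for every $p$ and not only for $p=2$. This is handled because $\E_p$ is obtained by interpolation from $\E$, so the formula $\E_p(f)=\tau(f)1$ transfers automatically to all $1<p<\infty$.
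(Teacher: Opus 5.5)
Your proposal is correct and follows the paper's own route. Biane's hypercontractivity gives the gap through Proposition~\ref{prop-hypercontractivity-spectral-gap}, Lust-Piquard's equivalence \eqref{equiv-Lust} extended via \cite[Proposition 3.4]{Arh26b} gives the reverse Riesz estimate, and Theorem~\ref{Th-Poincare-von-Neumann} concludes; your extra steps (checking the Markov structure, the direct number-operator bound $\gap(A_2)\geq 1$, and deducing $\E_p(f)=\tau(f)1$ from $\ker A_2=\mathbb{C}1$) just make explicit what the paper leaves implicit.
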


The same argument extends to mixed $Q$-Gaussian von Neumann algebras, allowing us to recover the inequality of \cite[p.~39]{JLZZ24}.

\subsection{Poincar\'e inequalities on group von Neumann algebras}
\label{sec-group-von-Neumann-algebras}

Consider the group von Neumann algebra $\VN(G)$ of a discrete group $G$, generated by the unitary operators $\lambda_s \co \ell^2_G \to \ell^2_G$, where $s \in G$. Let $(T_t)_{t \geq 0}$ on $\VN(G)$ be a symmetric Markovian semigroup of Fourier multipliers on $\VN(G)$. These semigroups admit a nice description. Indeed, by \cite[Proposition 2.3 p.~33]{ArK22}, there exists a unique real-valued conditionally negative definite function $\psi \co G \to \R$ satisfying $\psi(e) = 0$ such that 
\begin{equation}
\label{divers-100}
T_t(\lambda_s) 
= \e^{-t \psi(s)}\lambda_s, \quad t \geq 0,\quad s \in G
\end{equation}
and there exists a \textit{real} Hilbert space $H$ together with a mapping $b_\psi \co G \to H$ and a homomorphism $\pi \co G \to \mathrm{O}(H)$ into the orthogonal group $\mathrm{O}(H)$ of $H$ such that the $1$-cocycle law holds 
$\pi_s(b_\psi(t))
=b_\psi(st)-b_\psi(s),
$ 
for any $s,t \in G$ and such that $\psi(s) = \norm{b_\psi(s)}_H^2$ for any $s \in G$. 

Suppose that $-1 \leq q < 1$. In this context, we can consider the map $\partial_{\psi,q} \co \cal{P}_G \to \Gamma_q(H) \rtimes_\alpha G$, $\lambda_s \mapsto s_q(b_\psi(s)) \rtimes \lambda_s$ where $\cal{P}_G \ov{\mathrm{def}}{=} \Span \{\lambda_s : s \in G \}$. For any $1 < p < \infty$, according to \cite[Proposition 3.4 p.~84]{ArK22} (relying on the case $q=1$ of \cite[p.~544]{JMP18}), the operator $\partial_{\psi,q}$ is closable and if we denote by $\partial_{\psi,q,p}$ its closure, the same result says that $\dom \partial_{\psi,q,p} =\dom A_p^{\frac{1}{2}}$ and provides the Riesz equivalence 
\begin{equation}
\label{group-Riesz-56}
\bnorm{A_p^{\frac{1}{2}}(x)}_{\L^p(\VN(G))}
\approx_p \norm{\partial_{\psi,q,p}(x)}_{\L^p(\Gamma_q(H) \rtimes_\alpha G)}, \quad x \in \dom \partial_{\psi,q,p},
\end{equation}
where $\alpha \co G \to \Aut(\Gamma_q(H))$, $s \mapsto \Gamma_q(\pi_s)$. See \cite[p.~58]{ArK22} for more information. 

Now, we establish a Poincar\'e inequality in this setting. The following theorem may be viewed as saying that an $\L^2$-spectral gap gives rise to an $\L^p$-Poincar\'e inequality. We refer to \cite{JuZ15b} and to \cite{JLZZ24} for particular cases of this inequality, with different assumptions. We recall that $\ker \psi \ov{\mathrm{def}}{=} \{s \in G : \psi(s)=0\}$. The condition $\inf_{s \not\in \ker \psi} \norm{b_\psi(s)}_H^2 > 0$ means that there is a spectral gap at the $\L^2$-level.

\begin{thm}
Suppose that $1 < p < \infty$ and $-1 \leq q < 1$. Let $G$ be a discrete group. Let $(T_t)_{t \geq 0}$ be a symmetric Markov semigroup of Fourier multipliers acting on the group von Neumann algebra $\VN(G)$ as in \eqref{divers-100}. Assume that $c_\psi \ov{\mathrm{def}}{=} \inf_{s \not\in \ker \psi} \norm{b_\psi(s)}_H^2 > 0$ with the convention $c_\psi=\infty$ if $\ker \psi = G$. Then
\begin{equation*}
\norm{x-\E_p(x)}_{\L^p(\VN(G))}
\lesssim_{p,\psi} \norm{\partial_{\psi,q,p}(x)}_{\L^p(\Gamma_q(H) \rtimes_{\alpha} G)}, \quad x \in \dom \partial_{\psi,q,p},
\end{equation*}
where $\E_p \co \L^p(\VN(G)) \to \L^p(\VN(G))$ is the mean ergodic projection.
\end{thm}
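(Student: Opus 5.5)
The plan is to apply Theorem \ref{Th-Poincare-von-Neumann} with $\cal{M}=\VN(G)$ (normalized trace $\tau(x)=\la \epsi_e, x\epsi_e\ra$), $\tilde{\cal{M}}=\Gamma_q(H)\rtimes_\alpha G$, $\alpha=\frac12$ and $\partial=\partial_{\psi,q,p}$. The semigroup \eqref{divers-100} is a symmetric Markov semigroup on a finite von Neumann algebra. The reverse Riesz estimate \eqref{eq-lower-Riesz-direct-Poincare-Markov}, together with the inclusion $\dom \partial_{\psi,q,p}\subset \dom A_p^{\frac12}$ (indeed equality), is supplied by \eqref{group-Riesz-56}. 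The crossed product $\Gamma_q(H)\rtimes_\alpha G$ carries the finite normal faithful trace $\tau_{\Gamma_q}\rtimes \tau$, because $\alpha$ is trace preserving. So the only hypothesis left to check is the $\L^2$-spectral gap $0\in\rho(A_{2,0})$.

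For the gap I work on $\L^2(\VN(G))\cong \ell^2_G$, where $(\lambda_s)_{s\in G}$ is an orthonormal basis. The generator $A_2$ is the multiplication operator $\lambda_s\mapsto \psi(s)\lambda_s$, with domain $\{\sum a_s\lambda_s:\sum \psi(s)^2|a_s|^2<\infty\}$. This gives three facts:
\begin{itemize}
\item $\ker A_2=\ovl{\Span}\{\lambda_s: s\in\ker\psi\}$;
\item $(\ker A_2)^\perp=\ovl{\Span}\{\lambda_s:s\notin\ker\psi\}$;
\item for $f=\sum_{s\notin\ker\psi}a_s\lambda_s$ in $\dom A_2$, the quadratic form is $\la A_2f,f\ra=\sum\psi(s)|a_s|^2\geq c_\psi\norm{f}^2$, using $\psi(s)=\norm{b_\psi(s)}_H^2$.
\end{itemize}
Hence $\gap(A_2)\geq c_\psi>0$. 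If $\ker\psi=G$, then $(\ker A_2)^\perp=\{0\}$ and the statement is trivial. Proposition \ref{prop-gap-and-s} now gives $0\in\rho(A_{2,0})$.

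Theorem \ref{Th-Poincare-von-Neumann} then yields $0\in\rho(A_{p,0})$ and the inequality \eqref{Poincare-clean}, with constant $C_p\norm{A_{p,0}^{-1/2}}$ depending on $p$ and $\psi$. Here $\E_p$ is the extension of the conditional expectation onto the fixed-point algebra, which is the mean ergodic projection (it is the projection onto $\VN(\ker\psi)$, since $\ker\psi$ is a subgroup). The statement's $\E_p$ is the mean ergodic projection, and it coincides with $P$ by uniqueness of the ergodic projection along $\ovl{\Ran A_p}$.

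The main obstacle is only bookkeeping. I need the precise form of \eqref{group-Riesz-56}: the inequality $\lesssim$ in the direction $\norm{A_p^{1/2}x}\lesssim\norm{\partial x}$ on all of $\dom\partial_{\psi,q,p}$. I also need to check that the target $\L^p$-space is that of a finite von Neumann algebra, as Theorem \ref{Th-Poincare-von-Neumann} requires. The case $q=-1$ needs the crossed product trace as well, but $\Gamma_{-1}(H)$ is still finite by \cite[Proposition 2.3]{BKS97}. Neither point needs new analysis.
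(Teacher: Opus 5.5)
Your proposal is correct and follows the paper's proof. Like the paper, you apply Theorem \ref{Th-Poincare-von-Neumann} with $\alpha=\frac12$, take the reverse Riesz estimate from \eqref{group-Riesz-56}, and treat $\ker\psi=G$ as the trivial case. The only cosmetic difference is in how you obtain $0\in\rho(A_{2,0})$. The paper identifies $\sigma(A_2)$ as the closure of $\{\norm{b_\psi(s)}_H^2 : s\in G\}$ and then invokes Propositions \ref{prop-Gap-equivalente} and \ref{prop-gap-and-s}, whereas you bound the Rayleigh quotient directly to get $\gap(A_2)\geq c_\psi$ and then apply Proposition \ref{prop-gap-and-s}.
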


\begin{proof}
We have seen in the proof of Theorem \ref{Th-Poincare-von-Neumann} that the semigroup $(T_t)_{t \geq 0}$ is mean ergodic with mean ergodic projection $\E_p \co \L^p(\VN(G)) \to \L^p(\VN(G))$. If $\ker \psi = G$, then $\psi=0$. So $T_t=\Id$, $A_p=0$, and $\E_p=\Id$. The inequality is trivial. We may therefore suppose that $c_\psi < \infty$. Note that on the complex Hilbert space $\L^2(\VN(G))$, the infinitesimal generator $-A_2$ acts as a multiplication operator defined by $A_2(\lambda_s) = \norm{b_\psi(s)}_H^2 \lambda_s$, where $s \in G$. So by \cite[Example 3.2.16 p.~185]{KaR97a}, the spectrum $\sigma(A_2)$ is the closure of the set $\{\norm{b_\psi(s)}_H^2 : s \in G \}$. Since $c_\psi > 0$, we see that $\sigma(A_2) \subset \{0\} \cup \big[c_\psi,\infty\big)$. Combining Proposition \ref{prop-Gap-equivalente} and Proposition \ref{prop-gap-and-s}, we conclude that $0 \in \rho(A_{2,0})$. The reverse Riesz estimate \eqref{eq-lower-Riesz-direct-Poincare-Markov} is a particular case of \eqref{group-Riesz-56}. We conclude with Theorem \ref{Th-Poincare-von-Neumann}.
\end{proof}

We refer to \cite[Theorem 7.1 p.~36]{JLZZ24} (see also \cite[pp. 13--15]{ArK22}) for concrete examples to which the preceding theorem applies.

\subsection{Poincar\'e inequalities with semigroups of Schur multipliers}

Let $I$ be a non-empty index set. Let $(T_t)_{t \geq 0}$ be a symmetric Markovian semigroup of Schur multipliers acting on the von Neumann algebra $\B(\ell^2_I)$ of bounded operators acting on the complex Hilbert space $\ell^2_I$. In this situation, by \cite[Proposition 5.4 p.~415]{Arh13} there exists a \textit{real} Hilbert space $H$ and a family 
$
(\alpha_i)_{i \in I}
$
of vectors of $H$ such that for any $t \geq 0$, the Schur multiplier $T_t \co \B(\ell^2_I) \to \B(\ell^2_I)$ is associated to the matrix
\begin{equation}
\label{Semigroup-Schur}
\big[\e^{-t\|\alpha_i-\alpha_j\|_{H}^2}\big]_{i,j \in I}.	
\end{equation}
If $1 \leq p < \infty$, this semigroup induces a strongly continuous semigroup on the Schatten space $S^p_I \ov{\mathrm{def}}{=} S^p(\ell^2_I)$ with infinitesimal generator $-A_p$. We denote by $\M_{I,\fin}$ the dense subspace of the Schatten space $S^p_I$ of matrices with a finite number of nonzero entries. Suppose that $-1 \leq q < 1$. Recall that the von Neumann algebra $\Gamma_q(H)$ is defined in Section \ref{sec-q-Ornstein}. We equip it with its canonical normal finite faithful trace. Following \cite[(2.95) p.~62]{ArK22}, we can consider the unbounded operator $\partial_{\alpha,q} \co \M_{I,\fin} \subset S^p_I \to \L^p(\Gamma_q(H) \otvn \B(\ell^2_I))$, $e_{ij} \mapsto s_q(\alpha_i-\alpha_j) \ot e_{ij}$, where the $q$-Gaussian $s_q(\alpha_i-\alpha_j)$ is defined in \eqref{def-sq}. Moreover, if $1 < p < \infty$, the operator $\partial_{\alpha,q}$ is closable by \cite[Proposition 3.11 p.~121]{ArK22}. If we denote by $\partial_{\alpha,q,p}$ its closure, the same result says that $\dom \partial_{\alpha,q,p} =\dom A_p^{\frac{1}{2}}$ and provides the Riesz equivalence
\begin{equation}
\label{Riesz-equivalence-Schur}
\bnorm{A_p^{\frac{1}{2}}(x)}_{S^p_I}
\approx_p \norm{\partial_{\alpha,q,p}(x)}_{\L^p(\Gamma_q(H) \otvn \B(\ell^2_I))}, \quad x \in \dom \partial_{\alpha,q,p}.
\end{equation}
Now, we establish a Poincar\'e inequality in this setting. The condition $\inf\big\{\norm{\alpha_i-\alpha_j}_H^2 : i,j \in I,\alpha_i \neq \alpha_j\big\} > 0$ means precisely that there is a spectral gap at the $\L^2$-level.

\begin{thm}
Suppose that $1 < p < \infty$ and $-1 \leq q < 1$. Let $I$ be a non-empty index set. Let $(T_t)_{t \geq 0}$ be a symmetric Markovian semigroup of Schur multipliers acting on $\B(\ell^2_I)$ associated to \eqref{Semigroup-Schur}. Assume that $
c_\alpha
\ov{\mathrm{def}}{=} \inf\big\{\norm{\alpha_i-\alpha_j}_H^2 : i,j \in I,\alpha_i \neq \alpha_j\big\}
> 0$, 
with the convention $c_\alpha=\infty$ if the set $\{\norm{\alpha_i-\alpha_j}_H^2 : i,j \in I,\alpha_i \neq \alpha_j\}$ is empty. Then
\begin{equation*}
\norm{x-P_p(x)}_{S^p_I}
\lesssim_{p,\alpha} \norm{\partial_{\alpha,q,p}(x)}_{\L^p(\Gamma_q(H) \otvn \B(\ell^2_I))}, \quad x \in \dom \partial_{\alpha,q,p},
\end{equation*}
where the map $P_p \co S^p_I \to S^p_I$ is the mean ergodic projection.
\end{thm}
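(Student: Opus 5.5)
The plan is to reproduce the structure of the group von Neumann algebra case: establish an $\L^2$-spectral gap for $A_2$, transfer it to $S^p_I$, and feed the Riesz equivalence \eqref{Riesz-equivalence-Schur} into Theorem \ref{Th-Poincare-von-Neumann}.

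There is one subtlety. $\B(\ell^2_I)$ with its usual trace is not finite when $I$ is infinite, while Theorem \ref{Th-Poincare-von-Neumann} is stated for normalized finite traces. I would therefore not invoke it blindly. Instead, I would redo its short argument directly with Theorem \ref{thm-direct-Poincare-via-negative-powers}, which works on any Banach space.

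The first step is a trivial case. If $c_\alpha=\infty$, then all $\alpha_i$ coincide on the support of each matrix entry. So $T_t=\Id$, $A_p=0$ and $P_p=\Id$, and the inequality is trivial. We may therefore assume $c_\alpha<\infty$.

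The second step is the spectral gap on $S^2_I$. The matrix units $(e_{ij})$ form an orthonormal basis of $S^2_I$, and $A_2 e_{ij}=\norm{\alpha_i-\alpha_j}_H^2 e_{ij}$. So $A_2$ is a multiplication operator, and by \cite[Example 3.2.16 p.~185]{KaR97a} its spectrum is the closure of $\{\norm{\alpha_i-\alpha_j}_H^2\}$. This gives $\sigma(A_2)\subset\{0\}\cup[c_\alpha,\infty)$. Propositions \ref{prop-Gap-equivalente} and \ref{prop-gap-and-s} then yield $0\in\rho(A_{2,0})$.

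The third step is ergodicity and holomorphy on $S^p_I$. The semigroup is Markovian and symmetric, so it acts contractively on each reflexive space $S^p_I$ for $1<p<\infty$. It is therefore mean ergodic by Example \ref{bounded-ergodic}, and bounded holomorphic by \cite[Proposition 5.4 p.~51, Lemma 3.1 p.~26]{JMX06}, which does not need finiteness of the trace. The ergodic projections $P_p$ are consistent: $P_2$ is the Schur multiplier with symbol $1_{\alpha_i=\alpha_j}$, a Schur multiplier that is contractive on every $S^p_I$ because it is the limit of the $T_t$.

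The fourth step transfers the gap to $p\neq2$, and I expect it to be the main obstacle. For $1<r<p<2$, interpolating the compatible projections $\Id-P_2$ and $\Id-P_r$ gives $\ovl{\Ran A_p}\approx(\ovl{\Ran A_2},\ovl{\Ran A_r})_\theta$. The restricted semigroup is uniformly bounded on $\ovl{\Ran A_r}$ and uniformly exponentially stable on $\ovl{\Ran A_2}$ by Proposition \ref{prop-reduced-exp-stability-closed-range}. Proposition \ref{prop-stability-interpolation} then gives uniform exponential stability on $\ovl{\Ran A_p}$, hence $0\in\rho(A_{p,0})$ by Proposition \ref{prop-reduced-exp-stability-closed-range} again. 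For $p>2$, argue by duality, using $A_p^*=A_{p^*}$ and $P_p^*=P_{p^*}$. The points to check here are the consistency of the semigroups across the $S^r_I$ spaces and the interpolation of the complemented subspaces.

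Finally, \eqref{Riesz-equivalence-Schur} provides the reverse Riesz estimate with $\alpha=\frac12$ and $\partial=\partial_{\alpha,q,p}$ on $\dom\partial_{\alpha,q,p}=\dom A_p^{1/2}$. Theorem \ref{thm-direct-Poincare-via-negative-powers} then yields the inequality with constant $C_p\,\norm{A_{p,0}^{-1/2}}$.
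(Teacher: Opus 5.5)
Your proposal is correct and follows the paper's proof essentially step for step. The shared steps are: the trivial case; the multiplication-operator description of $A_2$ on $S^2_I$, which gives $0 \in \rho(A_{2,0})$; mean ergodicity and bounded holomorphy on $S^p_I$; transfer of uniform exponential stability to $\ovl{\Ran A_p}$ by interpolating the reduced spaces for $1<p<2$ and by duality for $p>2$; and finally Theorem \ref{thm-direct-Poincare-via-negative-powers} combined with the Riesz equivalence \eqref{Riesz-equivalence-Schur}. You bypass Theorem \ref{Th-Poincare-von-Neumann} because the trace on $\B(\ell^2_I)$ is not finite, and the paper does the same, concluding directly with the abstract Banach-space theorem.
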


\begin{proof}
The semigroup $(T_t)_{t \geq 0}$ is contractive on the Banach space $S^p_I$. Since this space is reflexive, we deduce by Example \ref{bounded-ergodic} that the semigroup $(T_t)_{t \geq 0}$ is mean ergodic. We denote by $P_p \co S^p_I \to S^p_I$ its mean ergodic projection onto the subspace $\ker A_p$. For any $i,j \in I$, we have $P_p(e_{ij})= \delta_{\alpha_i,\alpha_j} e_{ij}$.

If the set defining $c_\alpha$ is empty, then all vectors $\alpha_i$ are equal. Hence $T_t=\Id$, $A_p=0$, and $P_p=\Id$. The inequality is trivial. We may therefore suppose that $c_\alpha < \infty$. Note that on the complex Hilbert space $S^2_I$, the generator $-A_2$ acts as a multiplication operator defined by $A_2(e_{ij}) = \|\alpha_i-\alpha_j\|_{H}^2 e_{ij}$, where $i,j \in I$. So by \cite[Example 3.2.16 p.~185]{KaR97a}, the spectrum $\sigma(A_2)$ is the \textit{closure} of the set $\{\|\alpha_i-\alpha_j\|_{H}^2 : i,j \in I \}$. Since $c_\alpha >0$, we see that $\sigma(A_2) \subset \{0\} \cup \big[c_\alpha,\infty\big)$. Combining Proposition \ref{prop-Gap-equivalente} and Proposition \ref{prop-gap-and-s}, we conclude that $0 \in \rho(A_{2,0})$. 

Since it is a diffusion semigroup in the sense of \cite[p.~49]{JMX06}, we see that the semigroup is bounded holomorphic on $S^p_I$ by \cite[Proposition 5.4 p.~51, Lemma 3.1 p.~26]{JMX06}, hence on $\ovl{\Ran A_p}$. Since $0 \in \rho(A_{2,0})$, according to Proposition \ref{prop-reduced-exp-stability-closed-range}, the semigroup $(T_t)_{t \geq 0}$ is uniformly exponentially stable on the space $\ovl{\Ran A_2}$. If $1 < r < p <2$, note the complex interpolation formula $\ovl{\Ran A_p}\approx (\ovl{\Ran A_2},\ovl{\Ran A_r})_{\theta}$ obtained from $S^p_I=(S^2_I,S^r_I)_{\theta}$ for some $\theta \in (0,1)$ and the compatible projections $\Id-P_2$ and $\Id-P_r$ onto the subspaces $\ovl{\Ran A_2}$ and $\ovl{\Ran A_r}$. By interpolation for $1 < p < 2$ and duality if $p > 2$, we conclude with Proposition \ref{prop-stability-interpolation} that $(T_t)_{t \geq 0}$ is uniformly exponentially stable on $\ovl{\Ran A_p}$. The estimate \eqref{eq-lower-Riesz-direct-Poincare} is satisfied with \eqref{Riesz-equivalence-Schur}. We conclude with Theorem \ref{thm-direct-Poincare-via-negative-powers}.
\end{proof}

\begin{remark} \normalfont
In ongoing joint work with C.~Kriegler, the author studies the problem of obtaining this inequality with sharp dependence on $p$.
\end{remark}

\section{A concentration inequality}
\label{sec-concentration}

This part is inspired by \cite[Section 5.2]{JuW26} and \cite[Theorem 2.7]{HuT21}. Here, we use the complex interpolation described in the books \cite{BeL76} and \cite{Lun18}.

\begin{thm}
\label{thm-moment-Poincare-to-concentration-log-loss}
Let $\cal{M}$ be a von Neumann algebra equipped with a normal faithful tracial state $\tau$. Suppose that $Y_\infty \subset Y_1$ is a contractive inclusion of Banach spaces. We consider the complex interpolation space $Y_p \ov{\mathrm{def}}{=} (Y_\infty,Y_1)_{\frac{1}{p}}$ where $1 < p < \infty$. Let $P_p \co \L^p(\cal{M}) \to \L^p(\cal{M})$ be a family of bounded projections which are compatible on $\cal M$, in the sense that there exists a linear map $P \co \cal M \to \cal M$ such that \(P_p(x)=P(x)\) for any \(x \in \cal M\) and any $p \in (1, \infty)$. Let $\partial \co \dom \partial \subset \cal{M} \to Y_\infty$ be an unbounded linear operator. Assume that there exist constants $K > 0$, $\beta > 0$, $\alpha \geq 0$, and \(p_0 > 1\) such that, for any $p \geq p_0$,
\begin{equation}
\label{eq-Lp-Poincare-growth-log}
\norm{x-P(x)}_{\L^p(\cal{M})}
\leq
K p^\beta(\log p)^\alpha
\norm{\partial x}_{Y_p},
\quad x \in \dom\partial.
\end{equation}
Then, for every $x \in \dom\partial$ such that $\norm{\partial x}_{Y_\infty}> 0$ there exist constants $c_{\alpha,\beta} > 0$ and $T_{\alpha,\beta,p_0} > 0$ such that
\begin{equation}
\label{eq-concentration-log-loss}
\tau\left(
1_{(t,\infty)}\bigl(|x-P(x)|\bigr)
\right)
\leq
\exp\left[
-c_{\alpha,\beta}
\frac{
\left(
\frac{t}{K\norm{\partial x}_{Y_\infty}}
\right)^{\frac1\beta}
}{
\left(
\log\frac{t}{K\norm{\partial x}_{Y_\infty}}
\right)^{\frac\alpha\beta}
}
\right]
\end{equation}
for every $
t \geq
T_{\alpha,\beta,p_0}
K\norm{\partial x}_{Y_\infty}$.
\end{thm}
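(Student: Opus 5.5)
The plan is the standard moment-to-tail argument: apply Chebyshev's inequality at a variable exponent $p$, feed in the hypothesis \eqref{eq-Lp-Poincare-growth-log}, and optimize over $p$.

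\textbf{Step 1 (Chebyshev in $\cal M$).} Fix $x \in \dom\partial$ with $\norm{\partial x}_{Y_\infty}>0$ and set $y \ov{\mathrm{def}}{=} x-P(x)$. Since $x \in \cal M$, the compatibility gives $P_p(x)=P(x)$, so $y\in\cal M$ and $y$ is the same element for every $p$. The spectral projection $e\ov{\mathrm{def}}{=}1_{(t,\infty)}(|y|)$ satisfies $t^p e \leq |y|^p$ by functional calculus. Hence
\[
\tau(e)\leq t^{-p}\tau(|y|^p)=t^{-p}\norm{y}_{\L^p(\cal M)}^p .
\]

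\textbf{Step 2 (replace $Y_p$ by $Y_\infty$).} The inclusion $Y_\infty\subset Y_1$ is contractive, so for $z\in Y_\infty$ the interpolation inequality gives
\[
\norm{z}_{Y_p}\leq \norm{z}_{Y_\infty}^{1-\frac1p}\norm{z}_{Y_1}^{\frac1p}\leq\norm{z}_{Y_\infty}.
\]
Here $\partial x\in Y_\infty\subset Y_1$, so this applies. Combining with \eqref{eq-Lp-Poincare-growth-log}, for all $p\geq p_0$ and with $s\ov{\mathrm{def}}{=}\frac{t}{K\norm{\partial x}_{Y_\infty}}$, we get
\[
\tau(e)\leq\Big(\frac{p^\beta(\log p)^\alpha}{s}\Big)^p .
\]

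\textbf{Step 3 (choice of $p$, the main point).} Choose $p$ so that $p^\beta(\log p)^\alpha = s/\e$; then $\tau(e)\leq \e^{-p}$. Equivalently, take the explicit choice
\[
p\ov{\mathrm{def}}{=}c\,\frac{s^{1/\beta}}{(\log s)^{\alpha/\beta}}
\]
with $c=c_{\alpha,\beta}$ small. The work is to verify $p^\beta(\log p)^\alpha\leq s/\e$ for $s$ large. We have $\log p=\frac1\beta\log s-\frac\alpha\beta\log\log s+\log c\leq\frac1\beta\log s$ once $s$ is large and $c \leq 1$. Therefore
\[
p^\beta(\log p)^\alpha\leq c^\beta\frac{s}{(\log s)^\alpha}\beta^{-\alpha}(\log s)^\alpha=c^\beta\beta^{-\alpha}s,
\]
which is at most $s/\e$ if $c^\beta\leq \beta^{\alpha}/\e$. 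Then $\tau(e)\leq \e^{-p}$, which is exactly \eqref{eq-concentration-log-loss}.

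\textbf{Step 4 (threshold $T_{\alpha,\beta,p_0}$).} Choose $T_{\alpha,\beta,p_0}$ large enough that for $s\geq T$ three conditions hold: $\log s>0$ (so all logarithms are well defined and positive), $p\geq p_0$ (possible since $p\to\infty$ as $s\to\infty$), and the inequality $\log p\leq\frac1\beta\log s$ from Step 3 is valid. These are elementary asymptotic requirements depending only on $\alpha,\beta,p_0$ once $c$ is fixed.

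The only delicate point is this bookkeeping of the logarithmic loss in Step 3; everything else is immediate.
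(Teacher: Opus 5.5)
Your proposal is correct and follows the paper's proof. Both arguments combine the noncommutative Chebyshev inequality at exponent $p$, the bound $\norm{\partial x}_{Y_p}\leq\norm{\partial x}_{Y_\infty}$, and the choice $p\sim s^{1/\beta}(\log s)^{-\alpha/\beta}$ controlled through $\log p\leq\frac1\beta\log s$.

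The only difference is the normalization. The paper takes $p_t=\frac{\beta^{\alpha/\beta}}{\e}\,u^{1/\beta}(\log u)^{-\alpha/\beta}$, so each factor is at most $\e^{-\beta}$ and $c_{\alpha,\beta}=\beta^{1+\alpha/\beta}/\e$. You instead make each factor at most $\e^{-1}$, using $c\leq 1$ and $c^\beta\leq\beta^\alpha/\e$.
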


\begin{proof}
Let \(x\in\dom\partial\), and put $
a\ov{\mathrm{def}}{=}x-P(x)$. For every $p \in (1,\infty)$ we have
$\norm{\partial x}_{Y_p}
\leq
\norm{\partial x}_{Y_\infty}$ by \cite[Proposition 2.4 p.~50]{Lun18}. Thus \eqref{eq-Lp-Poincare-growth-log} gives, for any \(p \geq p_0\),
\begin{equation}
\label{eq-moment-bound-log}
\norm{a}_{\L^p(\cal M)}
\leq
K p^\beta(\log p)^\alpha
\norm{\partial x}_{Y_\infty}.
\end{equation}
By functional calculus, we have $
t^p1_{(t,\infty)}(|a|)
\leq
|a|^p$ for any $t > 0$. Applying the trace, we obtain the noncommutative Chebyshev inequality
\begin{equation}
\label{eq-nc-Chebyshev-log}
\tau\left(1_{(t,\infty)}(|a|)\right)
\leq
t^{-p}\norm{a}_{\L^p(\cal M)}^p,
\quad 1\leq p < \infty.
\end{equation}
Combining \eqref{eq-moment-bound-log} and \eqref{eq-nc-Chebyshev-log}, we get
\begin{equation}
\label{eq-tail-before-optim-log}
\tau\left(1_{(t,\infty)}(|a|)\right)
\leq
\left(
\frac{
Kp^\beta(\log p)^\alpha
\norm{\partial x}_{Y_\infty}
}{t}
\right)^p,
\quad p\geq p_0.
\end{equation}
Now we assume that $\norm{\partial x}_{Y_\infty} > 0$. Set $
u\ov{\mathrm{def}}{=}
\frac{t}{K\norm{\partial x}_{Y_\infty}}$. We shall assume that $u$ is sufficiently large. Define
\begin{equation}
\label{eq-choice-pt-log}
p_t
\ov{\mathrm{def}}{=}
\frac{\beta^{\frac{\alpha}{\beta}}}{\e}
\frac{u^{\frac{1}{\beta}}}{(\log u)^{\frac{\alpha}{\beta}}}.
\end{equation}
For \(u\) large enough, depending only on \(\alpha,\beta,p_0\), we have $p_t \geq p_0$. Moreover, since \(\alpha \geq 0\), again for $u$ large enough, $
\log p_t
\leq
\frac{1}{\beta}\log u$. Consequently, we have
\begin{equation}
\label{inter-921}
p_t^\beta(\log p_t)^\alpha
\leq \bigg(
\frac{\beta^{\frac{\alpha}{\beta}}}{\e} \bigg)^\beta
\frac{u}{(\log u)^\alpha}
\bigg(
\frac{1}{\beta}\log u \bigg)^\alpha
=
\e^{-\beta}u.
\end{equation}
Hence
\[
\frac{Kp_t^\beta(\log p_t)^\alpha \norm{\partial x}_{Y_\infty}}{t}
=
\frac{p_t^\beta(\log p_t)^\alpha}{u}
\ov{\eqref{inter-921}}{\leq} 
\e^{-\beta}.
\]
Taking \(p=p_t\) in \eqref{eq-tail-before-optim-log}, we obtain $
\tau\left(1_{(t,\infty)}(|a|)\right)
\leq
\left(\e^{-\beta}\right)^{p_t}
=
\exp(-\beta p_t)$. Using the definition \eqref{eq-choice-pt-log}, this gives
\[
\tau\left(1_{(t,\infty)}(|a|)\right)
\leq
\exp\left[
-\frac{\beta^{1+\frac{\alpha}{\beta}}}{\e}
\frac{u^{\frac{1}{\beta}}}{(\log u)^{\frac{\alpha}{\beta}}}
\right].
\]
Choose \(T_{\alpha,\beta,p_0}>0\) large enough such that, whenever \(u \geq T_{\alpha,\beta,p_0}\), we have \(u>1\), \(p_t\geq p_0\), and $\log p_t\leq \frac1\beta\log u$. Then the preceding argument applies for every such \(u\). This is precisely the condition $
t 
\geq T_{\alpha,\beta,p_0}K\norm{\partial x}_{Y_\infty}$. Thus we obtain \eqref{eq-concentration-log-loss}, with $c_{\alpha,\beta}=\frac{\beta^{1+\frac{\alpha}{\beta}}}{\e}$.
\end{proof}

\paragraph{Competing interests} The author declares that he has no competing interests.

\paragraph{Data availability statement}
The author confirms that the paper does not use any datasets. No datasets were generated during the current study.



{\footnotesize

\vspace{0.2cm}

\noindent C\'edric Arhancet\\ 
\noindent 6 rue Didier Daurat, 81000 Albi, France\\
URL: \href{http://sites.google.com/site/cedricarhancet}{https://sites.google.com/site/cedricarhancet}\\
cedric.arhancet@protonmail.com\\
ORCID: 0000-0002-5179-6972 
}

\end{document}